\documentclass[12pt]{amsart}
\usepackage[margin=1in]{geometry}

\usepackage{amsmath, amsthm, amssymb, hyperref, color}
\usepackage{MnSymbol} 
\usepackage{graphicx}
\usepackage{caption}
\usepackage[all]{xypic}
\usepackage{verbatim}
\usepackage{chemfig, chemnum}
\usepackage{tikz}
\usepackage[linesnumbered,lined,commentsnumbered]{algorithm2e}
\usepackage{caption}
\usepackage{subcaption}
\usepackage{float}
\usepackage{tikz-cd}
\usetikzlibrary{calc}
\usetikzlibrary{patterns}
\usepackage{extarrows}

\usepackage{makecell}
\usepackage{array}
\usepackage{fancyvrb}
\usepackage{enumitem}
\usepackage{standalone}
\usepackage{subcaption}

\usepackage{amsfonts}

\usepackage[T1]{fontenc}
\usepackage{imakeidx}
\makeindex

\makeatletter
\def\l@section{\@tocline{1}{0pt}{1pc}{}{}}
\def\l@subsection{\@tocline{2}{0pt}{3pc}{5pc}{}}
\def\l@subsubsection{\@tocline{3}{0pt}{5pc}{7pc}{}}
\makeatother

\usepackage[
    backend=biber,
    style=alphabetic,,
    sorting=nyt,
    maxnames=99,
    maxalphanames=4
]{biblatex}
\renewbibmacro{in:}{}
\AtEveryBibitem{%
    \clearfield{doi}%
    \clearfield{url}%
    \clearfield{urldate}%
    \clearfield{isbn}%
    \clearfield{issn}%
    \clearfield{language}%
    \clearfield{month}%
    \clearfield{day}%
    \clearfield{eprintclass}%
    \iffieldundef{journaltitle}
        {}
        {%
            \clearfield{eprint}%
            \clearfield{eprinttype}%
        }%
}

\DeclareFieldFormat{eprint:arxiv}{%
  \href{https://arxiv.org/abs/#1}{\texttt{arXiv #1}}%
}

\DeclareLabelalphaTemplate{
  \labelelement{
    \field[final]{shorthand}
    \field{label}
    \field[strwidth=1,strside=left]{labelname}
  }
}

\DeclareLabelalphaTemplate{
  \labelelement{
    \field[final]{shorthand}
    \field{label}
    \field[strwidth=2,strside=left,ifnames=1]{labelname}
    \field[strwidth=1,strside=left]{labelname}
  }
}

\hypersetup{
	colorlinks,
	citecolor=black,
	filecolor=black,
	linkcolor=black,
	urlcolor=blue
}
\usepackage{zref-clever}
\zcsetup{cap,nameinlink}

\newcommand{\bR}{\mathbb R}
\newcommand{\bM}{\mathbb M}
\newcommand{\bC}{\mathbb C}
\newcommand{\bZ}{\mathbb Z}

\newcommand{\bP}{\mathbb P}

\newcommand{\id}{\text{id}}

\renewcommand{\phi}{\varphi}

\newcommand{\im}{\text{im }}
\newcommand{\Hom}{\text{Hom}}

\newcommand{\Gr}{{\rm Gr}}

\newcommand{\GL}{{\rm GL}}

\newcommand{\onshellgraph}[5]{
    \begin{tikzpicture}[scale=#5]
        \foreach \pos\index in #1{
            \node(\index) at \pos {\index};
        }
    
        \foreach \pos\index in #2{
            \node(\index)[draw,circle] at \pos {};
        }

        \foreach \pos\index in #3{
            \node(\index)[draw,fill,circle] at \pos {};
        }

        \foreach \first/\second in #4{
            \draw[white,line width=2.5pt] (\first) -- (\second);    
            \draw[line width=1.5pt] (\first) -- (\second);
        }
    \end{tikzpicture}
}

\newcommand{\coloredonshellgraph}[5]{
    \begin{tikzpicture}[scale=#5]
        \foreach \pos\index in #1{
            \node(\index) at \pos {\index};
        }
    
        \foreach \pos\index in #2{
            \node(\index)[draw,circle] at \pos {};
        }

        \foreach \pos\index in #3{
            \node(\index)[draw,fill,circle] at \pos {};
        }

        \foreach \first/\second/\color in #4{
            \draw[color=white,line width=2.5pt] (\first) -- (\second);    
            \draw[color=\color,line width=1.5pt] (\first) -- (\second);
        }
    \end{tikzpicture}
}

\newcommand{\onshellgraphwithcode}[6]{
    \begin{tikzpicture}[scale=#5]
        \foreach \pos\index in #1{
            \node(\index) at \pos {\index};
        }
    
        \foreach \pos\index in #2{
            \node(\index)[draw,circle] at \pos {};
        }

        \foreach \pos\index in #3{
            \node(\index)[draw,fill,circle] at \pos {};
        }

        \foreach \first/\second/\color in #4{
            \draw[color=white,line width=2.5pt] (\first) -- (\second);    
            \draw[line width=1.5pt,color=\color] (\first) -- (\second);
        }

        #6
    \end{tikzpicture}
}

\newcommand{\ta}[0]{\text{a}}
\newcommand{\tb}[0]{\text{b}}
\newcommand{\ttc}[0]{\text{c}}
\newcommand{\td}[0]{\text{d}}
\newcommand{\te}[0]{\text{e}}
\newcommand{\tf}[0]{\text{f}}

\ExplSyntaxOn
\newcommand{\zcrefglobalstringname}[2]{
  \__zrefclever_opt_varname_lang_type:nnnn{\languagename}{#1}{#2}{tl}
}
\newcommand{\zcreflocalstringname}[2]{
  \__zrefclever_opt_varname_type:een{#1}{#2}{tl}
}
\newcommand{\zcrefgetstring}[2]{
  \__zrefclever_provide_langfile:e { \languagename }
  \__zrefclever_process_language_settings:
  \ifcsvoid{\zcreflocalstringname{#1}{#2}}
    {\csname \zcrefglobalstringname{#1}{#2} \endcsname}
    {\csname \zcreflocalstringname{#1}{#2} \endcsname}
}
\ExplSyntaxOff
\newcommand{\zcreftypename}[1]{\zcrefgetstring{#1}{Name-sg}}
\newtheorem{sharedtheoremcounter}{}[section]
\undef\sharedtheoremcounter 

\NewDocumentCommand{\newzctheorem}{mO{#1}O{\zcreftypename{#2}}}{
  \newtheorem{#1}[sharedtheoremcounter]{#3}
    \AddToHook{env/#1/begin}{%
      \zcsetup{countertype={sharedtheoremcounter=#2}}}
}

\theoremstyle{remark}

\theoremstyle{plain}
\newzctheorem{theorem}[theorem]
\newzctheorem{lemma}[lemma]
\newzctheorem{proposition}[proposition]
\newzctheorem{claim}[claim]
\newzctheorem{cor}[corollary]

\theoremstyle{definition}
\newzctheorem{definition}
\AtBeginEnvironment{definition}{%
  \pushQED{\qed}%
}
\AtEndEnvironment{definition}{\popQED}

\newzctheorem{notation}[notation][Notation]
\AtBeginEnvironment{notation}{%
  \pushQED{\qed}%
}
\AtEndEnvironment{notation}{\popQED}

\newzctheorem{eg}[example]
\AtBeginEnvironment{eg}{%
  \pushQED{\qed}%
}
\AtEndEnvironment{eg}{\popQED}

\newzctheorem{prob}[problem]
\newzctheorem{ex}[exercise]
\newzctheorem{conj}[conjecture]

\newzctheorem{rem}[remark]
\AtBeginEnvironment{rem}{%
  \pushQED{\qed}%
}
\AtEndEnvironment{rem}{\popQED}
\newzctheorem{question}[question]

\DeclareMathOperator{\cross}{cross}
\DeclareMathOperator{\rowsp}{rowsp}
\def\bdG{\partial G}
\def\bu{\mathbf{u}}
\def\bfR{\mathbf{R}}
\DeclareMathOperator{\tot}{tot}
\DeclareMathOperator{\match}{Match}
\DeclareMathOperator{\conv}{conv}

\title{A mathematical perspective on nonplanar on-shell forms}
\author{Artyom Lisitsyn, Elizabeth Pratt, Melissa Sherman-Bennett, Jaroslav Trnka}

\begin{document}
\begin{abstract}
    On-shell forms are differential forms on the Grassmannian which arise in particle physics. They are defined using bipartite graphs with $n$ distinguished ``boundary'' vertices. Mathematical investigation of on-shell forms has largely focused on the case where the graph is planar, in which case one can utilize combinatorial tools pioneered by Postnikov in the study of the totally nonnegative Grassmannian. In this article, we investigate on-shell forms for arbitrary graphs. We first discuss how to extend various tools from the planar case to arbitrary graphs. We then prove a determinantal formula for a class of on-shell forms which first appeared in physics literature \cite{Arkani-Hamed:2014bca}. We explain the relation between this class of forms and the hypertree divisors of $M_{0,n}$, introduced by Castravet--Tevelev \cite{CastravetTevelev}.
\end{abstract}

\maketitle

\tableofcontents

\section{Introduction}

On-shell forms are differential forms on the Grassmannian and its subvarieties which arise in physics, specifically in the study of particle scattering \cite{Arkani-Hamed:2012zlh}. An on-shell form $\omega_G$ is determined by a bipartite graph $G$ with distinguished boundary vertices $1, \dots, n$, which we call a \emph{graph with boundary}. For particular choices of graph, the on-shell form recovers the celebrated Parke-Taylor form, which is used to compute tree-level scattering amplitudes for gluons \cite{Parke:1986gb}.

Mathematical investigation of on-shell forms has thus far focused on the case where the graph $G$ is a \emph{plabic graph}, meaning $G$ is planar and embedded in a disk with $1, \dots, n$ in order on the boundary. Plabic graphs were introduced by Postnikov to parametrize \emph{positroid cells} in the totally nonnegative Grassmannian \cite{postnikov}, and have since been studied extensively, in works including \cite{psw2009,Lam-grassmann-polytope, Speyer16, MSTwist}. When $G$ is plabic, one can use the combinatorics and geometry of the totally nonnegative Grassmannian to aid the study of the form $\omega_G$. For example, in the plabic case $\omega_G$ can be rigorously defined via Postnikov's boundary measurement map for parametrizing the positroid cell $C_G$. One can use statistics on plabic graphs to determine when the on-shell forms for different graphs coincide. 

On-shell forms also connect to the burgeoning mathematical field of positive geometries, introduced by Arkani-Hamed--Bai--Lam \cite{ahbl2017}. A positive geometry is a semialgebraic subset $X^{\ge 0}$ of a normal projective variety $X$ which admits a unique \emph{canonical form} $\Omega$. The canonical form is a nonzero top degree rational differential form which reflects the boundary structure of $X^{\ge0}$ in a precise way; roughly speaking, $\Omega$ has logarithmic singularities exactly on the boundary of $X^{\ge 0}$, and its residues have the same property for ``boundaries of boundaries'' of $X^{\ge 0}$. Examples of positive geometries include polytopes in projective space and the nonnegative parts of toric varieties. Plabic on-shell forms $\omega_G$ are examples of canonical forms: positroid cells $C_G$ are positive geometries \cite{Lam-pos-geom}, with canonical form $\omega_G$.

This paper investigates on-shell forms for arbitrary graphs with boundary. Such forms are of substantial interest in scattering amplitudes, and have been considered in a number of physics papers \cite{Arkani-Hamed:2014bca,Franco:2015rma,Bourjaily:2016mnp}. There are also indications that these forms are very interesting from a mathematical perspective. For example, \cite[Eq. (2.15)]{Arkani-Hamed:2014bca} posits a beautiful determinantal formula for $k=2$ ``top-cell'' on-shell forms. The formula implies that, surprisingly, these forms vanish to order 2 on a \emph{hypertree divisor} of $\overline{M_{0,n}}$, defined by Castravet--Tevelev \cite{CastravetTevelev}. Here, $\overline{M_{0,n}}$ is the Deligne--Mumford compactification of the moduli space $M_{0,n}$ of genus 0 curves with $n$ marked points. Hypertree divisors are non-boundary divisors which give extremal rays of the effective cone of $\overline{M_{0,n}}$. One motivation for this paper was to provide a mathematical proof for the rational formula \cite[Eq. (2.15)]{Arkani-Hamed:2014bca} for $k=2$ forms and explain the relationship between these forms and hypertree divisors, which we do in \zcref{subsec:det formula,subsec:hypertree}, respectively.

We begin the paper by extending some well-known concepts from the setting of plabic graphs to arbitrary bipartite graphs with boundary (see \zcref{sec:boundary-measurement}); we need these in order to define the on-shell form. These concepts include gauge transformations on the space of edge weights $(\bC^*)^E$, the boundary measurement map $\bM^G:(\bC^*)^E/\sim_\text{gauge} \to \Gr_{k,n}$ via perfect matchings, moves on graphs, and the notion of a reduced graph. These definitions are essentially already present in the literature; in particular, the well-definedness of our boundary measurement map is work of Speyer \cite{Speyer16}. Some subtleties arise in comparison to the plabic case. For example, the boundary measurement map $\bM^G$ involves signs which depend on a choice of drawing of $G$ in a disk. For reduced non-plabic $G$, $\bM^G$ is in general a rational map and may be many-to-one. The notion of reducedness is also more complicated in the non-plabic case, as combinatorial tools for plabic graphs such as trips are not available.

Extending work of \cite{psw2009} in the plabic case, we define a toric variety $X_G$ for each graph with boundary (see \zcref{sec:toric}). The moment polytope of $X_G$ has vertices in bijection with the perfect matchings of $G$. We show that $X_G$ is also a GIT quotient of $\bC^E$. The nonnegative part $X_G^{\ge 0}$ of $X_G$ is a positive geometry, and we give a formula for its canonical form $\eta_G$ involving spanning trees of $G$. Essentially by definition, the boundary measurement map factors through $X_G$, and we use this to define on-shell forms.

In \zcref{sec:form}, we define the on-shell form $\omega_G$ as the push-forward of the canonical form $\eta_G$ of $X_G$ along the boundary measurement map:
\[\omega_G:=\bM^G_*~\eta_G.\]
This definition is consistent with the on-shell form for plabic graphs in \cite[Section 14]{Lam-grassmann-polytope} and is a reasonable formalization of the on-shell form for nonplanar graphs in the physics literature \cite{Arkani-Hamed:2014bca,Bourjaily:2016mnp,Franco:2015rma}. The heuristic for positive geometries \cite[Heuristic 4.1]{ahbl2017} predicts that $\omega_G$ is the canonical form of $\overline{\bM^G(X_G^{>0})}$, provided that the latter is a positive geometry. This is true for plabic $G$, where $\overline{\bM^G(X_G^{>0})}$ is simply the positroid cell.

With the mathematical definition of $\omega_G$ in hand, in \zcref{sec:k=2} we turn to the $k=2$ ``top cell'' on-shell forms studied in \cite{Arkani-Hamed:2014bca}. These forms arise from graphs $G$ where the boundary measurement map $\bM^G: (\bC^E)^* \dashrightarrow \Gr_{2,n}$ is dominant. We characterize $k=2$ top-cell graphs, and prove the rational formula for $\omega_G$ conjectured in \cite[Eq. (2.15)]{Arkani-Hamed:2014bca}. In order to do so, we introduce a decomposition of graphs with boundary into ``lobes'', and analyze the interaction of this decomposition with the boundary measurement map $\bM^G$ and the on-shell form $\omega_G$. We then describe the precise relationship between $k=2$ top-cell graphs $G$ and the hypertrees $\Gamma$ of Castravet--Tevelev \cite{CastravetTevelev}; between the numerator of $\omega_G$ and the equation cutting out the hypertree divisor $D_{\Gamma}$; and between the hypertree divisor $D_\Gamma$ and the boundary measurement map $\bM^G$.

We conclude the paper with a number of questions and further directions.

\vspace{12pt}

\noindent\textbf{Relation to prior work.} Above we mention prior mathematical work on plabic graphs and their on-shell forms; we also mention several physics references for non-plabic on-shell forms. Non-plabic graphs with boundary have also appeared in the mathematics literature. In \cite{TW}, the authors use a certain class of such graphs to parametrize Deodhar components in the Grassmannian. Their parametrization is somewhat different from the boundary measurement map introduced here. In \cite{GLSBS}, the authors use a different class of non-plabic graphs with boundary to generalize the cluster structure on positroid varieties to Richardson varieties. We do not know if the graphs in this present paper can be used to obtain a cluster structure on the image of the boundary measurement map. Finally, the ideas behind the boundary measurement map for nonplanar graphs (formulated using paths rather than matchings) go back to at least to Mason in the 1970s \cite{Mason}, and is closely related to the class of matroids known as gammoids.

\noindent \textbf{Summary for physics audience:}
 In this paper, we study non-planar on-shell diagrams and the connection to the Grassmannian $\Gr_{k,n}$. We find new ways of constructing the boundary measurement matrix using \emph{perfect matchings} and \emph{vector-relation configurations}. The main result is a proof of the determinantal formula for the $k=2$ on-shell form of \cite{Arkani-Hamed:2014bca}, which we rigorously construct without any reference to momentum delta functions used in the original construction. Our method opens the door for possible higher-$k$ extensions.

\noindent \textbf{LLM Use.} ChatGPT 5.6 Sol was used in the proof of \zcref{lem:jacobian}. Claude Fable 5 was used in the proof of \zcref{prop:bdry-meas-torus-vs-dense-torus} and to generated a preliminary referee report, which caught minor errors. Locally-hosted models (Gemma-4-31B, Qwen3.8-27B, Qwen3.8-Flash-Next) aided with writing code verifying \zcref{eg:threesix} and editing graphs in \zcref{fig:twosix} and \zcref{fig:threesix}. All text was written solely by the authors. 

\noindent \textbf{Acknowledgements.} MSB is partially supported by the National Science Foundation under Award No.~DMS-2444020. JT and AL are supported by the US Department of Energy under grant No. SC0009999, and by the funds of the University of California. EP was supported by NSF GRFP no. 2023358166, a UC Davis Chancellor’s Fellowship, and a Perimeter Institute Postdoctoral Fellowship. Part of this work was conducted while EP and AL were at the Max Planck Institute for Mathematics in the Natural Sciences in Leipzig, Germany.
  Any opinions, findings, and conclusions or recommendations expressed in this material are
those of the author(s) and do not necessarily reflect the views of the National Science
Foundation.

\section{Graphs with boundary and the boundary measurement map}\label{sec:boundary-measurement}

In this section, we take many concepts that are standard for \emph{plabic graphs}, that is, planar bipartite graphs embedded in a disk with some vertices on the boundary, and extend them to nonplanar graphs. Plabic graphs were introduced by Postnikov \cite{postnikov}, who used them to index and parametrize cells of the totally nonnegative Grassmannian. Their combinatorics has been studied extensively. Many of the definitions for plabic graphs extend naturally to nonplanar graphs; some definitions, such as the boundary measurement map, become slightly more complicated; and other concepts, such as reducedness, become substantially more complicated.

\subsection{Basic notions on graphs}\label{subsec:graph basics}

A \emph{graph with boundary} is a graph $G$ with a distinguished subset of \emph{boundary vertices} $\bdG$, labeled $1, \dots, n$. All other vertices of $G$ are \emph{internal}. Edges incident to boundary vertices are \emph{boundary edges}.

Throughout this section, we assume that $G$ is a bipartite graph with boundary, and all boundary vertices are white. We use $W, B, E$ for the sets of internal white vertices, black vertices, and edges of $G$, respectively, and $V=W \cup B$ for the set of internal vertices. We set $k_G:= |\bdG| + |W|-|B|$. A \emph{type $(k,n)$ graph} $G$ is a graph satisfying the assumptions of this paragraph, with $k_G = k$.

A \emph{perfect matching} of a type $(k,n)$ graph $G$ is a subset $M \subset E$ of the edges such that every internal vertex is contained in exactly one edge in $M$ and every boundary vertex is contained in at most one edge of $M$. The \emph{boundary} $\partial M$ of the matching $M$ is the $(n-k)$-element set
    \[\{i \in \bdG: i \text{ is in some }e \in M \}.\]
See \zcref{fig:twosix-labeling} for an example.

\begin{figure}
    \[
        \onshellgraphwithcode{{(-120:2)/1,(-180:2)/2,(120:2)/3,(60:2)/4,(0:2)/5,(-60:2)/6}}{{(-120:2)/W1,(-180:2)/W2,(120:2)/W3,(60:2)/W4,(0:2)/W5,(-60:2)/W6}}{{(150:1.125)/B1,(30:1.125)/B2,(-90:1.125)/B3,(0,0)/B4}}{{W1/B1/black, W2/B1/black, W3/B1/red, W3/B2/black, W4/B2/black, W5/B2/red, W1/B3/black, W5/B3/black, W6/B3/red, B4/W2/black, B4/W4/red, B4/W6/black}}{1}{
        \node() at (210:1.5) {$w_1$};
        \node() at (160:1.7) {$w_2$};
        \node() at (140:1.7) {$w_3$};
        \node() at (90:1.5) {$x_3$};
        \node() at (40:1.7) {$x_4$};
        \node() at (20:1.7) {$x_5$};
        \node() at (330:1.5) {$y_5$};
        \node() at (280:1.7) {$y_6$};
        \node() at (260:1.7) {$y_1$};
        \node() at (155:0.55) {$z_2$};
        \node() at (35:0.55) {$z_4$};
        \node() at (275:0.55) {$z_6$};
        }
        \hspace{+3em}
        \onshellgraphwithcode{{(-120:2)/1,(-180:2)/2,(120:2)/3,(60:2)/4,(0:2)/5,(-60:2)/6}}{{(-120:2)/W1,(-180:2)/W2,(120:2)/W3,(60:2)/W4,(0:2)/W5,(-60:2)/W6}}{{(150:1.125)/B1,(30:1.125)/B2,(-90:1.125)/B3,(0,0)/B4}}{{W1/B1/black, W2/B1/black, W3/B1/blue, W3/B2/black, W4/B2/blue, W5/B2/black, W1/B3/black, W5/B3/blue, W6/B3/black, B4/W2/black, B4/W4/black, B4/W6/blue}}{1}{
        \node() at (210:1.5) {$w_1$};
        \node() at (160:1.7) {$w_2$};
        \node() at (140:1.7) {$w_3$};
        \node() at (90:1.5) {$x_3$};
        \node() at (40:1.7) {$x_4$};
        \node() at (20:1.7) {$x_5$};
        \node() at (330:1.5) {$y_5$};
        \node() at (280:1.7) {$y_6$};
        \node() at (260:1.7) {$y_1$};
        \node() at (155:0.55) {$z_2$};
        \node() at (35:0.55) {$z_4$};
        \node() at (275:0.55) {$z_6$};
        }
    \]
 
    \caption{A type (2,6) graph with edge weights and two perfect matchings with boundary $\partial M = \{3,4,5,6\}$. The edges connected to vertices $\{1,2\}$ are excluded from both perfect matchings. 
    }\label{fig:twosix-labeling}
\end{figure}
    
To avoid trivialities, we will always assume that $G$ has at least one perfect matching. This implies that $0 \le k_G \le n$.

We will often think of $G$ with edges weighted by nonzero weights $x_e \in \bC^*$. The \emph{weighted adjacency matrix} of $G$ is $K=(K_{b,w})$, which has rows indexed by $B$ (for some fixed total order), columns indexed by $W \cup \bdG$ (for some fixed total order which ends in $1, \dots, n$), and has entries

   \[K_{b,w} = \begin{cases}
        x_e & \text{ if }e=\{b,w\} \in E\\
        0 & \text{ else}.
\end{cases}\] 

In the future, we will sometimes need to change the signs of the entries of $K$; for $\sigma =(\sigma_e)_{e \in E} \in \{\pm 1\}^E$, we will denote by $\sigma K$ the matrix with nonzero entries $\sigma_e x_e$.

\subsection{Gauge transformation and the torus of edge weights modulo gauge}

For each internal vertex $v \in V$, we define the \emph{gauge transformation} at $v$ to be the action of $\bC^*$ on $(\bC^*)^E$ given by
\[t \cdot x_e = \begin{cases}
    t x_e & \text{ if } v \in e\\
x_e & \text{ else.}
\end{cases}\]
In this manner each vertex determines a co-character $\lambda_v: \bC^* \hookrightarrow (\bC^*)^E.$ The collection of vertices determines a map from $ (\bC^*)^V$ to $(\bC^*)^E,$ given concretely by the $|E| \times |V|$ incidence matrix $Q$ of the internal vertices. We define the torus $(\bC^*)^{E-V}$ as the cokernel 
    \begin{equation}\label{eq:cokernel}
        (\bC^*)^V \xrightarrow{Q} (\bC^*)^E \to (\bC^*)^{E - V} \to 0.
    \end{equation}
Concretely, we think of $(\bC^*)^{E - V}$ as the space of nonzero edge weights modulo gauge transformations at internal vertices.

We will later need some coordinates on $(\bC^*)^{E-V}$, i.e. a basis of characters. One way to obtain such coordinates is to choose a specific representative in $(\bC^*)^E$ for each gauge-equivalence class in $(\bC^*)^{E-V}$, for example by specifying that certain edge weights are 1 to ``gauge fix''. 
Another way to obtain coordinates is to find ``enough'' gauge-invariant functions on $(\bC^*)^E$. To formalize this, we use the homology group\footnote{We will always use integral coefficients. For $e=\{b,w\}$, we define the boundary map by $e \mapsto b-w$.} $H_1(G, \bdG)$. 

Recall that 
\[H_1(G, \bdG) = \{p \in \bZ^E: \text{ for all }v \in V, ~\sum_{v \ni e} p_e =0\}.\]
It is spanned by the ``alternating'' indicator vectors of cycles in $G$ and paths from $\bdG$ to $\bdG$, obtained from the indicator vectors by changing the sign of every other entry.

\begin{lemma}
   For $p \in \bZ^E$, the character $(x_e) \mapsto \prod x_e^{p_e}$ is gauge-invariant if and only if $p \in H_1(G, \bdG)$. 
\end{lemma}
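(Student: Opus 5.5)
We compute directly how a single gauge transformation acts on the character $\chi_p\colon (x_e)\mapsto \prod_e x_e^{p_e}$. Fix an internal vertex $v\in V$ and $t\in\bC^*$. The gauge transformation at $v$ multiplies $x_e$ by $t$ exactly when $v\in e$, and leaves the other weights unchanged. Hence
\[
\chi_p(t\cdot x)=\prod_{e\ni v} (t x_e)^{p_e}\prod_{e\not\ni v} x_e^{p_e} = t^{\sum_{e\ni v} p_e}\,\chi_p(x).
\]
So $\chi_p$ is invariant under the gauge transformation at $v$ if and only if $t^{\sum_{e\ni v}p_e}=1$ for all $t\in\bC^*$. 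Since $\bC^*$ is infinite, this holds if and only if the integer $\sum_{e\ni v}p_e$ is zero.

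Next we pass from single vertices to the whole gauge group. The gauge group is the image of $(\bC^*)^V$ under $Q$, and it is generated by the images of the cocharacters $\lambda_v$ for $v\in V$. A character is therefore gauge-invariant if and only if it is invariant under each $\lambda_v$ separately. Equivalently, the pullback $Q^T p$ of $\chi_p$ to $(\bC^*)^V$ must be the trivial character, and the $v$-th coordinate of $Q^T p$ is exactly $\sum_{e\ni v}p_e$. Combining this with the first step, $\chi_p$ is gauge-invariant if and only if $\sum_{e\ni v}p_e=0$ for every $v\in V$.

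This is precisely the defining condition of $H_1(G,\bdG)$ as recalled just before the lemma. There is one point to check here: the condition involves no signs, even though the boundary map is $e\mapsto b-w$. The relative chain complex is $\bZ^E\to \bZ^V$ once boundary vertices are quotiented out. Each vertex $v$ occurs in the boundary of every edge $e\ni v$ with the same sign ($+$ if $v$ is black, $-$ if $v$ is white), because $G$ is bipartite. So the $v$-coefficient of $\partial p$ is $\pm\sum_{e\ni v}p_e$, and it vanishes exactly when the gauge condition at $v$ holds.

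There is no real obstacle in this argument. The only points needing care are this sign check and the remark that a character $t\mapsto t^m$ of $\bC^*$ is trivial only when $m=0$.
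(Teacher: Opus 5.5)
Your proof is correct and matches the paper's, which is the same one-line computation: the gauge transformation by $t$ at $v$ rescales the character by $t^{\sum_{e\ni v}p_e}$. Your additional remarks spell out steps the paper leaves implicit: that the gauge group is generated by the $\lambda_v$, that $t\mapsto t^m$ is trivial only for $m=0$, and the sign check against the boundary map $e\mapsto b-w$.
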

\begin{proof}
    Gauge transformation by $t$ at $v$ changes $p(x_e)$ by $\prod_{v \ni e}t^{p_e}= t^{\sum_{v \ni e}p_e}$. 
\end{proof}

Let $G/\partial G$ be the graph obtained from $G$ by identifying all boundary vertices; note the edge set remains the same.

\begin{cor}\label{cor:basis-for-torus-mod-gauge}
    Any basis of $H_1(G, \bdG)$ gives a basis of characters of $(\bC^*)^{E-V}$. Thus, if $c$ is the number of connected components of $G/\bdG$, then $\dim(\bC^*)^{E-V} = |E|-|V| + (c-1)$.
\end{cor}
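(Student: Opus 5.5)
The plan is to dualize the cokernel sequence \eqref{eq:cokernel} and then compute a rank. Applying $\Hom(-,\bC^*)$, the character lattice of $(\bC^*)^{E-V}$ is the kernel of the transpose map $Q^T:\bZ^E\to\bZ^V$. This map sends $p\mapsto (\sum_{e\ni v}p_e)_{v\in V}$. One point needs care: a cokernel of tori should be taken as $(\bC^*)^E/\im Q$, and this is a torus whose character group is exactly the set of characters of $(\bC^*)^E$ trivial on $\im Q$. Those characters are precisely the gauge-invariant ones. By the preceding lemma, this character group is $H_1(G,\bdG)=\ker Q^T$. Any $\bZ$-basis of $H_1(G,\bdG)$ is therefore a basis of the character lattice of $(\bC^*)^{E-V}$, which gives the first claim.

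For the dimension, we need $\operatorname{rank} H_1(G,\bdG)=|E|-\operatorname{rank}Q^T$. I would compute this by identifying $H_1(G,\bdG)$ with the ordinary cycle space $H_1(G/\bdG)$ of the quotient graph. The graph $G/\bdG$ has the same edge set, vertex set $V\cup\{\ast\}$, and $c$ components. In $G/\bdG$, a vector $p$ is a cycle exactly when the condition $\sum_{e\ni v}p_e=0$ holds at every $v\in V$ and also at $\ast$, with orientation signs from the convention $e\mapsto b-w$. The condition at $\ast$ is automatic: summing the boundary of $p$ over all vertices gives zero, since each edge contributes $b-w$ with augmentation $0$. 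So the two lattices coincide.

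The standard rank formula for cycle spaces then gives
\[\operatorname{rank} H_1(G/\bdG)=|E|-(|V|+1)+c,\]
which equals $|E|-|V|+(c-1)$, as claimed. Because the lattice sits in $\bZ^E$ with torsion-free cokernel $\im Q^T\subset\bZ^V$, the torus $(\bC^*)^{E-V}$ has exactly this dimension.

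The main thing I expect to require care is the sign convention. The defining condition for $H_1(G,\bdG)$ in the paper is written without signs, whereas the boundary map has $e\mapsto b-w$. Since $G$ is bipartite, the two descriptions differ by negating the coordinates at white vertices, an automorphism of $\bZ^V$ that preserves kernels. Hence either convention yields the same lattice and the same rank, and the argument goes through unchanged.
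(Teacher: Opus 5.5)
Your proposal is correct and follows the paper's proof. You dualize the cokernel sequence to identify the character lattice with $\ker Q^T$, observe that negating the white-vertex rows turns this into $H_1(G,\bdG)$, and compute the rank via $G/\bdG$. The only difference is cosmetic: you invoke the cycle-space rank formula $|E|-(|V|+1)+c$, after checking that the condition at the collapsed vertex is automatic, whereas the paper counts the edges outside a spanning forest of $G/\bdG$, which is the same fact.
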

\begin{proof} 
    Dualizing the cokernel sequence \eqref{eq:cokernel} yields 
    \[0 
    \to \Hom((\bC^*)^{E - V}, \bC^*) \to \Hom((\bC^*)^E, \bC^*) \xrightarrow{{Q}^T}  \Hom((\bC^*)^V, \bC^*). \]
    The boundary map is given by $Q^T$ with every row corresponding to a white vertex multiplied by $-1.$ This operation preserves the kernel, and identifies $\Hom((\bC^*)^{E - V}, \bC^*)$ with $H_1(G, \partial G).$ The statement about dimension is because the dimension of $H_1(G, \bdG)$ is the number of edges in the complement of a spanning forest of $G/\bdG$. A spanning forest of $G/\bdG$ contains $|V| + 1$ vertices and thus $|V| + 1 - c$ edges. The complement contains $|E|-|V| + (c-1)$ edges.
\end{proof}

\begin{eg}
    Consider the graph $G$ in \zcref{fig:basic example} with $|W| = |B| = 1,$ and its contraction $G / \partial G.$
    \begin{figure}[!h]
        \[
            \begin{tikzpicture}
                \node[draw,circle](W1) at (-1.06,1.06) {};
                \node at (-1.06,1.06) {1};
                \node[draw,circle](W2) at (1.06,1.06) {};
                \node at (1.06,1.06) {2};
                \node[draw,circle](Wi) at (-90:1.5) {};
                \node[draw,fill,circle](B) at (0,0) {};

                \draw[line width=1.5pt] (W1) -- (B) node[midway,below] {$a$};
                \draw[line width=1.5pt] (W2) -- (B) node[midway,below] {$b$};
                \draw[line width=1.5pt] (Wi) -- (B) node[midway,left] {$c$};
            \end{tikzpicture}
            \hspace{6em}
            \begin{tikzpicture}
                \node[draw,circle](W1) at (0,1.06) {};
                \node at (0,1.45) {$\partial G$};
                \node[draw,circle](Wi) at (-90:1.5) {};
                \node[draw,fill,circle](B) at (0,0) {};

                \draw[line width=1.5pt] (W1) to[out=-120,in=120] (B);
                \node at (-0.4,0.53) {$a$};
                \draw[line width=1.5pt] (W1) to[out=-60,in=60] (B);
                \node at (0.4,0.53) {$b$};
                \draw[line width=1.5pt] (Wi) -- (B) node[midway,left] {$c$};
            \end{tikzpicture}
        \]
        \caption{A graph $G$ and its contraction $G / \partial G$}\label{fig:basic example}
    \end{figure}
    Define the matrices 
    \[\tilde{Q} = \begin{bmatrix}
        1 & 1 & 1 \\
        0 & 0 & -1
    \end{bmatrix}, \quad P = \begin{bmatrix}
        -1 & -1 & 0 \\
        1 & 1 & 1 \\
        0 & 0 & -1
    \end{bmatrix}.\]
    The boundary map is the map from $\bZ^E$ to $\bZ^V$ given concretely by the matrix $\tilde{Q},$ whereas the matrix $P$ is the oriented incidence matrix of $G / \partial G,$ with columns corresponding in order to $a, b, c.$ The kernels of $\tilde{Q}$ and $P$ both equal $H_1(G, \partial G)$, and consist of the span of $[1:-1:0]^T,$ which is the alternating incidence vector of the unique cycle in $G / \partial G.$
\end{eg}

\begin{rem}\label{rem:torus-coordinates}
  Here is a concrete way to obtain coordinates for $(\bC^*)^{E-V}$. Choose a spanning forest $F$ of $G / \partial G$. Now, consider an edge $e \in G / \partial G$ which is not in $F$. The graph $F \cup e$ has a single cycle (necessarily including the edge $e$), with edges $e=e_1, e_2, \dots, e_{2m}$. We define
    \[y_e:=x_{e_1} x_{e_2}^{-1} x_{e_3}\dots x_{e_{2m}}^{-1}.\]
    Then $\{y_e: e \in E \setminus  F\}$ are a basis of characters for $(\bC^*)^{E-V}$. If one would prefer to obtain coordinates by gauge-fixing instead, one can set $x_{e'} =1$ for all $e' \in F$, in which case the coordinates of $(\bC^*)^{E-V}$ are $\{x_e : e \notin F\}$.
    
   When $G$ is planar, sometimes $F$ is chosen so that the cycle in $F \cup e$ is a face of $G$, and $y_e$ becomes an alternating product of edge weights around faces. 
\end{rem}

\subsection{The boundary measurement map}

We will use the edge-weighted graph $G$ to define the boundary measurement map, a rational map from $(\bC^*)^{E-V}$ to $\Gr_{k,n}$. Our definition is essentially present in work of Speyer \cite{Speyer16}, and will require choosing a specific drawing of $G$ in the disk $D$. Our allowed drawings follow the conventions of \cite{Speyer16}. A \emph{disk immersion} of $G$ is a drawing of $G$ in $D$ such that each edge is a line segment\footnote{One could relax this to piecewise linear at the expense of additional bookkeeping.} of nonzero length, no vertex is in the interior of any edge, all intersections of edges are transverse, and the boundary vertices $\bdG$ are on $\partial D$ in the correct order. If $G$ has more than one edge between a pair of vertices, we slightly deform these edges to be piecewise linear with exactly 2 linear pieces. From now on, we will fix both $G$ and a disk immersion, in which case we call $G$ an \emph{immersed} graph.

\begin{notation}
     We use $[n]:=\{1, \dots, n\}$ and $\binom{[n]}{k}:=\{I \subset [n]: |I|=k\}$. For $I \subset [n]$, when $n$ is clear from context, we use notation $I^c:= [n]\setminus I$. Set $\tot(I):= \sum_{i \in I} i$. 
\end{notation}

For a matching $M$ of an immersed graph $G$, we define the \emph{crossing number} $\cross(M)$ of $M$ as the number of intersection points of edges in $M$ in the immersion.

\begin{definition}\label{def:bdry-meas}
    Let $G$ be an immersed type $(k,n)$ graph, with edges weighted by formal parameters $x_e$. For $I \in \binom{[n]}{k}$, we define
    \[\bM^G_I(x_e):= (-1)^{\tot(I)} \sum_{M: \partial M=I^c} (-1)^{\cross(M)}\prod_{e \in M} x_e.\]
    The \emph{boundary measurement map} is
    \begin{align*}
        \bM^G:(\bC^*)^E &\dashrightarrow \Gr_{k,n}\\
        (x_e) & \mapsto (\bM_I^G)_{I \in \binom{[n]}{k}}
    \end{align*} and descends to a rational map $\bM^G:(\bC^*)^{E-V} \dashrightarrow \Gr_{k,n}$.
\end{definition}
Note the locus of indeterminacy of $\bM^G$ is the subset of $(\bC^*)^{E}$ or $(\bC^*)^{E-V}$ where all $\bM_I^G(x_e)$ vanish. 

\begin{figure}[h]
    \begin{center}
        \begin{tikzpicture}
            \node() at (0,0) {
                \begin{tikzpicture}
                    \node at (0,0) {\coloredonshellgraph{{(-120:2)/1,(-180:2)/2,(120:2)/3,(60:2)/4,(0:2)/5,(-60:2)/6}}{{(-120:2)/W1,(-180:2)/W2,(120:2)/W3,(60:2)/W4,(0:2)/W5,(-60:2)/W6}}{{(150:1.125)/B1,(30:1.125)/B2,(-90:1.125)/B3,(0,0)/B4}}{{W1/B1/green!50!black, W2/B1/green!50!black, W3/B1/green!50!black, W3/B2/black, W4/B2/blue, W5/B2/red, W1/B3/black, W5/B3/blue, W6/B3/red, B4/W2/black, B4/W4/red, B4/W6/blue}}{0.8}};
            
                    \node at (0,-3) {$\begin{aligned}
                            \mathbb M^G_{12} &= -{\color{green!50!black}w_3} ({\color{red}x_5 y_6 z_4} - {\color{blue}x_4 y_5 z_6}) \\
                            \mathbb M^G_{13} &= +{\color{green!50!black}w_2} ({\color{red}x_5 y_6 z_4} - {\color{blue}x_4 y_5 z_6}) \\
                            \mathbb M^G_{23} &= -{\color{green!50!black}w_1} ({\color{red}x_5 y_6 z_4} - {\color{blue}x_4 y_5 z_6})
                        \end{aligned}$};
                \end{tikzpicture}
                \begin{tikzpicture}
                    \node at (0,0) {\coloredonshellgraph{{(-120:2)/1,(-180:2)/2,(120:2)/3,(60:2)/4,(0:2)/5,(-60:2)/6}}{{(-120:2)/W1,(-180:2)/W2,(120:2)/W3,(60:2)/W4,(0:2)/W5,(-60:2)/W6}}{{(150:1.125)/B1,(30:1.125)/B2,(-90:1.125)/B3,(0,0)/B4}}{{W1/B1/blue, W2/B1/red, W3/B1/black, W3/B2/green!50!black, W4/B2/green!50!black, W5/B2/green!50!black, W1/B3/red, W5/B3/black, W6/B3/blue, W2/B4/blue, W4/B4/black, W6/B4/red}}{0.8}};
                
                    \node at (0,-3) {$\begin{aligned}
                            \mathbb M^G_{34} &= -{\color{green!50!black}x_5} ({\color{red}w_2 y_1 z_6} - {\color{blue}w_1 y_6 z_2}) \\
                            \mathbb M^G_{35} &= +{\color{green!50!black}x_4} ({\color{red}w_2 y_1 z_6} - {\color{blue}w_1 y_6 z_2}) \\
                            \mathbb M^G_{45} &= -{\color{green!50!black}x_3} ({\color{red}w_2 y_1 z_6} - {\color{blue}w_1 y_6 z_2})
                        \end{aligned}$};
                \end{tikzpicture}
            };
    
            \node() at (0,-6) {
                \begin{tikzpicture}
                    \node at (0,0) {\coloredonshellgraph{{(-120:2)/1,(-180:2)/2,(120:2)/3,(60:2)/4,(0:2)/5,(-60:2)/6}}{{(-120:2)/W1,(-180:2)/W2,(120:2)/W3,(60:2)/W4,(0:2)/W5,(-60:2)/W6}}{{(150:1.125)/B1,(30:1.125)/B2,(-90:1.125)/B3,(0,0)/B4}}{{W1/B1/black, W2/B1/blue, W3/B1/red, W3/B2/blue, W4/B2/red, W5/B2/black, W1/B3/green!50!black, W5/B3/green!50!black, W6/B3/green!50!black, W2/B4/red, W4/B4/blue, W6/B4/black}}{0.8}};
                
                    \node at (0,-3) {$\begin{aligned}
                            \mathbb M^G_{15} &= +{\color{green!50!black}y_6} ({\color{red}w_3 x_4 z_2} - {\color{blue}w_2 x_3 z_4}) \\
                            \mathbb M^G_{16} &= -{\color{green!50!black}y_5} ({\color{red}w_3 x_4 z_2} - {\color{blue}w_2 x_3 z_4}) \\
                            \mathbb M^G_{56} &= -{\color{green!50!black}y_1} ({\color{red}w_3 x_4 z_2} - {\color{blue}w_2 x_3 z_4})
                        \end{aligned}$};
                \end{tikzpicture}
                \begin{tikzpicture}
                    \node at (0,0) {\coloredonshellgraph{{(-120:2)/1,(-180:2)/2,(120:2)/3,(60:2)/4,(0:2)/5,(-60:2)/6}}{{(-120:2)/W1,(-180:2)/W2,(120:2)/W3,(60:2)/W4,(0:2)/W5,(-60:2)/W6}}{{(150:1.125)/B1,(30:1.125)/B2,(-90:1.125)/B3,(0,0)/B4}}{{W1/B1/blue, W2/B1/black, W3/B1/red, W3/B2/blue, W4/B2/black, W5/B2/red, W1/B3/red, W5/B3/blue, W6/B3/black, W2/B4/green!50!black, W4/B4/green!50!black, W6/B4/green!50!black}}{0.8}};
                
                    \node at (0,-3) {$\begin{aligned}
                            \mathbb M^G_{24} &= +{\color{green!50!black}z_6} ({\color{red}w_3 x_5 y_1} - {\color{blue}w_1 x_3 y_5}) \\
                            \mathbb M^G_{26} &= +{\color{green!50!black}z_4} ({\color{red}w_3 x_5 y_1} - {\color{blue}w_1 x_3 y_5}) \\
                            \mathbb M^G_{46} &= +{\color{green!50!black}z_2} ({\color{red}w_3 x_5 y_1} - {\color{blue}w_1 x_3 y_5})
                        \end{aligned}$};
                \end{tikzpicture}
            };
    
            \node() at (8,-3.5) {
                $
                \begin{gathered}
                \begin{tikzpicture}
                    \node at (0,0) {\coloredonshellgraph{{(-120:2)/1,(-180:2)/2,(120:2)/3,(60:2)/4,(0:2)/5,(-60:2)/6}}{{(-120:2)/W1,(-180:2)/W2,(120:2)/W3,(60:2)/W4,(0:2)/W5,(-60:2)/W6}}{{(150:1.125)/B1,(30:1.125)/B2,(-90:1.125)/B3,(0,0)/B4}}{{W1/B1/black, W2/B1/blue, W3/B1/red, W3/B2/blue, W4/B2/black, W5/B2/red, W1/B3/black, W5/B3/blue, W6/B3/red, W2/B4/red, W4/B4/black, W6/B4/blue}}{0.8}};
                
                    \node at (0,-2) {$\mathbb M^G_{14} = -{(\color{red}w_3 x_5 y_6 z_2} - {\color{blue}w_2 x_3 y_5 z_6})$};
                \end{tikzpicture}\\
                \begin{tikzpicture}
                    \node at (0,0) {\coloredonshellgraph{{(-120:2)/1,(-180:2)/2,(120:2)/3,(60:2)/4,(0:2)/5,(-60:2)/6}}{{(-120:2)/W1,(-180:2)/W2,(120:2)/W3,(60:2)/W4,(0:2)/W5,(-60:2)/W6}}{{(150:1.125)/B1,(30:1.125)/B2,(-90:1.125)/B3,(0,0)/B4}}{{W1/B1/blue, W2/B1/black, W3/B1/red, W3/B2/blue, W4/B2/red, W5/B2/black, W1/B3/red, W5/B3/black, W6/B3/blue, W2/B4/black, W4/B4/blue, W6/B4/red}}{0.8}};
                
                    \node at (0,-2) {$\mathbb M^G_{25} = -({\color{red}w_3 x_4 y_1 z_6} - {\color{blue}w_1 x_3 y_6 z_4})$};
                \end{tikzpicture}\\
                \begin{tikzpicture}
                    \node at (0,0) {\coloredonshellgraph{{(-120:2)/1,(-180:2)/2,(120:2)/3,(60:2)/4,(0:2)/5,(-60:2)/6}}{{(-120:2)/W1,(-180:2)/W2,(120:2)/W3,(60:2)/W4,(0:2)/W5,(-60:2)/W6}}{{(150:1.125)/B1,(30:1.125)/B2,(-90:1.125)/B3,(0,0)/B4}}{{W1/B1/blue, W2/B1/red, W3/B1/black, W3/B2/black, W4/B2/blue, W5/B2/red, W1/B3/red, W5/B3/blue, W6/B3/black, W2/B4/blue, W4/B4/red, W6/B4/black}}{0.8}};
                
                    \node at (0,-2) {$\mathbb M^G_{36} = -({\color{red}w_2 x_5 y_1 z_4} - {\color{blue}w_1 x_4 y_5 z_2})$};
                \end{tikzpicture}
                \end{gathered}
                $
            };
        \end{tikzpicture}
    \end{center}
    \caption{Boundary measurements $\bM_{ij}^G$ for $G$ from \zcref{fig:twosix-labeling}. When $i,j$ are adjacent to the same black vertex $b$ (left four images), the two matchings contributing to $\bM_{ij}^G$ use a fixed edge incident to $b$, which is one of the {\color{green!50!black}green} edges. The remaining edges of the matchings are in 
     {\color{red}red} (with zero crossings), and in {\color{blue}blue} (with one crossing). The overall sign is $(-1)^{i+j}$. Variables are colored in the same way as the edges they label.
     }
     \label{fig:twosix}
\end{figure}

\begin{eg}\label{eg:twosix} 
An immersed type $(2,6)$ graph is pictured in \zcref{fig:twosix-labeling}, as well as the two perfect matchings used to compute $\bM^G_{12}$. The remaining $\bM^G_{ij}$ are computed in \zcref{fig:twosix}. 
Note that all $\bM^G_{ij}$ vanish simultaneously for many choices of weights, such as when all edge weights are equal, and thus $\bM^G$ is \emph{not} defined on all of $(\bC^*)^{E-V}$.
\end{eg}

The sign $(-1)^{\tot(I)}$  in the boundary measurement map may be removed if one passes to the dual Grassmannian $\Gr_{n-k, n}.$ The following lemma is sometimes called \emph{Grassmann duality}.

\begin{lemma}\label{lem:duality-plucker-coordinates}
    If $V \in \Gr_{k,n}$ has Pl\"ucker coordinates $p_I(V)$, then $V^{\perp} \in \Gr_{n-k, n}$ has Pl\"ucker coordinates
    \[p_{I^c}(V^\perp) = (-1)^{\tot(I)} p_I(V).\]
\end{lemma}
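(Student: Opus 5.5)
We will prove the statement by putting a representative of $V$ in a normal form and reading both sets of Plücker coordinates off explicitly. The statement is projective, since Plücker coordinates are defined only up to a common scalar. So the claim is that, for some scalar $c\neq 0$ and all $I$, we have $p_{I^c}(V^\perp)=c\,(-1)^{\tot(I)}p_I(V)$, and that $c$ can be normalized to $1$ by a suitable choice of representative. Both sides transform compatibly under permuting coordinates, up to the sign bookkeeping checked below. Hence it suffices to work on the open chart where a fixed Plücker coordinate $p_J(V)\neq 0$, and then extend to all of $\Gr_{k,n}$ by continuity, or by density of that chart together with the fact that both sides are polynomial in a representative.

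First reduce to $J=[k]$. On the chart $p_{[k]}\neq0$, write $V$ as the row span of $[\,\mathrm{Id}_k \mid A\,]$ with $A$ a $k\times(n-k)$ matrix. Then $V^\perp$ is the row span of $[\,-A^T \mid \mathrm{Id}_{n-k}\,]$, since the product $[\,\mathrm{Id}\mid A\,][\,-A^T\mid \mathrm{Id}\,]^T=-A+A=0$ and the ranks are correct.

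Next, compute the minors. The maximal minor of $[\,\mathrm{Id}_k\mid A\,]$ in columns $I$ equals, up to a sign $\epsilon_1(I)$, the minor of $A$ with rows $[k]\setminus I$ and columns $(I\setminus[k])-k$. The minor of $[\,-A^T\mid \mathrm{Id}\,]$ in columns $I^c$ equals, up to a sign $\epsilon_2(I)$, the minor of $-A^T$ with rows $(I\setminus [k])-k$ and columns $[k]\setminus I$. That is the same minor of $A$ transposed, times $(-1)^{m}$ with $m=|[k]\setminus I|$. So the whole proof reduces to the sign identity
\[
\epsilon_1(I)\,\epsilon_2(I)\,(-1)^{m}=(-1)^{\tot(I)-\tot([k])}.
\]
The global factor $(-1)^{\tot([k])}$ is absorbed into the scalar $c$; it equals $1$ when the chart is $[k]$ and a fixed sign otherwise.

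The sign identity is the main obstacle, and we would do it by a Laplace-expansion count. Expanding along the identity columns $[k]\cap I$, the sign $\epsilon_1(I)$ is $(-1)$ raised to the number of inversions needed to move the identity columns past the $A$-columns. Concretely, for each $i\in[k]\setminus I$, count the $A$-columns of $I$ that must be passed to place row $i$ correctly. This gives a sum of positions of the form $\sum_{i\in[k]\setminus I}(i - \text{rank})$. Similarly, $\epsilon_2(I)$ is governed by $\sum_{j\in I\setminus[k]}(j-k-\text{rank})$. The rank contributions are $\binom{m+1}{2}$ each, so they cancel mod $2$. What remains is
\[
\sum_{i\in[k]\setminus I} i+\sum_{j\in I\setminus[k]} j - mk + m \equiv \tot([k]\setminus I)+\tot(I\setminus[k]) + m(k+1).
\]
We will verify that this is $\equiv \tot(I)-\tot([k]) \pmod 2$. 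Since $\tot([k]\setminus I)\equiv \tot([k])-\tot([k]\cap I)$ and $\tot(I)=\tot(I\cap[k])+\tot(I\setminus[k])$, the difference is $2\tot([k]\cap I)$ plus the term $m(k+1)$. If the leftover $m(k+1)$ parity survives, we will absorb it by fixing a cleaner convention for $\epsilon_i$: equivalently, choose the dual representative with the factor arrangement $[\,-A^T\mid\mathrm{Id}\,]$ versus $[\,A^T\mid\mathrm{Id}\,]$ adjusted columnwise. As a sanity check, we would test the $k=1$ case, which must give $p_{I^c}(V^\perp)=\pm(-1)^i p_i$.

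Finally, we would cross-check the whole identity in the special case $V$ a coordinate subspace, where it is immediate up to the global normalization.
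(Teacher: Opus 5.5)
The paper states this lemma as standard Grassmann duality and gives no proof. Your framework is the standard one and is sound: on the dense chart $p_{[k]}\neq 0$ write $V=\rowsp[\,\mathrm{Id}_k\mid A\,]$ and $V^\perp=\rowsp[\,-A^T\mid \mathrm{Id}_{n-k}\,]$, reduce to the sign identity $\epsilon_1\epsilon_2(-1)^m=(-1)^{\tot(I)-\tot([k])}$, and extend by density. No reduction via permuting coordinates is needed, since this chart is already dense.

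The gap is the sign identity itself, which you flag as the main obstacle but do not establish. Your count for $\epsilon_1$ is wrong, and that is the only source of the leftover $(-1)^{m(k+1)}$. The proposed remedy is not available, for three reasons:
\begin{itemize}
\item $\epsilon_1,\epsilon_2$ are not conventions; they are forced by the two fixed matrices.
\item The leftover depends on $I$ through $m=|[k]\setminus I|$, so it cannot be absorbed into the global scalar $c$.
\item Replacing $-A^T$ by $A^T$, or rescaling columns, changes the row space, which is then no longer $V^\perp$.
\end{itemize}
Indeed, if the leftover were real, the lemma would fail whenever $k$ is even and $n>k$.

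Neither of your sanity checks can detect the error. For $k=1$ the leftover $2m$ is even, and a coordinate subspace has only one nonzero Pl\"ucker coordinate. The case $k=2$, $n=3$, $I=\{1,3\}$ does detect it: the minor of $[\,\mathrm{Id}\mid A\,]$ on columns $1,3$ is $+A_{21}$, while your formula gives $\epsilon_1=(-1)^{2-1}=-1$.

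To repair it, set $P=I\cap[k]$, $R=[k]\setminus I$, $Q=I\setminus[k]$. Reorder the rows of the column submatrix $[\,e_{p_1},\dots,e_{p_{k-m}},a_{q_1},\dots,a_{q_m}\,]$ so that the rows in $P$ precede those in $R$. This makes it block upper triangular with lower-right block $A_{R,Q-k}$, so $\epsilon_1=(-1)^{\#\{(p,r)\in P\times R:\,p>r\}}$. Your sum $\sum_{r\in R}(r-\mathrm{rank}_R(r))=\tot(R)-\binom{m+1}{2}$ counts the pairs with $p<r$ instead. The two counts add to $m(k-m)$, so
\[
\epsilon_1=(-1)^{m(k-m)+\tot(R)+\binom{m+1}{2}}.
\]
Your $\epsilon_2=(-1)^{\tot(Q)-mk-\binom{m+1}{2}}$ is correct; it comes from the analogous row reordering in $[\,-A^T\mid\mathrm{Id}\,]$, which gives a block lower triangular matrix. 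Multiplying, and using $\tot(I)=\tot(P)+\tot(Q)$ and $\tot([k])=\tot(P)+\tot(R)$, gives
\[
\epsilon_1\epsilon_2(-1)^m=(-1)^{\tot(R)+\tot(Q)+m-m^2}=(-1)^{\tot(Q)-\tot(R)}=(-1)^{\tot(I)-\tot([k])}.
\]
There is no leftover, and the lemma holds with $c=(-1)^{\tot([k])}$. This is a genuine global sign, not $1$ in general (for example when $k=1$), and it is harmless projectively.
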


To show that the boundary measurement map is well-defined, we need a lemma on the adjacency matrix $K$.

\begin{lemma}[{\cite[Proposition 1.1, Remark 1.2]{Speyer16}}]\label{lem:signs-for-K}
    Fix an immersed type $(k,n)$ graph $G$. There is a choice of signs $\sigma \in \{\pm 1\}^E$ such that for all $I \in \binom{[n]}{k}$, the maximal minor of $\sigma K$ using columns labeled by $W \cup I^c$ is equal to $(-1)^{\tot(I)}\bM_I^G(x_e)$. 
\end{lemma}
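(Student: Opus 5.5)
We expand the minor as a sum over matchings and then choose signs so that each term picks up exactly the sign $(-1)^{\cross(M)}$. This is a Kasteleyn-type argument adapted to immersions.

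\textbf{Step 1: reduce to a sign-consistency statement.} Expand the maximal minor of $\sigma K$ on the columns $W\cup I^c$ by the Leibniz formula. The nonzero terms are bijections $B\to W\cup I^c$ along edges of $G$, and these are exactly the perfect matchings $M$ with $\partial M=I^c$. The term indexed by $M$ equals $\operatorname{sgn}(\pi_M)\prod_{e\in M}\sigma_e x_e$. Here $\pi_M$ is the permutation comparing the row order on $B$ with the fixed column order on $W\cup I^c$, with boundary columns listed increasingly at the end. Comparing with \zcref{def:bdry-meas}, it therefore suffices to find $\sigma$ and a global constant $\epsilon=\pm1$ such that
\[\operatorname{sgn}(\pi_M)\prod_{e\in M}\sigma_e=\epsilon\,(-1)^{\cross(M)}\]
for every perfect matching $M$, whatever its boundary. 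The constant $\epsilon$ is harmless: negating $\sigma_e$ for all edges at one black vertex multiplies every term by $-1$.

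\textbf{Step 2: compare two matchings.} Fix a reference matching $M_0$. For any other matching $M$, the symmetric difference $M\triangle M_0$ is a disjoint union of alternating cycles $C$ and alternating paths between boundary vertices. Since we may change one component at a time, it suffices to understand how each side changes when $M$ is switched along a single cycle or path.

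For a cycle $C$ of length $2m$:
\begin{itemize}
\item The Leibniz sign changes by $(-1)^{m-1}$.
\item The crossing number changes by the parity of the self-crossings of the closed immersed curve $C$, plus the parity of crossings of $C$ with edges of $M\setminus C$.
\end{itemize}
For the second contribution, each edge of $M\setminus C$ has both endpoints off $C$. It crosses $C$ an odd number of times exactly when its endpoints have different winding parity with respect to $C$. Consequently the total parity is $\#\{\text{internal vertices of odd winding parity w.r.t. } C\}$ mod $2$, because these vertices are perfectly matched except across the curve.

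So for cycles we need a local edge rule $\sigma$ whose product around any cycle $C$ equals
\[(-1)^{m-1+\mathrm{self}(C)+\mathrm{inside}(C)}.\]
For this we use Whitney's formula: $\mathrm{self}(C)\equiv \mathrm{rot}(C)+1 \pmod 2$, where $\mathrm{rot}$ is the rotation number. The rotation number is a sum of local turning angles at the vertices of $C$. The term $\mathrm{inside}(C)$ we handle as in Kasteleyn's theorem, by writing it as a sum over faces of the immersion of local contributions. Concretely, we try $\sigma_e=\exp(\tfrac{i}{2}\cdot\text{angle data})$ normalized to $\pm1$, following Speyer's construction; alternatively, we use a Kasteleyn condition imposed face by face on the planar graph obtained by adding the crossing points as $4$-valent vertices.

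\textbf{Step 3: paths between boundary vertices.} A path $P$ from boundary vertex $i$ to boundary vertex $j$ changes the boundary of the matching by exchanging $i$ and $j$. We close $P$ up along an arc of $\partial D$ and apply the cycle analysis to the resulting closed curve. The extra sign then comes from two sources: the boundary vertices lying between $i$ and $j$ that belong to the matching, and the reordering of boundary columns in the column order. We will check that these two effects exactly cancel. This is the bookkeeping that produces the prefactor $(-1)^{\tot(I)}$ in \zcref{def:bdry-meas}, in the form of \zcref{lem:duality-plucker-coordinates}.

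\textbf{Main obstacle.} The hardest step is constructing a \emph{local} $\sigma$ in Step 2 whose cycle products realize the winding-parity term $\mathrm{inside}(C)$ for immersed, rather than embedded, cycles. My first attempt will be the planarization: subdivide at the crossings, and note that a crossing vertex contributes $0$ to the parity count, since it is not an internal vertex of $G$. One then needs to verify that the Kasteleyn face condition on the planarized graph still holds after the crossing vertices are forgotten.
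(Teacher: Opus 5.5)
Your Step 1 reduction is correct, but the proposal never produces $\sigma$. The step you flag as the main obstacle is the whole content of the lemma, and both suggested routes are left as things to try.

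\textbf{The parity target is wrong.} The sign you want $\sigma$ to realize around a cycle is not the right one. Write $A=M\cap C$, $A'=C\setminus A$, $R=M\setminus A$. Let $c(X)$ denote the number of crossings among edges of $X$, and $c(X,Y)$ the number of crossings between edges of $X$ and edges of $Y$. Then
\[\cross(M)+\cross(M\triangle C)\equiv c(A)+c(A')+c(C,R)\pmod 2,\]
whereas $\mathrm{self}(C)=c(A)+c(A')+c(A,A')$. Crossings between an $A$-edge and an $A'$-edge never lie inside a single matching, so they must not be counted.

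A concrete counterexample is the hexagon $b_1=(0,0)$, $w_1=(2,2)$, $b_2=(3,1)$, $w_2=(2,0)$, $b_3=(0,2)$, $w_3=(-1,1)$. Its only crossing is between $b_1w_1\in A$ and $w_2b_3\in A'$. Switching along $C$ leaves the crossing parity unchanged, but your formula predicts a change. So the target $(-1)^{m-1+\mathrm{self}(C)+\mathrm{inside}(C)}$ is incorrect. Moreover, $c(A)+c(A')$ is not a rotation-number quantity, so the Whitney-formula step does not apply as planned.

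\textbf{Consistency is not established.} Even with the correct target, you would still have to show that all these cycle signs, together with the path signs of Step 3 (which are only announced, not checked), are realized simultaneously by a single edge function $\sigma$. The planarization does not give this directly, because cycles of $G$ pass straight through crossing points and never turn there. Also, after Step 1 there is no $(-1)^{\tot(I)}$ left to produce: the sign in the lemma cancels the one built into $\bM^G_I$.

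\textbf{The cited argument avoids all of this.} The paper cites Speyer's proof; its shape is visible in the remark after the lemma and in the (PV) case of \zcref{lem:boundary-measurement-moves}. It needs no cycle analysis.
\begin{enumerate}
\item Choose an immersion with the black vertices on a horizontal line in the row order, and the white vertices on a parallel line in the column order: internal ones first, then $1,\dots,n$ lying on $\partial D$.
\item Two straight edges with distinct endpoints running between parallel lines cross if and only if they form an inversion of $\pi_M$. Hence $\operatorname{sgn}(\pi_M)=(-1)^{\cross(M)}$ for every matching, whatever its boundary, and $\sigma\equiv 1$ works.
\item Any immersion is reached from this one by (PV) moves. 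A (PV) move across $q$ flips $(-1)^{\cross(M)}$ for exactly the matchings containing $q$, and this is compensated by flipping $\sigma_q$.
\end{enumerate}
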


We will call any choice of signs $\sigma$ for which \zcref{lem:signs-for-K} holds \emph{immersion signs}.

\begin{rem}
    Suppose that the black vertices lie in order on a straight line in the disk, and the white vertices are on one side of the line, in order from left to right. Then by the proof of \cite[Proposition 1.1]{Speyer16}, the immersion signs may be taken to all be one.
\end{rem}

\begin{lemma}\label{lem:bdry-meas-well-defined}
    For an immersed type $(k,n)$ graph $G$, the boundary measurement map $\bM^G$ is well-defined.
\end{lemma}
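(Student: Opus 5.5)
The plan is to reduce everything to \zcref{lem:signs-for-K} and \zcref{lem:duality-plucker-coordinates}. Well-definedness has three parts. First, the tuple $(\bM^G_I)_I$, wherever it is not identically zero, must satisfy the Plücker relations, so that it is a point of $\Gr_{k,n}$. Second, the resulting map must not depend on the choice of representative modulo gauge, so that it descends to $(\bC^*)^{E-V}$. Third, the map must be a genuine rational map, i.e.\ not identically undefined.

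For the Plücker relations, fix immersion signs $\sigma$ as provided by \zcref{lem:signs-for-K}. The matrix $\sigma K$ has $|B|$ rows and $|W|+n$ columns. Suppose first that $\sigma K$ has full row rank $|B|$ for generic weights. This holds whenever some $\bM^G_I$ is nonzero, since then some maximal minor of $\sigma K$ is nonzero, and it holds generically because $G$ has a perfect matching. Then the row span of $\sigma K$ is a point $U \in \Gr_{|B|,\,|W|+n}$, whose Plücker coordinates are the maximal minors of $\sigma K$. By \zcref{lem:signs-for-K}, the minor on columns $W\cup I^c$ equals $(-1)^{\tot(I)}\bM^G_I$.

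Next we pass to the coordinates $[n]$. Let $L\subset \bC^{W\cup[n]}$ be the coordinate subspace spanned by the columns in $[n]$, and consider the subspace $U' := U\cap \bC^{[n]}$ obtained by contracting away the $W$ coordinates. More cleanly: the minors of $\sigma K$ that use all columns of $W$ are exactly the Plücker coordinates of the image of $U$ under a projection. Precisely, row-reduce $\sigma K$ so that the $W$ columns become $\binom{\mathrm{Id}}{0}$. This is possible generically if the $W$-columns have full rank; otherwise all these minors vanish and we are in the indeterminacy locus. The bottom $|B|-|W| = n-k$ rows, restricted to $[n]$, then span an $(n-k)$-dimensional subspace $U'\subseteq \bC^n$. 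Its Plücker coordinate at $J$ equals, up to one global nonzero scalar, the minor of $\sigma K$ on columns $W\cup J$. Hence $\big((-1)^{\tot(I)}\bM^G_I\big)_I$ are Plücker coordinates of $U'\in\Gr_{n-k,n}$ indexed by $I^c$. By \zcref{lem:duality-plucker-coordinates}, $(\bM^G_I)_I$ are then the Plücker coordinates of $U'^{\perp}\in\Gr_{k,n}$. In particular they satisfy the Plücker relations.

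For gauge invariance, note that gauge transformation by $t$ at a black vertex $b$ scales row $b$ of $K$ by $t$. Gauge transformation at an internal white vertex $w$ scales column $w$ by $t$; this column is included in every minor we use. In either case every $\bM^G_I$ is multiplied by the same factor $t$. Alternatively, one can see this directly from \zcref{def:bdry-meas}: each perfect matching uses exactly one edge at each internal vertex. Thus the projective point is unchanged, and since the $\bM^G_I$ are polynomials, $\bM^G$ descends to a rational map on the quotient torus $(\bC^*)^{E-V}$.

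Finally, the map is not empty. Since $G$ has a perfect matching $M_0$, the polynomial $\bM^G_{I}$ for $I=(\partial M_0)^c$ contains the monomial $\prod_{e\in M_0}x_e$. Distinct matchings give distinct monomials, so there is no cancellation and this polynomial is nonzero. Its nonvanishing locus is therefore a dense open set on which $\bM^G$ is defined.

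The main obstacle is the middle step: justifying cleanly that the minors containing the fixed column set $W$ are Plücker coordinates of a single subspace of $\bC^n$, including in degenerate cases. I would handle this by the row-reduction argument above on the open set where the $W$-columns are independent. The complement of that open set lies inside the locus where all $\bM^G_I$ vanish, so it causes no problem.
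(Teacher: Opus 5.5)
Your proposal is correct and follows the paper's proof. On the locus where some $\bM^G_I\neq 0$, you row-reduce $\sigma K$ so its $W$-columns become an identity block over zeros, identify $(-1)^{\tot(I)}\bM^G_I$ with the Pl\"ucker coordinate $p_{I^c}$ of the row span of the remaining $(n-k)\times n$ block, apply Grassmann duality, and observe that a gauge transformation rescales a row or a $W$-column of $\sigma K$. Your explicit check that $\bM^G_{(\partial M_0)^c}\neq 0$ (distinct matchings give distinct monomials) fills in a point the paper leaves implicit. The aside calling this subspace a projection of $U$ is a slip: it is $U\cap\bC^{[n]}$, as you first wrote. Your row reduction is correct regardless, so this does not affect the argument.
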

\begin{proof} For ease of notation, we write $\bM_I(x_e)$ for $\bM_I^G(x_e)$.

Suppose $(x_e) \in (\bC^*)^{E}$ is in the dense open set where at least one $\bM_I(x_e)$ is nonzero. By \zcref{lem:signs-for-K}, for any choice of immersion signs $\sigma$, the maximal minor $\Delta_{W \cup I^c}$ of $\sigma K$ is equal to $(-1)^{\tot(I)}\bM_I(x_e)$. 
Since at least one of these minors is nonzero, we may perform invertible row operations to put $\sigma K$ in the form
    \[\begin{bmatrix}
        \mathrm{Id} & X\\
        0 & A
    \end{bmatrix},\]
    where $A$ is a $ (|\partial G| - k) \times |\partial G| = (n-k) \times n$ matrix and the identity is a $|W| \times |W|$ matrix. 
    Note $A$ is full rank, so $V= \rowsp A$ is in $\Gr_{n-k, n}$, and we have $p_{I^c}(V) = (-1)^{\tot(I)}\bM_I(x_e)$. Applying \zcref{lem:duality-plucker-coordinates}, we have that $V^\perp \in \Gr_{k,n}$ and has Pl\"ucker coordinates $p_I(V) = \bM_I(x_e)$. So $\bM^G$ is well-defined.

    To see that $\bM^G$ descends to $(\bC^*)^{E-V}$, note that gauge transformation at $v$ multiplies either a row or one of the first $|W|$ columns of $\sigma K$ by the same nonzero scalar, and so multiplies all $\bM_I(x_e)$ by the same nonzero scalar.
\end{proof}

\begin{rem}\label{rem:bdry-meas-vs-Postnikov}
    The boundary measurement map was first introduced by Postnikov \cite{postnikov} in the context of \emph{plabic} graphs, which are immersed graphs that are planar, i.e. no edges cross. Postnikov originally used perfect orientations rather than perfect matchings, but these are equivalent; see \cite[Remark 3.4]{MSTwist} for additional history. When $G$ is plabic, the signs $\cross(M)$ disappear from $\bM_I^G$. The signs $(-1)^{\tot(I)}$ are a result of applying Grassmannian duality (cf. \zcref{lem:duality-plucker-coordinates}). So when $G$ is plabic, $\bM^G$ as defined here is Postnikov's boundary measurement map followed by the map $V \mapsto V^{\perp}$. When $G$ is plabic, $\bM^G$ has domain $(\bC^*)^{E-V}$ and is an injection \cite{MSTwist} so long as $G$ is \emph{reduced}. We caution the reader that these properties do not necessarily hold when $G$ is not plabic, in which case $\bM^G$ can have a nontrivial exceptional locus (cf. \zcref{eg:twosix}), and can have degree greater than 1 (cf. \zcref{eg:threesix}). 
\end{rem}

\begin{eg}\label{eg:threesix}
    Consider the type (3,6) graph in \zcref{fig:threesix-labeling}. 
    The boundary measurement map $\bM^G$, computed in \zcref{fig:threesix}, is degree $2$. For example, both of the choices of weights in the table below

    \setcounter{MaxMatrixCols}{20}
    \[\begin{matrix}
a_2 &a_{\textbf{c}}& a_{\textbf{a}}& b_4& b_{\textbf{c}}& b_{\textbf{a}}& c_6& c_{\textbf{c}}& c_{\textbf{a}}& x_3& x_5& x_{\textbf{r}}& y_1& y_5& y_{\textbf{r}}& z_1& z_3& z_{\textbf{r}}\\
1& 2& 1& 1& 2& 1& 1& -2& 1& -2& 1& 1& -1& 2& 1& -1& 2& 1\\
1& -1& 1& 1& -1& 1& 1& -\frac{1}{2}& 1& -2& -2& 1& -4& 2& 1& -1& -4& 1
    \end{matrix}
        \]
    have the same image $V$ under boundary measurement, with Pl\"ucker coordinates
    \[
        (p_I(V))_{I \in \binom{[6]}{3}} = 
        (-2,-12,4,8,2,-1,-3,-2,-10,2,4,-3,-5,-10,-14,-2,1,-1,2,6).
    \]
    Thus, the fiber over $V$ has two points. One can compute that the Jacobian of $\bM^G$ does not vanish at either of the points in the fiber and so $\bM^G$ has degree at least 2. One can use the Bernstein–Khovanskii–Kushnirenko theorem to show that the degree is at most 2.
    This graph coincides with the graph for $\Omega_{24}$ of \cite[Appendix A]{Bourjaily:2016mnp} up to relabeling of boundary vertices. 
\end{eg}

\begin{figure}[!h]
    \[
        \onshellgraphwithcode{{(139.1:2.645)/2,(19.1:2.645)/4,(-100.9:2.645)/6,(180:2)/1,(60:2)/3,(300:2)/5}}{{(0:1)/A,(120:1)/C,(240:1)/B,(139.1:2.645)/W2,(19.1:2.645)/W4,(-100.9:2.645)/W6,(180:2)/W1,(60:2)/W3,(300:2)/W5}}{{(180:1)/B1,(60:1)/B2,(300:1)/B3,(0:2)/B4,(240:2)/B5,(120:2)/B6}}{{B1/W2/black,B1/B/black,B1/C/black,B2/A/black,B2/W4/black,B2/C/black,B3/A/black,B3/B/black,B3/W6/black,B4/A/black,B4/W3/black,B4/W5/black,B5/W1/black,B5/B/black,B5/W5/black,B6/W1/black,B6/W3/black,B6/C/black}}{1}{
            \node() at (210:0.6) {$a_{\textbf{a}}$};
            \node() at (150:0.6) {$a_{\textbf{c}}$};
            \node() at (90:0.6) {$b_{\textbf{a}}$};
            \node() at (30:0.6) {$b_{\textbf{c}}$};
            \node() at (330:0.6) {$c_{\textbf{a}}$};
            \node() at (270:0.6) {$c_{\textbf{c}}$};
            \node() at (170:1.45) {$a_2$};
            \node() at (50:1.45) {$b_4$};
            \node() at (290:1.45) {$c_6$};
            \node() at (110:1.45) {$z_{\textbf{r}}$};
            \node() at (-10:1.45) {$x_{\textbf{r}}$};
            \node() at (230:1.45) {$y_{\textbf{r}}$};
            \node() at (133:2.12) {$z_1$};
            \node() at (107:2.12) {$z_3$};
            \node() at (13:2.12) {$x_3$};
            \node() at (-13:2.12) {$x_5$};
            \node() at (253:2.12) {$y_5$};
            \node() at (227:2.12) {$y_1$};
        }
    \]
    \caption{Labeling of edge weights on a type (3,6) graph. Subscripts indicate either the label of the incident boundary vertex or whether the edge leaves the black vertex in the \textbf{c}lockwise, \textbf{a}nticlockwise, or \textbf{r}adial direction.}\label{fig:threesix-labeling}
\end{figure}

\begin{figure}
    \[
        \begin{tikzpicture}[scale=0.7]
        	\node at (-5,0) {\coloredonshellgraph{{(139.1:2.645)/2,(19.1:2.645)/4,(-100.9:2.645)/6,(180:2)/1,(60:2)/3,(300:2)/5}}{{(0:1)/A,(120:1)/C,(240:1)/B,(139.1:2.645)/W2,(19.1:2.645)/W4,(-100.9:2.645)/W6,(180:2)/W1,(60:2)/W3,(300:2)/W5}}{{(180:1)/B1,(60:1)/B2,(300:1)/B3,(0:2)/B4,(240:2)/B5,(120:2)/B6}}{{B1/W2/black,B1/B/blue,B1/C/teal,B2/A/teal,B2/W4/black,B2/C/blue,B3/A/blue,B3/B/teal,B3/W6/black,B4/A/black,B4/W3/red,B4/W5/orange,B5/W1/orange,B5/B/black,B5/W5/red,B6/W1/red,B6/W3/orange,B6/C/black}}{0.7}};
        	\node at (-5,-3) {$\mathbb M^G_{246} = ({\color{blue} a_{\textbf{a}} b_{\textbf{a}} c_{\textbf{a}}} + {\color{teal} a_{\textbf{c}} b_{\textbf{c}} c_{\textbf{c}}})({\color{red} x_3 y_5 z_1} + {\color{orange} x_5 y_1 z_3})$};
        	
        	\node at (5,-0) {\coloredonshellgraph{{(139.1:2.645)/2,(19.1:2.645)/4,(-100.9:2.645)/6,(180:2)/1,(60:2)/3,(300:2)/5}}{{(0:1)/A,(120:1)/C,(240:1)/B,(139.1:2.645)/W2,(19.1:2.645)/W4,(-100.9:2.645)/W6,(180:2)/W1,(60:2)/W3,(300:2)/W5}}{{(180:1)/B1,(60:1)/B2,(300:1)/B3,(0:2)/B4,(240:2)/B5,(120:2)/B6}}{{B1/W2/blue,B1/B/black,B1/C/black,B2/A/black,B2/W4/blue,B2/C/black,B3/A/black,B3/B/black,B3/W6/blue,B4/A/blue,B4/W3/black,B4/W5/black,B5/W1/black,B5/B/blue,B5/W5/black,B6/W1/black,B6/W3/black,B6/C/blue}}{0.7}};
        	\node at (5,-3) {$\mathbb M^G_{135} = -{\color{blue} a_2 b_4 c_6 x_{\textbf{r}} y_{\textbf{r}} z_{\textbf{r}}}$};
        	
        	\node at (-5,-6) {\coloredonshellgraph{{(139.1:2.645)/2,(19.1:2.645)/4,(-100.9:2.645)/6,(180:2)/1,(60:2)/3,(300:2)/5}}{{(0:1)/A,(120:1)/C,(240:1)/B,(139.1:2.645)/W2,(19.1:2.645)/W4,(-100.9:2.645)/W6,(180:2)/W1,(60:2)/W3,(300:2)/W5}}{{(180:1)/B1,(60:1)/B2,(300:1)/B3,(0:2)/B4,(240:2)/B5,(120:2)/B6}}{{B1/W2/black,B1/B/blue,B1/C/black,B2/A/black,B2/W4/blue,B2/C/black,B3/A/black,B3/B/black,B3/W6/blue,B4/A/blue,B4/W3/black,B4/W5/black,B5/W1/black,B5/B/black,B5/W5/blue,B6/W1/black,B6/W3/black,B6/C/blue}}{0.7}};
        	\node at (-5,-9) {$\mathbb M^G_{123} = -{\color{blue} a_{\textbf{a}} b_4 c_6 x_{\textbf{r}} y_5 z_{\textbf{r}}}$};
        	
        	\node at (5,-6) {\coloredonshellgraph{{(139.1:2.645)/2,(19.1:2.645)/4,(-100.9:2.645)/6,(180:2)/1,(60:2)/3,(300:2)/5}}{{(0:1)/A,(120:1)/C,(240:1)/B,(139.1:2.645)/W2,(19.1:2.645)/W4,(-100.9:2.645)/W6,(180:2)/W1,(60:2)/W3,(300:2)/W5}}{{(180:1)/B1,(60:1)/B2,(300:1)/B3,(0:2)/B4,(240:2)/B5,(120:2)/B6}}{{B1/W2/black,B1/B/orange,B1/C/red,B2/A/black,B2/W4/blue,B2/C/black,B3/A/black,B3/B/black,B3/W6/blue,B4/A/blue,B4/W3/black,B4/W5/black,B5/W1/orange,B5/B/red,B5/W5/black,B6/W1/red,B6/W3/black,B6/C/orange}}{0.7}};
        	\node at (5,-9) {$\mathbb M^G_{235} = {\color{blue} b_4 c_6 x_{\textbf{r}}}({\color{red} a_{\textbf{c}} y_{\textbf{r}} z_1} + {\color{orange} a_{\textbf{a}} y_1 z_{\textbf{r}}})$};
            
        	\node at (-6,-12) {\coloredonshellgraph{{(139.1:2.645)/2,(19.1:2.645)/4,(-100.9:2.645)/6,(180:2)/1,(60:2)/3,(300:2)/5}}{{(0:1)/A,(120:1)/C,(240:1)/B,(139.1:2.645)/W2,(19.1:2.645)/W4,(-100.9:2.645)/W6,(180:2)/W1,(60:2)/W3,(300:2)/W5}}{{(180:1)/B1,(60:1)/B2,(300:1)/B3,(0:2)/B4,(240:2)/B5,(120:2)/B6}}{{B1/W2/black,B1/B/black,B1/C/blue,B2/A/blue,B2/W4/black,B2/C/black,B3/A/black,B3/B/black,B3/W6/blue,B4/A/black,B4/W3/black,B4/W5/blue,B5/W1/black,B5/B/blue,B5/W5/black,B6/W1/black,B6/W3/blue,B6/C/black}}{0.7}};
        	\node at (0,-12) {\coloredonshellgraph{{(139.1:2.645)/2,(19.1:2.645)/4,(-100.9:2.645)/6,(180:2)/1,(60:2)/3,(300:2)/5}}{{(0:1)/A,(120:1)/C,(240:1)/B,(139.1:2.645)/W2,(19.1:2.645)/W4,(-100.9:2.645)/W6,(180:2)/W1,(60:2)/W3,(300:2)/W5}}{{(180:1)/B1,(60:1)/B2,(300:1)/B3,(0:2)/B4,(240:2)/B5,(120:2)/B6}}{{B1/W2/black,B1/B/red,B1/C/black,B2/A/red,B2/W4/black,B2/C/black,B3/A/black,B3/B/black,B3/W6/red,B4/A/black,B4/W3/red,B4/W5/black,B5/W1/black,B5/B/black,B5/W5/red,B6/W1/black,B6/W3/black,B6/C/red}}{0.7}};
        	\node at (6,-12) {\coloredonshellgraph{{(139.1:2.645)/2,(19.1:2.645)/4,(-100.9:2.645)/6,(180:2)/1,(60:2)/3,(300:2)/5}}{{(0:1)/A,(120:1)/C,(240:1)/B,(139.1:2.645)/W2,(19.1:2.645)/W4,(-100.9:2.645)/W6,(180:2)/W1,(60:2)/W3,(300:2)/W5}}{{(180:1)/B1,(60:1)/B2,(300:1)/B3,(0:2)/B4,(240:2)/B5,(120:2)/B6}}{{B1/W2/black,B1/B/orange,B1/C/black,B2/A/black,B2/W4/black,B2/C/orange,B3/A/black,B3/B/black,B3/W6/orange,B4/A/orange,B4/W3/black,B4/W5/black,B5/W1/black,B5/B/black,B5/W5/orange,B6/W1/black,B6/W3/orange,B6/C/black}}{0.7}};
        	\node at (0,-15) {$\mathbb M^G_{124} = -({\color{blue} a_{\textbf{c}} b_{\textbf{c}} c_6 x_5 y_{\textbf{r}} z_3} - {\color{red} a_{\textbf{a}} b_{\textbf{c}} c_6 x_3 y_5 z_{\textbf{r}}} - {\color{orange} a_{\textbf{a}} b_{\textbf{a}} c_6 x_{\textbf{r}} y_5 z_3})$};
            
        	\node at (-6,-18) {\coloredonshellgraph{{(139.1:2.645)/2,(19.1:2.645)/4,(-100.9:2.645)/6,(180:2)/1,(60:2)/3,(300:2)/5}}{{(0:1)/A,(120:1)/C,(240:1)/B,(139.1:2.645)/W2,(19.1:2.645)/W4,(-100.9:2.645)/W6,(180:2)/W1,(60:2)/W3,(300:2)/W5}}{{(180:1)/B1,(60:1)/B2,(300:1)/B3,(0:2)/B4,(240:2)/B5,(120:2)/B6}}{{B1/W2/black,B1/B/black,B1/C/blue,B2/A/blue,B2/W4/black,B2/C/black,B3/A/black,B3/B/black,B3/W6/blue,B4/A/black,B4/W3/blue,B4/W5/black,B5/W1/black,B5/B/blue,B5/W5/black,B6/W1/blue,B6/W3/black,B6/C/black}}{0.7}};
        	\node at (0,-18) {\coloredonshellgraph{{(139.1:2.645)/2,(19.1:2.645)/4,(-100.9:2.645)/6,(180:2)/1,(60:2)/3,(300:2)/5}}{{(0:1)/A,(120:1)/C,(240:1)/B,(139.1:2.645)/W2,(19.1:2.645)/W4,(-100.9:2.645)/W6,(180:2)/W1,(60:2)/W3,(300:2)/W5}}{{(180:1)/B1,(60:1)/B2,(300:1)/B3,(0:2)/B4,(240:2)/B5,(120:2)/B6}}{{B1/W2/black,B1/B/red,B1/C/black,B2/A/red,B2/W4/black,B2/C/black,B3/A/black,B3/B/black,B3/W6/red,B4/A/black,B4/W3/red,B4/W5/black,B5/W1/red,B5/B/black,B5/W5/black,B6/W1/black,B6/W3/black,B6/C/red}}{0.7}};
        	\node at (6,-18) {\coloredonshellgraph{{(139.1:2.645)/2,(19.1:2.645)/4,(-100.9:2.645)/6,(180:2)/1,(60:2)/3,(300:2)/5}}{{(0:1)/A,(120:1)/C,(240:1)/B,(139.1:2.645)/W2,(19.1:2.645)/W4,(-100.9:2.645)/W6,(180:2)/W1,(60:2)/W3,(300:2)/W5}}{{(180:1)/B1,(60:1)/B2,(300:1)/B3,(0:2)/B4,(240:2)/B5,(120:2)/B6}}{{B1/W2/black,B1/B/orange,B1/C/black,B2/A/black,B2/W4/black,B2/C/orange,B3/A/black,B3/B/black,B3/W6/orange,B4/A/orange,B4/W3/black,B4/W5/black,B5/W1/orange,B5/B/black,B5/W5/black,B6/W1/black,B6/W3/orange,B6/C/black}}{0.7}};
        	\node at (0,-21) {$\mathbb M^G_{245} = -({\color{blue} a_{\textbf{c}} b_{\textbf{c}} c_6 x_3 y_{\textbf{r}} z_1} + {\color{red} a_{\textbf{a}} b_{\textbf{c}} c_6 x_3 y_1 z_{\textbf{r}}} + {\color{orange} a_{\textbf{a}} b_{\textbf{a}} c_6 x_{\textbf{r}} y_1 z_3})$};
        \end{tikzpicture}
    \]
    \caption{Some boundary measurements $\bM_{ijk}^G$ for a type (3,6) graph, using the labeling from \zcref{fig:threesix-labeling}. Those $\bM_{ijk}^G$ not shown can be obtained by rotation or reflection. In each expression, the variable is the same color as the edge it labels. For e.g. $\bM_{235}^G$, the matchings consist of the blue edges together with either the red edges or the orange edges. }
    \label{fig:threesix}
\end{figure}

Finally, we note that the boundary measurement map respects the natural action of $(\bC^*)^n$ on the Grassmannian $\Gr_{k,n}.$ This action can be defined either by scaling the columns of a $k \times n$ matrix representative of each point, or directly on the Pl\"ucker coordinates by 
\begin{equation}\label{eq:torus action grkn}
    (t_1, \dots, t_n) \star p_{i_1 \dots i_k} := t_{i_1} \dots t_{i_k} p_{i_1 \dots i_k} \quad \forall 1 \leq i_1 < \ldots < i_k \leq n.
\end{equation}
Similarly, we define an action of $(\bC^*)^n$ on $(\bC^*)^E$ by
\begin{equation}\label{eq:torus action edges}
    (t_1, \dots, t_n) \cdot x_e  = \begin{cases}
        t_ix_e & \text{if } \text{boundary vertex }i \in e \\
        x_e & \text{else}.
    \end{cases}
\end{equation}
The boundary measurement map transforms the torus action via the following lemma.
\begin{lemma}\label{lem:bdry-meas-torus-eq}
    Fix an immersed type $(k,n)$ graph $G$. Let $ t=(t_1, \dots, t_n)\in (\bC^*)^n$.
    Then for every $I \in \binom{[n]}{k},$ we have \[t \star \bM^G_I((x_e)_{e \in E}) = t_1 \dots t_n \bM^G_I(t^{-1} \cdot (x_e)_{e \in E}).\]
\end{lemma}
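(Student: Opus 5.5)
Since both sides are polynomials in the $x_e$ and the formula is stated coordinatewise in Pl\"ucker coordinates, I will prove it by a direct monomial-by-monomial comparison, with no need to pass to the Grassmannian. Fix $I \in \binom{[n]}{k}$. By \zcref{def:bdry-meas},
\[\bM^G_I(t^{-1}\cdot (x_e)) = (-1)^{\tot(I)} \sum_{M:\partial M = I^c} (-1)^{\cross(M)} \prod_{e\in M} (t^{-1}\cdot x)_e .\]
The crossing signs and the overall sign $(-1)^{\tot(I)}$ do not depend on the weights, so only the products of edge weights need to be tracked.

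The key step is to compute $\prod_{e\in M}(t^{-1}\cdot x)_e$ for a single matching $M$ with $\partial M = I^c$. By \eqref{eq:torus action edges}, an edge $e$ acquires a factor $t_i^{-1}$ exactly when boundary vertex $i$ lies in $e$. I will use two facts:
\begin{itemize}[label={}]
\item (a) since $G$ is bipartite and all boundary vertices are white, no edge joins two boundary vertices, so each edge contains at most one boundary vertex;
\item (b) by the definition of a perfect matching, each boundary vertex lies in at most one edge of $M$, and the boundary vertices covered are exactly $\partial M = I^c$.
\end{itemize}
Together these show that each $i\in I^c$ contributes exactly one factor $t_i^{-1}$, and each $i \in I$ contributes none. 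Hence
\[\prod_{e\in M}(t^{-1}\cdot x)_e = \Big(\prod_{i\in I^c} t_i^{-1}\Big)\prod_{e\in M}x_e.\]

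This factor is the same for every matching in the sum, so $\bM^G_I(t^{-1}\cdot x) = \prod_{i\in I^c}t_i^{-1}\,\bM^G_I(x)$. Multiplying by $t_1\cdots t_n$ gives $\prod_{i\in I} t_i\,\bM^G_I(x)$, which is $t\star \bM^G_I(x)$ by \eqref{eq:torus action grkn}.

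There is no real obstacle. The only point needing care is fact (a), which rules out a single edge carrying two boundary factors.
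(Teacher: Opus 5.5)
Your proof is correct and follows the same approach as the paper: expand $\bM^G_I(t^{-1}\cdot x)$ via \zcref{def:bdry-meas}, observe that every matching with $\partial M = I^c$ picks up exactly the factor $\prod_{i \in I^c} t_i^{-1}$, and use $t_1\cdots t_n\prod_{i\in I^c}t_i^{-1} = \prod_{i\in I}t_i$. You also spell out why each edge carries at most one boundary factor (bipartiteness with white boundary vertices), which the paper leaves implicit.
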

\begin{proof}
    Using the action on the edge variables, we have 
    \begin{align*}
        t \star\bM^G_I((x_e)_{e \in E})) & = (\prod_{i \in I} t_i) \, \bM^G_I((x_e)_{e \in E}))\\& = (-1)^{\tot(I)} t_1 \dots t_n \prod_{i \in I^c} t^{-1}_i \sum_{M: \partial M=I^c} (-1)^{\cross(M)}\prod_{e \in M} x_e\\  
        & = t_1 \dots t_n\bM^G_I(t^{-1} \cdot (x_e)_{e \in E}).
    \end{align*}
\end{proof}

\subsection{Moves and reduced graphs}

There are a number of local moves for weighted immersed graphs which preserve boundary measurement, shown in \zcref{fig:moves}. We emphasize that these moves can be performed only in sub-disks where $G$ looks exactly as in \zcref{fig:moves}; no extra edges or vertices should be present.  Note that each of these moves preserves the type $(k,n)$ of a graph, as it leaves $\partial G$ fixed and increases or decreases $|W|$ and $|B|$ by the same number. 

\begin{figure}[h]
    \centering
    \[
        \def\outline{gray}
        \begin{tikzpicture}
            \node() at (0,0) {
                $
                    \vcenter{\hbox{
                    \begin{tikzpicture}
                        \draw[dashed,gray] (0,0) circle (1.5);
                        \clip (0,0) circle (1.5);
                        \draw[line width=1.5pt] (0,-0.7) -- ++(120:2.5);
                        \draw[line width=1.5pt] (0,-0.7) -- ++(60:2.5);
                        \draw[line width=1.5pt] (0,-0.7) -- ++(-120:2.5);
                        \draw[line width=1.5pt] (0,-0.7) -- ++(-60:2.5);
                        \node() at (0,-0.3) {$\cdots$};
                        \node() at (0,-1.2) {$\cdots$};
                        \draw[line width=2.5pt,white] (-1.5,0) -- (1.5,0);
                        \draw[line width=1.5pt] (-1.5,0) -- (1.5,0);
                        \node() at (-1,0.15) {\tiny $x_q$};
                        \node[fill,white,circle] at (0,-0.7) {};
                        \draw[pattern=north east lines] (0,-0.7) circle (0.2);
                    \end{tikzpicture}
                    }}
                    \underset{\text{(PV)}}{\longleftrightarrow}
                    \vcenter{\hbox{
                    \begin{tikzpicture}
                        \draw[dashed,gray] (0,0) circle (1.5);
                        \clip (0,0) circle (1.5);
                        \draw[line width=1.5pt] (0,0.7) -- ++(120:2.5);
                        \draw[line width=1.5pt] (0,0.7) -- ++(60:2.5);
                        \draw[line width=1.5pt] (0,0.7) -- ++(-120:2.5);
                        \draw[line width=1.5pt] (0,0.7) -- ++(-60:2.5);
                        \node() at (0,1.1) {$\cdots$};
                        \node() at (0,0.2) {$\cdots$};
                        \draw[line width=2.5pt,white] (-1.5,0) -- (1.5,0);
                        \draw[line width=1.5pt] (-1.5,0) -- (1.5,0);
                        \node() at (-1,0.15) {\tiny $-x_q$};
                        \node[fill,white,circle] at (0,0.7) {};
                        \draw[pattern=north east lines] (0,0.7) circle (0.2);
                    \end{tikzpicture}
                    }}
                $
            };
            \node() at (0,-4) {
                $
                    \vcenter{\hbox{
                    \begin{tikzpicture}
                        \draw[dashed,gray] (0,0) circle (1.5);
                        \clip (0,0) circle (1.5);
                        \draw[line width=1.5] (-1.5,-1) -- (-0.7,0);
                        \draw[line width=1.5] (-1.5,1) -- (-0.7,0);
                        \draw[line width=1.5] (1.5,-1) -- (0.7,0);
                        \draw[line width=1.5] (1.5,1) -- (0.7,0);
                        \node() at (-1.2,0) {$\vdots$};
                        \node() at (1.2,0) {$\vdots$};
                        \draw[line width=1.5] (-0.7,0) -- (0,0.3) -- (0.7,0);
                        \draw[line width=1.5] (-0.7,0) -- (0,-0.3) -- (0.7,0);
                        \node() at (0,0.55) {\tiny $x_1$};
                        \node() at (0,-0.55) {\tiny $x_2$};
                        \node[draw,fill,circle] at (0.7,0) {};
                        \node[fill,white,circle] at (-0.7,0) {};
                        \node[draw,circle] at (-0.7,0) {};
                    \end{tikzpicture}
                    }}
                    \underset{\text{(B)}}{\longrightarrow}
                    \vcenter{\hbox{
                    \begin{tikzpicture}
                        \draw[dashed,gray] (0,0) circle (1.5);
                        \clip (0,0) circle (1.5);
                        \draw[line width=1.5] (-1.5,-1) -- (-0.7,0);
                        \draw[line width=1.5] (-1.5,1) -- (-0.7,0);
                        \draw[line width=1.5] (1.5,-1) -- (0.7,0);
                        \draw[line width=1.5] (1.5,1) -- (0.7,0);
                        \node() at (-1.2,0) {$\vdots$};
                        \node() at (1.2,0) {$\vdots$};
                        \draw[line width=1.5] (-0.7,0) -- (0.7,0) node[midway,above] {\tiny $x_1 + x_2$};
                        \node[draw,fill,circle] at (0.7,0) {};
                        \node[fill,white,circle] at (-0.7,0) {};
                        \node[draw,circle] at (-0.7,0) {};
                    \end{tikzpicture}
                    }}
                $
            };
            \node() at (8,1) {
                $
                    \vcenter{\hbox{
                    \begin{tikzpicture}
                        \draw[dashed,gray] (0,0) ellipse (1.5 and 0.7);
                        \clip (0,0) ellipse (1.5 and 0.7);
                        \draw[line width=1.5] (-1.5,-1) -- (-0.7,0);
                        \draw[line width=1.5] (-1.5,1) -- (-0.7,0);
                        \draw[line width=1.5] (1.5,-1) -- (0.7,0);
                        \draw[line width=1.5] (1.5,1) -- (0.7,0);
                        \node() at (-1.2,0) {$\vdots$};
                        \node() at (1.2,0) {$\vdots$};
                        \draw[line width=1.5] (-0.5,0) -- (-0.2,0) node[midway,above] {\tiny $1$};
                        \draw[line width=1.5] (0.2,0) -- (0.5,0) node[midway,above] {\tiny $1$};

                        \node[draw,fill,circle] at (0,0) {};
                        \node[fill,white,circle] at (0.7,0) {};
                        \node[fill,white,circle] at (-0.7,0) {};
                        \node[draw,circle] at (0.7,0) {};
                        \node[draw,circle] at (-0.7,0) {};
                    \end{tikzpicture}
                    }}
                    \underset{\text{(EC)}}{\longleftrightarrow}
                    \vcenter{\hbox{
                    \begin{tikzpicture}
                        \draw[dashed,gray] (0,0) ellipse (1.5 and 0.7);
                        \clip (0,0) ellipse (1.5 and 0.7);
                        \draw[line width=1.5] (-1.5,-1) -- (0,0);
                        \draw[line width=1.5] (-1.5,1) -- (0,0);
                        \draw[line width=1.5] (1.5,-1) -- (0,0);
                        \draw[line width=1.5] (1.5,1) -- (0,0);
                        \node() at (-1.2,0) {$\vdots$};
                        \node() at (1.2,0) {$\vdots$};
                        \node[fill,white,circle] at (0,0) {};
                        \node[draw,circle] at (0,0) {};
                    \end{tikzpicture}
                    }}
                $
            };
            \node() at (8,-1) {
                $
                    \vcenter{\hbox{
                    \begin{tikzpicture}
                        \draw[dashed,gray] (0,0) ellipse (1.5 and 0.7);
                        \clip (0,0) ellipse (1.5 and 0.7);
                        \draw[line width=1.5] (-1.5,-1) -- (-0.7,0);
                        \draw[line width=1.5] (-1.5,1) -- (-0.7,0);
                        \draw[line width=1.5] (1.5,-1) -- (0.7,0);
                        \draw[line width=1.5] (1.5,1) -- (0.7,0);
                        \node() at (-1.2,0) {$\vdots$};
                        \node() at (1.2,0) {$\vdots$};
                        \draw[line width=1.5] (-0.5,0) -- (-0.2,0) node[midway,above] {\tiny $1$};
                        \draw[line width=1.5] (0.2,0) -- (0.5,0) node[midway,above] {\tiny $1$};

                        \node[draw,circle] at (0,0) {};
                        \node[fill,circle] at (0.7,0) {};
                        \node[fill,circle] at (-0.7,0) {};
                    \end{tikzpicture}
                    }}
                    \underset{\text{(EC)}}{\longleftrightarrow}
                    \vcenter{\hbox{
                    \begin{tikzpicture}
                        \draw[dashed,gray] (0,0) ellipse (1.5 and 0.7);
                        \clip (0,0) ellipse (1.5 and 0.7);
                        \draw[line width=1.5] (-1.5,-1) -- (0,0);
                        \draw[line width=1.5] (-1.5,1) -- (0,0);
                        \draw[line width=1.5] (1.5,-1) -- (0,0);
                        \draw[line width=1.5] (1.5,1) -- (0,0);
                        \node() at (-1,0) {$\vdots$};
                        \node() at (1,0) {$\vdots$};
                        \node[fill,circle] at (0,0) {};
                    \end{tikzpicture}
                    }}
                $
            };
            \node() at (8,-4) {
                $
                    \vcenter{\hbox{
                    \begin{tikzpicture}
                        \draw[dashed,gray] (0,0) circle (1.5);
                        \clip (0,0) circle (1.5);
                        \node[draw,circle](TL) at (135:1.1) {};
                        \node[draw,fill,circle](TR) at (45:1.1) {};
                        \node[draw,fill,circle](BL) at (-135:1.1) {};
                        \node[draw,circle](BR) at (-45:1.1) {};
                        \node[rotate=45] at (135:1.35) {$\cdots$};
                        \node[rotate=-45] at (45:1.35) {$\cdots$};
                        \node[rotate=-45] at (-135:1.45) {$\cdots$};
                        \node[rotate=45] at (-45:1.45) {$\cdots$};
                        \draw[line width=1.5] (TL) -- ++(-1,0);
                        \draw[line width=1.5] (TL) -- ++(0,1);
                        \draw[line width=1.5] (TR) -- ++(1,0);
                        \draw[line width=1.5] (TR) -- ++(0,1);
                        \draw[line width=1.5] (BL) -- ++(-1,0);
                        \draw[line width=1.5] (BL) -- ++(0,-1);
                        \draw[line width=1.5] (BR) -- ++(1,0);
                        \draw[line width=1.5] (BR) -- ++(0,-1);
                        \draw[line width=1.5] (TL) -- (TR) node[above,midway] {\tiny $x_1$};
                        \draw[line width=1.5] (BR) -- (TR) node[right,midway] {\tiny $x_2$};
                        \draw[line width=1.5] (BR) -- (BL) node[below,midway] {\tiny $x_3$};
                        \draw[line width=1.5] (TL) -- (BL) node[left,midway] {\tiny $x_4$};
                    \end{tikzpicture}
                    }}
                    \underset{\text{(S)}}{\longleftrightarrow}
                    \vcenter{\hbox{
                    \begin{tikzpicture}
                        \draw[dashed,gray] (0,0) circle (1.5);
                        \clip (0,0) circle (1.5);
                        \node[draw,circle](TL) at (135:1.1) {};
                        \node[draw,fill,circle](TR) at (45:1.1) {};
                        \node[draw,fill,circle](BL) at (-135:1.1) {};
                        \node[draw,circle](BR) at (-45:1.1) {};
                        \node[rotate=45] at (135:1.35) {$\cdots$};
                        \node[rotate=-45] at (45:1.35) {$\cdots$};
                        \node[rotate=-45] at (-135:1.45) {$\cdots$};
                        \node[rotate=45] at (-45:1.45) {$\cdots$};
                        \node[draw,fill,circle](TLp) at (135:0.45) {};
                        \node[draw,circle](TRp) at (45:0.45) {};
                        \node[draw,circle](BLp) at (-135:0.45) {};
                        \node[draw,fill,circle](BRp) at (-45:0.45) {};
                        \draw[line width=1.5] (TL) -- ++(-1,0);
                        \draw[line width=1.5] (TL) -- ++(0,1);
                        \draw[line width=1.5] (TR) -- ++(1,0);
                        \draw[line width=1.5] (TR) -- ++(0,1);
                        \draw[line width=1.5] (BL) -- ++(-1,0);
                        \draw[line width=1.5] (BL) -- ++(0,-1);
                        \draw[line width=1.5] (BR) -- ++(1,0);
                        \draw[line width=1.5] (BR) -- ++(0,-1);
                        \draw[line width=1.5] (TLp) -- (TRp) node[above,midway] {\tiny $x_3'$};
                        \draw[line width=1.5] (BRp) -- (TRp) node[right,midway] {\tiny $x_4'$};
                        \draw[line width=1.5] (BRp) -- (BLp) node[below,midway] {\tiny $x_1'$};
                        \draw[line width=1.5] (TLp) -- (BLp) node[left,midway] {\tiny $x_2'$};
                        \draw[line width=1.5] (TLp) -- (TL) node[shift={(0.1,0.1)},midway] {\tiny $1$};
                        \draw[line width=1.5] (TRp) -- (TR) node[shift={(-0.1,0.1)},midway] {\tiny $1$};
                        \draw[line width=1.5] (BRp) -- (BR) node[shift={(-0.1,-0.1)},midway] {\tiny $1$};
                        \draw[line width=1.5] (BLp) -- (BL) node[shift={(0.1,-0.1)},midway] {\tiny $1$};
                    \end{tikzpicture}
                    }}
                $
            };
            \draw (-4,-2) -- (12,-2);
            \draw (4,2) -- (4,-6);
        \end{tikzpicture}
    \]
    \caption{Local moves for weighted immersed graphs preserving boundary measurement. All edge weights not shown are the same in both graphs. In (EC), top, and (B) one white vertex may be a boundary vertex; in (PV) and (S) all vertices are internal. In the square move, we define $x'_i := \frac{x_i}{x_1x_3 + x_2 x_4}$. %
    }
    \label{fig:moves}
\end{figure}

The expand/contract move (EC), square move (S), and bigon removal (B) are familiar from plabic graphs. For nonplanar graphs, we also have a ``push vertex'' move (PV) which pushes a vertex across an edge; this changes the immersion but not the underlying graph. By \cite[Proof of Proposition 1.1]{Speyer16}, any disk immersion of $G$ can be taken to any other by a sequence of (PV) moves.

\begin{lemma}\label{lem:boundary-measurement-moves}
    Let $G, G'$ be immersed type $(k,n)$ graphs which differ by a (PV), (EC), (S) or (B) move (see \zcref{fig:moves}). Let $\phi: (\bC^*)^{E-V} \dashrightarrow (\bC^*)^{E'-V'}$ be the corresponding (rational) map of edge weights, indicated in \zcref{fig:moves}.
    Then $\bM^{G'} \circ \phi = \bM^{G}$. 
\end{lemma}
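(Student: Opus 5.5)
The plan is to check the identity move by move. Since both sides are points of $\Gr_{k,n}$, it suffices to show that there is a nonzero scalar $c$ (depending on the weights) with $\bM^{G'}_I(\phi(x)) = c\,\bM^G_I(x)$ for all $I\in\binom{[n]}{k}$. In each case we work directly with Definition~\ref{def:bdry-meas}, i.e.\ with signed sums over perfect matchings. Because every move is local to a sub-disk, we partition the matchings of $G$ and of $G'$ according to their restriction to the complement of the sub-disk. We then compare the local contributions, keeping track of the crossing signs. The sign $(-1)^{\tot(I)}$ is common to both sides and can be ignored.

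\textbf{(EC) and (B).} For (EC), a matching of the expanded graph restricts outside the sub-disk to a configuration in which either the contracted vertex is covered by one outside edge or it is not. There is a bijection with matchings of the contracted graph: the degree-two vertex is forced to use exactly one of its two weight-$1$ edges. The weights agree, and the boundary sets agree, including when one white vertex is a boundary vertex. Crossings agree as well, because the new short edges can be drawn inside a tiny neighborhood that meets no other edges. For (B), matchings using $x_1$ or $x_2$ merge into matchings using the single edge of weight $x_1+x_2$. The two parallel edges are drawn as nearby two-piece segments, so they cross exactly the same set of other edges and carry the same crossing sign. Hence $c=1$ in both cases.

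\textbf{(S).} The square move in the plabic setting is Postnikov's computation. We run it for matchings, and the sub-disk contains no crossings. We fix the restriction $M_0$ of a matching to edges outside the square; this determines which of the four corner vertices still need to be covered. We then compare the local generating functions. On the left, the local sum is $1$, $x_i$, or $x_1x_3+x_2x_4$, according to which corners are covered from outside. On the right, the local sums are $x_1'x_3'+x_2'x_4'$, $x_j'$ for the appropriate opposite index, or $1$, respectively. Substituting $x_i'=x_i/(x_1x_3+x_2x_4)$ shows that every local sum on the right equals $(x_1x_3+x_2x_4)^{-1}$ times the corresponding sum on the left. Crossings outside the sub-disk are unchanged, so $c=(x_1x_3+x_2x_4)^{-1}$.

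\textbf{(PV), the main obstacle.} Here the graph is unchanged but the immersion changes, so only the crossing signs can differ. Let $v$ be the pushed vertex with neighbors in the sub-disk, and let $q$ be the crossed edge. Pushing $v$ across $q$ changes, modulo $2$, the number of crossings between $q$ and each edge incident to $v$ in the sub-disk. All other crossings are unchanged, and $v$ is not an endpoint of $q$. Since every matching uses exactly one edge at the internal vertex $v$, for a matching $M$ we find that $\cross(M)$ changes parity exactly when $q\in M$. Replacing $x_q$ by $-x_q$ therefore restores the original signs term by term, so $c=1$.

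The delicate part is making the parity claim rigorous: each edge at $v$ gains or loses exactly one crossing with $q$. This holds in the local picture of \zcref{fig:moves}, since all edges at $v$ leave the sub-disk on the same side as before the push. Finally, since $\phi$ is defined on weights and is compatible with gauge (the new vertices carry gauge-fixed weights $1$), the identity descends to $(\bC^*)^{E-V}$.
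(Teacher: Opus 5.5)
Your proposal is correct. Your (PV) argument matches the paper's: only the crossings between $q$ and the edges at $v$ change, and each of them toggles. Since every matching uses exactly one edge at $v$, the sign $(-1)^{\cross(M)}$ flips precisely when $q\in M$, and $x_q\mapsto -x_q$ compensates for this.

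The difference is in (EC), (S), (B). The paper simply cites the plabic case \cite[Proposition 4.8]{Lam-grassmann-polytope}. You instead redo these moves directly on signed matching sums, conditioning on the restriction $M_0$ of a matching to the edges outside the sub-disk. This makes explicit something the citation leaves implicit, namely why the plabic computation carries over to immersed graphs. The sub-disk contains no crossings and the drawing outside it is unchanged, so $\cross(M)=\cross(M_0)$ on both sides, and the local identities hold with no extra signs.

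Two small things to tighten in (S):
\begin{itemize}
\item Record the cases where the corners left uncovered by $M_0$ are two opposite corners, or are unbalanced in color. In these cases both local sums are $0$.
\item When two adjacent corners are covered locally by $x_i$, the surviving inner edge is the opposite side of the inner square. In \zcref{fig:moves} that edge carries the \emph{same} index, so the right-hand local sum is $x_i'$ rather than $x_j'$ with $j\neq i$. This is exactly what produces the uniform factor $(x_1x_3+x_2x_4)^{-1}$.
\end{itemize}
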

\begin{proof}
    For the moves (EC), (S), (B), this is well-known from the plabic case, see e.g. \cite[Proposition 4.8]{Lam-grassmann-polytope}. 
    
    For (PV): let $v$ be the vertex pushed across edge $q$. Identifying the edge and vertex sets of $G, G'$, the effect of the map $\phi$ is to change $x_q$ to $-x_q$ and leave all other edge weights as they are. So $\bM^{G'} \circ \phi(x_e)$ is computed by taking the formula for $\bM^{G'}$ and replacing $x_q$ with $-x_q$. Let $x_q':=-x_q$ and $x_e':=x_e$ for $e \neq q$.

    To show $\bM^{G'} \circ \phi = \bM^{G}$, we need to verify that for any matching $M$ of $G$,
    \[(-1)^{\cross_G(M)} \prod_{e \in M} x_e = (-1)^{\cross_{G'}(M)} \prod_{e \in M} x_e',\]
    where $\cross_G(M)$ indicates crossings computed in the immersion $G$.
    If $q \notin M$, this equality is clear. If $q \in M$, then $(-1)^{\cross_G(M)} = -(-1)^{\cross_{G'}(M)}$. Indeed, $M$ must include exactly one edge $e$ adjacent to $v$; this edge crosses $q$ in $G$ if and only if it does not cross $q$ in $G'$. We also have $\prod_{e \in M} x_e' = -\prod_{e \in M} x_e$ since $q \in M$.
\end{proof}

Since all disk immersions of $G$ are related by (PV) moves, the proof of \zcref{lem:boundary-measurement-moves} for the (PV) moves implies the following corollary.

\begin{cor}\label{cor:image-doesnt-depend-on-immersion}
    The image of $\bM^G$ for an immersed graph $G$ does not depend on the choice of immersion.
\end{cor}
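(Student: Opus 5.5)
The plan is to compare two immersions $G_1$ and $G_2$ of the same underlying graph $G$ through a chain of (PV) moves. By \cite[Proof of Proposition 1.1]{Speyer16}, as recalled above, any two disk immersions of $G$ are connected by a finite sequence of (PV) moves. Induction on the length of this sequence therefore reduces the statement to the case where $G_1$ and $G_2$ differ by a single (PV) move, which pushes some vertex $v$ across some edge $q$.

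In that case we apply \zcref{lem:boundary-measurement-moves}. The map of edge weights $\phi$ sends $x_q \mapsto -x_q$ and fixes all other coordinates, and the lemma gives $\bM^{G_2}\circ\phi = \bM^{G_1}$ as rational maps. Two facts about $\phi$ are needed.

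First, $\phi$ is a well-defined automorphism of $(\bC^*)^{E}$. It is an involution and it commutes with gauge transformations, which only rescale coordinates multiplicatively. Hence $\phi$ descends to an automorphism of $(\bC^*)^{E-V}$.

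Second, $\phi$ preserves the locus of indeterminacy. The identity $\bM^{G_2}_I\circ\phi = \bM^{G_1}_I$ in fact holds for each individual Pl\"ucker coordinate polynomial $I$, not merely projectively; this is exactly the matching-by-matching identity in the proof of \zcref{lem:boundary-measurement-moves}. So $x$ lies in the domain of definition of $\bM^{G_1}$ if and only if $\phi(x)$ lies in the domain of $\bM^{G_2}$.

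Together these give
\[\bM^{G_1}(\mathrm{dom}\,\bM^{G_1}) = \bM^{G_2}(\phi(\mathrm{dom}\,\bM^{G_1})) = \bM^{G_2}(\mathrm{dom}\,\bM^{G_2}),\]
so the images agree, and the induction finishes the argument.

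The only point needing care is the second fact: one must use the coordinatewise polynomial identity rather than just equality of rational maps, so that the images over the full domains of definition coincide and not merely on dense open subsets. Since this identity is already established in the proof of \zcref{lem:boundary-measurement-moves}, I do not expect a genuine obstacle.
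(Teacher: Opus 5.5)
Your proposal is correct and follows essentially the paper's route: connect any two immersions by (PV) moves and note that each (PV) move precomposes boundary measurement with the torus automorphism $x_q \mapsto -x_q$. Your remark that the identity holds for each Pl\"ucker polynomial separately, so the domains of definition correspond exactly, is a detail the paper leaves implicit.
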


In the case of the (B) move, \zcref{lem:boundary-measurement-moves} shows that one can delete one edge of a bigon without changing the image of boundary measurement. In particular, the image of boundary measurement is smaller in dimension than its domain. We will be most interested in graphs where this does not happen. For technical reasons, we will need the following combinatorial notion: $G$ is \emph{leafless} if all degree 1 internal vertices are adjacent to a boundary vertex\footnote{In particular, any degree 1 vertex must be black.}.

\begin{definition}\label{def:reduced-top-cell} 
    We call $G$ \emph{reduced} if it is leafless and the image of $\bM^G$ is the same dimension as $(\bC^*)^{E-V}$, or equivalently, if $\bM^G$ is generically finite onto its image.  We call $G$ a \emph{top-cell graph} if it is reduced and $\bM^G$ is a dominant rational map, equivalently its image is Zariski-dense in $\Gr_{k,n}$.
\end{definition}

Note that \zcref{lem:boundary-measurement-moves} and \zcref{cor:image-doesnt-depend-on-immersion} imply that the properties of being reduced or top-cell are properties of type $(k,n)$ graphs rather than immersed graphs.

When $G$ is plabic, $G$ is reduced if and only if no graph related to $G$ by (EC) and (S) moves admits a (B) move \cite{postnikov}. There are various other combinatorial characterizations of reducedness for plabic graphs involving \emph{trips}. In the non-planar setting, we do not have a characterization of reduced graphs, but do have some combinatorial information.

\begin{lemma}\label{lem:reduced-conditions}
    Suppose $G$ is reduced. Then 
    \begin{enumerate}
        \item every edge of $G$ is used in some perfect matching;
        \item every connected component of $G$ contains a boundary vertex, or equivalently $G/\bdG$ is connected;
        \item any graph obtained from $G$ by a sequence of (PV), (EC), (S) moves is also reduced.
    \end{enumerate}

    On the other hand, if any graph obtained from $G$ by (PV), (EC) and (S) moves admits a (B) move, then $G$ is not reduced.
\end{lemma}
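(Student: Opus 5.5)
Each part reduces to a dimension count: we show the image of $\bM^G$ has dimension strictly less than $\dim(\bC^*)^{E-V}$. Recall that reduced means leafless together with $\dim \overline{\im \bM^G} = \dim (\bC^*)^{E-V}$. By \zcref{cor:basis-for-torus-mod-gauge}, $\dim(\bC^*)^{E-V} = |E|-|V|+(c-1)$, where $c$ is the number of connected components of $G/\bdG$.

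For (1), suppose some edge $q$ lies in no perfect matching. Then $x_q$ does not appear in any $\bM_I^G$, so $\bM^G$ factors through the projection forgetting $x_q$. We must check that this projection genuinely lowers dimension modulo gauge, i.e. that the character $x_q$ restricted to $H_1(G,\bdG)$ directions is nontrivial. Concretely, the one-parameter subgroup scaling $x_q$ alone must not lie in the image of $Q$.
\begin{itemize}
\item If $q$ is not a bridge of $G/\bdG$, some cycle of $G/\bdG$ through $q$ gives a gauge-invariant coordinate $y_e$ (\zcref{rem:torus-coordinates}) involving $x_q$, on which $\bM^G$ does not depend. Then the fibers of $\bM^G$ are positive-dimensional.
\item If $q$ is a bridge of $G/\bdG$, it is not automatic. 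Deleting $q$ disconnects $G/\bdG$, and the parity count $k_G$ on one side forces every matching to use $q$ or none to, by counting black versus white vertices on the component not containing $\bdG$. Since matchings exist, if $q$ is never used we can delete $q$ and compare dimensions: $|E|$ drops by 1 and $c$ rises by 1, so the dimension is unchanged. This case needs care, and we would handle it by showing that the side cut off must then contain a leaf or a piece unmatched by $q$. I expect this to be the main obstacle, and the leafless hypothesis should be what excludes it.
\end{itemize}

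For (2), suppose $G$ has a component $C$ with no boundary vertex; then $c\ge 2$. Every perfect matching restricts to a perfect matching of $C$, so each $\bM_I^G$ factors as (the matching polynomial of $C$) times (the terms from the rest of $G$). The $C$-factor is a common scalar for all $I$, hence invisible in projective Plücker coordinates. Therefore $\bM^G$ factors through the weights of $G\setminus C$ modulo gauge, which have dimension $\dim - (|E_C|-|V_C|+1)$ by the same corollary. Since $C$ has a perfect matching and is connected, $|E_C|\ge |V_C|-1$, so $|E_C|-|V_C|+1\ge 0$.
\begin{itemize}
\item If $|E_C|-|V_C|+1>0$, the dimension strictly drops and $G$ is not reduced.
\item If it is zero, $C$ is a tree with a perfect matching; a tree with at least two vertices has a leaf, which is not adjacent to the boundary, contradicting leaflessness.
\end{itemize}
The equivalence with connectivity of $G/\bdG$ is immediate.

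For (3), \zcref{lem:boundary-measurement-moves} shows the images of $\bM^G$ and $\bM^{G'}$ agree, since the (EC), (S) and (PV) maps $\phi$ are birational. The dimension of the weight torus is also preserved: (EC) changes $|E|$ and $|V|$ by one each, and (S) by four each, without changing $c$. It remains to check leaflessness is preserved. (PV) and (S) do not create internal leaves. For (EC), contracting could create a degree-1 vertex only from a degree-2 configuration attached to a leaf, which is excluded. Expanding can create a leaf, and then a bivalent vertex; we would argue via (1) that an internal leaf forces its neighbor's other edges to be unused by matchings, contradicting (1) for $G$.

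For the final claim, a (B) move replaces two parallel edges with weights $x_1,x_2$ by one edge of weight $x_1+x_2$. This map is a dominant map from the torus of $G'$ to the torus of $G''$, whose dimension is one less, and $\bM^{G'}$ factors through it. So the image has dimension below $\dim (\bC^*)^{E'-V'}$, and $G'$ is not reduced, hence neither is $G$ by (3).
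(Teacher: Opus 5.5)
You follow the paper's structure. Part (2) is the same argument: factor out the matching polynomial $\bM^C$ of the boundary-free component and use leaflessness to see that $C$ is not a tree. Your bridge/non-bridge split in (1) is the paper's split according to whether $G\setminus q$ acquires a boundary-free component. Your non-bridge case, the dimension count in (3), and the bigon claim are correct.

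One small omission in (2): the factorization $\bM^G_I=\bM^C\cdot\bM^{G\setminus C}_I$ requires an immersion in which no edge of $C$ crosses an edge outside $C$. Otherwise the crossing signs couple the two factors. The paper chooses such an immersion explicitly, which is allowed because the image of $\bM^G$ does not depend on the immersion.

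The genuine gap is the bridge case of (1), which you leave open (``we would handle it by showing\dots''). The missing step is to run your argument for (2) on $G\setminus q$.

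\begin{itemize}
\item Set-up. Let $C$ be the side of $(G/\partial G)\setminus q$ not containing $\partial G$; if $q$ lies in a boundary-free component, (2) already applies. Then $C$ consists of internal vertices and is a connected component of $G\setminus q$. Since $q$ lies in no perfect matching, every perfect matching of $G$ restricts to a perfect matching of $C$, so $|V_C|\ge 2$.
\item Factorization. Take an immersion in which the edges of $C$ cross nothing except possibly $q$; crossings with $q$ never contribute. Then $\bM^G_I=\bM^C\cdot\bM^{G''}_I$ with $G''=G\setminus(C\cup\{q\})$. So the image of $\bM^G$ has dimension at most $\dim(\bC^*)^{E''-V''}=\dim(\bC^*)^{E-V}-(|E_C|-|V_C|+1)$.
\item $C$ is not a tree. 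This is where leaflessness enters. A tree on at least two vertices has at least two leaves, but only one vertex of $C$ (the endpoint of $q$) has larger degree in $G$ than in $C$. So some leaf of $C$ would be an internal degree-one vertex of $G$ whose only neighbour is internal, contradicting leaflessness.
\end{itemize}

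Hence $|E_C|-|V_C|+1\ge 1$ and $G$ is not reduced. The paper reaches the same conclusion differently: it first uses (EC) moves to make the internal endpoints of $q$ at least trivalent, so that $G\setminus q$ stays leafless, and then quotes (2) for $G\setminus q$. The leaf count above avoids the moves.

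A lesser point in (3): the paper's proof only uses that $\phi$ is birational and says nothing about leaflessness, so your extra check goes beyond it. However, your treatment of expansions does not work as stated. Perfect matchings of $G$ and $G'$ correspond and use the same original edges, so a leaf in $G'$ contradicts (1) for $G$ only if the edge it forces to be unused is an original edge. Suppose an expansion at an internal vertex gives one of the two resulting vertices no original edges. Then that vertex is a leaf of $G'$ attached to the new bivalent vertex, and the forced-unused edge is a new weight-one edge. In that case $G'$ genuinely fails to be leafless, so such degenerate expansions must be excluded from (EC) rather than argued away.
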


\begin{proof}
    For (1) and (2), we prove the contrapositive.

    (2): Suppose $G$ has a connected component $C$ disjoint from $\bdG$. Let $G'$ denote the graph obtained by deleting all vertices of $C$ from $G$. Choose an immersion where the edges of $C$ do not intersect any edges of $G'$. Note that any perfect matching $M$ of $G$ restricts to a (usual) perfect matching of $C$, where each vertex of $C$ is in exactly one edge of $M$. So, for this immersion, each $\bM^G_I(x_e)$ factors as $\bM^C(x_e) \cdot \bM^{G'}_I(x_e)$, where $\bM^C(x_e)$ is the sum over all perfect matchings $M$ of $C$ of $(-1)^{\cross(M)} \prod_{e \in M} x_e$. The edges of $C$ contribute only to $\bM^C(x_e)$. So the image of $\bM^G$ is the same as the image of $\bM^{G'}$. 
    
    Say $C$ has edges $E_C$, and vertices $V_C$ (all of which are internal), while $G'$ has edges $E'$ and internal vertices $V'$. Let $c$ be the number of connected components of $G/\bdG$, so $G'/\bdG'$ has $c-1$ connected components.
    
    We have 
    $E= E' \sqcup E_C, V= V' \sqcup V_C$ so
    \[ \dim (\bC^*)^{E'-V'}=|E'| - |V'| + (c-2) = |E| -|V| + (c-1) -(|E_C| -|V_C| + 1).\]
    Note that $C$ is not a tree since $G$ is leafless, and thus $|E_C| -|V_C| + 1 > 0$. By \zcref{cor:basis-for-torus-mod-gauge}, the domain $(\bC^*)^{E' - V'}$ of the map $\bM^{G'}$ has dimension smaller than the domain $(\bC^*)^{E - V}$ of $\bM^G$. So $G$ cannot be reduced.
    
    (1): Suppose $G$ has an edge $e$ that is not used in any perfect matching. We may assume $G$ has no isolated components. Performing (EC) moves involving only internal vertices will not change this property, so we may assume that each endpoint of $e$ which is not a boundary vertex or adjacent to a boundary vertex has degree at least 3. Let $G'=G \setminus e$; note that $G'$ is again leafless. Then $\bM^G(x_p) = \bM^{G'}(x_p)$, where on the right the edge weight $x_e$ is omitted. So the images of $\bM^G$ and $\bM^{G'}$ are the same.
    
    Either $G'$ has an isolated component, or it does not. In the former case, \zcref{cor:basis-for-torus-mod-gauge} implies the domains of $\bM^G$ and $\bM^{G'}$ have the same dimension. By the above argument, the domain of $\bM^{G'}$ has dimension larger than the image. The same is true of $\bM^G$, so $G$ is not reduced. In the latter case, by \zcref{cor:basis-for-torus-mod-gauge}, the domain of $\bM^{G'}$ has dimension one smaller than the domain of $\bM^G$, so the image of $\bM^G$ has dimension smaller than its domain, and again $G$ is not reduced.

    (3): This follows from \zcref{lem:boundary-measurement-moves}, which shows that doing a move has the effect of pre-composing boundary measurement with a birational map. This will not change the dimension of the image. 

    The statement that a graph $G$ with a bigon is not reduced also follows from \zcref{lem:boundary-measurement-moves}.
\end{proof}

\subsection{Vector-relation configurations}
There is another perspective on the boundary measurement map, using \emph{vector-relation configurations}. These are very closely related to the ``relation space'' of an (oriented, not necessarily planar) bicolored graph $N$, studied in \cite[Section 14]{Lam-grassmann-polytope}. Vector-relation configurations were introduced for planar graphs in \cite{Affolter_2023}, with slightly different conventions. Both \cite[Proposition 7.3]{Affolter_2023} and \cite[Theorem 14.6]{Lam-grassmann-polytope} relate vector-relation configurations/relation spaces to the boundary measurement map in the planar case, and both were inspired by the physics formulation of on-shell forms\footnote{In \cite{Arkani-Hamed:2012zlh}, the authors associate a pair of vectors $\lambda_e,\widetilde{\lambda}_e\in\bC^2$ to each edge. Each black and white vertex imposes constraints on one type of vectors, analogous to (2) from \zcref{def:VRC}.
} in \cite{Arkani-Hamed:2012zlh}.

\begin{definition}\label{def:VRC}
    Fix a type $(k,n)$ graph $G$. A \emph{vector-relation configuration} (VRC) on $G$ is an assignment of vectors $u_w \in \bC^k$ 
    to each white vertex of $G$ and numbers $r_e \in \bC^*$ to each edge of $G$, such that 
    \begin{enumerate}
    \item the \emph{boundary vectors} $u_1, \dots, u_n$ on vertices $1, \dots, n$ span $\bC^k$;
    \item
    for each black vertex $b$, the following relation holds
    \[\sum_{\{b,w\}=e \in E} r_e u_w =0.\]
   \end{enumerate} 
    We write $(\bu, \bfR):=(\{u_w\}_{w \in W \cup [n]}, \{r_e\}_{e \in E})$ for a VRC. The \emph{boundary} of $(\bu, \bfR)$ is the $k \times n$ matrix of boundary vectors
    \[\partial(\bu, \bfR):= \begin{bmatrix}
        u_1 \dots u_n
    \end{bmatrix}.\]
\end{definition}

Note that $\GL_k$ acts simultaneously on all vectors in a VRC. Further, VRCs also admit a gauge action of $\bC^*$ at each internal vertex: at a vertex $v$, $\bC^*$ acts by $r_e \mapsto t r_e$ for all adjacent edges $e$, and, if the vertex $v=w$ is white, additionally by rescaling the vector $u_w \mapsto 1/t u_w$. We write $[\bu, \bfR]$ for the gauge- and $\GL_k$-equivalence class of $(\bu, \bfR)$, and $\partial[\bu, \bfR]$ for the element of $\Gr_{k,n}$ represented by the boundary matrix of $(\bu, \bfR)$.

In the next lemma, if $A$ is a matrix and $I$ is a subset of columns, we say that $A$ is \emph{$I$-full rank} if $A$ is full rank and the submatrix $A_I$ on columns $I$ is also full-rank. Recall that $K$ denotes the $|B| \times |W \cup \bdG|$ adjacency matrix of $G$. 

\begin{lemma}\label{lem:bdry-meas-vs-VRC}
    Suppose $G$ is type $(k,n)$. Fix a disk immersion of $G$, and choose immersion signs $\sigma=(\sigma_e)$. Let $[r_e] \in (\bC^*)^{E-V}$.
    \begin{itemize}
        \item If $K(r_e)$ is full rank, then there is at most one VRC on $G$ with coefficients $(r_e)$. If $K(r_e)$ is $W$-full rank, then there is exactly one VRC on $G$ with coefficients $(r_e)$
        \item  Suppose that $(\sigma_er_e)$ is in the domain of $\bM^G$, or equivalently that $K(r_e)$ is $W$-full rank.
    Then $ \bM^G(\sigma_e r_e)$ is equal to the boundary $\partial[\bu, (r_e)]$ of the unique VRC on $G$ with coefficients $(r_e)$.
    \end{itemize}
\end{lemma}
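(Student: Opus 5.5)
Translate the VRC condition into linear algebra on the adjacency matrix. Form the $k \times |W \cup \bdG|$ matrix $U$ whose columns are the vectors $u_w$, with the white vertices first and the boundary vertices $1,\dots,n$ last, as for $K$. Condition (2) of \zcref{def:VRC} says exactly that $K(r_e)\, U^T = 0$, i.e. the rows of $U$ lie in $\ker K(r_e)$. Since $K$ has $|B|$ rows and $|W|+n$ columns and $k = n+|W|-|B|$, full rank of $K$ means $\dim \ker K = k$.

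For the first bullet, the uniqueness claim is really uniqueness of the class $[\bu,(r_e)]$ modulo $\GL_k$ with the $r_e$ fixed. Suppose $K$ is full rank and $(\bu,(r_e))$ is a VRC. Condition (1) forces $U$ to have rank $k$. Then the $k$ rows of $U$ form a basis of the $k$-dimensional space $\ker K$, so $U$ is determined up to left multiplication by $\GL_k$. Hence there is at most one VRC.

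For existence, assume $K$ is $W$-full rank, and take $U$ to be any matrix whose rows form a basis of $\ker K$. It remains to check that the boundary columns of $U$ span $\bC^k$. Suppose a vector $c$ satisfies $c^T U_{\bdG}=0$. Then $c^T U$ is a vector $z$ in $\ker K$ supported on the $W$-columns, so $K_W z_W = 0$. The columns of $K_W$ are independent by $W$-full rank, so $z=0$, and hence $c=0$ because the rows of $U$ are independent. Rescaling by gauge changes $r_e$ and $u_w$ compatibly, so everything is well-defined on $[r_e]\in(\bC^*)^{E-V}$.

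For the second bullet, first note that the $W$-full-rank condition on $K(r_e)$ is equivalent to $\sigma K(r_e)$ being full rank with some maximal minor on columns $W\cup I^c$ nonzero. By \zcref{lem:signs-for-K} these minors are $\pm\bM^G_I(\sigma_e r_e)$, so this matches being in the domain; sign changes $\sigma$ do not affect ranks of column subsets up to this identification. (Here we apply \zcref{lem:signs-for-K} to $K(\sigma_e r_e) = \sigma K(r_e)$.) Next, follow the proof of \zcref{lem:bdry-meas-well-defined}: row-reduce $\sigma K(r_e)$ to $\begin{bmatrix}\mathrm{Id} & X\\ 0 & A\end{bmatrix}$. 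Then $\bM^G(\sigma_e r_e)$ is $(\rowsp A)^\perp$. On the other hand, $\ker \sigma K(r_e)$ consists of the vectors $(z_W,z_{\bdG})$ with $z_W=-Xz_{\bdG}$ and $Az_{\bdG}=0$. So projecting $\ker(\sigma K)$ to the boundary coordinates is an isomorphism onto $\ker A=(\rowsp A)^\perp$.

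The main obstacle is the signs. The VRC is built from $K(r_e)$, while $\bM^G(\sigma_e r_e)$ comes from $\sigma K(r_e)$. I would handle this by applying the above to the VRC with coefficients $\sigma_e r_e$, whose boundary is $\rowsp U_{\bdG}=\ker A = \bM^G(\sigma_e r_e)$. I would then need to reconcile it with coefficients $r_e$. This requires checking that the statement is intended with the same sign convention, i.e. that the VRC is taken with coefficients $\sigma_e r_e$, or that the $\sigma$ can be absorbed. I would try to show the $\sigma$ is a product of gauge sign changes and column/row sign changes on boundary vectors. If it is not, I would read the lemma as being about the VRC with coefficients $(\sigma_e r_e)$ and the $\bM^G$ of these, which the argument above proves directly.
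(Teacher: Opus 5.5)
Your first bullet is fine. Uniqueness is the paper's kernel-dimension count. Your existence argument (a relation $c^TU_{\bdG}=0$ produces a kernel vector supported on the $W$-columns, which $W$-full rank kills) is a valid direct substitute for the paper's appeal to Grassmann duality.

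The gap is in the second bullet, where you misapply \zcref{lem:signs-for-K}. That lemma is about the minors of $\sigma K(x_e)$, the matrix with entries $\sigma_e x_e$: they equal $\pm\bM^G_I(x_e)$. Correspondingly, the proof of \zcref{lem:bdry-meas-well-defined} row-reduces $\sigma K(x_e)$ to obtain $\bM^G(x_e)$. You drop the $\sigma$, and so you believe $\bM^G(\sigma_e r_e)$ is computed from $\sigma K(r_e)=K(\sigma_e r_e)$. In fact, row-reducing $\sigma K(r_e)$ computes $\bM^G(r_e)$. Substituting $x_e=\sigma_e r_e$ correctly gives the matrix $\sigma K(\sigma_e r_e)$, whose entries are $\sigma_e^2r_e=r_e$; that is, $\sigma K(\sigma_e r_e)=K(r_e)$.

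This identity is the missing idea, and it shows there is no sign obstacle at all. First, it shows the $W\cup I^c$ minors of $K(r_e)$ are $\pm\bM^G_I(\sigma_e r_e)$, which gives the stated equivalence between the domain condition and $W$-full rank of $K(r_e)$. Your justification, that entrywise sign changes preserve ranks, is false: compare $\begin{pmatrix}1&1\\1&1\end{pmatrix}$ and $\begin{pmatrix}1&1\\1&-1\end{pmatrix}$. Second, it shows the matrix one row-reduces to get $\bM^G(\sigma_e r_e)$ is exactly $K(r_e)$, whose kernel defines the VRC. Your projection $\ker K(r_e)\to\ker A=(\rowsp A)^\perp$ then finishes the proof, as in the paper.

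As written, however, the proposal leaves the signs unreconciled, and both fallbacks fail. Immersion signs are generally not products of row, column, or gauge sign flips. Such flips change each maximal minor only by an overall sign. But for the graph of \zcref{fig:twosix-labeling}, the unsigned minor of $K$ on columns $3,4,5,6$ is $w_3(x_4y_5z_6+x_5y_6z_4)$, while $\bM^G_{12}=-w_3(x_5y_6z_4-x_4y_5z_6)$. As for the other fallback, your computation on $\sigma K(r_e)$ actually proves $\partial[\bu,(\sigma_e r_e)]=\bM^G(r_e)$. That is the lemma after relabeling $r_e\mapsto\sigma_e r_e$. It is not your proposed reinterpretation $\partial[\bu,(\sigma_e r_e)]=\bM^G(\sigma_e r_e)$, which is false for that same graph.
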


\begin{proof}
    A VRC with coefficients $(r_e)$ is the same as a choice of vectors $u_{w_1}, \dots u_{w_N}, u_1, \dots, u_n \in \bC^{k}$ such that $u_1, \dots, u_n$ span $\bC^k$ and satisfy
    \[K(r_e) \begin{bmatrix}
        - u_{w_1}^T - \\
        \vdots\\
          - u_{n}^T - \\
    \end{bmatrix}=0.\]
    That is, a VRC $[\bu, (r_e)]$ is the same as the choice of a $k$-dimensional subspace of the kernel of $K(r_e)$, where at least one Pl\"ucker coordinate using only the last $n$ indices is nonzero. By assumption, $K(r_e)$ is full rank. Since $K(r_e)$ has $k$ more columns than rows, its kernel is $k$-dimensional. So there is at most one VRC with coefficients $(r_e)$. If $K$ is $W$-full rank, some minor $\Delta_{W \cup I^c}$ of $K(r_e)$ is nonzero. The minor $\Delta_I$ of any representative of the kernel will be nonzero by \zcref{lem:duality-plucker-coordinates}, and there is exactly one VRC with coefficients $(r_e)$. 
    This establishes the first point. 

    For the second point, recall from the proof of \zcref{lem:bdry-meas-well-defined} that for $(x_e)$ in the domain of $\bM^G$, $\bM^G(x_e)$ is obtained by transforming $\sigma K(x_e)$ into
      \[\begin{bmatrix}
        \mathrm{Id} & X\\
        0 & A
    \end{bmatrix}\]
    using invertible row operations and then taking the orthogonal complement of $\rowsp A$. In this instance, $x_e= \sigma_e r_e$, so $\sigma K(x_e) = K(r_e)$. By assumption, $(x_e)$ is in the domain of $\bM^G$ and $\sigma K(x_e)$ can indeed be put in the desired form. As described above, the unique VRC with coefficients $(r_e)$ is given by the kernel of $K(r_e)$, which is unaffected by invertible row operations. So we have 
     \[\begin{bmatrix}
        \mathrm{Id} & X\\
        0 & A
    \end{bmatrix}\begin{bmatrix}
        - u_{w_1}^T - \\
        \vdots\\
          - u_{n}^T - \\
    \end{bmatrix}=0.\]
    This equation implies that $A$ is orthogonal to $\partial[\bu, (r_e)]$, so $\partial [\bu, (r_e)]$ is contained in $\bM^G(\sigma_er_e)$. Since both subspaces have dimension $k$, they are equal. 
\end{proof}

\begin{cor}\label{cor:fiber-vs-VRC-set}
    Fix $G$ a type $(k,n)$ graph and fix any immersion of $G$. For $Y \in \Gr_{k,n}$, the fiber
    \[(\bM^G)^{-1}(Y)\]
    is in bijection with the set of VRCs
    \[\{[\bu, (r_e)]: \partial[\bu, (r_e)] = Y\} \cap \{[\bu, (r_e)]: K(r_e) \text{ is $W$-full rank}\}.\]
\end{cor}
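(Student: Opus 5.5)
The plan is to build the bijection directly from \zcref{lem:bdry-meas-vs-VRC}, using the sign-twist map $(r_e) \mapsto (\sigma_e r_e)$ for a fixed choice of immersion signs $\sigma$. Since $\sigma_e = \pm 1$ and gauge transformations act by the same scalar on all edges at a vertex, this map commutes with the gauge action. Hence it descends to an involution of $(\bC^*)^{E-V}$.

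\emph{Step 1: identify the two domains.} Let $\Phi$ send a class $[\bu,(r_e)]$ in the displayed VRC set to the point $[\sigma_e r_e] \in (\bC^*)^{E-V}$. First I check that $\Phi$ is well defined on gauge- and $\GL_k$-classes. The $\GL_k$ action does not change $(r_e)$. A gauge transformation changes $(r_e)$ by a gauge transformation on edge weights. So $[\sigma_e r_e]$ depends only on the class.

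Next, the condition that $K(r_e)$ is $W$-full rank is gauge invariant, since gauge acts by scaling rows and the $W$-columns of $K$. By \zcref{lem:signs-for-K}, this condition says exactly that some $\bM_I^G(\sigma_e r_e)$ is nonzero. That is the same as saying $[\sigma_e r_e]$ lies in the domain of $\bM^G$, as already noted in \zcref{lem:bdry-meas-vs-VRC}. Then the second bullet of \zcref{lem:bdry-meas-vs-VRC} gives $\bM^G(\sigma_e r_e) = \partial[\bu,(r_e)] = Y$. So $\Phi$ lands in $(\bM^G)^{-1}(Y)$.

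\emph{Step 2: injectivity and surjectivity.} For surjectivity, take $[x_e] \in (\bM^G)^{-1}(Y)$ and set $r_e = \sigma_e x_e$. Then $K(r_e) = \sigma K(x_e)$ is $W$-full rank. The first bullet of \zcref{lem:bdry-meas-vs-VRC} gives a VRC with these coefficients, and the second bullet shows its boundary is $Y$.

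For injectivity, suppose two classes $[\bu,(r_e)]$ and $[\bu',(r'_e)]$ have gauge-equivalent twisted coefficients. Then after a gauge transformation we may assume $r_e = r'_e$ on the nose, because the gauge action on VRCs covers the gauge action on coefficients. Now the uniqueness part of \zcref{lem:bdry-meas-vs-VRC} (a $W$-full rank $K(r_e)$ is in particular full rank) says the vectors are determined up to $\GL_k$. Concretely, they span the $k$-dimensional kernel of $K(r_e)$. So the two classes coincide.

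\emph{Main obstacle.} The only delicate point is bookkeeping in the injectivity step. When a gauge transformation at a white vertex $w$ rescales the coefficients $r_e$, the vector $u_w$ is rescaled by $1/t$. I need to confirm this is compatible with the kernel description, so that the gauge-transformed VRC is still the unique kernel VRC for the new coefficients. This amounts to observing that scaling column $w$ of $K$ by $t$ corresponds to scaling the $w$-row of the kernel matrix by $1/t$. This is a routine check.
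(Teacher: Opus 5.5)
Your proof is correct and matches the paper's argument: both fix immersion signs $\sigma$ and use the two bullets of \zcref{lem:bdry-meas-vs-VRC} to pair $[x_e]$ in the fiber with the unique VRC whose coefficients are $(\sigma_e x_e)$. The only difference is direction. You build the map from the VRC set to the fiber, while the paper builds the inverse map $[x_e]\mapsto[\bu,(\sigma_e x_e)]$, so your injectivity check is the paper's well-definedness step and vice versa.
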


\begin{proof}
    Call the set of VRCs in the corollary statement $X$. Fix a choice of immersion signs $\sigma$. \zcref{lem:bdry-meas-vs-VRC} shows that if $\bM^G(x_e) = Y$, then there is a unique VRC with $(r_e)= (\sigma_e x_e)$ and its boundary is $Y$. Since $(x_e)$ is in the domain of boundary measurement, $\sigma K(x_e) = K(r_e)$ is $W$-full rank. So $Y \mapsto [\bu, (\sigma_e x_e)]$ is a well-defined injection from the fiber to $X$. That it is also surjective follows from the second bullet point of \zcref{lem:bdry-meas-vs-VRC}.
\end{proof}

\zcref{cor:fiber-vs-VRC-set} says that generically, finding a VRC with boundary $V$ is the same as finding a pre-image of $V$ under the boundary measurement map. In the following lemma, we use this connection to VRCs to gain additional combinatorial information about reduced graphs. 

\begin{lemma}\label{lem:reduced-black-vt-bounded-degree}
    Suppose $G$ is a reduced type $(k,n)$ graph. Then the black vertices of $G$ have degree at most $k+1$.
\end{lemma}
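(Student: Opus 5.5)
The plan is to argue by contradiction using \zcref{cor:fiber-vs-VRC-set}: a black vertex of degree larger than $k+1$ should produce positive-dimensional families of VRCs with a fixed boundary. Suppose $G$ is reduced and has a black vertex $b$ with neighbors $w_1,\dots,w_d$ (listed with multiplicity of edges $e_1,\dots,e_d$) and $d\ge k+2$. Fix an immersion and immersion signs $\sigma$. Choose a generic point $(x_e)$ in the domain of $\bM^G$, so that $K(r_e)$ is $W$-full rank for $r_e=\sigma_e x_e$. By \zcref{lem:bdry-meas-vs-VRC} there is a unique VRC $(\bu,(r_e))$ whose boundary is $Y=\bM^G(x_e)$.

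The key observation is the relation at $b$, namely $\sum_{i=1}^d r_{e_i}u_{w_i}=0$. This is a linear relation among $d$ vectors in $\bC^k$. The space $R_b=\{(s_1,\dots,s_d): \sum s_i u_{w_i}=0\}$ therefore has dimension at least $d-k\ge 2$, and it contains the point $(r_{e_1},\dots,r_{e_d})$, all of whose coordinates are nonzero. Replace $(r_{e_i})$ by any nearby $(s_i)\in R_b$ and keep every other $r_e$ and every vector $u_w$ unchanged. All relations at the other black vertices still hold, and the boundary vectors are untouched. So we get a VRC $(\bu,(r'_e))$ with the same boundary $Y$. The conditions that all $s_i\neq 0$ and that $K(r'_e)$ is $W$-full rank are Zariski open and hold at the starting point. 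Hence by \zcref{cor:fiber-vs-VRC-set} all these VRCs give points of the fiber $(\bM^G)^{-1}(Y)$. This yields a family in $(\bC^*)^E$ of dimension at least $d-k\ge 2$ lying over the single point $Y$.

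It remains to show that this family still has positive dimension after passing to $(\bC^*)^{E-V}$, that is, modulo gauge. Then the generic fiber of $\bM^G$ on $(\bC^*)^{E-V}$ is positive-dimensional, contradicting reducedness (\zcref{def:reduced-top-cell}). I would do this infinitesimally. The tangent space to the family is $R_b$, viewed as vectors supported on the star of $b$. I need to compare it with the subspace of the tangent space of the gauge orbit, $\operatorname{im} Q$, consisting of vectors supported on the star of $b$. A gauge vector $(t_v)$ lies in that subspace exactly when $t_v+t_{v'}=0$ on every edge not at $b$. On each component of $G\setminus b$ that contains a boundary vertex this forces $t\equiv 0$, because boundary vertices are not gauged. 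On a boundary-free component $C$, $t$ alternates $\pm s_C$. So the gauge directions supported at $b$ are spanned by the $b$-scaling together with one direction for each boundary-free component $C$ of $G\setminus b$.

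I expect this count to be the main obstacle. If $G\setminus b$ has no boundary-free components, the gauge contribution is one-dimensional and $d-k-1\ge 1$ finishes the argument. Otherwise I would show that each boundary-free component $C$ (a leaf-like piece hanging off $b$) contributes as much to the lower bound on $\dim R_b$ as it removes through its gauge direction. The idea is to restrict attention to edges from $b$ into components containing boundary vertices. Alternatively, one can first handle $C$ directly: its white vectors are determined, via the relations at its black vertices, only up to the single scaling $s_C$. So in the relation at $b$, the edges to $C$ contribute one combined vector up to scalar. Counting then works with $d$ replaced by the number of ``effective'' neighbors. If that reduction fails, a fallback is to use \zcref{lem:reduced-conditions}(1) and leaflessness to rule out such components, or to pass via (EC) moves to a graph where the count is clean.
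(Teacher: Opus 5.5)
Your strategy is the same one the paper uses: fix the vectors $u_w$, move the coefficients at $b$ inside the relation space $R_b$, and invoke \zcref{cor:fiber-vs-VRC-set}. Your gauge computation is also correct. The gauge directions that fix every edge not at $b$ are the $b$-scaling plus one alternating scaling per boundary-free component of $G\setminus b$. When there are no such components, $\dim R_b-1\ge d-k-1\ge 1$ finishes the proof.

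The gap is that you never handle the remaining case. You flag boundary-free components $C$ of $G\setminus b$ as the main obstacle, then list three strategies without carrying any of them out. The first two (``$C$ contributes as much to $\dim R_b$ as it removes'' and ``effective neighbors'') are unsubstantiated. For the third, leaflessness is not the relevant hypothesis: a boundary-free $C$ hanging off $b$ can have all its vertices of degree $\ge 2$, for instance a cycle attached to $b$.

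The missing idea is a parity count with perfect matchings, using \zcref{lem:reduced-conditions}(1) and (2); this is exactly what the paper does. Let $C$ be a boundary-free component of $G\setminus b$, with black vertices $B_C$ and white vertices $W_C$, so every edge leaving $C$ ends at $b$.
\begin{enumerate}
\item By (2), $C$ must be adjacent to $b$. Also, $b$ has an edge $e'$ not going into $C$; otherwise $C\cup\{b\}$ would be a component of $G$ with no boundary vertex.
\item Pick an edge $e=\{b,w\}$ with $w\in W_C$. By (1), choose perfect matchings $M\ni e$ and $M'\ni e'$.
\item $M'$ restricts to a perfect matching of $C$, so $|W_C|=|B_C|$.
\item $M$ restricts to a perfect matching of $C\setminus w$, so $|W_C|=|B_C|+1$. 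This is a contradiction.
\end{enumerate}
Hence in a reduced graph $G\setminus b$ has no boundary-free components, the only relevant gauge direction is the $b$-scaling, and your dimension count goes through.

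With this step inserted, your proof is essentially the paper's. The paper phrases the same fact as the existence of a spanning tree of $G/\partial G$ containing one edge $e_1$ at $b$ and none of $e_2,\dots,e_p$. It gauge-fixes that tree, which removes the $b$-scaling. It then perturbs only $r_{e_2},\dots,r_{e_{k+2}}$ along a linear dependence among the corresponding $k+1$ vectors, instead of quotienting $R_b$ by the $b$-scaling.
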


\begin{proof}
We will argue by contradiction. We assume that $G$ is reduced and has a black vertex of degree 
$p\ge k+2$. We will then use \zcref{cor:fiber-vs-VRC-set} to show that generic fibers of $\bM^G$ are infinite, a contradiction.

We first need to choose an appropriate gauge-fix (cf. \zcref{rem:torus-coordinates}). Say the edges incident to $b$ are $e_1, \dots, e_p$ and have white endpoints $w_1, \dots, w_p$. By \zcref{lem:reduced-conditions}, $G/\bdG$ is connected since $G$ is reduced. Without loss of generality, there is a path from $b$ to a boundary vertex which uses $e_1$ and no other edges incident to $b$. We claim that there is a spanning tree $T$ of $G/\bdG$ which contains $e_1$ and none of $e_2, \dots, e_p$. Suppose for the sake of contradiction that this does not exist. This means that deleting the edges $e_2, \dots, e_p$ splits $G$ into $C_1 \sqcup C_2$, where $C_1$ contains $b$ and all of the boundary vertices, and all vertices of $C_2$ are internal. Since $G$ is reduced, by \zcref{lem:reduced-conditions}, there is a matching $M$ of $G$ which contains $e_1$ and hence does not contain $e_2, \dots ,e_p$. So $M$ restricts to a perfect matching of $C_2$. This implies that $C_2$ has the same number of black and white vertices. Now, $C_2$ also contains the white endpoint $w$ of some edge $e_i$. There is a matching $M'$ of $G$ which contains $e_i$. We have that $M'$ restricts to a perfect matching of $C_2 \setminus w$. This implies that $C_2$ has one more white vertex than black vertex, a contradiction.

We henceforth gauge-fix the edge weights $\{x_e: e \in T\}$ to 1.

Consider a point $X$ in the image of the boundary measurement map. By \zcref{lem:bdry-meas-vs-VRC}, $X$ is the boundary of a VRC $[\bu, (r_e)]$ on $G$. The chosen gauge fix identifies a distinguished representative $(\bu, (r_e))$. Let $u_i$ be the vector on $w_i$ and $r_i$ be the coefficient on $e_i$. We have $u_1 + \sum_{i=2}^p r_i u_i =0$. Since the $u_i$ are in $\bC^k$ and we have at least $k+2$ of them, we have another linear dependence
\[d_2 u_2 + \dots + d_{k+2}u_{k+2} =0.\]
For any $\epsilon \in \bC^*\setminus \{-r_i/d_i: i \in [2, k+2], d_i \neq 0\}$, changing the coefficients of the VRC by $r_i \mapsto r_i(\epsilon):= \epsilon d_i + r_i$ for $i \in [2, k+2]$ will produce a VRC $(\bu, (r_e(\epsilon)))$ with the same vectors and different coefficients. Because of the gauge fix, we have that $[\bu, (r_e(\epsilon))] \neq [\bu, (r_e)]$. We also note that the minors of $K(r_e(\epsilon))$ are polynomials in $\epsilon$. Since $K(r_e)$ has a non-vanishing minor using columns $W$, for $\epsilon$ sufficiently close to 0, this minor will also be non-vanishing for $K(r_e(\epsilon))$. This shows that for $X$, the fiber 
\[(\bM^G)^{-1}(X)=\{[\bu, (r_e)]: \partial[\bu, (r_e)] = X\} \cap \{[\bu, (r_e)]: K(r_e) \text{ is $W$-full rank}\}\]
is infinite. Thus $G$ is not reduced, as all points in the image of $\bM^G$ have infinite fiber.
\end{proof}

\section{Lobes} \label{sec:lobes}

In this section, we discuss a decomposition of the graph $G$ into subgraphs $G_i$ called \emph{lobes}. We show the boundary measurement map on the full graph $G$ can be identified as the product of boundary measurement maps on each lobe, followed by a ``gluing'' map $\Phi$ which records how the lobes combine to make $G$. This will be useful in our analysis of top cell $k=2$ graphs in \zcref{sec:k=2}, where the lobes are particularly simple.

\begin{definition}
    Let $\tilde{G_1}, \ldots, \tilde{G_\ell}$ be the connected components of $G \setminus \partial G.$ For $1 \leq i \leq \ell,$ let $G_i$ be the induced subgraph of $G$ on the vertices of $\tilde{G_i}$ together with the incident boundary vertices. We call the graph $G_i$ a \emph{lobe} of $G$, and denote its edges by $E_i$ and its internal vertices by $V_i$. See \zcref{fig:lobe-example} for an example. 
\end{definition}

\begin{figure}[h!]
    \centering
    \begin{tikzpicture}
        \node at (0,0) {\onshellgraph{ {(0,0)/1,(0,-1)/2,(0,-2)/3,(-3,0)/5,(-3,-2)/4,(2,-1)/6} }{ {(0,0)/W1,(0,-1)/W2,(0,-2)/W3,(-3,0)/W5,(-3,-2)/W4,(2,-1)/W6,(-2,-1)/A} }{ {(-2,0)/B1,(-1,-1)/B2,(-2,-2)/B3,(1,-1)/B4} }{ {W1/B1,W5/B1,A/B1,W1/B2,W2/B2,W3/B2,A/B2,W3/B3,A/B3,W4/B3,W1/B4,W2/B4,W3/B4,W6/B4} }{1}};
        \node at (0,-1.5) {$G$};
        \draw (3,1.5) -- (3,-1.5);
        \node at (5,0) {\onshellgraph{ {(0,0)/1,(0,-1)/2,(0,-2)/3,(-3,0)/5,(-3,-2)/4} }{ {(0,0)/W1,(0,-1)/W2,(0,-2)/W3,(-3,0)/W5,(-3,-2)/W4,(-2,-1)/A} }{ {(-2,0)/B1,(-1,-1)/B2,(-2,-2)/B3} }{ {W1/B1,W5/B1,A/B1,W1/B2,W2/B2,W3/B2,A/B2,W3/B3,A/B3,W4/B3} }{1}};
        \node at (5,-1.5) {$G_1$};
        \node at (9,0) {\onshellgraph{ {(0,0)/1,(0,-1)/2,(0,-2)/3,(2,-1)/6} }{ {(0,0)/W1,(0,-1)/W2,(0,-2)/W3,(2,-1)/W6} }{ {(1,-1)/B4} }{ {W1/B4,W2/B4,W3/B4,W6/B4} }{1}};
        \node at (9,-1.5) {$G_2$};
    \end{tikzpicture}
    \caption{A type (3,6) top-cell graph $G$, alongside its lobes $G_1$ and $G_2$.}\label{fig:lobe-example}
\end{figure}

The decomposition of $G$ into lobes gives decompositions $E = E_1 \cup \dots \cup E_\ell$ and $V = V_1 \cup \dots \cup V_\ell.$ If we order the edges and internal vertices accordingly, the $|E| \times |V|$ incidence matrix $Q$  is block-diagonal. Thus,
\begin{align*}
    (\bC^*)^{E - V} = \prod_{i = 1}^\ell (\bC^*)^{E_i - V_i}.
\end{align*}

The following lemma shows that $\mathbb{M}^G$ factors as the product of boundary measurement maps from each lobe, followed by a gluing map.

\begin{lemma}\label{lem:gluing exists}
    Let $G$ be a type $(k,n)$ graph with lobes $\{G_i\}_{i \in [\ell]}$ that are type $k_i$. Suppose that if $j$ is a boundary vertex of $G_i$, there is exactly one edge of $G_i$ incident to $j$. Choose an immersion of $G$ such that if $e,e'$ cross and belong to different lobes, they are both boundary edges. Then, there exists a gluing map $\Phi^G : \prod_{i =1}^\ell (\mathrm{im}\,\bM^{G_i}) \dashrightarrow \Gr_{k,n}$ such that
    \[
        \bM^G = \Phi^G \circ \prod_{i=1}^\ell \bM^{G_i},
    \]
    where each $\bM^{G_i}$ has as its domain the factor $(\bC^*)^{E_i - V_i}$ of $(\bC^*)^{E-V}$.
\end{lemma}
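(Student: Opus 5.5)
The plan is to build $\Phi^G$ directly from the Pl\"ucker formula in \zcref{def:bdry-meas}. A perfect matching of $G$ restricts to perfect matchings of the lobes, so each $\bM^G_I$ should be a multilinear expression in the lobe Pl\"ucker coordinates. That multilinear expression will be the gluing map.

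Let $S_i \subset [n]$ be the set of boundary vertices of $G_i$, ordered as in $[n]$. Set $n_i=|S_i|$, so that $G_i$ is an immersed type $(k_i,n_i)$ graph with $\bM^{G_i}:(\bC^*)^{E_i-V_i}\dashrightarrow \Gr_{k_i,S_i}$. Every edge of $G$ lies in exactly one lobe, and every internal vertex lies in exactly one $V_i$. Hence a perfect matching $M$ of $G$ is the same as a tuple $(M_1,\dots,M_\ell)$ with each $M_i$ a perfect matching of $G_i$ and with the boundaries $\partial M_i\subset S_i$ pairwise disjoint. Here we use that a boundary vertex is covered at most once. In this correspondence $\partial M=\bigsqcup_i \partial M_i$.

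Next we analyze crossings. We have $\cross(M)=\sum_i \cross(M_i)+c(M)$, where $c(M)$ counts crossings between edges of different lobes. By the hypothesis on the immersion, such crossings only involve boundary edges. Since each boundary vertex $j\in S_i$ has a unique edge of $G_i$, the boundary edges of $M_i$ are determined by $J_i:=\partial M_i$. So $c(M)=c(J_1,\dots,J_\ell)$ depends only on the tuple of boundary sets. Substituting into \zcref{def:bdry-meas}, and writing $\bM^{G_i}_{K}$ for the coordinate indexed by $K\subset S_i$ with $|K|=k_i$, we get
\[
\bM^G_I=(-1)^{\tot(I)}\sum_{\substack{J_1\sqcup\dots\sqcup J_\ell=I^c\\ J_i\subset S_i,\ |J_i|=n_i-k_i}}(-1)^{c(J_1,\dots,J_\ell)}\prod_{i=1}^\ell \epsilon_i(J_i)\,\bM^{G_i}_{S_i\setminus J_i},
\]
where $\epsilon_i(J_i)=\pm1$ undoes the sign $(-1)^{\tot}$ computed within $S_i$ (with respect to its relabeling $1,\dots,n_i$). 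This is a formal polynomial identity in the edge weights, and each term is multilinear with exactly one lobe coordinate per factor.

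Now define $\Phi^G$ on $\prod_i \bP(\bigwedge^{k_i}\bC^{S_i})$, hence on $\prod_i \mathrm{im}\,\bM^{G_i}$, by the right-hand side with the lobe coordinates $p^{(i)}_K$ in place of $\bM^{G_i}_K$. Because the expression is multihomogeneous of degree one in each factor, rescaling the Pl\"ucker vector of any one factor rescales all output coordinates simultaneously. So $\Phi^G$ is a well-defined rational map to $\bP^{\binom nk-1}$, undefined exactly where every output coordinate vanishes. The identity above then gives $\bM^G=\Phi^G\circ\prod_i\bM^{G_i}$ wherever the right side is defined. By the decomposition $(\bC^*)^{E-V}=\prod_i(\bC^*)^{E_i-V_i}$, both sides are defined on gauge classes.

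The remaining point, and the one needing care, is that $\Phi^G$ lands in $\Gr_{k,n}$. It is also a concern that $\Phi^G$ might be nowhere defined on $\prod_i\mathrm{im}\,\bM^{G_i}$. Both are handled by the identity itself. Since $G$ has a perfect matching, $\bM^G$ is not identically zero on $(\bC^*)^{E-V}$. So $\Phi^G$ is defined on a dense open subset of $\prod_i\overline{\mathrm{im}\,\bM^{G_i}}$, and there its values equal values of $\bM^G$, hence lie in $\Gr_{k,n}$ by \zcref{lem:bdry-meas-well-defined}. The Grassmannian is closed, so the image of this irreducible product lies in $\Gr_{k,n}$. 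The delicate bookkeeping is checking that $c(M)$ genuinely depends only on $(J_1,\dots,J_\ell)$. This uses both hypotheses: crossings between different lobes involve only boundary edges, and each $j\in S_i$ has a unique edge in $G_i$, so ``which boundary edges of lobe $i$ are in $M$'' is exactly the data $J_i$.
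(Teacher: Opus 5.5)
Your proposal is correct and takes essentially the same approach as the paper's proof. You restrict matchings to lobes, split $\cross(M)$ into within-lobe crossings plus a term depending only on $(J_1,\dots,J_\ell)$ (using both hypotheses exactly as the paper does), and define $\Phi^G$ by the resulting multilinear formula in the lobe Pl\"ucker coordinates; your additional checks---multihomogeneity making $\Phi^G$ well defined, and the argument that its values lie in $\Gr_{k,n}$---fill in points the paper leaves under ``by construction,'' and they are sound.
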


\begin{proof}
Let $N_i:=\bdG_i$ be the set of boundary vertices of lobe $G_i$ and set $n_i :=|N_i|$.

    The restriction of a perfect matching $M$ to a lobe $G_i$ is itself a perfect matching $M_i := M \cap G_i$ on the lobe. If lobe $G_i$ is a type $(k_i,n_i)$ graph, then $M_i$ has boundary
    \[
        \partial M_i := \{j \in \partial G_i : j \text{ is in some } e \in M_i\}
    \]
    which is of size $(n_i-k_i)$. For a fixed boundary $\partial M$, there can be multiple possibilities for $(\partial M_1, \dots , \partial M_\ell)$. For $J \subset \partial G$ of size $n-k,$ let
    \[
        L_J := \left\{ (J_1,\dots,J_\ell) : J_i \in \binom{N_i}{n_i-k_i},~\bigcup_{i = 1}^\ell J_i = J, ~J_i \cap J_j = \emptyset \right\}.
    \] 
    In words, $L_J$ consists of all the possibilities for $(\partial M_1, \dots , \partial M_\ell)$ if $M$ is a matching with boundary $J$.
    
    We can re-express the sum from \zcref{def:bdry-meas} as a sum over the lobes' perfect matchings:
    \[
        \begin{aligned}
            \bM^G_I(x_e) &= (-1)^{\tot(I)} \sum_{M : \partial M = I^c} (-1)^{\cross(M)} \prod_{e \in M} x_e \\
            &= (-1)^{\tot(I)} \sum_{(J_1,\dots,J_\ell) \in L_{I^c}} \sum_{M_1 : \partial M_1 = J_1} \dots \sum_{M_\ell : \partial M_\ell = J_\ell} (-1)^{\cross(\bigcup_{i = 1}^\ell M_i)} \prod_{e \in \bigcup_{i =1}^\ell M_i} x_e.
        \end{aligned}
    \]
    In the last line, the $i$th sum is a sum over perfect matchings $M_i$ of $G_i$.
    
    By our assumption on the immersion, the crossings of a perfect matching either occur within a lobe or are crossings between boundary edges of distinct lobes. This allows one to write $\cross(M) = \sum_{i = 1}^\ell \cross(M_i) + \cross(\partial M_1,\dots,\partial M_\ell)$, where the last term records only crossings between boundary edges of different lobes. Note that since any boundary vertex $j$ is incident to at most one edge in any lobe $G_i$, which boundary edges of different lobes cross is completely determined by the boundary sets $\partial M_1,\dots,\partial M_\ell$.

We have
    \[
        \begin{aligned}
            \bM^G_I(x_e) &= (-1)^{\tot(I)} \sum_{(J_1,\dots,J_\ell) \in L_{I^c}} \sum_{M_1 : \partial M_1 = J_1} \dots \sum_{M_\ell : \partial M_\ell = J_\ell} (-1)^{\cross(\bigcup_{i = 1}^\ell M_i)} \prod_{e \in \bigcup_{i =1}^\ell M_i} x_e \\
            &= (-1)^{\tot(I)} \sum_{(J_1,\dots,J_\ell) \in L_{I^c}} (-1)^{\cross(J_1,\dots,J_\ell)} \prod_{i = 1}^\ell \bigg( \sum_{M_i : \partial M_i = J_i} (-1)^{\cross(M_i)} \prod_{e \in M_i} x_e\bigg) \\
            &= (-1)^{\tot(I)} \sum_{(J_1,\dots,J_\ell) \in L_{I^c}} (-1)^{\cross(J_1,\dots,J_\ell)} \prod_{i = 1}^\ell \bigg( (-1)^{\sigma(J_i)} \bM^{G_i}_{N_i \setminus J_i}(x_e)\bigg)
        \end{aligned}
    \]
    where $(-1)^{\sigma(J_i)}$ is the appropriate sign.
    
    We define the map $\Phi^G$ to send a point in the product $\prod_{i =1}^\ell (\im \bM^{G_i})$ with Pl\"ucker coordinates
    \[\bigr( (p^1_{I_1})_{I_1 \in \binom{N_1}{k_1}}, \dots, (p^{\ell}_{I_\ell})_{I_\ell \in \binom{N_\ell}{k_\ell}}\bigr) \]
    to the point with Pl\"ucker coordinates
    \[
        p_{I}= (-1)^{\tot(I)} \sum_{(J_1,\dots,J_\ell) \in L_{I^c}} (-1)^{\cross(J_1,\dots,J_\ell)} \prod_{i = 1}^\ell \bigg( (-1)^{\sigma(J_i)} p^i_{N_i \setminus J_i}\bigg).
    \]

    By construction, this map satisfies $\bM^G= \Phi^G \circ \prod_{i=1}^\ell \bM^{G_i}$.
\end{proof}

\begin{rem}
    The assumptions of \zcref{lem:gluing exists} do not meaningfully restrict what graphs can be considered. Let $G$ be a type $(k,n)$ graph with lobes $\{G_i\}_{i \in [\ell]}$. One can apply expand moves at the boundary vertices of $G$ to obtain a graph $G'$ such that the boundary vertices of each lobe $G'_i$ are incident to at most one edge in $G'_i$. There is also a disk immersion of $G'$ that satisfies the conditions of the lemma, namely each pair of crossing edges belonging to different lobes are both boundary edges. One way to accomplish this is to place the interior vertices of the lobes in disjoint disks, and arrange the disks so they do not intersect the boundary edges of other lobes. See \zcref{fig:lobe-immersion} for an example.
\end{rem}

Unlike expand/contract moves, square moves can affect the decomposition of a graph into lobes, as illustrated in \zcref{fig:lobe-move}.

\begin{figure}[h!]
    \[
        \onshellgraphwithcode{
            {(210:2)/1,(150:2)/2,(90:2)/3,(30:2)/4,(-30:2)/5,(-90:2)/6}
         }{
            {(210:2)/W1,(150:2)/W2,(90:2)/W3,(30:2)/W4,(-30:2)/W5,(-90:2)/W6,(0.6,-0.45)/WW1,(0.6,0.45)/WW3,(1.3,0)/WW}
        }{
            {(-1.2,0)/B4,(0.25,-0.8)/BB1,(0.25,0.8)/BB3,(0.4,0)/B1,(1.1,-0.45)/BB5,(1.1,0.45)/BB4}
        }{
            {W1/B4/black,W2/B4/black,W3/B4/black,W6/B4/black,W1/BB1/black,W3/BB3/black,BB1/WW1/black,BB3/WW3/black,W2/B1/black,WW1/B1/black,WW3/B1/black,B1/WW/black,WW3/BB4/black,W4/BB4/black,WW/BB4/black,WW1/BB5/black,W5/BB5/black,WW/BB5/black}
        }{1.3}{
            \draw[dashed] (-1.2,0) circle (0.5);
            \node() at (-2,0) {$\tilde G_2'$};
            \draw[dashed] (0.6,0) circle (1.1);
            \node() at (2,0) {$\tilde G_1'$};
        }
    \]
    \caption{The graph $G'$ obtained from the graph $G$ of \zcref{fig:lobe-example} using expand moves on the boundary vertices $1$ and $3$. The immersion satisfies the conditions of \zcref{lem:gluing exists}.}\label{fig:lobe-immersion}
\end{figure}

\begin{figure}[h!]
    \[
        \onshellgraph{
            {(0,0)/1,(-2,0)/5,(-2,-2)/4,(0,-2)/3,(3,0)/2,(4,-1)/6}
        }{
            {(0,0)/W1,(-2,0)/W5,(-2,-2)/W4,(0,-2)/W3,(3,0)/W2,(4,-1)/W6,(-1,-1)/A,(1,-1)/B,(2,-2)/C}
        }{
            {(-1,0)/B1,(-1,-2)/B2,(0,-1)/B3,(1,-2)/B4,(2,-1)/B5,(3,-1)/B6}
        }{
            {W1/B1,W5/B1,A/B1,W4/B2,A/B2,W3/B2,W1/B3,A/B3,B/B3,W3/B4,B/B4,C/B4,B/B5,C/B5,W2/B5,W1/B6,C/B6,W6/B6}
        }{1}
    \]
    \caption{A type (3,6) top-cell graph $G''$, related to $G$ from \zcref{fig:lobe-example} by a sequence of expand/contract and square moves. This graph has only one lobe.}\label{fig:lobe-move}
\end{figure}

In the situation when $G$ and all of its lobes are top-cell graphs, the gluing map $\Phi^G$ has a very easy inverse map, given by coordinate projection. Before proving this, we need a combinatorial lemma.

\begin{lemma}\label{lem:top-lobes-k}
    Let $G$ be type $(k,n)$. Suppose $G$ and each of its lobes $G_i$ are all top-cell graphs. Then, $G_i$ are type $(k,|\partial G_i|)$ graphs.
\end{lemma}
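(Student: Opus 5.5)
The plan is to combine three pieces of information: a generic spanning argument via vector-relation configurations, which gives $k_i \ge k$; a matching count, which gives $\sum_i (n_i - k_i) = n-k$; and a dimension count coming from the top-cell hypotheses. Write $n_i := |\partial G_i|$ and $k_i := k_{G_i}$.

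\emph{Step 1: $n_i > k_i$ for every $i$.} Since $G$ is reduced, \zcref{lem:reduced-conditions} says $G/\bdG$ is connected, so every lobe contains at least one boundary vertex $j$ and a boundary edge $e$ at $j$. If $n_i = k_i$, every perfect matching of $G_i$ would avoid all boundary vertices. Since restrictions of perfect matchings of $G$ are perfect matchings of $G_i$, $e$ would then lie in no perfect matching of $G$. This contradicts \zcref{lem:reduced-conditions}(1).

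\emph{Step 2: $\sum_i (n_i-k_i) = n-k$.} Fix a perfect matching $M$ of $G$. Each boundary vertex lies in at most one edge of $M$, and each edge lies in exactly one lobe. Hence $\partial M$ is the disjoint union of the sets $\partial M_i$, where $M_i = M\cap E_i$ is a perfect matching of $G_i$ with $|\partial M_i| = n_i-k_i$. Comparing cardinalities gives the identity, exactly as in the definition of $L_J$ in the proof of \zcref{lem:gluing exists}.

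\emph{Step 3: $k_i \ge k$.} Take $Y$ generic in $\Gr_{k,n}$; since $\bM^G$ is dominant, $Y$ lies in the image. By \zcref{cor:fiber-vs-VRC-set}, $Y$ is the boundary of a VRC $(\bu,(r_e))$ on $G$ with vectors in $\bC^k$. Restricting to $G_i$, the columns $u_w$ for $w$ a white vertex of $G_i$ (internal or in $N_i$) form a $|W_i \cup N_i| \times k$ matrix annihilated by $K_i(r_e)$, the adjacency matrix of $G_i$. Genericity should let us take $K_i(r_e)$ full rank (this is the step needing care; I would argue that the full-rank locus is open and meets the fiber over a generic $Y$). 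Then $\ker K_i(r_e)$ has dimension $|W_i|+n_i-|B_i| = k_i$. So the column space of that matrix has dimension at most $k_i$, and in particular the vectors $u_j$, $j\in N_i$, span a space of dimension at most $k_i$. For generic $Y$, any $n_i$ columns span a space of dimension $\min(k,n_i)$, so $\min(k,n_i)\le k_i$. If $n_i<k$ this forces $k_i \ge n_i$, contradicting Step 1. Hence $n_i \ge k$ and $k_i\ge k$.

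\emph{Step 4: dimension count.} Each lobe is connected and meets $\partial G$, so $G_i/\partial G_i$ is connected. Since $(\bC^*)^{E-V}=\prod_i(\bC^*)^{E_i-V_i}$, \zcref{cor:basis-for-torus-mod-gauge} gives $\dim (\bC^*)^{E-V}=\sum_i \dim(\bC^*)^{E_i-V_i}$. Top-cell means the domain dimension equals the dimension of the Grassmannian, so
\[ k(n-k) = \sum_i k_i(n_i-k_i) \ge k\sum_i (n_i-k_i) = k(n-k), \]
where the inequality uses Steps 1 and 3 and the final equality is Step 2. Equality throughout, together with $n_i-k_i>0$, forces $k_i=k$ for all $i$.

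The main obstacle is Step 3: making sure that generic points of $\Gr_{k,n}$ admit preimage VRCs whose lobe adjacency matrices $K_i(r_e)$ are full rank. I would first try to handle this using that lobe $G_i$ is top-cell, so its $W_i$-full-rank locus is dense in $(\bC^*)^{E_i-V_i}$, and pulling back along the product decomposition.
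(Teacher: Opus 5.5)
Your proof is correct, and its skeleton matches the paper's: a lower bound $k_i \ge k$, the count $\sum_i (n_i-k_i) = n-k$, and the dimension identity $k(n-k) = \sum_i k_i(n_i-k_i)$. The difference is in how you obtain $k_i \ge k$.

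The paper argues purely with matchings. Since $\bM^G$ is dominant, no Pl\"ucker coordinate vanishes identically on its image, so every $k$-subset $I$ is avoided by some perfect matching $M$ of $G$. Taking $I \subseteq \partial G_i$, the restriction $M \cap E_i$ avoids at least $k$ boundary vertices of $G_i$, but it avoids exactly $k_i$ of them.

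Your VRC rank argument is the linear-algebra counterpart of this and uses more machinery, but it works. The step you flag as the main obstacle is not one. The VRC supplied by \zcref{cor:fiber-vs-VRC-set} has $K(r_e)$ of full row rank $|B|$. The rows indexed by $B_i$ are supported on the columns $W_i \cup N_i$, where they form exactly $K_i(r_e)$. So $K_i(r_e)$ automatically has full row rank $|B_i|$, giving $\dim\ker K_i(r_e) = k_i$, and no density argument on the lobes is needed.

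Your Step 1 ($n_i > k_i$) makes explicit something the paper uses tacitly. It is needed to extract $k_i = k$ from $\sum_i (k_i-k)(n_i-k_i) = 0$. It is also needed to rule out lobes with $n_i < k$, where the choice $I \subseteq \partial G_i$ in the matching argument is unavailable.
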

\begin{proof}
    Since $G$ is a top-cell graph, for any $k$-element subset $I$, there exists a perfect matching $M$ which avoids $I$. The restriction of a perfect matching to $G_i$ avoids $k_i$ boundary vertices and also avoids at least the $k$ boundary vertices $I$, so we must have $k_i \geq k$.

    Counting black and white vertices in each lobe separately, one identifies the constraint
    \[
        k(n - k) = k(|B| - |W|) = k \sum_{i} (|B_i| - |W_i|) = k\sum_{i} (|\partial G_i| - k_i).
    \]
     We also have that $|E|-|V| = \sum_{i} |E_i| -|V_i|$. Since $G$ and all its lobes are reduced top-cell graphs, $|E|-|V|$ is the dimension of $\Gr_{k,n}$ and $|E_i| -|V_i|$ is the dimension of $\Gr_{k_i, \bdG_i}$. So we have
    \[
        k(n-k) = \sum_i k_i (|\partial G_i| - k_i).
    \]
    Using the preceding expressions for $k(n-k)$,
    \[k\sum_{i} (|\partial G_i| - k_i) =\sum_i k_i (|\partial G_i| - k_i) \]
    which together with $k_i \ge k$ implies $k_i = k$ for every lobe.
\end{proof}

If $G_i$ is a lobe of $G$, we write $(\bu,\bfR)_{G_i}$ for the restriction of the VRC to $G_i$.

\begin{lemma}\label{lem:gluing-inverse}
    Suppose immersed top-cell graph $G$ has lobes $G_1, \dots, G_\ell$, each of which is itself a top-cell graph, and satisfies the assumptions of \zcref{lem:gluing exists}. The gluing map $\Phi^G$ is the inverse of the coordinate projection map
    \[
        \begin{aligned}
            \pi^G : \Gr_{k,n} &\dashrightarrow \prod_{i \in [\ell]} \Gr_{k,\partial G_i}, \\
            (p_I)_{I \in \binom{[n]}{k}} &\mapsto \prod_{i \in [\ell]} (p_I)_{I \in \binom{\partial G_i}{k}}.
        \end{aligned}
    \]
\end{lemma}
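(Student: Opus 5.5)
By \zcref{lem:top-lobes-k}, each lobe $G_i$ has type $(k,n_i)$ with $n_i=|\bdG_i|$, so $\im \bM^{G_i}$ is dense in $\Gr_{k,\bdG_i}$ and $\pi^G$ has the stated target. The plan is to prove $\pi^G\circ\Phi^G=\id$ on a dense open subset of $\prod_i \Gr_{k,\bdG_i}$, and then deduce that $\Phi^G\circ\pi^G=\id$ from a dimension count. By \zcref{lem:gluing exists}, $\Phi^G\circ\prod_i\bM^{G_i}=\bM^G$. So the first claim is equivalent to the identity
\[\pi^G\circ \bM^G=\prod_{i}\bM^{G_i}\]
of rational maps on $(\bC^*)^{E-V}=\prod_i(\bC^*)^{E_i-V_i}$. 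Here I use that $\prod_i \bM^{G_i}$ is dominant onto $\prod_i\Gr_{k,\bdG_i}$.

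To prove this identity I would use vector-relation configurations rather than the explicit sign-laden formula for $\Phi^G$. Take generic weights $(x_e)$ and set $r_e=\sigma_e x_e$, with $\sigma$ immersion signs for $G$. By \zcref{lem:bdry-meas-vs-VRC}, $\bM^G(x_e)=\partial[\bu,(r_e)]$ for the unique VRC with coefficients $(r_e)$. Every black vertex of $G$ lies in exactly one lobe, and all its neighbours lie in that lobe. Hence the restriction $(\bu,\bfR)_{G_i}$ satisfies all the black-vertex relations of $G_i$, and it is a VRC on $G_i$ as soon as $u_j$, $j\in\bdG_i$, span $\bC^k$. Its boundary is the column submatrix of $\partial(\bu,\bfR)$ on the columns $\bdG_i$. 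The Pl\"ucker coordinates of that submatrix are exactly $(p_I)_{I\in\binom{\bdG_i}{k}}$ of $\bM^G(x_e)$, which is $\pi^G$ on the $i$-th factor. Applying \zcref{lem:bdry-meas-vs-VRC} again on $G_i$ then identifies this boundary with $\bM^{G_i}$ at the corresponding weights. That gives the identity.

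Two genericity conditions are needed: the $u_j$ with $j\in\bdG_i$ span $\bC^k$, and $K_i(r_e|_{E_i})$ is $W_i$-full rank. Both are open conditions, and both are nonempty because $G_i$ is top-cell, so some minor $\Delta_{W_i\cup J}$ of $K_i$ is nonzero. Since the lobe weights are independent coordinates, these conditions hold on a dense open set.

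The step I expect to be the main obstacle is matching the signs. \zcref{lem:bdry-meas-vs-VRC} on $G_i$ requires immersion signs $\sigma^{(i)}$ for $G_i$, and I need $\sigma|_{E_i}$ to serve as such. I would first try to show this via \zcref{lem:signs-for-K}. Under the immersion hypothesis of \zcref{lem:gluing exists}, $K$ is block-diagonal on the internal columns $W=\bigsqcup W_i$. A maximal minor $\Delta_{W\cup I^c}$ then factors through the lobes, and this should force $\sigma|_{E_i}$ to satisfy Speyer's condition for the induced immersion of $G_i$, up to a global sign per lobe, which is harmless projectively.

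If this fails, there is a fallback. Two sign choices on $E_i$ that both realize the same matching expansion up to sign differ by a gauge/torus rescaling. Such a rescaling acts on $\bM^{G_i}$ by the $(\bC^*)^{n_i}$-action of \zcref{lem:bdry-meas-torus-eq}, and I would need it to be trivial, which I would check using the fact that crossings within a lobe are counted identically in $G$ and in $G_i$.

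Finally, set $\dim\prod_i\Gr_{k,\bdG_i}=\sum_i k(n_i-k)$. The proof of \zcref{lem:top-lobes-k} shows this equals $k(n-k)$. So $\Phi^G$ is a dominant rational map between irreducible varieties of the same dimension with a left inverse $\pi^G$; in particular it is generically injective. Then $\Phi^G\circ\pi^G$ agrees with the identity on the dense image of $\Phi^G$, since $\Phi^G\circ\pi^G\circ\Phi^G=\Phi^G$. Hence $\Phi^G$ and $\pi^G$ are mutually inverse birational maps.
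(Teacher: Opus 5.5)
Your route is the paper's: restrict the VRC of $G$ to each lobe, observe that $\pi^G$ reads off exactly the boundary of the restricted VRC, and finish with dominance. You also correctly isolate the step the paper's proof passes over in silence. Identifying $\partial[\bu,\bfR]_{G_i}$ with $\bM^{G_i}$ via \zcref{lem:bdry-meas-vs-VRC} needs $\sigma|_{E_i}$ to be immersion signs for $G_i$. Neither of your two resolutions works, because this is false in general. So is the identity $\pi^G\circ\bM^G=\prod_i\bM^{G_i}$ to which you reduce the lemma.

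The block structure gives something weaker. Write $\epsilon(M)$ for the sign with which $\prod_{e\in M}x_e$ occurs in the minor $\Delta_{W\cup I^c}(\sigma K)$, and $\epsilon_i(M_i)$ for the analogous sign for $\sigma|_{E_i}K_i$. By \zcref{lem:signs-for-K}, $\epsilon(M)(-1)^{\cross(M)}$ is constant. Now $\epsilon(M)$ and $\cross(M)$ differ from $\prod_i\epsilon_i(M_i)$ and $\sum_i\cross(M_i)$ by a column-reordering sign and an inter-lobe crossing count, and both of these depend only on $(\partial M_1,\dots,\partial M_\ell)$. So you can only conclude that $\epsilon_i(M_i)(-1)^{\cross(M_i)}$ is a function of $\partial M_i$ alone, not that it is constant. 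Route (a) therefore gives agreement only up to the $\star$-action of some $t\in\{\pm1\}^{\bdG_i}$. In route (b), this $t$ is in general nontrivial.

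Here is a concrete instance. Take $k=1$, $n=3$, with black vertices $b_1\sim\{1,2\}$ (weights $\alpha_1,\alpha_2$) and $b_2\sim\{2,3\}$ (weights $\beta_2,\beta_3$). Draw $b_2$ inside the region bounded by the two edges at $b_1$ and the boundary arc from $1$ to $2$ not containing $3$, so that the only crossing is between the edges $b_23$ and $b_12$. All hypotheses hold: $G$ and both lobes are top-cell, and only boundary edges cross. We get $\bM^G=[\alpha_2\beta_3:\alpha_1\beta_3:-\alpha_1\beta_2]$, so the first factor of $\pi^G\circ\bM^G$ is $[\alpha_2:\alpha_1]$. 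But $\bM^{G_1}=[-\alpha_2:\alpha_1]$. Hence $\pi^G\circ\Phi^G\neq\id$, and the statement as written fails for this immersion.

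More generally, push a black vertex of $G_{i'}$, all of whose edges are boundary edges, across the boundary edge of $G_i$ at $a$. This is an admissible (PV) move. It leaves $\bM^{G_i}$ unchanged, but by \zcref{lem:boundary-measurement-moves} and \zcref{lem:bdry-meas-torus-eq} it twists factor $i$ of $\pi^G\circ\bM^G$ by $t_a=-1$. So no fixed convention for the lobes makes the literal claim hold for every admissible immersion.

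What you can prove is $\pi^G\circ\Phi^G=t\star$ for some $t\in\prod_i\{\pm1\}^{\bdG_i}$. The sign pattern above carries a dense subset of $\Gr_{k,\bdG_i}$ into $\Gr_{k,\bdG_i}$, hence is a torus sign. For tripods this is immediate from \zcref{lem:bdry-meas-torus-eq}, since every edge sign is a boundary-vertex sign. Then $\Phi^G$ is birational with inverse $t\star\circ\pi^G$, and your dimension and dominance step goes through unchanged. This suffices for \zcref{thm:pushforward-hypertree}, because such signs do not affect $d\log$ of ratios of Pl\"ucker coordinates. The paper's proof applies \zcref{lem:bdry-meas-vs-VRC} to the lobes with the restricted signs without comment, so it shares this gap.
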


\begin{proof}
    Suppose $(\sigma_e r_e)$ is in the domain of $\bM^G$, so $\bM(\sigma_e r_e)$ is the boundary of a VRC $[\bu, (r_e)]$ by \zcref{lem:bdry-meas-vs-VRC}. Since $k_i = k$ for each lobe by \zcref{lem:top-lobes-k}, we can apply \zcref{lem:bdry-meas-vs-VRC} to each boundary measurement map appearing in $\Phi^G \circ \prod_{i \in [\ell]}\bM^{G_i} = \bM^G$. Then, we obtain  
    \[
        \Phi^G( \partial[\bu,(r_e)]_{G_1} , \dots , \partial[\bu,(r_e)]_{G_\ell} )=\partial[\bu,(r_e)].
    \]
    Applying $\pi^G$ on both sides we obtain
    \[
        (\pi^G \circ \Phi^G) ( \partial[\bu,(r_e)]_{G_1} , \dots , \partial[\bu,(r_e)]_{G_\ell} ) = (\partial[\bu,(r_e)]_{G_1} , \dots , \partial[\bu,(r_e)]_{G_\ell}),
    \]
    determining that $\pi^G \circ \Phi^G = \id$. This implies that $\Phi^G$ is injective with one-sided inverse $\pi^G$. By \zcref{lem:gluing exists}, the image of $\bM^G$ is contained in the image of $\Phi^G.$ Since $G$ is top-cell, $\bM^G$ is a dominant rational map and thus $\Phi^G$ is also dominant. So $\pi^G$ is in fact the unique two-sided inverse of $\Phi^G$.

\end{proof}

\begin{cor}\label{cor:degree-factors}
    Under the assumptions of \zcref{lem:gluing-inverse}, the degree of the gluing map is $\deg \Phi^G = 1$, and the degree of the boundary measurement map $\bM^G$ is the product of the component-wise degrees
    \[
        \deg \bM^G = \prod_{i \in [\ell]} \deg \bM^{G_i}.
    \]
\end{cor}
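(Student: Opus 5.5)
The plan is to read both claims off \zcref{lem:gluing-inverse} together with the factorization of \zcref{lem:gluing exists}, using multiplicativity of degree for compositions of dominant, generically finite rational maps.

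\textbf{Degree of $\Phi^G$.} By \zcref{lem:top-lobes-k}, each lobe $G_i$ is type $(k,|\bdG_i|)$. Since $G_i$ is top-cell, $\overline{\im \bM^{G_i}} = \Gr_{k,\bdG_i}$. So the domain of $\Phi^G$ is a dense subset of $\prod_i \Gr_{k,\bdG_i}$. By \zcref{lem:gluing-inverse}, $\Phi^G$ has a two-sided rational inverse $\pi^G$, namely $\pi^G\circ\Phi^G=\id$ on a dense open set, and $\Phi^G$ is dominant. Hence $\Phi^G$ is birational. A birational map is generically injective with generic fiber a single point, so $\deg\Phi^G=1$. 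I would note that this also forces
\[\dim \prod_i \Gr_{k,\bdG_i} = \dim \Gr_{k,n},\]
which is consistent with the dimension count in the proof of \zcref{lem:top-lobes-k}.

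\textbf{Degree of $\bM^G$.} Consider the product map $\prod_i \bM^{G_i}:\prod_i(\bC^*)^{E_i-V_i}\dashrightarrow\prod_i\Gr_{k,\bdG_i}$. Each factor is dominant and generically finite, since each $G_i$ is reduced and top-cell. The fiber of the product over a generic point $(Y_1,\dots,Y_\ell)$ is the product of the fibers $(\bM^{G_i})^{-1}(Y_i)$. A generic point of the product has each $Y_i$ generic in its factor: the complement of the product of the good dense opens is a proper closed subset. Hence
\[\deg\prod_i\bM^{G_i}=\prod_i\deg\bM^{G_i}.\]
Finally, $\bM^G=\Phi^G\circ\prod_i\bM^{G_i}$ by \zcref{lem:gluing exists}, and $(\bC^*)^{E-V}=\prod_i(\bC^*)^{E_i-V_i}$. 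Degree is multiplicative under composition of dominant generically finite rational maps of equidimensional irreducible varieties, because the function field degrees multiply in the tower. Therefore
\[\deg\bM^G=\deg\Phi^G\cdot\prod_i\deg\bM^{G_i}=\prod_i\deg\bM^{G_i}.\]

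\textbf{Main obstacle.} The only delicate point is the indeterminacy loci. I need the image of $\prod_i\bM^{G_i}$ to meet the open set where $\Phi^G$ and $\pi^G$ are mutually inverse. This holds because that image is dense in the target, while the good open set is nonempty and open; so one can restrict to dense opens on which every map is a morphism and count fibers there. Alternatively, I would argue purely with function fields. The map $\Phi^G$ induces an isomorphism $\bC(\Gr_{k,n})\cong\bC(\prod_i\Gr_{k,\bdG_i})$. The extension $\bC(\prod_i(\bC^*)^{E_i-V_i})/\bC(\prod_i\Gr_{k,\bdG_i})$ has degree $\prod_i\deg\bM^{G_i}$, since it is a compositum of the independent extensions coming from the separate factors. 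This sidesteps the pointwise bookkeeping entirely.
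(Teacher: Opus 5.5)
Your proposal is correct and follows the intended argument. The paper gives no separate proof: the corollary is read off from \zcref{lem:gluing-inverse}, which makes $\Phi^G$ birational and hence of degree $1$, together with the factorization $\bM^G=\Phi^G\circ\prod_i\bM^{G_i}$ and multiplicativity of degree for products and compositions of dominant, generically finite maps — exactly your route. Your treatment of the indeterminacy loci, and the function-field version, simply make explicit what the paper leaves implicit.
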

In future sections, we will consider those graphs $G$ for which $\deg \bM^G = 1$. Since $\deg \bM^{G_i} = 1$ for a reduced plabic graph $G_i$, we learn from the corollary that the boundary measurement map of a gluing of plabic graphs is also degree 1.

\section{Perfect matching toric varieties} \label{sec:toric}

In this section, we define a toric variety $X_G$ for each type $(k,n)$ graph $G$. The nonnegative part of a toric variety is a positive geometry \cite{ahbl2017}, and so has a distinguished rational differential top-form called the \emph{canonical form}, which in this case is the dlog form of the dense torus. The boundary measurement map $\bM^G: (\bC^*)^{E-V} \dashrightarrow \Gr_{k,n}$ factors through $X_G$; we will soon use this to define the on-shell form of $G$, by pushing forward the canonical form of $X_G$.

We will define $X_G$ using the \emph{perfect matching polytope} of $G$, so we begin with the combinatorics of this polytope. We then define the toric variety and discuss its properties.

\subsection{Perfect matching polytope}

\begin{definition}
    For $G$ a type $(k,n)$ graph and $M \in \match(G)$, let $\chi(M) \in \bR^E$ denote the indicator vector of $M$. The \emph{perfect matching polytope} of $G$ is
    \[P(G):=\conv(\chi(M): M \in \match(G)) \subset \bR^E.\]
\end{definition}

The perfect matching polytope $P(G)$ (and corresponding toric variety $X_G$) were first defined in \cite{psw2009} for plabic $G$, and the following results on the inequality description and dimension of $P(G)$ were proved in ibid. for plabic $G$. We provide the proofs for general $G$ for the convenience of the reader.

\begin{proposition}\label{prop:matching-polytope-inequality}
    The polytope $P(G)$ has inequality description
    \[P(G)= \{x \in \bR^E: x_e \ge 0 \text{ for all }e \in E, \sum_{e \ni v} x_e =1 \text{ for all }v \in V, \sum_{e \ni v} x_e \le 1 \text{ for all }v \in \bdG\}.\]
\end{proposition}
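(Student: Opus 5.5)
Write $Q(G)$ for the polytope on the right-hand side. The inclusion $P(G)\subseteq Q(G)$ is immediate. Each indicator vector $\chi(M)$ satisfies the inequalities, since a perfect matching uses each internal vertex exactly once and each boundary vertex at most once. The set $Q(G)$ is convex, so it contains the convex hull of these vectors. The substance is the reverse inclusion. I will show that $Q(G)$ is an integral polytope and that its integer points are exactly the vectors $\chi(M)$.

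\emph{Integer points.} An integer point $x\in Q(G)$ has all coordinates in $\{0,1\}$, because $x_e\ge 0$ and $x_e\le \sum_{e'\ni v}x_{e'}\le 1$ for either endpoint $v$ of $e$. Hence $x=\chi(S)$ for some set of edges $S$. The vertex conditions say exactly that $S$ covers each internal vertex once and each boundary vertex at most once, so $S$ is a perfect matching in the sense of the paper.

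\emph{Integrality.} Since $Q(G)$ is bounded, it suffices to show that every vertex of $Q(G)$ is integral. I will use total unimodularity. The constraint matrix has one row for each vertex of $G$, internal or boundary, and one column for each edge. This is the unsigned vertex-edge incidence matrix of the bipartite graph $G$, which is totally unimodular. Stacking it with an identity block for the constraints $x_e\ge 0$ preserves total unimodularity. All right-hand sides are integers ($0$ or $1$). By the Hoffman--Kruskal theorem, every vertex of
\[\{x : x\ge 0,\ A_V x = \mathbf 1,\ A_{\bdG}x\le \mathbf 1\}\]
is then integral. Here $A_V$ and $A_{\bdG}$ denote the rows of the incidence matrix for internal and boundary vertices respectively; equalities are handled as paired inequalities, which keeps the matrix totally unimodular. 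Combining this with the previous paragraph, every vertex of $Q(G)$ equals some $\chi(M)$. Therefore $Q(G)=\conv\{\chi(M)\}=P(G)$.

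\emph{Self-contained alternative.} For readers who prefer to avoid the total unimodularity citation, I would give a direct argument. Take a vertex $x$ of $Q(G)$ and suppose it is non-integral. Let $F$ be the set of edges with $0<x_e<1$. Every internal vertex touched by $F$ meets at least two edges of $F$, since its incident coordinates sum to $1$ and one of them is fractional. A boundary vertex with fractional incidence may meet only one edge of $F$, but in that case its constraint is slack.

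Then $F$ contains either an even cycle, which is automatic because $G$ is bipartite, or a path between two boundary vertices, or a path from a boundary vertex back to the same boundary vertex. In each case the inner vertices of the path or cycle are internal or tight; the endpoints are boundary vertices with slack constraints. Alternately adding and subtracting $\varepsilon$ along this path or cycle keeps every equality $\sum_{e\ni v} x_e=1$ at inner vertices. At boundary endpoints it changes the sum by $\pm\varepsilon$, which is allowed by slackness for small $\varepsilon$. This writes $x$ as the midpoint of two distinct points of $Q(G)$, contradicting that $x$ is a vertex.

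The main obstacle is this boundary bookkeeping. A boundary vertex with two fractional edges might be tight, but then it behaves like an internal vertex and can be passed through. The walk along $F$ therefore terminates only at cycles or at slack boundary vertices, and an even length is not needed at slack endpoints.
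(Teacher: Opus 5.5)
Your proof is correct. Your main argument takes a different route from the paper, while your ``self-contained alternative'' is essentially the paper's proof. The paper shows that every vertex of $Q(G)$ is integral by the perturbation argument: take the subgraph of fractional edges, find either a cycle or a path between two boundary vertices with slack constraints, and alternately add and subtract $\varepsilon$. The paper handles a tight boundary endpoint by noting that it has fractional degree at least $2$, so the path can be continued; your tight-versus-slack bookkeeping makes the same point a bit more carefully. Your third case, a path returning to the same boundary vertex, is just a cycle, and the perturbation preserves the sum at every interior vertex of the path whether or not that vertex is tight.

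Your primary argument instead notes that the constraint system is the unsigned incidence matrix of the bipartite graph $G$, with the equality rows doubled up to sign and an identity block appended. That matrix is totally unimodular, so Hoffman--Kruskal gives integrality. This route is shorter and uses a fact the paper itself relies on later, when it invokes total unimodularity of the incidence matrix in the GIT discussion. It works verbatim for any integral right-hand sides. By the Baum--Trotter theorem, total unimodularity would also give directly the integer decomposition property of $P(G)$, which the paper obtains separately from Ohsugi--Hibi. The cost is reliance on an external theorem. The perturbation argument is fully elementary, and it shows explicitly which combinatorial configurations in the fractional support stop a non-integral point from being a vertex: even cycles and boundary-to-boundary paths.
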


\begin{proof}
    For the inequality description, temporarily denote by $Q(G)$ the polytope on the right-hand side. It is clear that $P(G) \subset Q(G)$. We also have that $Q(G) \cap \bZ^E = P(G) \cap \bZ^E$, since the condition on coordinate sums forces $x_e \le 1$ for all $e$ and a 0/1 vector satisfying the inequalities for $Q(G)$ is the indicator vector of a perfect matching. So it suffices to show that all vertices of $Q(G)$ are integer vectors. 

    Consider a non-integer point $x \in Q(G)$. We will produce two other points $x', x'' \in Q(G)$ such that $x = \frac{1}{2} (x' + x'')$, showing that $x$ is not a vertex of $Q(G)$. 
    
    Let $H$ be the subgraph of $G$ consisting of edges where $0 < x_e < 1$. Any internal vertex $v$ which is in an edge of $H$ must have degree at least 2 in $H$, since $\sum_{e \ni v} x_e =1$. This implies that $H$ contains either a cycle or a path from a boundary vertex to a boundary vertex. Note that we may assume that in the latter case, the boundary vertices $v$ involved satisfy $\sum_{e \ni v} x_e <1$, since otherwise $v$ is also degree at least 2 in $H$ and the path can be extended to a cycle.
    
   Let $e_1, \dots, e_m$ be the edges of either the cycle or the path, in order. Let $x'$ be obtained from $x$ by adding a very small positive $\epsilon$ to $x_{e_i}$ for the odd edges and subtracting $\epsilon$ from $x_{e_i}$ for the even edges. Define $x''$ similarly, swapping the role of even and odd. For sufficiently small $\epsilon$, $x', x'' \in Q(G)$ and clearly $x = \frac{1}{2} (x' + x'')$.

\end{proof}

\begin{rem} \label{rem:bdry-vt-deg1-easier-polytope}
If we assume that all boundary vertices of $G$ are degree 1, then the inequalities
\[\sum_{e \ni v} x_e \le 1 \text{ for all }v \in \bdG\]
are implied by the other inequalities of $P(G)$.
\end{rem}

Before providing the dimension of $P(G)$, it will be useful to record that (EC) moves essentially do not affect the perfect matching polytope.

\begin{lemma}\label{lem:polytope-ec-moves}
    Suppose $G$ and $G'$ are related by an (EC) move. Then $P(G)$ and $P(G')$ are linearly isomorphic by a map which is bijective on lattice points in their respective affine hulls. 
\end{lemma}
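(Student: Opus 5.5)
The plan is to write down an explicit bijection between perfect matchings of $G$ and of $G'$, and to check that it comes from a linear map on edge spaces. There are two cases, since a boundary vertex may occur in the white (EC) move. We treat the internal case first.

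Say $G'$ is obtained from $G$ by expanding a vertex $v$ of $G$ (say white, the black case being symmetric). In $G'$, $v$ is replaced by $v_1 - b - v_2$, with $b$ black and $v_1, v_2$ white. The edge set $E$ of $G$ is split between $v_1$ and $v_2$, and there are two new edges $f_1=\{v_1,b\}$ and $f_2=\{v_2,b\}$, so $E' = E \sqcup \{f_1,f_2\}$. Given a perfect matching $M$ of $G$, exactly one edge $e\in M$ contains $v$. If $e$ is incident to $v_1$ in $G'$, set $M' = M\cup\{f_2\}$; otherwise set $M' = M \cup \{f_1\}$. Conversely, a perfect matching of $G'$ uses exactly one of $f_1,f_2$. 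Deleting it gives a perfect matching of $G$, because $b$ is matched via $f_i$ and the other endpoint of the deleted $f$-edge is matched elsewhere. So we have a bijection $\match(G)\to\match(G')$.

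On indicator vectors this bijection is given by the linear map $L:\bR^E\to\bR^{E'}$ defined by
\[x \mapsto \Big(x,\; x_{f_1} = \sum_{e\ni v_2} x_e,\; x_{f_2}=\sum_{e \ni v_1} x_e\Big).\]
Here sums are over edges of $G$ incident to $v$ that are attached to $v_2$ (respectively $v_1$) in $G'$. Indeed, $f_1 \in M'$ exactly when the matched edge at $v$ goes to $v_2$. Since $L$ is linear and carries vertices to vertices bijectively, it maps $P(G)$ onto $P(G')$. It is injective because the projection $\bR^{E'}\to\bR^E$ forgetting $f_1,f_2$ is a left inverse. Both $L$ and this projection are integral. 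Therefore $L$ restricts to a bijection $\mathrm{aff}(P(G))\cap\bZ^E \to \mathrm{aff}(P(G'))\cap \bZ^{E'}$: integer points go to integer points under $L$, and the projection sends integer points of $\mathrm{aff}(P(G'))$ to integer points of $\mathrm{aff}(P(G))$. Finally, the projection inverts $L$ on $\mathrm{aff}(P(G'))$, since on the affine hull the coordinates $x_{f_i}$ are forced by the vertex equations at $v_1,v_2$, which hold on every vertex of $P(G')$.

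Now suppose $v$ is a boundary vertex $i$. Then $G'$ has $v_1 = i$ on the boundary, with new black vertex $b$ and new internal white vertex $v_2$. Here a matching of $G$ may leave $i$ unmatched. In that case $v_2$ must be matched through $f_2$ (if no other edge at $v_2$ is used), and $i$ remains unmatched. The map becomes
\[x_{f_2} = 1-\sum_{e\ni v_1}x_e, \qquad x_{f_1}=\sum_{e\ni v_2}x_e.\]
This map is affine rather than linear. This is the main subtlety: the statement says ``linearly isomorphic''. I would handle it by viewing everything in the affine hulls, where the constant $1$ can be absorbed, for example after homogenizing or translating by the integer vertex $\chi(M_0)$. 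Integrality of the constant term keeps the lattice bijection intact, and the rest of the argument goes through unchanged. Checking that the boundary case still yields a bijection of matchings, including the case where $i$ is unmatched, is the step I expect to need the most care.
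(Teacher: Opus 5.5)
Your overall strategy matches the paper's, and your internal case is correct: you build a bijection $\match(G)\to\match(G')$, note that the two new coordinates are determined on the affine hull by vertex equations, and use the fact that the coordinate-forgetting projection is integral in both directions. One wording slip there: when you delete $f_1$ from a matching of $G'$, it is the other outer vertex $v_2$, not the other endpoint $v_1$ of $f_1$, that is matched elsewhere. The paper runs the argument in the opposite direction. It starts from the expanded graph and observes that its polytope lies in the codimension-two affine subspace cut out by the equations at the bivalent middle vertex and at an \emph{internal} outer vertex. It then takes the linear projection forgetting the two extra coordinates as the isomorphism.

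The boundary case is wrong as written. With $v_1=i$, suppose the matched edge of $M$ at $i$ is attached to $v_1$. Then $b$ can only be matched via $f_2$, so $x_{f_2}=1$, but your formula $x_{f_2}=1-\sum_{e\ni v_1}x_e$ gives $0$. If the matched edge is attached to $v_2$, your formula gives $1$ instead of $0$. In both cases $x_{f_1}+x_{f_2}\neq 1$, so the image is not even in $P(G')$. The correct formula is $x_{f_2}=1-\sum_{e\ni v_2}x_e=1-x_{f_1}$, which is exactly what the equations at $b$ and at the internal vertex $v_2$ force. For the same reason, the rest does not go through ``unchanged'': in your inverse step you must use the equations at $b$ and $v_2$, since $v_1=i$ only satisfies an inequality.

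The linear-versus-affine issue you flag is not a real obstacle, and it needs no homogenizing or translating; translating would in any case replace $P(G')$ by a translate. Take as the isomorphism the projection $\bR^{E'}\to\bR^{E}$ that you already identified. It is linear and maps $P(G')$ bijectively onto $P(G)$. It is injective on $\mathrm{aff}(P(G'))$, and its inverse there is your corrected affine map, which has integral coefficients and integer constant term. So lattice points in the two affine hulls correspond bijectively. This is precisely the paper's route.
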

\begin{proof}
    Suppose $G$ is as on the left in \zcref{fig:moves} and $G'$ is as on the right, with edge sets $E, E'$ respectively. We consider $E'$ as a subset of $E$. Say $e_\ell$ and $e_r$ are the edges of $G$ which are not edges of $G'$, and the other edges adjacent to $e_r$ are $e_1, \dots, e_p$. All points in $P(G)$ satisfy
    \[x_{e_r}= 1-(x_{e_1} + \dots + x_{e_p}) \qquad \text{and} \qquad x_{e_{\ell}}= 1-x_{e_r} = x_{e_1} + \dots + x_{e_p}.\]
    So $P(G)$ is contained in the codimension 2 affine subspace $S$ cut out by the above two equations. Consider the projection map $\pi$ from $S \subset \bR^E$ to $\bR^{E'}$ which forgets the coordinates $x_{e_\ell}$ and $x_{e_r}$. Clearly $\pi$ is an isomorphism, and takes $\bZ^E \cap S$ bijectively to $\bZ^{E'}$. Further, $\pi$ sends the vertices of $P(G)$ to the vertices of $P(G')$, so the polytopes are linearly isomorphic as claimed.
\end{proof}

Recall that $G/\bdG$ is the graph obtained from $G$ by identifying all boundary vertices.

\begin{lemma}\label{lem:polytopedim}
     If every edge of $G$ is used in some perfect matching, then $P(G)$ has dimension $|E|-|V|+(c-1)$, where $c$ is the number of connected components of $G/\bdG$. 
\end{lemma}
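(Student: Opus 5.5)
The plan is to compute the dimension of the affine hull of $P(G)$ as the dimension of the space of directions $\chi(M)-\chi(M')$. By \zcref{prop:matching-polytope-inequality}, $P(G)$ lies in the affine subspace cut out by $\sum_{e \ni v} x_e = 1$ for $v \in V$. So every difference of vertices lies in
\[\{p \in \bR^E : \textstyle\sum_{e \ni v} p_e = 0 \text{ for all } v \in V\}.\]
Since the graph is bipartite, flipping the sign of the coordinates on edges turns this into the kernel of the signed incidence map of $G/\bdG$. That kernel is $H_1(G,\bdG)\otimes \bR$, whose dimension is $|E|-|V|+(c-1)$ by the computation in the proof of \zcref{cor:basis-for-torus-mod-gauge}. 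Up to this sign convention, the upper bound $\dim P(G) \le |E|-|V|+(c-1)$ is therefore immediate.

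For the lower bound I will show that the differences $\chi(M)-\chi(M')$ span this whole space. By \zcref{lem:polytope-ec-moves}, I may first perform (EC) moves freely, since they preserve the dimension. They also preserve $|E|-|V|$ and $c$, and they preserve the hypothesis that every edge lies in a perfect matching. Next I use the hypothesis: the point $x^0 = \frac{1}{|\match(G)|}\sum_M \chi(M)$ lies in $P(G)$ and has $x^0_e > 0$ for every edge. Moreover, at any boundary vertex $v$ with $\sum_{e\ni v} x^0_e = 1$, every perfect matching uses $v$.

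I then argue that $x^0$ lies in the relative interior of the affine slice $S = \{x : \sum_{e\ni v}x_e = 1,\ v\in V\} \cap \{\sum_{e\ni v} x_e = 1 \text{ for boundary } v \text{ tight at } x^0\}$. All inequalities $x_e \ge 0$ are strict at $x^0$, and the non-tight boundary inequalities are strict as well. Hence $P(G)$ contains a neighborhood of $x^0$ in $S$, and $\dim P(G) = \dim S$.

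The main obstacle is showing that the extra equalities coming from tight boundary vertices do not cut the dimension below $|E|-|V|+(c-1)$. These are boundary vertices used by every perfect matching. Such an equality involves only the edges at one boundary vertex $v$, and in $G/\bdG$ it constrains the flow through $v$. I expect to handle this by a cycle argument. Take an alternating boundary-to-boundary path through $v$ in $H_1(G,\bdG)$. I then try to show that, when $v$ is always matched, such paths can be replaced by cycles or by paths between non-tight boundary vertices without losing rank.

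If this fails, the fallback is to reduce to the case where all boundary vertices have degree one (via expand moves at the boundary and \zcref{lem:polytope-ec-moves}) and redo the count. I expect the intended statement may need care exactly here: if a boundary vertex is matched in every perfect matching, the count could genuinely drop. So I will check whether the hypotheses, or $c$ computed appropriately, absorb this case.
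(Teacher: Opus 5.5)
\textbf{There is a genuine gap, and it cannot be closed: the statement is false as written.}

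Your upper bound and your relative-interior step are correct. Since every edge is used, the barycenter $x^0$ of the $\chi(M)$ is strictly positive. The boundary vertices tight at $x^0$ are exactly those matched in every perfect matching, and $\dim P(G)=\dim S$. The gap is the step you leave open, namely that the tight boundary equations do not lower the dimension. Your suspicion is right.

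Take $n=2$, no internal white vertices, and black vertices $b_1,b_2$, each joined to both boundary vertices $1,2$. Write $e_{ij}=\{b_i,j\}$. This has $k=0$; adding a disjoint tripod on new boundary vertices $3,4,5$ gives an example with $k=2$.
\begin{itemize}
\item The perfect matchings are $\{e_{11},e_{22}\}$ and $\{e_{12},e_{21}\}$. So every edge is used and $P(G)$ is a segment.
\item $G/\bdG$ is connected with $3$ vertices and $4$ edges, so $|E|-|V|+(c-1)=2$.
\item The tight equation $x_{e_{11}}+x_{e_{21}}=1$ is not implied by the equations at $b_1,b_2$. The alternating path $1,b_1,2$ (that is, $p_{e_{11}}=1$, $p_{e_{12}}=-1$) lies in $H_1(G,\bdG)$ and changes it.
\end{itemize}
In general your slice argument gives $\dim P(G)=|E|-|V|+(c-1)-r$. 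Here $r$ is the rank on $H_1(G,\bdG)\otimes\bR$ of the functionals $p\mapsto\sum_{e\ni v}p_e$, taken over boundary vertices $v$ matched in every perfect matching. Any such $v$ joined through internal vertices to another boundary vertex gives a nonzero functional. So no cycle argument can force $r=0$.

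Your fallback is exactly the paper's proof. It expands boundary vertices to degree one and applies \zcref{lem:polytope-ec-moves}. It then observes that $P(G)=\{x\ge 0,\ \sum_{e\ni v}x_e=1 \text{ for } v\in V\}$ meets the open orthant, so its affine hull is a translate of $H_1(G,\bdG)\otimes\bR$. That is correct when all boundary vertices have degree one.

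The reduction, however, needs a claim you also make: that (EC) moves preserve the hypothesis that every edge is in a perfect matching. This is false. Expanding a boundary vertex $v$ (a new black $b'$ adjacent to $v$ and to a new white $w'$ that takes over the old edges of $v$) creates the edge $\{b',w'\}$. That edge lies in a perfect matching exactly when some perfect matching of $G$ leaves $v$ unmatched. In the example it is never used, so the paper's reduction breaks at the same point.

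The repair is to assume additionally that every boundary vertex of degree at least $2$ is left unmatched by some perfect matching. Then your argument closes without any moves:
\begin{itemize}
\item A tight boundary vertex $v$ then has degree one.
\item Its edge $e=\{b,v\}$ is in every perfect matching, so any other edge at $b$ would be in none. Hence $b$ has degree one.
\item The tight equation $x_e=1$ is then just the equation at $b$, and $S$ is the affine space cut out by the internal vertex equations.
\end{itemize}
Reduced graphs satisfy this extra hypothesis, by \zcref{lem:reduced-conditions}(1) and (3) applied to the expanded graph. This covers the use of the lemma in \zcref{sec:form}. The same example also contradicts \zcref{prop:bdry-meas-torus-vs-dense-torus} as stated: there $(\bC^*)^{E-V}$ is $2$-dimensional while $X_G\cong\bP^1$.
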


\begin{proof}
    We first assume that all boundary vertices of $G$ are degree 1. Under this assumption, 
    \[P(G)= \{x \in \bR^E: x_e \ge 0 \text{ for all }e \in E, \sum_{e \ni v} x_e =1 \text{ for all }v \in V\}\]
    and $P(G)$ intersects the interior of the positive orthant in $\bR^E$. So its affine hull is the same as the affine linear space which cuts $P(G)$ out of the positive orthant. This is the linear space 
    \[\{x \in \bR^E: \sum_{e \ni v} x_e =1 \text{ for all }v \in V\}\]
    which is a parallel translate of the vector space satisfying $\sum_{e \ni v} x_e =0$ for all $v \in V$. This vector space is precisely the relative homology group $H_1(G, \bdG)$, which has dimension $|E|-|V| + (c-1)$. 

    Now, if any boundary vertices are not degree 1, we may apply an (EC) move to make them degree 1. By \zcref{lem:polytope-ec-moves}, the resulting graph has isomorphic polytope, of the dimension claimed in the lemma statement.
\end{proof}

Recall that an integral polytope $P \subset \bR^d$ 
has the \emph{integer decomposition property} if for every positive $k \in \bZ$, every integer point in $k P$ is a sum of $k$ integer points in $P$. The integer decomposition property in particular implies the polytope is normal. The following result is a consequence of \cite[Lemma 2.1]{OH01}.

\begin{proposition}\label{prop:matching-polytope-normal}
    The polytope $P(G)$ has the integer decomposition property and in particular is normal. 
\end{proposition}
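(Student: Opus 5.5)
We need to show $P(G)$ has the integer decomposition property (IDP). The plan is to reduce to the standard fact that polytopes cut out by totally unimodular systems have IDP, which is the content of \cite[Lemma 2.1]{OH01}. Alternatively, we can argue directly via bipartite matchings.

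First we reduce to the case where all boundary vertices have degree 1. By \zcref{lem:polytope-ec-moves}, an (EC) move at a boundary vertex replaces $P(G)$ by a linearly isomorphic polytope. The isomorphism is bijective on lattice points of the affine hulls, and it commutes with dilation: it is the restriction of an affine-linear projection that forgets coordinates, and the forgotten coordinates are affine-integral functions of the others. So IDP transfers. After this reduction, by \zcref{rem:bdry-vt-deg1-easier-polytope},
\[P(G)=\{x\in\bR^E: x\ge 0,\ \textstyle\sum_{e\ni v}x_e=1 \ \forall v\in V\}.\]

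Now take an integer point $y\in mP(G)$. Then $y\in\bZ_{\ge0}^E$, every internal vertex has $y$-weighted degree exactly $m$, and every boundary vertex has weighted degree at most $m$. Consider the multigraph $G_y$ with $y_e$ copies of each edge $e$. It is bipartite, and every vertex has degree at most $m$.

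By K\H{o}nig's edge-colouring theorem, the edges of a bipartite multigraph of maximum degree $m$ can be properly coloured with $m$ colours. Every internal vertex has degree exactly $m$, so it meets every colour class exactly once. Each boundary vertex meets each colour class at most once. Hence each colour class is a perfect matching $M_j$ of $G$ in the sense of this paper, and $y=\sum_{j=1}^m \chi(M_j)$. This gives IDP, and normality follows.

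Equivalently, one could cite \cite[Lemma 2.1]{OH01}, using that the vertex–edge incidence matrix of a bipartite graph is totally unimodular. Then $mP(G)\cap\{x: x-\chi(M)\in(m-1)P(G)\}$ is an integral polytope, which allows peeling off one matching at a time. The only step needing care is checking that the edge-colouring respects the "at most one" boundary condition, which it automatically does. The degree-1 reduction is not even needed for the colouring argument, since boundary vertices with degree at most $m$ are handled directly. So the main obstacle is essentially just invoking K\H{o}nig's theorem correctly.
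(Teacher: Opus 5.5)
Your proof is correct, but it takes a different route from the paper.

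\textbf{The paper's route.} It first uses (EC) moves to make every boundary vertex have degree one. Then, by \zcref{rem:bdry-vt-deg1-easier-polytope}, $P(G)$ is cut out by $x_e\ge 0$ and the vertex equalities $\sum_{e\ni v}x_e=1$ alone. It cites \cite[Lemma 2.1]{OH01} for the integer decomposition property of a polytope in that form. Finally, it transfers the property back to $G$ through the lattice-preserving isomorphism of \zcref{lem:polytope-ec-moves}.

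\textbf{Your route.} You decompose an integer point $y\in mP(G)$ explicitly, by properly $m$-edge-colouring the bipartite multigraph $G_y$ via K\H{o}nig's theorem. Because the colouring is proper, parallel copies of an edge receive different colours. So each colour class is a genuine subset of $E$ meeting every internal vertex exactly once and every boundary vertex at most once.

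\textbf{Comparison.} Your argument is more elementary and self-contained, and it handles the boundary inequalities directly. As you note, this makes the (EC) reduction and the transfer along \zcref{lem:polytope-ec-moves} unnecessary. It also needs only the trivial containment of $P(G)$ in the inequality-defined polytope. Indeed, any point of $mP(G)$ is $m$ times a convex combination of matchings, so it has the required weighted degrees. The full description of \zcref{prop:matching-polytope-inequality} is not needed. The paper's route is shorter, given the citation.

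\textbf{A small correction to your aside.} The polytope whose integrality lets you peel off one matching is $P(G)\cap\bigl(y-(m-1)P(G)\bigr)$. It is nonempty because it contains $y/m$, and it is integral by total unimodularity. The set $mP(G)\cap\{x: x-\chi(M)\in(m-1)P(G)\}$ that you wrote is not the right object. This does not affect your main König argument.
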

\begin{proof}
 Note that $P(G)$ is a 0/1 polytope, and is described by the inequalities
 \[0 \le x_e \leq 1\text{ for $ e \in E$ and } b_v \leq \sum_{e \ni v} x_e \leq b_v + \epsilon_v \text{ for }v \in V \cup \bdG\]
where $b_v = 1, \epsilon_v =0$ if $v \in V$ and $b_v=0, \epsilon_v=1$ if $v \in \bdG$.

If all boundary vertices of $G$ are degree 1, then by \zcref{rem:bdry-vt-deg1-easier-polytope}, we can omit the inequalities above involving $v \in \bdG$. So in this case, we have $\epsilon_v=0$ in all inequalities, and the integer decomposition property (IDP) is implied by \cite[Lemma 2.1]{OH01}. If any boundary vertices are higher degree, we may perform (EC) moves to obtain $G'$ with all boundary vertices of degree 1. So $P(G')$ has IDP. By \zcref{lem:polytope-ec-moves}, $P(G')$ is linearly isomorphic to $P(G)$ via an isomorphism which is bijective on integer points in their affine hulls. Such an isomorphism preserves IDP.
\end{proof}

\subsection{Perfect matching varieties}

\begin{definition}\label{def:perfect-matching-variety}
For $G$ a type $(k,n)$ graph, let $\match(G)$ be the set of perfect matchings of $G$. We define a monomial map 
\begin{align*}
    (\bC^*)^E &\to \bP^{|\match(G)|-1}\\
    (r_e)_{e \in E}&\mapsto \left(\prod_{e \in M} r_e\right)_{M \in \match(G)}.
\end{align*}

The closure of the image of this map is a toric variety $X_G$, which we call the \emph{perfect matching variety}.
\end{definition}

Using monomial maps to define toric varieties can sometimes result in undesired behavior. The next lemma shows that is not the case here.

\begin{lemma}
    The variety $X_G \subset \bP^{|\match(G)|-1}$ is normal. Its moment polytope is $P(G)$.
\end{lemma}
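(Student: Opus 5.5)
The approach is to identify $X_G$ with the projective toric variety attached to the lattice polytope $P(G)$, and then to get normality from \zcref{prop:matching-polytope-normal}. By \zcref{def:perfect-matching-variety}, $X_G$ is the closure of the image of the monomial map whose exponent vectors are the lattice points $\chi(M)\in\bZ^E$, $M\in\match(G)$. These are exactly the vertices of $P(G)$. They are also all of its lattice points: $P(G)$ is a $0/1$ polytope, and by \zcref{prop:matching-polytope-inequality} any integer point of $P(G)$ is the indicator vector of a perfect matching. So $X_G = X_{P(G)\cap\bZ^E}$ is the toric variety defined by the full set of lattice points of $P(G)$. Its dense torus is the image of $(\bC^*)^E$, and its character lattice is the lattice $L$ generated by the differences $\chi(M)-\chi(M')$, together with a homogenizing coordinate.

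The standard result (e.g.\ Cox--Little--Schenck, Thm.\ 2.3.1 and Prop.\ 2.1.8 on very ample and normal polytopes) says that for a lattice polytope $P$ with respect to a lattice $L$, the variety $X_{P\cap L}$ is projectively normal exactly when $P$ is normal with respect to $L$. Its moment polytope is $P$, with normal fan that of $P$. There is one subtlety, which I expect to be the main obstacle. \zcref{prop:matching-polytope-normal} gives the integer decomposition property with respect to $\bZ^E$ intersected with the affine hull of $P(G)$. The torus of $X_G$, however, sees only the lattice $L$ generated by the points $\chi(M)-\chi(M')$. I need $L$ to equal the full lattice $\bZ^E\cap H_1$, where $H_1$ is the linear span parallel to $\mathrm{aff}\,P(G)$. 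Otherwise the character lattice of $X_G$ is a sublattice, and normality must be checked with respect to it.

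I would settle this directly using IDP. Take $v\in\bZ^E\cap(\mathrm{aff}P(G)-\mathrm{aff}P(G))$. Choose an interior-ish point: fix $p=\chi(M_0)$ and a large integer $m$ such that $v+m\,q\in mP(G)$, where $q$ is the barycenter of the vertices scaled appropriately. Concretely, let $N=|\match(G)|$ and $s=\sum_M\chi(M)$, so $s/N$ lies in the relative interior. Then $v+t s\in tN\,P(G)$ for $t\gg0$, and this point is an integer point in $\mathrm{aff}(tNP(G))$. By IDP it is a sum of $tN$ vertices. Hence $v$ is an integer combination of the $\chi(M)$ with coefficients summing to $0$, that is, $v\in L$. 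So the two lattices agree.

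With the lattices matched, IDP (\zcref{prop:matching-polytope-normal}) gives that the semigroup generated by $(\chi(M),1)$ is saturated in $L\oplus\bZ$. Therefore the affine cone over $X_G$ is normal, and so $X_G$ is (projectively) normal. The moment polytope statement then follows formally. The moment map image of $X_G\subset\bP^{N-1}$ for a monomial embedding is the convex hull of the exponent vectors, which is $P(G)$ by definition. Normality also ensures the torus-orbit/face correspondence holds with $P(G)$ itself, not with some non-saturated variant.
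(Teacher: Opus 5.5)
Your proposal is correct and follows the paper's argument. The exponent vectors $\chi(M)$ are exactly the lattice points of the $0/1$ polytope $P(G)$; IDP from \zcref{prop:matching-polytope-normal} gives normality (hence very ampleness), so $X_G$ is the normal toric variety of $P(G)$; and the moment polytope claim is the standard fact for monomial maps. Your extra lattice check $L=\bZ^E\cap H_1$ is valid (the paper uses the same saturation fact in \zcref{prop:bdry-meas-torus-vs-dense-torus}), but normality does not need it: IDP with respect to $\bZ^E$ already makes the semigroup generated by the $(\chi(M),1)$ saturated in the group it generates.
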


\begin{proof}
    The variety $X_G$ is defined using a monomial map. The exponent vectors for the monomial map are given by the vertices of $P(G)$, which, since $P(G)$ is a $0/1$ polytope, is the same as $P(G) \cap \bZ^E$. Since $P(G)$ has IDP, it is normal and in particular very ample. Thus, the embedded variety $X_G$ is the same as the abstract toric variety defined using the normal fan of $P(G)$, so is normal. The statement about moment polytopes is a standard fact about toric varieties defined using monomial maps.
\end{proof}

Note that the monomial map in \zcref{def:perfect-matching-variety} factors through $(\bC^*)^{E-V}$ so rational functions on $X_G$ pull back to gauge-invariant functions on $(\bC^*)^E$. In fact, the monomial map often descends to an isomorphism.

\begin{proposition}\label{prop:bdry-meas-torus-vs-dense-torus}
    Suppose every edge of $G$ is used in at least one perfect matching. Then the monomial map of \zcref{def:perfect-matching-variety} induces an isomorphism of algebraic tori between $(\bC^*)^{E-V}$ and the dense open torus of $X_G$.
\end{proposition}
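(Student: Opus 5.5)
Our plan is to reduce the statement to a comparison of character lattices. The dense torus of $X_G$ is the torus whose character lattice is the sublattice $L \subset \bZ^E$ generated by differences $\chi(M)-\chi(M')$ of indicator vectors of perfect matchings. Equivalently, it is the image of $(\bC^*)^E$ under the monomial map, taken modulo the overall scaling in $\bP^{|\match(G)|-1}$. By \zcref{cor:basis-for-torus-mod-gauge}, the character lattice of $(\bC^*)^{E-V}$ is $H_1(G,\bdG) \subset \bZ^E$, viewed as the saturated lattice of gauge-invariant characters. The monomial map factors through $(\bC^*)^{E-V}$ because each $\chi(M)-\chi(M')$ is gauge-invariant. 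Indeed, each internal vertex meets exactly one edge of $M$ and one of $M'$, so the signed sum at $v$ vanishes once signs are set by the white/black convention. Thus $L \subseteq H_1(G,\bdG)$, and the induced map of tori is dual to this inclusion. It is an isomorphism if and only if $L = H_1(G,\bdG)$.

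The first step is equality of ranks. By \zcref{lem:polytopedim}, when every edge lies in a perfect matching, $\dim P(G) = |E|-|V|+(c-1)$, which equals $\operatorname{rank} H_1(G,\bdG)$. The rank of $L$ is the dimension of the affine span of the vertices of $P(G)$, so $L$ has finite index in $H_1(G,\bdG)$. Here one should take care with the sign convention: $\chi(M)$ lives in $\bZ^E$ unsigned, while $H_1$ was identified using the boundary map $e \mapsto b-w$. Gauge invariance in the sense of the earlier lemma only requires $\sum_{e\ni v} p_e = 0$ unsigned. That is exactly the condition satisfied by $\chi(M)-\chi(M')$, so we work with that description throughout.

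The main step, which we expect to be the obstacle, is saturation: we must show $L = (\operatorname{aff} P(G) - \operatorname{aff} P(G)) \cap \bZ^E$. For this we use normality of $P(G)$, specifically the integer decomposition property of \zcref{prop:matching-polytope-normal}. Take $p \in H_1(G,\bdG)$. Fix $x_0$, the barycenter or any relative interior rational point of $P(G)$, and write $x_0 = \frac1N \sum_M \chi(M)$ with positive coefficients, using that every edge is used. Then for large $m$, the point $q := mN x_0 + p$ is an integer point lying in $mN\,P(G)$. It is nonnegative, and by \zcref{prop:matching-polytope-inequality} the vertex equations scale to $mN$ because $p$ has zero vertex sums. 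The boundary inequalities hold for large $m$ after first reducing, via (EC) moves and \zcref{lem:polytope-ec-moves}, to the case where boundary vertices have degree 1, as in \zcref{rem:bdry-vt-deg1-easier-polytope}. By IDP, $q = \sum_{j=1}^{mN} \chi(M_j)$, and likewise $mN x_0$ is an integer combination $\sum \chi(M'_j)$ with the same number of terms. Subtracting gives $p \in L$.

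Hence $L = H_1(G,\bdG)$, the dual map of tori is an isomorphism onto the dense torus of $X_G$, and the proposition follows. We also have to check that the reduction by (EC) moves is compatible with the identification of tori. This holds because (EC) moves insert a degree-2 vertex and a new edge fixed by gauge, which changes neither $(\bC^*)^{E-V}$ nor the matching monomials, up to the lattice isomorphism of \zcref{lem:polytope-ec-moves}.
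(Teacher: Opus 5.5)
Your set-up (comparing the lattice $L$ with $H_1(G,\bdG)$ and proving saturation via IDP) is the paper's. However, the argument has a genuine gap at the reduction to degree-one boundary vertices, and it cannot be closed, because the statement is false as written.

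\textbf{Where the argument fails.} An (EC) move at a boundary vertex $i$ replaces $i$ by a path $i, b', w'$, where $w'$ inherits the old edges of $i$. The new edge $\{b',w'\}$ lies in exactly those perfect matchings that avoid $i$. So if $\deg i\ge 2$ and every perfect matching covers $i$, the expanded graph has an edge lying in no perfect matching. Then your $Nx_0$ vanishes on that edge, and $q=mNx_0+p$ need not be nonnegative. Skipping the reduction only moves the problem to the boundary inequality at $i$, which reads $mN+\sum_{e\ni i}p_e\le mN$. Since $-p\in H_1(G,\bdG)$ as well, you would need $\sum_{e\ni i}p_e=0$ for every $p$, and this fails in general.

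\textbf{Counterexample.} Let $G$ have boundary vertices $1,2$, no internal white vertices, and black vertices $b_1,b_2$, each joined to both $1$ and $2$; this is type $(0,2)$. Its perfect matchings are $M_1=\{\{b_1,1\},\{b_2,2\}\}$ and $M_2=\{\{b_1,2\},\{b_2,1\}\}$, so every edge is used. But $X_G=\bP^1$ has a one-dimensional torus, while $\dim(\bC^*)^{E-V}=4-2+0=2$. Concretely, $L=\bZ(\chi(M_1)-\chi(M_2))$, whereas the vector $p$ with $p_{\{b_1,1\}}=1$, $p_{\{b_1,2\}}=-1$ and all other entries $0$ lies in $H_1(G,\bdG)\setminus L$. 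Adding a disjoint black vertex joined to two new boundary vertices gives a type $(1,4)$ example with the same defect.

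\textbf{Relation to the paper and a fix.} This is not a departure from the paper: its proof is the same lattice comparison, and the same defect sits inside \zcref{lem:polytopedim}, whose (EC) step has exactly this problem (in the example, $\dim P(G)=1$). What is missing is an extra hypothesis, for instance: every boundary vertex of degree at least $2$ is avoided by some perfect matching. Under it your argument works with no (EC) moves at all. Take $x_0$ to be the barycenter. For such an $i$ one has $\sum_{e\ni i}(Nx_0)_e\le N-1$, so the boundary inequality holds once $m\ge\sum_{e\ni i}p_e$. For a boundary vertex of degree one, the inequality follows from the equation at its black neighbour. Your saturation step then shows $H_1(G,\bdG)\subseteq L$ directly, so the rank comparison is not even needed. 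That makes your route slightly more self-contained than the paper's bare assertion that IDP gives saturation.

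\textbf{Why the fix covers the paper's use.} The extra hypothesis holds for reduced $G$, which is the only case used in \zcref{sec:form}. By Hall's theorem, a boundary vertex covered by every matching lies in $N(S)$ for some set $S$ of black vertices with $|N(S)|=|S|$. If every edge is used, then $H=S\cup N(S)$ is a union of components of $G$, and its edge weights do not affect $\bM^G$. Generic finiteness therefore forces $|E_H|=|V_H|$, using that each component of $H$ contains a boundary vertex. Internal white vertices of a leafless graph have degree at least $2$, so $|E_H|=|V_H|$ forces every boundary vertex of $H$ to have degree one.
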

\begin{proof}
    Let $T$ be the dense torus of $X_G$. Choose any perfect matching $M_0$, and let $L \subset \bZ^E$ be the lattice generated by the differences $\{\chi(M)-\chi(M_0): M \in \match(G)\}$. By \cite[Proposition 2.1.6]{CoxLittleSchenck}, $L$ is the character lattice of $T$. Its rank is $\dim P(G)$.

    Let $L' \subset \bZ^E$ be the character lattice of $(\bC^*)^{E-V}$. This is the kernel of $Q^T$, the $|V| \times |E|$ incidence matrix. Its rank is $|E|-|V|+ (c-1)$ by \zcref{cor:basis-for-torus-mod-gauge}, where $c$ is the number of connected components of $G/\bdG$.

    The monomial map induces a surjective homomorphism $s:(\bC^*)^{E-V} \to T$, which is dual to the inclusion $L \subset L'$. We would like to show that in fact $L=L'$.

    By \zcref{prop:matching-polytope-normal}, $P(G)$ has IDP. This implies that $L$ is saturated in $\bZ^E$, meaning that $L= \mathrm{span}_{\bR}(L) \cap \bZ^E$. So $L'/L$ is torsion-free. By \zcref{lem:polytopedim}, $\dim P(G)=|E|-|V|+ (c-1)$. So $L$ and $L'$ have the same rank. This implies $L'/L$ is trivial, as desired. So $s$ is dual to the identity map of character lattices, and is an isomorphism.
\end{proof}

Recall the moves (EC), (PV), (S) on weighted immersed graphs from \zcref{fig:moves}. The (PV) moves have no effect on the perfect matching variety $X_G$, since the variety does not depend on the immersion. \zcref{lem:polytope-ec-moves} implies that if $G, G'$ are related by an (EC) move, then $X_G, X_{G'}$ are isomorphic. It is an immediate corollary of \zcref{lem:boundary-measurement-moves} that the moves also determine birational maps between perfect matching varieties which preserve boundary measurements.

\begin{cor} \label{cor:toric-var-moves}
    Let $G, G'$ be immersed type $(k,n)$ graphs which are related by a (PV), (EC), or (S) move, and let $\phi$ be the corresponding rational map on edge weights. Then $\phi$ determines a birational map $\phi: X_G \dashrightarrow X_{G'}$ such that $\bM^{G'} \circ \phi = \bM^{G}.$
\end{cor}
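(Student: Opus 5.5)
The plan is to realize $\phi$ as a composite of three maps: identify the dense torus of $X_G$ with $(\bC^*)^{E-V}$, apply the edge-weight map of \zcref{fig:moves}, and then identify $(\bC^*)^{E'-V'}$ with the dense torus of $X_{G'}$. The identifications come from \zcref{prop:bdry-meas-torus-vs-dense-torus}. Its hypothesis, that every edge lies in a perfect matching, is not assumed here. So the first step is a reduction: if some edge $e$ lies in no perfect matching, delete it. This changes neither $X_G$ (the monomial map never involves $r_e$) nor $\bM^G$ (which also never involves $x_e$). I would then check that deleting such edges is compatible with the moves, in the sense that an edge in no matching of $G$ corresponds to an edge or edges in no matching of $G'$, away from the local picture. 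If this bookkeeping is awkward, an alternative is to work with $(\bC^*)^{E-V}\to T_G$, which is always surjective, and show that $\phi$ descends.

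For the (PV) move, $\phi$ is $x_q\mapsto -x_q$. This is a torus automorphism that multiplies each matching monomial by $\pm1$, so it is the restriction of a linear automorphism of $\bP^{|\match(G)|-1}$ (diagonal with signs). That automorphism preserves $X_G$ and is even biregular. For (EC), \zcref{lem:polytope-ec-moves} gives a lattice-preserving isomorphism $P(G)\cong P(G')$, hence $X_G\cong X_{G'}$. I then need to confirm that this isomorphism agrees on tori with the weight map of \zcref{fig:moves}, where the new edges carry weight $1$. This holds because the matchings of $G$ and $G'$ correspond bijectively, with monomials related by setting $x_{e_\ell}=x_{e_r}=1$.

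For (S), $\phi$ is a birational automorphism of the ambient edge-weight tori. It is given by $x_i'=x_i/(x_1x_3+x_2x_4)$, whose inverse has the same form, and it commutes with gauge transformations, so it descends to a birational map $(\bC^*)^{E-V}\dashrightarrow(\bC^*)^{E'-V'}$. Composing with the torus identifications yields a birational map between the dense tori, and therefore between the closures $X_G\dashrightarrow X_{G'}$. Descent through gauge is the main point to check: gauge at a vertex of the square should correspond to gauge on $G'$ at the new vertices. This is the standard plabic computation, and I would do it by verifying that the characters $y_e$ of \zcref{rem:torus-coordinates} transform rationally.

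Finally, $\bM^{G'}\circ\phi=\bM^G$ on the dense torus follows from \zcref{lem:boundary-measurement-moves}, because $\bM^G$ factors through $T_G$. Both sides are rational maps that agree on a dense open set, so they agree. I expect the main obstacle to be the first reduction, namely ensuring that the hypothesis of \zcref{prop:bdry-meas-torus-vs-dense-torus} holds or can be bypassed. The (S) case is standard.
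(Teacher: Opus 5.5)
Your main line is what the paper has in mind when it calls this an immediate corollary of \zcref{lem:boundary-measurement-moves}: transport $\phi$ through the torus identifications of \zcref{prop:bdry-meas-torus-vs-dense-torus} on both sides, pass to closures, and deduce $\bM^{G'}\circ\phi=\bM^G$ by density. Your (PV) and (EC) arguments are fine. Those two cases need no hypothesis at all, since the perfect matchings of $G$ and $G'$ correspond bijectively.

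The gap is your first step: removing the hypothesis ``every edge lies in a perfect matching'', either by deleting unused edges or by showing that $\phi$ descends along $(\bC^*)^{E-V}\to T_G$. Neither can succeed for (S), because without that hypothesis the conclusion is false. Here is a counterexample.
\begin{itemize}
\item Attach a pendant (degree-one, internal) neighbor to each of the four square vertices. Put the boundary vertices in a separate component, e.g.\ a single black vertex joined to $1$ and $2$.
\item In $G$, every square vertex is forced onto its pendant. So the square edges lie in no perfect matching, and $T_G$ does not see the face variable $x_1x_3/(x_2x_4)$.
\item In $G'$, the outer vertices are again forced onto their pendants, so the spokes are unused. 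The inner $4$-cycle must then be matched internally, in either of its two ways.
\item Hence $X_{G'}\cong X_G\times\bP^1$ (Segre), so $\dim X_{G'}=\dim X_G+1$. No birational map $X_G\dashrightarrow X_{G'}$ exists, even though $\bM^{G'}\circ\phi=\bM^G$ still holds.
\end{itemize}
Since $T_{G'}$ does see $x_1'x_3'/(x_2'x_4')=x_1x_3/(x_2x_4)$, the map $\phi$ does not even factor through $T_G$. Deleting unused edges does not help either. It produces graphs that are no longer related by a square move, since $G'$ acquires an extra closed $4$-cycle component.

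So the hypothesis has to be assumed rather than bypassed. It is needed for both $G$ and $G'$, since you use the torus identification on both sides. A convenient sufficient condition is that $G$ be reduced. Then $G'$ is reduced by \zcref{lem:reduced-conditions}(3), and every edge of both graphs lies in a perfect matching by part (1). This is also the only setting in which the paper uses the corollary, namely \zcref{lem:moves-preserve-form}, where $G$ is reduced throughout.

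With that hypothesis added, the rest of your proposal goes through. Your gauge check for (S) is correct. Gauge by $t$ at a square vertex of $G$ corresponds, in $G'$, to gauge by $t$ at the same outer vertex combined with gauge by $t^{-1}$ at its inner partner, which keeps the spoke weight $1$. Moreover $|E'|-|V'|=|E|-|V|$, with the same number of components of $G/\bdG$. So the transported map is birational between tori of equal dimension.
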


\subsection{Perfect matching variety as GIT quotient}\label{ssec:GIT}

Finally, we show that the toric variety $X_G$ may be viewed as a GIT quotient of $\bC^E$ by the action of the torus $(\bC^*)^V.$ Let $Q$ be the $|E| \times |V|$ incidence matrix of internal vertices and consider the lattice $N$ defined by
\[\bZ^V \xrightarrow{Q} \bZ^E \to N \to 0.\]
Note that $Q$ is a submatrix of the incidence matrix of $G,$ given by deleting the columns corresponding to $\partial G$. Since the graph $G$ is bipartite, its incidence matrix is totally unimodular, and so is $Q.$ Thus the cokernel is free. 

Indeed, $N$ is the co-character lattice of the torus $(\bC^*)^{E - V}.$ Let $M$ be the dual lattice to $N,$ which can be obtained via the dual sequence
\begin{equation}\label{eq:duallattice}
    0 \to M \to \Hom(\bZ^E, \bZ) \xrightarrow{Q^T} \Hom(\bZ^V, \bZ).
\end{equation}

To get an embedded toric variety via the toric GIT construction, one requires a character $a = (a_1, \ldots, a_{|E|})$ of the torus $(\bC^*)^E.$ The corresponding polytope is then 
\begin{equation}\label{eq:git polytope}
P_a = \left\{ m \in M_{\mathbb{R}} \ \middle|\ \langle m, \nu_e \rangle \geq -a_e,\ e \in E \right\} \subseteq M_{\mathbb{R}},\end{equation}
where $v_e$ are the images in $N$ of the standard basis of $\bZ^E.$ By \cite[Theorem 14.2.13]{CoxLittleSchenck}, the quotient $\bC^E /\!/_a (\bC^*)^V$ is isomorphic to the toric variety of $P_a.$  

\begin{lemma}\label{lem:matching-variety-is-git-quotient}
    Suppose $G$ is a type $(k,n)$ graph whose boundary vertices have degree one. Let $a$ be the incidence vector of any perfect matching. Under the inclusion induced by \eqref{eq:duallattice} the polytope $P_a$ maps to a translate of the polytope $P(G).$ In particular, $X_{P(G)}$ is a toric GIT quotient of $\bC^E.$ 
\end{lemma}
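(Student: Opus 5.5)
We identify $M$ with a sublattice of $\bZ^E$ via \eqref{eq:duallattice}: $M=\ker Q^T=\{m\in\bZ^E:\sum_{e\ni v} m_e=0 \text{ for all } v\in V\}$, which is $H_1(G,\bdG)$ as in the proof of \zcref{cor:basis-for-torus-mod-gauge}. Under this identification, the pairing $\langle m,\nu_e\rangle$ with the image $\nu_e\in N$ of the standard basis vector is just the coordinate $m_e$. Indeed, the dual of the quotient $\bZ^E\to N$ is the inclusion $M\hookrightarrow \Hom(\bZ^E,\bZ)$, so $\langle m, \nu_e\rangle = m(\mathbf{e}_e)=m_e$. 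So the plan is to rewrite \eqref{eq:git polytope} concretely as
\[P_a=\{m\in M_\bR : m_e\ge -a_e \text{ for all } e\in E\},\]
with $M_\bR=\{m\in\bR^E: \sum_{e\ni v}m_e=0 \ \forall v\in V\}$.

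Next take $a=\chi(M_0)$ for a perfect matching $M_0$ and consider the translation $m\mapsto m+a$. If $m\in P_a$, then $x=m+a$ satisfies $x_e\ge 0$ for every $e$. It also satisfies $\sum_{e\ni v}x_e=0+\sum_{e\ni v}a_e=1$ for every internal $v$, since $M_0$ covers each internal vertex exactly once. Conversely, such an $x$ gives $m=x-a\in P_a$. Hence $P_a+a$ equals
\[\{x\in\bR^E: x_e\ge0,\ \textstyle\sum_{e\ni v}x_e=1\ \forall v\in V\}.\]
Because boundary vertices have degree one, \zcref{rem:bdry-vt-deg1-easier-polytope} shows that the boundary inequalities in \zcref{prop:matching-polytope-inequality} are redundant. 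So this set is exactly $P(G)$, which means $P_a$ is the translate $P(G)-\chi(M_0)$.

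Finally, translation by the lattice vector $\chi(M_0)\in\bZ^E$ identifies the lattice $M$ with the affine lattice $(\chi(M_0)+M_\bR)\cap\bZ^E$. This lattice contains $P(G)$, and translates of polytopes by lattice vectors give isomorphic toric varieties. Combining this with \cite[Theorem 14.2.13]{CoxLittleSchenck}, $\bC^E/\!/_a(\bC^*)^V\cong X_{P_a}\cong X_{P(G)}$. By normality (\zcref{prop:matching-polytope-normal}) this is also $X_G$.

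The only delicate point is the lattice bookkeeping. One has to check that the sign conventions in $Q^T$ (unsigned incidence) match the condition defining $M_\bR$; they do, since the defining equations are homogeneous. One also has to check that the lattice used by Cox–Little–Schenck is $M$ itself rather than a finite-index sublattice. That holds because $M$ is defined as a kernel, hence is saturated in $\bZ^E$. Total unimodularity of $Q$ guarantees that $N$ is free, so no torsion issues arise.
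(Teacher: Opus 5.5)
Your proposal is correct and is essentially the paper's argument. Like the paper, you identify $M$ with $\ker Q^T\subset\bZ^E$, so that $P_a=\{x\in\bR^E: x_e\ge -a_e,\ Q^Tx=0\}$, and translate by $a=\chi(M_0)$ to get $\{y\in\bR^E: y_e\ge 0,\ Q^Ty=1\}$. This is $P(G)$ by its inequality description once the boundary inequalities are dropped as redundant (degree-one boundary vertices), and a lattice translation does not change the toric variety. Your remarks on the saturation of $M$ and on the identification with $X_G$ only make explicit bookkeeping that the paper leaves implicit.
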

\begin{proof}
    By \eqref{eq:git polytope}, the inclusion $\iota: \ker Q^T \hookrightarrow \bZ^E$ of \eqref{eq:duallattice} sends the polytope $P_a$ to
\[\{x \in \bR^E: x_e \geq -a_e, \, \, Q^Tx = 0\}.\]
Now, suppose that $a$ is a vector such that $Q^Ta = 1$. For instance, we may take $a$ to be any incidence
vector of any perfect matching. Then under the translation $y := x + a$, we get
\[\iota({P_a}) + a = \{y \in \bR^E: y_e \geq 0, \, \, Q^Ty = 1\}.\]
This agrees with the inequality description of the perfect matching polytope given in \zcref{prop:matching-polytope-inequality}. By Remark \ref{rem:bdry-vt-deg1-easier-polytope} the inequalities $\sum_{e \ni v} x_e \leq 1$ for $v \in \partial G$ are implied by the other inequalities. Note that translating the polytope does not change the toric variety.
\end{proof}

The condition of degree one vertices in Lemma \ref{lem:matching-variety-is-git-quotient} is not critical. Indeed, any graph $G$ may be converted to this form via (EC) moves, which by \zcref{lem:polytope-ec-moves} do not change the toric variety.

Because the matrix $Q$ is totally unimodular, for a general choice of $a,$ the toric variety of $P_a$ is smooth. Our choice of character $a$ above is not general, and indeed the toric variety $X_G$ is typically singular. 
\begin{eg}\label{eg:twosix-toric}
    Consider the type $(2,6)$ graph $G$ of \zcref{eg:twosix}. In any perfect matching, each black vertex is matched to exactly one of its neighbors. Exactly thirty of the $3^4 = 81$ choices of one incident edge per black vertex give a matching. We have $X_G \subset \bP^{29}$, and it is dimension 8. 
    
    For a generic choice of $a,$ the GIT quotient $X_{P_a} \subset \bP^{80}$ is isomorphic to $(\bP^2)^4$. The boundary measurement map factors as 
    \[(\bC^*)^E \to (\bP^2)^4 \dashrightarrow X_G \dashrightarrow \Gr_{2,6}.\]
    The middle map is coordinate projection from the Segre embedding $(\bP^2)^4 \hookrightarrow \bP^{80}$ to $\bP^{29}.$ 

    This $G$ decomposes into lobes $G_1, \dots,G_4 $, which are all ``tripods'' consisting of one black vertex attached to three boundary vertices. The toric varieties $X_{G_i}$ are all isomorphic to $\bP^2$. So this example also shows that the toric variety $X_G$ does not factor as the product of toric varieties $X_{G_i}$ for the lobes of $G$. 
    
\end{eg}

\section{On-shell forms as pushforwards}\label{sec:form}

In this section, we define the on-shell form of a reduced graph $G.$ On-shell forms were originally defined in the physics literature \cite{Arkani-Hamed:2012zlh} in a rather different way. On-shell forms for plabic graphs in particular were defined in a similar way as we do here in \cite[Section 14]{Lam-grassmann-polytope}.

Throughout this section, we assume $G$ is reduced. Recall that by \zcref{lem:reduced-conditions}, for reduced graphs we have that $G/\bdG$ is connected and every edge is in some perfect matching. So $(\bC^*)^{E - V}$ has dimension $|E|-|V|$ (\zcref{lem:polytopedim}), and is isomorphic to the dense open torus in the perfect matching variety $X_G$ (\zcref{prop:bdry-meas-torus-vs-dense-torus}).

Any torus $(\bC^*)^m$ has a natural differential top-form, which may be written in coordinates $(t_1, \, \ldots, \, t_m)$ as 
\begin{equation}\label{eq:dlog-form}
    \eta = \frac{dt_1}{t_1}\wedge \ldots \wedge \frac{dt_m}{t_m}.
\end{equation}
For the particular torus $(\bC^*)^{E - V}$, we may obtain explicit coordinates via \zcref{rem:torus-coordinates}, yielding a differential top-form $\eta_G$ on the torus. The form $\eta_G$ extends uniquely to a rational top-form on the toric variety $X_G$ with at most simple poles along each boundary component in $X_G \setminus (\bC^*)^{E-V}$ \cite[Section 5.3]{BrownDupont}. In particular, the form $\eta_G$ is the canonical form of the semialgebraic set $(X_G)^{\ge0} \subset X_G(\bR)$, viewed as a \emph{positive geometry} \cite[Section 5.6]{ahbl2017}.

\begin{rem}
    All differential forms we consider in this paper are defined only up to overall choice of sign. In particular, if one changes the ordering of the torus coordinates, then the sign of the form as written in \eqref{eq:dlog-form} will change.
\end{rem}

The standard form on $(\bC^*)^{E - V}$ may be written conveniently in coordinates on the larger torus $(\bC^*)^E$ using spanning trees of $G / \partial G.$ As in previous sections, we let $Q$ be the $|E| \times |V|$ incidence matrix of the internal vertices, where we have chosen an ordering of the edges. For a subset $I \subset E$ of the edges, we let $\tot(I)$ be the total index with respect to the chosen ordering of $|E|.$ We use $Q_I$ to denote the submatrix using the rows indexed by $I.$ As $Q$ is totally unimodular, determinants of submatrices are $\pm 1$.
    
    \begin{proposition}\label{prop:torus-form-spanning-trees}
    Let $G$ be a reduced graph. Let $\mathcal{T}$ be the set of spanning trees of $G / \partial G.$
    In coordinates $x_e,$ we have 
    \begin{equation}\label{eq:invariant-torus-form}
        \eta_G = \sum_{T  \in \mathcal{T}} \mathrm{sgn}(T)\bigwedge_{e \notin T}\frac{dx_e}{x_e},
    \end{equation}
    where $\mathrm{sgn}(T):= (-1)^{\mathrm{tot}(T)} \det Q_{E \setminus T}.$
    \end{proposition}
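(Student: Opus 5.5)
The plan is to treat $\eta_G$ as the pullback of the dlog form on the quotient torus $(\bC^*)^{E-V}$ along the quotient map $q:(\bC^*)^E\to(\bC^*)^{E-V}$. The right-hand side of \eqref{eq:invariant-torus-form} is then read as a form on $(\bC^*)^E$ that is basic for the gauge action. It is invariant, and its contraction with each gauge vector field vanishes. It therefore equals $q^*\eta_G$ after we identify forms on the quotient with basic forms. Since $G$ is reduced, \zcref{lem:reduced-conditions} gives that $G/\bdG$ is connected. So $c=1$ and $\dim(\bC^*)^{E-V}=|E|-|V|$, and the complement of every spanning tree has exactly $|E|-|V|$ edges.

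First, choose a basis of characters as in \zcref{rem:torus-coordinates}. Writing $\theta_e = dx_e/x_e$, the basis corresponds to an integer matrix $P$ whose rows span $\ker Q^T = H_1(G,\bdG)$. The form $\eta_G$, pulled back to $(\bC^*)^E$, is $\bigwedge_j \sum_e P_{je}\theta_e$. Expanding multilinearly gives
\[
\sum_{S\subset E,\ |S|=|E|-|V|} \det P_{\cdot,S}\ \bigwedge_{e\in S}\theta_e .
\]
So the claim reduces to a linear-algebra identity among maximal minors. We need $\det P_{\cdot,S}=\pm1$ up to a global sign, with the value $0$ unless $E\setminus S$ is a spanning tree, and with the value $(-1)^{\tot(E\setminus S)}\det Q_{E\setminus S}$ in the spanning-tree case, where $\det Q_{E\setminus S}$ is the $|V|\times|V|$ minor of $Q$ on the rows $E\setminus S$.

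Second, apply the standard complementary-minor identity for a pair of orthogonal complements in $\bZ^E$. The row space of $P$ is $\ker Q^T$, which is the orthogonal complement of the column space of $Q$. Two full-rank matrices with complementary row spaces of this kind have their maximal minors related by $\det P_{\cdot,S} = \lambda\,(-1)^{\tot(E\setminus S)}\det Q_{E\setminus S}$ for a constant $\lambda$ independent of $S$. This is exactly the Grassmann duality statement \zcref{lem:duality-plucker-coordinates}, applied in $\Gr_{|V|,|E|}$. To see that $\lambda=\pm1$, we use two facts. Both $P$ and $Q$ give lattice bases: $P$ because it is a basis of the saturated lattice $\ker Q^T$, and $Q$ because it is totally unimodular with free cokernel. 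Their minors therefore have gcd $1$, which forces $\lambda=\pm 1$; we can also check this directly on the gauge fix of \zcref{rem:torus-coordinates}, where $P_{\cdot,E\setminus F}$ is the identity.

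Third, identify the nonzero minors of $Q$. A $|V|\times|V|$ minor $\det Q_{R}$ is nonzero exactly when the edges $R$ form a spanning tree of $G/\bdG$. This is the matrix-tree fact: $Q$, up to the row sign flips for white vertices, is the reduced incidence matrix of $G/\bdG$ with the merged boundary vertex deleted. Total unimodularity gives that each nonzero minor is $\pm1$. Substituting these facts into the expansion yields \eqref{eq:invariant-torus-form}, and the overall sign is irrelevant by the preceding remark.

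The main obstacle is getting the sign conventions consistent. This includes the white-vertex sign flips relating $Q^T$ to the boundary map, the ordering convention behind $(-1)^{\tot}$, and making sure that the constant $\lambda$ really is independent of $S$. I would settle all of this by proving the duality identity once directly via a Laplace expansion of $\det\begin{bmatrix} Q & P^T\end{bmatrix}$ along the first $|V|$ columns. This determinant is a nonzero constant, and each term of the expansion pairs $\det Q_{E\setminus S}$ with a cofactor proportional to $\det P_{\cdot,S}$.
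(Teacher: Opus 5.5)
Your proof is correct and follows essentially the paper's route. You expand the pulled-back dlog form into maximal minors of a character matrix (the paper cites Cox--Little--Schenck for this expansion), convert these to complementary minors of $Q$ by Grassmann duality (the paper's ``Cramer's rule'' step), and identify the nonzero minors with spanning trees of $G/\bdG$. Your gcd/gauge-fix argument that the proportionality constant is $\pm1$ supplies a detail the paper leaves implicit. Your coefficient $(-1)^{\tot(T)}\det Q_T$, with the minor on the rows $T$, is the right reading of the statement's $\det Q_{E\setminus T}$, which is not square in general.

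One caveat on your last paragraph. Laplace-expanding the single determinant $\det[\,Q\ \ P^T\,]$ only gives the scalar identity $\sum_S \pm\det Q_{E\setminus S}\det P_{\cdot,S}=\det[\,Q\ \ P^T\,]$, not termwise proportionality. For that, expand $\det[\,Q\ \ W\,]$ for a variable $|E|\times(|E|-|V|)$ matrix $W$ and compare with $\det(PW)$ via Cauchy--Binet. Since you can cite \zcref{lem:duality-plucker-coordinates} directly, this does not affect the argument.
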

    \begin{proof}
    Recall from \eqref{eq:cokernel} that $(\mathbb C^*)^{E-V}$ is the cokernel of $Q$; by  \zcref{prop:bdry-meas-torus-vs-dense-torus} it is the dense torus of $X_G$. Let $Y$ be any matrix representative of the cokernel of $Q.$ Then by  \cite[Section 8.2]{CoxLittleSchenck}, we have that
        \begin{align*}
            \eta_G = \sum_{\substack{I \subset E \\ |I| = |E|-|V|}} \det Y_I \bigwedge_{e \in I}\frac{dx_e}{x_e}.
        \end{align*}
        Since we assume that $G$ is reduced, by \zcref{lem:reduced-conditions}, $G / \bdG$ is connected, and thus $Q$ has full rank. By Cramer's rule, any minor of $Y$ is, up to a sign, the same as the minor of $Q$ with the complementary index. Concretely, this yields the formula
        \begin{align*}
            \eta_G = \sum_{\substack{I \subset E \\ |I| = |E|-|V|}}(-1)^{\mathrm{tot}(I)} \det Q_{E \setminus I} \bigwedge_{e \in E \setminus I}\frac{dx_e}{x_e}.
        \end{align*}
        
        The matrix $Q$ is the incidence matrix of $G / \partial G$ with one row removed. Thus the maximal minor $Q_{E \setminus I}$ is nonzero if and only if the edges ${E \setminus I}$ form a spanning tree of $G / \partial G$ \cite[Proposition 5.4]{biggs}. 
    \end{proof}

\begin{rem}
    We note that \eqref{eq:invariant-torus-form} becomes particularly simple if one gauge-fixes as in \zcref{rem:torus-coordinates}; that is, choose a spanning tree $T$ and fix all of those edge weights to 1. In this case, most terms of \eqref{eq:invariant-torus-form} become zero, and the only term that contributes is 
    \begin{equation}\label{eq:eta-as-single-wedge}\eta_G=\mathrm{sgn}(T)\bigwedge_{e \notin T}\frac{dx_e}{x_e}.\end{equation}
    This expression for $\eta_G$, as the dlog form for the free edge-weights, is common in the literature.
\end{rem}

\begin{eg}
    \newcommand{\dln}[1]{\frac{dx_{#1}}{x_{#1}}}
    Let $G$ be the graph from \zcref{fig:24-contraction-for-spanning}. The contraction $G / \partial G$ of this graph has 9 spanning trees, labeled by $\mathcal T = \{0,1,2\} \times \{3,4,5\}$ in correspondence with the edge weights $x_0,\dots,x_5$. The corresponding form $\eta_G$ on $(\mathbb C^*)^{6}$ is
    \[
        \begin{aligned}
            \eta_G &= -\dln{1} \wedge \dln{2} \wedge \dln{4} \wedge \dln{5} + \dln{1} \wedge \dln{2} \wedge \dln{3} \wedge \dln{5} + \dots - \dln{0} \wedge \dln{1} \wedge \dln{3} \wedge \dln{4} \\
            &= \frac{x_2 dx_0 \wedge dx_1 - x_1 dx_0 \wedge dx_2 + x_0 dx_1 \wedge dx_2}{x_0 x_1 x_2} \wedge \frac{-x_5 dx_3 \wedge dx_4 + x_4 dx_3 \wedge dx_5 - x_3 dx_4 \wedge dx_5}{x_3 x_4 x_5}.
        \end{aligned}
    \]
    On $(\mathbb C^*)^{E-V} = (\mathbb C^*)^3 / \mathbb C^* \times (\mathbb C^*)^3 / \mathbb C^* \subset \mathbb P^2 \times \mathbb P^2$, the form is simply
    \[
        \eta_G = \frac{dt_1}{t_1} \wedge \frac{dt_2}{t_2} \wedge \frac{dt_3}{t_3} \wedge \frac{dt_4}{t_4},
    \]
    with coordinates $t_1 = x_1 / x_0$, $t_2 = x_2 / x_0$, $t_3 = x_4 / x_3$, $t_4 = x_5/x_3$.
\end{eg}

\begin{figure}[h!]
    \centering
    \[
        \vcenter{\hbox{\onshellgraphwithcode{
         {(0,0)/1,(1.71,0.71)/2,(3.42,0)/3,(1.71,-0.71)/4}
        }{
         {(0,0)/W1,(1.71,0.71)/W2,(3.42,0)/W3,(1.71,-0.71)/W4}
        }{
         {(1,0)/B1,(2.42,0)/B2}
        }{
         {W1/B1/black,W2/B1/black,W4/B1/black,W3/B2/black,W4/B2/black,W2/B2/black}
        }{
         1.3
        }{
        
        }}}
        \hspace{+1em}
        \vcenter{\hbox{\begin{tikzpicture}
            \node[draw,circle](W) at (0,0) {};
            \node[draw,fill,circle](B1) at (-2,0) {};
            \node[draw,fill,circle](B2) at (2,0) {};
            \draw[line width=1.5pt] (B1) -- (W);
            \draw[line width=1.5pt] (B1) to[out=45,in=135] (W);
            \draw[line width=1.5pt] (B1) to[out=-45,in=-135] (W);
            \draw[line width=1.5pt] (W) -- (B2);
            \draw[line width=1.5pt] (W) to[out=45,in=135] (B2);
            \draw[line width=1.5pt] (W) to[out=-45,in=-135] (B2);
            \node() at (0,0.5) {$\partial G$};
            \node() at (0,1) {};
            \node() at (0,-1) {};
            \node() at (-1,0.7) {$x_0$};
            \node() at (-1,0.2) {$x_1$};
            \node() at (-1,-0.3) {$x_2$};
            \node() at (1,0.7) {$x_3$};
            \node() at (1,0.2) {$x_4$};
            \node() at (1,-0.3) {$x_5$};
        \end{tikzpicture}}}
    \]
    \caption{A type (2,4) top-cell graph $G$ and its contraction $G/\partial G$.}\label{fig:24-contraction-for-spanning}
\end{figure}

\begin{definition}
Let $G$ be a reduced graph of type $(k,n)$ where $\deg \bM^G =1$. Let $Y$ be the closure of $\im \bM^G$. The \emph{on-shell form} $\omega_G$ of $G$ is the pushforward of the form $\eta_G$ to $Y$ along the boundary measurement map, that is 
\[\omega_G := \bM^G_* \eta_G.\]
\end{definition}

\begin{rem}\label{rem:formulas-for-on-shell-form}
    In practice, one writes $\eta_G$ as in \eqref{eq:eta-as-single-wedge}. Using this, one writes formulas for $\omega_G$ by inverting the boundary measurement map, that is, expressing $x_e$ as a rational function in Pl\"ucker coordinates. From there, one may choose an affine chart on $\Gr_{k,n}$, fix coordinates on it, and then manipulate the dlog formula to obtain formulas for $\omega_G$ of the form $\omega_G=f ~dX$, where $f$ is rational and $dX$ is the standard top form on the chosen affine chart, in the fixed coordinates.
\end{rem}

\begin{eg}
    Let $G$ be the weighted graph in \zcref{fig:25-for-formula}; note that the edges weighted 1 are a spanning tree of $G / \partial G$. The form on the torus is
    \[
        \eta_G = \frac{dy_1}{y_1} \wedge \frac{dy_2}{y_2} \wedge \frac{dy_3}{y_3} \wedge \frac{dy_4}{y_4} \wedge \frac{dy_5}{y_5} \wedge \frac{dy_6}{y_6}.
    \]
    Since the boundary measurements include
    \[
        \bM^G_{13} = 1 , ~\bM^G_{23} = -y_1,~\bM^G_{34} = -y_2,~\bM^G_{35} = y_3,~\bM^G_{12} = -y_4,~\bM^G_{14} = -y_6,~\bM^G_{15}=y_5,
    \]
    one deduces that $\omega_G := \bM^G_* \eta_G$ is
    \[
        \omega_G = \frac{dp_{23}}{p_{23}} \wedge \frac{dp_{34}}{p_{34}} \wedge \frac{dp_{35}}{p_{35}} \wedge \frac{dp_{12}}{p_{12}} \wedge \frac{dp_{14}}{p_{14}} \wedge \frac{dp_{15}}{p_{15}},
    \]
    on the chart where $p_{13} = 1$. One can write $\omega_G$ on an arbitrary affine chart as
    \[
        \omega_G = \frac{p_{13}}{p_{12}p_{14}p_{15}p_{23}p_{34}p_{35}}\,dX
    \]
    where as in \zcref{rem:formulas-for-on-shell-form}, $dX$ is the standard top form on the chosen affine chart.
\end{eg}

\begin{figure}
    \centering
    \[
        \onshellgraphwithcode{
         {(252:1.5)/1,(180:1.5)/2,(108:1.5)/3,(36:1.5)/4,(-36:1.5)/5}
        }{
         {(252:1.5)/W1,(180:1.5)/W2,(108:1.5)/W3,(36:1.5)/W4,(-36:1.5)/W5}
        }{
         {(-0.6,0)/B1,(0.375,0.6)/B2,(0.375,-0.6)/B3}
        }{
         {W1/B1/black,W3/B1/black,W2/B1/black,W1/B2/black,W3/B2/black,W4/B2/black,W1/B3/black,W3/B3/black,W5/B3/black}
        }{1.5}{
            \node() at (-0.7,-0.8) {$y_1$};
            \node() at (-0.28,-0.5) {$y_2$};
            \node() at (0.1,-1.2) {$y_3$};
            \node() at (-0.7,0.8) {$y_4$};
            \node() at (-0.28,0.5) {$y_5$};
            \node() at (0.1,1.2) {$y_6$};
            \node() at (-1.05,0.15) {$1$};
            \node() at (0.8,-0.9) {$1$};
            \node() at (0.8,0.9) {$1$};
        }
    \]
    \caption{A type (2,5) top-cell graph.}\label{fig:25-for-formula}
\end{figure}

On-shell forms are special among differential top-forms on $\Gr_{k,n}.$ In particular, they are invariant under the natural torus action on the Grassmannian.
\begin{lemma}\label{lem:onshell torus invariant}
    Let $G$ be a reduced graph of type $(k,n)$ where $\deg \bM^G = 1.$ Let $\star$ be the action of the torus $(\bC^*)^n$ on $\Gr_{k,n}$ as in \eqref{eq:torus action grkn}. Then the on-shell form $\omega_G$ is invariant under the induced action of $(\bC^*)^n$ on the cotangent bundle $T^*\Gr_{k,n}.$
\end{lemma}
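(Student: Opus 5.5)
The plan is to transport the torus action on $\Gr_{k,n}$ back to the edge-weight torus using \zcref{lem:bdry-meas-torus-eq}, and observe that there it acts by translations of $(\bC^*)^{E-V}$, which preserve the dlog form $\eta_G$. Fix $t\in(\bC^*)^n$ and write $\tau_t$ for the map $(x_e)\mapsto t^{-1}\cdot(x_e)$ of \eqref{eq:torus action edges}. This map commutes with gauge transformations, since both act by scaling coordinates. Hence it descends to a map on $(\bC^*)^{E-V}$, which is translation by the image of $t^{-1}$ under $(\bC^*)^n\to(\bC^*)^E\to(\bC^*)^{E-V}$.

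\zcref{lem:bdry-meas-torus-eq} states that $t\star\bM^G_I(x)=t_1\cdots t_n\,\bM^G_I(\tau_t x)$ for all $I$. The factor $t_1\cdots t_n$ is independent of $I$, so it disappears projectively. We therefore get an identity of rational maps
\[ t\star\circ\,\bM^G=\bM^G\circ\tau_t \]
into $\Gr_{k,n}$. Note also that $t\star$ preserves the closure $Y$ of the image, since the image is preserved.

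Next, $\eta_G$ is the invariant form $\bigwedge dt_i/t_i$ on the torus $(\bC^*)^{E-V}$, and translations preserve it. Concretely, in the character coordinates $y_e$ of \zcref{rem:torus-coordinates}, $\tau_t$ multiplies each $y_e$ by a nonzero constant, which leaves each $dy_e/y_e$ unchanged. So $\tau_t^*\eta_G=\eta_G$, equivalently $(\tau_t)_*\eta_G=\eta_G$.

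Finally, I would combine these using functoriality of pushforward under compositions of dominant, generically finite rational maps. Since $\deg\bM^G=1$, the map $\bM^G$ is birational onto $Y$, so $\omega_G=\bM^G_*\eta_G=((\bM^G)^{-1})^*\eta_G$ on a dense open set. Then
\[ (t\star)_*\omega_G=(t\star\circ\bM^G)_*\eta_G=(\bM^G\circ\tau_t)_*\eta_G=\bM^G_*(\tau_t)_*\eta_G=\omega_G. \]
This is invariance under the action of $t$ on forms. Since $t$ is arbitrary, the result follows, and since all maps are invertible the statement with pullback $(t\star)^*$ is equivalent.

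The main obstacle is bookkeeping rather than substance. One needs pushforward to be compatible with composition for rational maps that are only birational onto their images on dense opens. I would handle this by working on the dense open set where $\bM^G$ is an isomorphism onto its image, and shrinking it to be $\tau_t$-stable by intersecting with its translate. Equality of rational forms on a dense open set then suffices. One also needs to check the sign convention, i.e.\ whether $\tau_t$ uses $t$ or $t^{-1}$, but either choice gives a translation and so is harmless.
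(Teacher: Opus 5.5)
Your proposal is correct and follows essentially the same route as the paper. \zcref{lem:bdry-meas-torus-eq} intertwines the $\star$-action on $\Gr_{k,n}$ with the translation $x\mapsto t^{-1}\cdot x$ on $(\bC^*)^{E-V}$, with the scalar $t_1\cdots t_n$ dropping out projectively, so $t\star\omega_G=\bM^G_*(t^{-1}\cdot\eta_G)=\omega_G$ by translation-invariance of the dlog form $\eta_G$; your extra care about composing pushforwards of birational maps on dense opens is a harmless refinement of the paper's brief argument.
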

\begin{proof}
Since $G$ is reduced, $G / \partial G$ is connected by Lemma \ref{lem:reduced-conditions}. Recall that the incidence matrix of $G / \partial G$ has the same image in $\bC^E$ as the matrix $Q,$ so in particular both contain the vector $1^E.$ By \zcref{lem:bdry-meas-torus-eq}, we have 
    \[t \star [\bM^G_{1 2 \dots k} (x): \dots : \bM^G_{n-k + 1  \dots n}(x) ] = \bM^G(t^{-1} \cdot x),\]
    where the factor of $t_1 \ldots t_n$ disappears. Indeed, since $1^E$ is in the image of $Q,$ multiplying each edge $x_e$ by the same factor acts trivially on the toric variety $X_G$. Then 
    \begin{align*}
         t \star \omega_G = t \star (\bM^G)_*\eta_G = (\bM^G)_*(t^{-1} \cdot \eta_G).
    \end{align*}
    But $t^{-1}$ acts trivially on $\eta_G,$ because each term in $\eta_G$ is a wedge of dlog forms and therefore torus-invariant.
\end{proof}

\begin{rem} As mentioned in \zcref{rem:bdry-meas-vs-Postnikov}, even if $G$ is a reduced graph, $\bM^G$ may have degree larger than 1. In this case, one can still define the pushforward of $\eta_G$ along $\bM^G$ as in \cite[(4.1)]{ahbl2017}, and thus can still define the on-shell form $\omega_G:=\bM^G_* \eta_G$. Writing expressions for $\omega_G$ using Pl\"ucker coordinates becomes more complicated. The on-shell forms of top-cell graphs of type $(3,6)$ were computed\footnote{The forms appearing in \cite{Bourjaily:2016mnp} for $\bM^G$ of degree 2 are not rational, so are not quite the same as $\bM^G_* \eta_G$ in the higher degree case.} in \cite{Bourjaily:2016mnp}.
\end{rem}

While we used the reduced immersed graph $G$ to define the on-shell form, the form depends only on $G$ up to moves. In particular, $\omega_G$ does not depend on the immersion.

\begin{lemma}\label{lem:moves-preserve-form}
    Suppose $G, G'$ are related by a (PV), (EC) or (S) move. Then $\omega_G$ and $\omega_{G'}$ are equal, up to sign.
\end{lemma}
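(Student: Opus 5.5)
The approach is to use Corollary~\ref{cor:toric-var-moves}: each move gives a birational map $\phi: X_G \dashrightarrow X_{G'}$ with $\bM^{G'}\circ\phi = \bM^G$. Since pushforward is functorial, $\omega_{G'} = \bM^{G'}_*\eta_{G'}$ and $\omega_G = \bM^G_*\eta_G = \bM^{G'}_*\phi_*\eta_G$. It therefore suffices to show $\phi_*\eta_G = \pm \eta_{G'}$ on $X_{G'}$, or equivalently $\phi^*\eta_{G'} = \pm\eta_G$, since $\phi$ is birational. By Lemma~\ref{lem:reduced-conditions}(3), $G'$ is again reduced, so $\eta_{G'}$ is defined. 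Because $\phi$ is birational, $\deg\bM^{G'}=\deg\bM^G=1$, so $\omega_{G'}$ is defined by the same definition. It remains to check the identity move by move, working on the dense tori $(\bC^*)^{E-V}$ and $(\bC^*)^{E'-V'}$ (Proposition~\ref{prop:bdry-meas-torus-vs-dense-torus}). There I gauge-fix and use the single-wedge expression \eqref{eq:eta-as-single-wedge}.

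For (PV), the map only sends $x_q \mapsto -x_q$, and $d(-x_q)/(-x_q) = dx_q/x_q$. Hence $\phi^*\eta_{G'}=\eta_G$ directly from Proposition~\ref{prop:torus-form-spanning-trees}, since the graph, and so $Q$ and the spanning trees, are unchanged.

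For (EC), choose a spanning tree $T$ of $G/\bdG$ containing the two new edges of weight $1$ (together with a spanning tree $T'$ of $G'/\bdG'$). Contracting those two edges identifies $T\setminus\{e_\ell,e_r\}$ with $T'$, and the non-tree edges of $G$ and $G'$ are identified with their weights. So $\phi$ is the identity in these coordinates and the two dlog forms agree up to the sign convention.

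For (S), gauge-fix using a spanning tree that contains the four weight-$1$ spokes in $G'$, and in $G$ contains three of the square edges, say $x_2,x_3,x_4$ set to $1$. Then $x_1$ is the free face variable, and in $G'$ the corresponding free variables are $x_1'$ and the others. The map becomes $x_1 \mapsto x_1' = x_1/(x_1+1)$-type expressions (after rescaling by gauge at the new vertices). The standard computation is that the face variable transforms as $f\mapsto f^{-1}$ and the neighboring face or cycle variables transform as $y\mapsto y(1+f^{\pm1})^{\pm1}$. This is exactly the cluster mutation, which is known to preserve the dlog torus form up to sign. The Jacobian is triangular once written in cycle coordinates $y_e$ from Remark~\ref{rem:torus-coordinates}: $d\log y \mapsto d\log y \pm d\log(1+f)$, and the extra terms are killed on wedging with $d\log f$.

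The main obstacle I expect is the square move in the nonplanar setting. There the ``neighboring faces'' are not available, so I would use the basis $\{y_e\}$ of $H_1(G,\bdG)$-characters from Remark~\ref{rem:torus-coordinates}, relative to a spanning tree chosen to contain the spokes. I then need to verify that every cycle character other than the square's one changes only by a factor that is a power of $(x_1x_3+x_2x_4)$, times a monomial in the square's character. This is a local check inside the sub-disk, since cycles enter and leave the square region through the four outer vertices, and it gives the triangular Jacobian.
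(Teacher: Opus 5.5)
Your proposal is correct and follows the paper's proof. Both reduce via \zcref{cor:toric-var-moves} to showing $\phi_*\eta_G=\pm\eta_{G'}$, check each move separately, and use the same $d(-x_q)/(-x_q)=dx_q/x_q$ argument for (PV). The only difference is that the paper calls (EC) straightforward and cites Lam's proof of Theorem 13.5 for (S), whereas you carry out the gauge-fixed cycle-character computation yourself; your locality remark (each character changes by $f^a(1+f)^b$, so the Jacobian is triangular) correctly makes that check valid for nonplanar $G$.
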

\begin{proof}
    By \zcref{cor:toric-var-moves}, in this case there is a birational map $\phi: X_{G} \dashrightarrow X_{G'}$ such that $\bM^G = \bM^{G'} \circ \phi$. So it suffices to show that $\phi_* \eta_G$ is equal to $\eta_{G'}$, up to sign.

    This is straightforward for (EC) moves, and for (S) moves is checked in \cite[Proof of Theorem 13.5]{Lam-grassmann-polytope}. For (PV) moves, we note that $\phi$ simply changes $x_q$ to $-x_q$. Since $\frac{d(-x_q)}{-x_q} = \frac{dx_q}{x_q}$, \zcref{prop:torus-form-spanning-trees} implies that $\phi_* \eta_G$ is equal to $\eta_{G'}$ in this case as well.
\end{proof}

We note that there are additional moves which preserve the form $\omega_G$, called \emph{sphere moves}. See \cite{Cachazo:2018wvl,CastravetTevelev,Lisitsyn:2025prd} for more details.

Finally, recall from \zcref{sec:lobes} the decomposition of a graph $G$ into lobes $G_1, \dots, G_\ell$. Under the assumptions of \zcref{lem:gluing exists}, one can write $\bM^G$ in terms of the $\bM^{G_i},$ followed by a gluing map $\Phi^G$. This implies that the form $\omega_G$ can also be understood in terms of the forms $\omega_{G_i}$.

\begin{lemma}
    Let $G$ be a reduced graph with lobes $G_1$,...,$G_\ell$ and with $\deg \bM^G =1$. Suppose $G$ satisfies the assumptions of \zcref{lem:gluing exists} and let $\Phi^G$ be the associated gluing map. Then,
    \[
        \omega_G = (\Phi^G)_* \bigwedge_{i \in [\ell]} \omega_{G_i}.
    \]
\end{lemma}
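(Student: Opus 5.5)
The plan is to factor every object through the lobe decomposition and then use functoriality of pushforward. The torus splits as $(\bC^*)^{E-V} = \prod_i (\bC^*)^{E_i-V_i}$, because the incidence matrix $Q$ is block diagonal with respect to the lobes. So the first step is to show that, up to sign, $\eta_G = \bigwedge_{i\in[\ell]} \eta_{G_i}$, where each $\eta_{G_i}$ is pulled back along the projection to the $i$th factor.

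To see this, I would choose a spanning tree $T$ of $G/\bdG$ whose restriction to each lobe is a spanning tree $T_i$ of $G_i/\bdG_i$. This is possible because, after identifying all boundary vertices, the lobes meet only at the single vertex $\bdG$, so $G/\bdG$ is a wedge of the graphs $G_i/\bdG_i$ at that vertex. Gauge-fixing the edge weights on $T$ to $1$ as in \eqref{eq:eta-as-single-wedge} gives
\[
\eta_G = \pm\bigwedge_{e\notin T}\frac{dx_e}{x_e} = \pm\bigwedge_i \bigwedge_{e\in E_i\setminus T_i}\frac{dx_e}{x_e} = \pm\bigwedge_i \eta_{G_i}.
\]
Each lobe is reduced: every edge of $G$ lies in a perfect matching and restricts to a matching of its lobe, and leaflessness is inherited. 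Hence each $\eta_{G_i}$ is defined as in \zcref{prop:torus-form-spanning-trees}.

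Next I would use \zcref{lem:gluing exists}, which gives $\bM^G = \Phi^G\circ \prod_i \bM^{G_i}$. Since $\deg \bM^G = 1$ and $\bM^G$ is generically finite onto its image, the product map $\prod_i\bM^{G_i}$ must be generically finite onto its image and of degree $1$. Otherwise a generic fiber of $\bM^G$ would contain more than one point or be positive-dimensional. The same argument shows that $\Phi^G$ restricted to $\overline{\prod_i \im\bM^{G_i}}$ is birational onto $Y=\overline{\im \bM^G}$. In particular each $\bM^{G_i}$ has degree $1$ onto its image, since the degree of a product map is the product of the degrees. So each $\omega_{G_i}=\bM^{G_i}_*\eta_{G_i}$ is defined.

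Pushforward of forms along a product of degree-one maps is the wedge of the individual pushforwards. Indeed, on a dense open set where the maps are isomorphisms, pushforward is just pullback by the inverse, and pullback commutes with wedge and products. This gives $(\prod_i\bM^{G_i})_*\eta_G = \pm\bigwedge_i\omega_{G_i}$. Functoriality of pushforward along composable dominant degree-one maps, $(\Phi^G\circ\prod_i\bM^{G_i})_* = \Phi^G_*\circ(\prod_i\bM^{G_i})_*$, then finishes the argument. Both functoriality facts are immediate for birational maps, since they are pullbacks by inverses on dense opens.

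The main obstacle is the degree-bookkeeping step. One must check that the product map is finite of degree one onto its image, rather than only dominant, without assuming the lobes are top-cell; \zcref{lem:gluing-inverse} is not available in general here. I would argue via dimensions. First, $\dim\prod_i\im\bM^{G_i}\le \sum_i(|E_i|-|V_i|) = |E|-|V| = \dim Y$. Second, $\Phi^G$ maps this product onto a dense subset of $Y$. Together these force equality of dimensions and generic finiteness of both maps. Multiplicativity of degrees under composition then forces both degrees to be $1$.
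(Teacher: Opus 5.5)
Your proposal is correct and follows the paper's proof: write $\eta_G$ as the wedge of the $\eta_{G_i}$ using $(\bC^*)^{E-V}=\prod_i(\bC^*)^{E_i-V_i}$, factor $\bM^G=\Phi^G\circ\prod_i\bM^{G_i}$, and use functoriality of pushforward. Your compatible spanning tree and your dimension/degree count supply details that the paper's two-line argument leaves implicit; note that it is this count, not the matching and leafless remarks, that shows each lobe is reduced with $\deg\bM^{G_i}=1$, so that $\omega_{G_i}$ is defined.
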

\begin{proof}
    Starting with $\eta_G$ on the torus $(\mathbb C^*)^{E-V}$, one makes use of $(\bC^*)^{E-V} = \prod_{i \in [\ell]} (\bC^*)^{E_i - V_i}$ to write
    \[
        \eta_G = \bigwedge_{i \in [\ell]} \eta_{G_i}.
    \]
    Then, making use of $\bM^G = \Phi^G \circ \prod_{i \in [\ell]} \bM^{G_i}$,
    \[
        \omega_G = (\bM^G)_* \eta_G = (\Phi^G)_* \bigwedge_{i \in [\ell]} (\bM^{G_i})_* \eta_{G_i} = (\Phi^G)_* \bigwedge_{i \in [\ell]} \omega_{G_i}.
    \]
\end{proof}

\section{A determinantal formula for $k=2$ top-cell forms}\label{sec:k=2}

In this section, we focus on top-cell graphs of type $(2,n)$. There is a combinatorial characterization of these graphs, which we carry out in \zcref{subsec:topcell k=2}; see \zcref{prop:top cell k=2} in particular. 
 
 \zcref{subsec:det formula} then proves a formula in Pl\"ucker coordinates for the on-shell forms of such graphs. This formula originally appeared in the physics literature in \cite{Arkani-Hamed:2014bca}, and we give a rigorous mathematical proof in \zcref{thm:pushforward-hypertree}. 

\subsection{Classification of top-cell $(2,n)$ graphs}\label{subsec:topcell k=2}
It is convenient to give a combinatorial characterization of which type $(2,n)$ graphs are top-cell graphs. The following notion of \emph{convexity} is the key criterion. 

\begin{definition}\label{def:convexity}
    Let $G$ be a type $(2,n)$ graph whose internal vertices are all black and trivalent. Such a graph has lobes $G_1, \, \ldots, \, G_{\ell}$, each of which is a ``tripod'', i.e. a black vertex connected to three boundary vertices. We say that $G$ is \emph{convex} if for any non-empty $S \subset [\ell]$ we have \[\left| \bigcup_{j \in S} \partial G_j \right| - 2 \geq |S|.\] 
\end{definition}

Convexity ensures that no Pl\"ucker coordinates vanish on the image of the boundary measurement map.

\begin{lemma}\label{lem:convexity-iff-matchings}
    Let $G$ be a type $(2,n)$ graph with $n-2$ internal vertices, all of which are black and trivalent. Then $G$ is convex if and only if for all $i,j \in [n]$, $G$ has a matching with boundary $[n] \setminus \{i,j\}$.
\end{lemma}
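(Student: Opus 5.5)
This is a Hall's marriage theorem argument. Since $G$ has $n-2$ internal vertices, all black and trivalent, every edge joins a black vertex $b_j$ to a boundary vertex. The lobes are the tripods $G_1,\dots,G_\ell$ with $\ell = n-2$, and $\partial G_j$ is the set of three boundary neighbours of $b_j$. A perfect matching of $G$ assigns to each black vertex $b_j$ a distinct boundary vertex in $\partial G_j$. Its boundary has size $\ell=n-2$, so it misses exactly two boundary vertices. Thus a matching with boundary $[n]\setminus\{i,j\}$ is exactly a system of distinct representatives for the family $(\partial G_1,\dots,\partial G_\ell)$ that avoids $i$ and $j$. Equivalently, it is a perfect matching of the bipartite graph between the $\ell$ black vertices and the $n-2=\ell$ vertices of $[n]\setminus\{i,j\}$, with edges from $b_t$ to $\partial G_t\setminus\{i,j\}$.

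For the forward direction, assume $G$ is convex and fix $i\neq j$. I apply Hall's theorem to the family $(\partial G_t\setminus\{i,j\})_{t}$. For nonempty $S\subset[\ell]$,
\[\Bigl|\bigcup_{t\in S}(\partial G_t\setminus\{i,j\})\Bigr| \ge \Bigl|\bigcup_{t\in S}\partial G_t\Bigr| - 2 \ge |S|,\]
which is exactly Hall's condition. So an SDR exists, and since there are $\ell = n-2$ representatives chosen from an $(n-2)$-element set, its image is all of $[n]\setminus\{i,j\}$. Hence the corresponding matching has boundary exactly $[n]\setminus\{i,j\}$. If the statement allows $i=j$, I read it as requiring some matching whose boundary omits $i$, and this follows from the case of any pair containing $i$.

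For the converse, suppose convexity fails for some nonempty $S$, so that $U:=\bigcup_{t\in S}\partial G_t$ has $|U|\le |S|+1$. Note $|U|\ge 3$, since each $\partial G_t$ has three elements. This forces $|S|\ge 2$ and lets me choose two distinct elements $i,j\in U$. Any matching with boundary $[n]\setminus\{i,j\}$ would match the $|S|$ black vertices indexed by $S$ injectively into $U\setminus\{i,j\}$. That set has at most $|S|-1$ elements, which is a contradiction.

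The main point needing care is that every boundary vertex of a tripod is a distinct boundary vertex, i.e. $|\partial G_t|=3$. This holds because $G$ is a graph, presumably simple; if multi-edges are allowed, the same argument goes through with $\partial G_t$ taken as the set of neighbours. The rest is routine.
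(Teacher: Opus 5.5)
Your proposal is correct and is exactly the paper's argument: the paper's proof is the single sentence that this is Hall's marriage theorem applied to $G \setminus \{i,j\}$, and you have written out both directions (Hall's condition from convexity for the forward direction, and the necessity of Hall's condition by direct counting for the converse). One small remark: since every matching of $G$ has boundary of size exactly $n-2$, the statement can only intend $i \neq j$, so your reinterpretation of the case $i=j$ is unnecessary.
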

\begin{proof}
    This is an application of Hall's Marriage Theorem to the graphs $G \setminus \{i,j\}$.
\end{proof}

The following gives a combinatorial classification of type $(2,n)$ top-cell graphs.

\begin{proposition}
    \label{prop:top cell k=2}
    Let $G$ be a graph with $n$ boundary vertices and no bivalent internal vertices\footnote{These can be removed by using contraction moves.}. Then $G$ is a type $(2,n)$ top-cell graph if and only if it has exactly $n-2$ internal vertices, all of which are black and trivalent, and $G$ is convex. 

    Moreover, for type $(2,n)$ top-cell graphs, $\bM^G$ has degree 1, that is, is generically injective.
\end{proposition}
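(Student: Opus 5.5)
We prove the forward direction by combining the degree bound of \zcref{lem:reduced-black-vt-bounded-degree} with a dimension count. Suppose $G$ is a type $(2,n)$ top-cell graph. By \zcref{lem:reduced-black-vt-bounded-degree}, every black vertex has degree at most $3$. Since $G$ is reduced and has no bivalent internal vertices, every internal vertex has degree at least $3$. The only exception is a leaf, which by leaflessness is black and adjacent to a boundary vertex; we rule this out below. Write $d_v$ for the degree of $v$. Counting edges from the black side gives $|E| = \sum_{b} d_b \le 3|B|$. Counting from the white side, $|E| = \sum_{w \in W} d_w + \sum_{i \in \bdG} d_i \ge 3|W| + n$; here each boundary vertex has degree at least $1$, because every boundary vertex must be unmatched by some matching while $\bM^G$ is dominant, and must be matched by some other.

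Since $G$ is top-cell and reduced, $|E|-|V| = 2(n-2)$, and $k_G = 2$ gives $|B| = |W| + n - 2$. Substituting, $|E| = 2n-4 + 2|W| + n - 2 = 3n - 6 + 2|W|$. Comparing with $|E| \le 3|B| = 3|W| + 3n - 6$ leaves slack $|W|$. The white-side inequality $|E| \ge 3|W| + n$ gives $3n - 6 + 2|W| \ge 3|W| + n$, so $|W| \le 2n - 6$; this is not yet decisive. To pin down $|W| = 0$, I would instead use the VRC picture. A white internal vertex $w$ of degree $d_w \ge 3$ carries a vector $u_w \in \bC^2$, and each adjacent trivalent black vertex forces $u_w$ into the span of two other vectors. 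I expect this step to be the main obstacle. My first attempt is to show that a white internal vertex produces either a fiber of positive dimension (reusing the perturbation trick in the proof of \zcref{lem:reduced-black-vt-bounded-degree}) or a non-reduced configuration. Failing that, I would use a cleaner count: each black vertex of degree $\le 3$ imposes $2$ linear conditions, and an internal white vertex contributes $2$ new coordinates but, modulo gauge, one fewer degree of freedom than the edges it adds. Either route should show that removing white vertices is forced, giving $|W| = 0$, $|B| = n - 2$, all black vertices trivalent, and $|E| = 3(n-2)$. Degree-$1$ and degree-$2$ black vertices are excluded by the same count, since $|E| = 3|B|$ forces equality. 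Finally, dominance means no Plücker coordinate vanishes identically, so every $\bM_{ij}^G \not\equiv 0$, and \zcref{lem:convexity-iff-matchings} gives convexity.

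For the converse, assume $G$ has $n-2$ trivalent black vertices, no white internal vertices, and is convex. Its lobes are tripods, each a top-cell plabic graph for $\Gr_{2,3}$ with $\deg \bM^{G_i} = 1$. After expand moves at the boundary as in the remark following \zcref{lem:gluing exists}, we obtain $\bM^G = \Phi^G \circ \prod_i \bM^{G_i}$. The domain has dimension $|E| - |V| = 3(n-2) - (n-2) = 2(n-2) = \dim \Gr_{2,n}$. It therefore suffices to show that $\bM^G$ is generically injective onto its image, which then forces dominance. Injectivity I would prove via VRCs and \zcref{cor:fiber-vs-VRC-set}. Given a generic $Y \in \Gr_{2,n}$ with boundary vectors $u_1, \dots, u_n \in \bC^2$ in general position, a VRC with boundary $Y$ is a choice of $r_e$ with $r_{e_1} u_a + r_{e_2} u_b + r_{e_3} u_c = 0$ at each black vertex with neighbors $a, b, c$. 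Since any two of the $u$'s are independent, this relation is unique up to gauge, so there is exactly one VRC. It remains to check the $W$-full-rank condition; because $W = \emptyset$, this amounts to $K$ having some nonzero maximal minor. Convexity via \zcref{lem:convexity-iff-matchings} supplies matchings avoiding each pair, and for generic $Y$ the relevant minor $\bM_{ij}^G$ is nonzero. This yields a unique preimage, hence $\deg \bM^G = 1$ and dominance, so $G$ is top-cell and reduced (it is leafless since it has no degree-$1$ internal vertices). It also proves the ``moreover'' clause in both directions.
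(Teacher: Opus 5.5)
Your forward direction has a genuine gap: you never establish $|W|=0$. You call the count ``not yet decisive'' and defer to a VRC argument that you only sketch (``either route should show\ldots''). Your exclusion of degree-one black vertices is also circular, since it invokes $|E|=3|B|$, which you only have after $|W|=0$.

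The missing observation is that your count already \emph{is} decisive. You have $d_b\le 3$ for every black vertex from \zcref{lem:reduced-black-vt-bounded-degree}, $d_b\ne 2$ by hypothesis, and $d_b\neq 0$ since $b$ must be matched. A leaf is excluded directly: if $b$ has degree one and is adjacent to boundary vertex $i$, every perfect matching contains the edge $\{b,i\}$. So $i\in\partial M$ for all $M$, and every $\bM^G_I$ with $i\in I$ vanishes identically, contradicting dominance. Hence every black vertex is trivalent. Since each edge has exactly one black endpoint, $|E|=\sum_b d_b=3|B|$, and your own identity $3|B|-|E|=|W|$ forces $|W|=0$ and $|B|=n-2$. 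This is precisely the paper's argument; the white-side inequality and the VRC detour are unnecessary.

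In the converse, reducing to generic injectivity and using the uniqueness of the relation among three pairwise independent vectors in $\bC^2$ is the right idea, and it is also the paper's. But the step ``convexity supplies matchings avoiding each pair, so for generic $Y$ the relevant minor is nonzero'' does not follow. A matching avoiding $\{i,j\}$ shows that the minor is a nonzero polynomial in \emph{independent} edge weights. Here, however, the weights are specialized to the entries of $A(Y)$, i.e.\ $r_{bi}=p_{jm}(Y)$ and so on. Whether the minors of $A(Y)$ (equivalently $\det A'$) vanish identically on $\Gr_{2,n}$ is essentially the dominance you are trying to prove.

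There are two easy repairs:
\begin{enumerate}
\item Take $Y$ generic in the \emph{image} of $\bM^G$ instead. Convexity makes each $\bM^G_{ij}$ a nonzero polynomial, so such $Y$ has all Pl\"ucker coordinates nonzero. The fiber over $Y$ is nonempty and, by \zcref{cor:fiber-vs-VRC-set}, injects into the one-element set of VRCs with boundary $Y$. This gives generic injectivity, and hence dominance by your dimension count.
\item Argue as the paper does. At such $Y$ the unique VRC is gauge-equivalent to the one coming from a preimage, so $A(Y)$ is full rank there, hence on a dense open subset of $\Gr_{2,n}$.
\end{enumerate}
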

\begin{proof}

    ($\implies$:) For $G$ to be a type $(2,n)$ top-cell graph, one must have $\dim (\bC^*)^{E-V} = |E| - (|W| + |B|) = 2(n-2)$ and $k = n+|W|-|B| = 2$, so $|E|-2|B| = n-2$. \zcref{lem:reduced-black-vt-bounded-degree} implies the black vertices of $G$ are degree at most 3. Degree 2 is excluded by assumption. Degree 1 is impossible: by the ``leafless'' assumption on reduced graphs, such a black vertex $b$ must be adjacent to a boundary vertex $i$. But then $i$ is in the boundary of every perfect matching, meaning that any Pl\"ucker coordinate containing $i$ vanishes on $\im \bM^G$. But $\im \bM^G$ is dense in $\Gr_{2,n}$, so this is impossible. Thus all black vertices are trivalent. Since the graph is bipartite, the number of edges is $|E| = 3|B|$, so $|B| = n-2$ and $|W| = 0$.

    For $G$ to be top-cell, we must also have that no Pl\"ucker coordinates vanish identically on the image of $\bM^G$. This is equivalent to the condition that for all $i, j \in [n]$, $G$ has a perfect matching with boundary $[n] \setminus \{i,j\}$, which is in turn equivalent to convexity by \zcref{lem:convexity-iff-matchings}.

($\impliedby$): Suppose $G$ has $n-2$ internal vertices, all of which are black and trivalent, and is convex. Each lobe of $G$ is a tripod. We have $|E|-|V|= 3(n-2) -(n-2) = 2(n-2)$, so $G$ is a top-cell type $(2,n)$ graph as long as $\bM^G$ has dense image in $\Gr_{2,n}$. 

Let $X \in \Gr_{2,n}$ have all Pl\"ucker coordinates nonvanishing. We claim that as long as $X$ is sufficiently generic, $X$ is in the image of boundary measurement. We will show this by finding a VRC with boundary $X$ and applying \zcref{cor:fiber-vs-VRC-set}. 

Choose a particular matrix representative of $X$, which we denote in the same way. For the VRC, since there are no internal white vertices, we need only to specify the coefficients $r_e$. Suppose black vertex $b$ is incident to boundary vertices $i<j<m$. Then, using notation $bi$ for the edge $\{b, i\}$, we define
\[r_{bi} = p_{j m}(X), \quad r_{bj} = -p_{i m}(X), \quad r_{bm} = p_{i j}(X).\]
These numbers are all nonzero by assumption. This choice of coefficients gives the unique VRC with boundary $X$ modulo gauge, since 3 generic vectors in $\bC^2$ satisfy a unique linear relation up to simultaneous rescaling. If the weighted adjacency matrix $K(r_e)$ is full-rank, then $X$ is in the image of $\bM^G$ by \zcref{cor:fiber-vs-VRC-set}. The matrix $K(r_e)$ is full-rank for at least one $X \in \Gr_{2,n}$ (any $X$ in the image of $\bM^G$) and so is full-rank on an open subset of $\Gr_{2,n}$. Thus, restricting $X$ further to lie in this open subset, $X$ is in the image of $\bM^G$.

For the `Moreover' part: in the paragraph above, generic $X$ have fiber of size exactly 1, since there is a unique VRC with boundary $X$.
    
\end{proof}

In light of \zcref{prop:top cell k=2}, in the subsequent section we will restrict our study to graphs whose internal vertices are all black and trivalent.

\subsection{Proving a determinantal formula for $\omega_G$ for top-cell $(2,n)$ graphs}\label{subsec:det formula}

We fix the following notation for the rest of this section. 

\begin{notation}\label{not:A matrix}
    The symbol $G$ will always denote a top-cell (2,$n$) graph. We denote by $\mathcal{S}$ the set $\{\partial G_1, \, \ldots, \, \partial G_r\}$ of boundaries of lobes. Each element of $\mathcal{S}$ is an ordered triplet $(abc)$ with ordering inherited from $\partial G$. We define the $(n-2) \times n$ matrix $A$ with rows indexed by $\mathcal{S}$, columns indexed by $[n]$ and nonzero entries
\begin{equation}\label{eq:hypertree matrix}
    A_{(abc, a)} = p_{bc}, \quad A_{(abc, b)} = -p_{ac}, \quad A_{(abc, c)} = p_{ab}.
\end{equation}
    The matrix $A$ depends on $G$, but we omit this from the notation. We may evaluate this matrix on a $2 \times n$ matrix $X$ or a point $X \in \Gr_{2,n}$, in which case we write $A(X)$.
\end{notation}

\begin{rem}\label{rem:kastelyn A}
    The matrix $A(X)$ is exactly the weighted adjacency matrix $K(r_e)$ from the proof of \zcref{prop:top cell k=2}. That is, for generic $X \in \Gr_{2,n}$, the entries of $A(X)$ give the coefficients of the unique VRC on $G$ with boundary $X$.
\end{rem}

  Our main theorem is \zcref{thm:pushforward-hypertree}, which gives a determinantal formula for the on-shell form $\omega_G$ using the triplets $\mathcal{S}$ and the matrix $A$. This result has previously appeared in the physics literature \cite{Arkani-Hamed:2014bca}. We defer the proof to later in the section.

  \begin{theorem}\label{thm:pushforward-hypertree}
    Let $G$ be a top-cell type $(2,n)$ graph. The on-shell form of $G$ is given by 
\begin{equation}\label{eq:pushforward}
        \omega_G = \frac{(\det A')^2}{\displaystyle\prod_{(abc) \in \mathcal{S}}p_{ab}p_{ac}p_{bc}}dX,
    \end{equation}
    where $A'$ is any submatrix of $A$ obtained by deleting a row and the three columns with nonzero entries in that row. 
\end{theorem}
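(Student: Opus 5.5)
We will compute $\omega_G=\bM^G_*\eta_G$ directly by inverting the boundary measurement map, using the lobe structure. By \zcref{prop:top cell k=2}, $G$ has $n-2$ black trivalent vertices, no internal white vertices, and $\bM^G$ is birational. Each lobe is a tripod $G_j$ with boundary $(abc)\in\mathcal S$. The domain is $(\bC^*)^{E-V}=\prod_j(\bC^*)^{3}/\bC^*$, and on each factor $\eta_{G_j}$ is the dlog form of $\bP^2$ in the three edge weights. By \zcref{rem:kastelyn A} and \zcref{lem:bdry-meas-vs-VRC}, the inverse of $\bM^G$ sends $X$ to the edge weights (up to immersion signs, which do not affect dlog forms) $r_{ba}=p_{bc}$, $r_{bb}=-p_{ac}$, $r_{bc}=p_{ab}$. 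Hence
\[\omega_G=\pm\bigwedge_{(abc)\in\mathcal S}\Bigl(d\log\tfrac{p_{ac}}{p_{bc}}\wedge d\log\tfrac{p_{ab}}{p_{bc}}\Bigr),\]
a $2(n-2)$-form of rational functions on $\Gr_{2,n}$. The task is to show that this wedge equals the claimed rational multiple of $dX$.

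We fix a chart: gauge-fix $X=\begin{bmatrix}1&0&\cdots\\0&1&\cdots\end{bmatrix}$ or, more symmetrically, work on the $\mathrm{GL}_2$-quotient. Writing column $i$ as $(1,z_i)$ up to scaling is awkward, so instead we treat $X$ as a $2\times n$ matrix with entries $x_{1i},x_{2i}$. We compute the pullback of $\omega_G$ to matrix space and compare it with $dX$ times the contraction by the $\mathrm{GL}_2$ vector fields. For each tripod the form $d\log(p_{ac}/p_{bc})\wedge d\log(p_{ab}/p_{bc})$ equals $\frac{\text{(2-form)}}{p_{ab}p_{ac}p_{bc}}$, where the numerator is $p_{bc}\,dp_{ac}\wedge dp_{ab}+\dots$. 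Explicitly it is $\frac{1}{p_{ab}p_{ac}p_{bc}}$ times the ``linear relation'' 2-form $\alpha_{abc}=p_{bc}\,dp_{ab}\wedge dp_{ac}-p_{ac}\,dp_{ab}\wedge dp_{bc}+p_{ab}\,dp_{ac}\wedge dp_{bc}$. This already produces the denominator in \eqref{eq:pushforward}. It remains to show that $\bigwedge_{(abc)}\alpha_{abc}=\pm(\det A')^2\,dX$.

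The key step is a Jacobian computation; this is presumably \zcref{lem:jacobian}. Work in the chart where columns $1,2$ of $X$ form the identity and columns $i\ge3$ are $(x_i,y_i)$, so $dX=\bigwedge_i dx_i\wedge dy_i$. Each $p_{ab}$ is then bilinear. The identity $p_{bc}u_a-p_{ac}u_b+p_{ab}u_c=0$ (Plücker/Cramer) differentiates to $A\,dX^{T}$-type relations. This suggests factoring $\alpha_{abc}$ as a product of two 1-forms, each linear in the row $A_{(abc)}$ applied to $dx$ and to $dy$ respectively, modulo terms that vanish in the top wedge. Concretely, we aim to show $\alpha_{abc}\equiv\pm(\sum_i A_{(abc),i}\,dx_i)\wedge(\sum_i A_{(abc),i}\,dy_i)$ after an appropriate change of coordinates, up to a scalar factor that cancels globally. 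Wedging over all $n-2$ rows then gives $\det(A|_{\text{cols }3..n})\,dx\wedge\det(A|_{\text{cols }3..n})\,dy$. This is the square of a maximal minor of $A$, hence $(\det A')^2$ up to the standard relation between minors of the kernel-type matrix $A$ (whose rows annihilate $X^T$) and Plücker coordinates of $X$.

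The remaining checks come after this. First, we must reconcile the chosen maximal minor with the $A'$ of the statement (delete a row and its three columns). Both are related via $AX^T=0$: maximal $(n-2)$-minors of $A$ on columns $[n]\setminus\{i,j\}$ equal $\pm p_{ij}\cdot c$ for a common factor $c$, and expanding along the deleted row relates $c$ to $\det A'$ times $p$'s. Second, we must verify that these $p$-factors cancel against the scalar from the chart normalization, using the $(\bC^*)^n$ and $\mathrm{GL}_2$ invariance of $\omega_G$ from \zcref{lem:onshell torus invariant}. Third, we must check independence of the choice of deleted row. I expect the main obstacle to be the Jacobian step: $\alpha_{abc}$ is not literally a product of an $x$-form and a $y$-form. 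Making this work requires either a clever change of variables (for example, projective coordinates $z_i=y_i/x_i$, where $p_{ab}=x_ax_b(z_b-z_a)$ and $\alpha_{abc}$ becomes a dlog of cross-ratio-like quantities), or an inductive argument on lobes via leaf removal in the hypertree, using convexity to ensure some triple contributes a new pair of variables at each step.
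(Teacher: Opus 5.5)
Your reduction is right: $\bM^G$ is birational by \zcref{prop:top cell k=2}, and its inverse is $X\mapsto(\sigma_e r_e(X))$ with $r_e$ the VRC coefficients. So $\omega_G=\pm\bigwedge_S\alpha_{abc}/\prod_S p_{ab}p_{ac}p_{bc}$, and everything comes down to $\bigwedge_S\alpha_{abc}=\pm(\det A')^2\,dX$.

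That identity is the whole content of the theorem; it is what \zcref{lem:jacobian} supplies in the paper. You do not prove it. You state it as an aim, predict that $\alpha_{abc}$ is not literally a product of an $x$-form and a $y$-form, and fall back on unspecified alternatives (coordinates $z_i=y_i/x_i$, induction on leaves). As written, the proposal has a gap exactly at the decisive step.

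Your prediction is mistaken, and the gap closes along the line you started. Let $u_i=(x_i,y_i)^T$ be the $i$th column of $X$. Differentiate the identity $\sum_i A_{S,i}u_i=0$ of \zcref{lem:A perp}, which holds on all $2\times n$ matrices:
\[
\sum_{i\in S}A_{S,i}\,du_i\;=\;-\bigl(dp_{bc}\,u_a-dp_{ac}\,u_b+dp_{ab}\,u_c\bigr).
\]
For a $\bC^2$-valued $1$-form $\gamma=\sum_s\omega_s v_s$, with $\gamma^{(r)}$ its $r$th component, one has $\gamma^{(1)}\wedge\gamma^{(2)}=\sum_{s<t}\det(v_s,v_t)\,\omega_s\wedge\omega_t$. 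Apply this to the right-hand side, using $\det(u_a,u_b)=p_{ab}$ and so on. The result is precisely your $\alpha_{abc}$, and the overall minus sign squares away. Hence, with no change of variables and nothing discarded,
\[
\alpha_{abc}\;=\;\Bigl(\sum_{i}A_{S,i}\,dx_i\Bigr)\wedge\Bigl(\sum_{i}A_{S,i}\,dy_i\Bigr).
\]
On the chart where columns $1,2$ are the identity, $du_1=du_2=0$. Wedging over the $n-2$ triples then gives $\bigwedge_S\alpha_{abc}=\pm(\det A_{(12)^c})^2\,dX$, where columns $1,2$ are deleted. \zcref{lem:greatest common factor} with $p_{12}=1$ converts this to $(\det A')^2\,dX$. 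This also settles your second and third checks: no torus or $\GL_2$ renormalization is needed on this chart, and independence of the deleted row is part of that lemma.

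Once this is filled in, your argument is a different and more elementary route than the paper's. The paper inverts $\bM^G$ through the coordinate projection $\pi^G$ and computes its Jacobian intrinsically:
\begin{itemize}
\item it identifies $T_L\Gr_{2,n}$ with $\Hom(L,\bC^n/L)$ and writes $(\pi^G)_*=\id_{L^\vee}\otimes\pi_G$;
\item it gets the square from $\det(\id_{L^\vee}\otimes\pi_G)=(\det\pi_G)^2$;
\item it reads off the matrix of $\pi_G^\vee$ as $A_{(ij)^c}$ with rescaled rows;
\item it corrects by $p_{ab}^{-2}$ per lobe for the charts on the factors $\Gr_{2,S}$.
\end{itemize}
Your exact factorization of $\alpha_{abc}$ is the coordinate form of the same mechanism: each of the two rows of $X$ sees the same linear map $A_{(12)^c}$. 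It avoids the tangent-space identification and the chart bookkeeping. It also adapts directly to \zcref{thm:pushforward-higher-k}. There, a $k$-fold component wedge of the differentiated $(k+1)$-term relation yields, up to sign, $\prod_{i\in S}p_{S\setminus i}$ times the dlog form of each lobe, hence the exponent $k$.
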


The differential $dX$ in the theorem statement should be interpreted as follows. Consider any affine chart in $\Gr_{2,n}$ of the form $\{p_{ij} \neq 0\}.$ Points in this chart are parameterized by matrices $X$ whose submatrix $X_{ij}$
is the $2 \times 2$ identity matrix, and whose other entries are $x_{rs}$ for $r \in [2], s \in [n] \setminus \{i,j\}$. Then 
\begin{equation}\label{eq:x coordinates}
    dX = \displaystyle\bigwedge_{s \in [n] \setminus \{i, j\}} dx_{1s} \wedge dx_{2s}.
\end{equation}
The other factor in \eqref{eq:pushforward} is a rational function in the variables $x_{rs}.$ Since rational forms are determined on Zariski open sets, this uniquely describes the differential form $ \omega_G.$ 

\begin{eg}\label{eg:twosix-hypertree} 
    We consider several graphs for $n \leq 8$, each of which we identify with the sets of triplets $\{\partial G_i\}_{i \in [n-2]}$, pictured in \zcref{fig:k2-irreducible}.
    \[
        \begin{aligned}
            \{\partial G_i^{I}\}_{i \in [4]} &= \{123,345,156,246\}, \\
            \{\partial G_i^{II}\}_{i \in [5]} &= \{123,456,147,257,367\}, \\
            \{\partial G_i^{III}\}_{i \in [6]} &= \{123,345,567,178,246,268\}.
        \end{aligned}
    \]
    The corresponding matrices are
    \[
        \begin{aligned}
            A^I &= \begin{bmatrix}
                p_{23} & -p_{13} & p_{12} & 0 & 0 & 0 \\
                0 & 0 & p_{45} & -p_{35} & p_{34} & 0 \\
                p_{56} & 0 & 0 & 0 & -p_{16} & p_{15} \\
                0 & p_{46} & 0 & -p_{26} & 0 & p_{24}
            \end{bmatrix} \\
            A^{II} &= \begin{bmatrix}
                p_{23} & -p_{13} & p_{12} & 0 & 0 & 0 & 0 \\
                0 & 0 & 0 & p_{56} & -p_{46} & p_{45} & 0 \\
                p_{47} & 0 & 0 & -p_{17} & 0 & 0 & p_{14} \\
                0 & p_{57} & 0 & 0 & -p_{27} & 0 & p_{25} \\
                0 & 0 & p_{67} & 0 & 0 & -p_{37} & p_{36}
            \end{bmatrix} \\
            A^{III} &= \begin{bmatrix}
                p_{23} & -p_{13} & p_{12} & 0 & 0 & 0 & 0 & 0 \\
                0 & 0 & p_{45} & -p_{35} & p_{34} & 0 & 0 & 0 \\
                0 & 0 & 0 & 0 & p_{67} & -p_{57} & p_{56} & 0 \\
                p_{78} & 0 & 0 & 0 & 0 & 0 & -p_{18} & p_{17} \\
                0 & p_{46} & 0 & -p_{26} & 0 & p_{24} & 0 & 0 \\
                0 & p_{68} & 0 & 0 & 0 & -p_{28} & 0 & p_{26}
            \end{bmatrix}
        \end{aligned}
    \]
    \zcref{thm:pushforward-hypertree} yields
    \[
        \begin{aligned}
            \omega_{G^I} &= \frac{(p_{16}p_{24}p_{35} - p_{15}p_{26}p_{34})^2}{p_{12}p_{13}p_{23}p_{34}p_{35}p_{45}p_{15}p_{16}p_{56}p_{24}p_{26}p_{46}}\,dX, \\
            \omega_{G^{II}} &= \frac{\left(p_{14}p_{27}p_{37}p_{56} + p_{17}p_{27}p_{36}p_{45} - p_{17}p_{25}p_{37}p_{46}\right)^2}{p_{12}p_{13}p_{23}p_{45}p_{46}p_{56}p_{14}p_{17}p_{47}p_{25}p_{27}p_{57}p_{36}p_{37}p_{67}}\,dX, \\
            \omega_{G^{III}} &= \frac{\left(p_{18}p_{24}p_{35}p_{67} - p_{18}p_{26}p_{34}p_{57} + p_{17}p_{28}p_{34}p_{56}\right)^2}{p_{12}p_{13}p_{23}p_{34}p_{35}p_{45}p_{56}p_{57}p_{67}p_{17}p_{18}p_{78}p_{24}p_{46}p_{28}p_{68}}\,dX.
        \end{aligned}
    \]
    The rational factors in the above forms have total degree $-6$,$-7$,$-8$ in the Pl\"ucker variables and thus yield well-defined rational differential top-forms on $\Gr_{2,6}$, $\Gr_{2,7}$, and $\Gr_{2,8}$ respectively.
\end{eg}

\begin{figure}
        \[\vcenter{\hbox{\onshellgraph{{(-120:2)/1,(-180:2)/2,(120:2)/3,(60:2)/4,(0:2)/5,(-60:2)/6}}{{(-120:2)/W1,(-180:2)/W2,(120:2)/W3,(60:2)/W4,(0:2)/W5,(-60:2)/W6}}{{(150:1.125)/B1,(30:1.125)/B2,(-90:1.125)/B3,(0,0)/B4}}{{W1/B1, W2/B1, W3/B1, W3/B2, W4/B2, W5/B2, W1/B3, W5/B3, W6/B3, B4/W2, B4/W4, B4/W6}}{1}}}
        \vcenter{\hbox{\onshellgraph{ {(-141.43:2)/1,(-192.86:2)/2,(-244.26:2)/3,(-295.71:2)/4,(-347.14:2)/5,(-398.57:2)/6,(-90:2)/7} }{ {(-141.43:2)/W1,(-192.86:2)/W2,(-244.26:2)/W3,(-295.71:2)/W4,(-347.14:2)/W5,(-398.57:2)/W6,(-90:2)/W7} }{ {(162:1)/B1,(18:1)/B2,(234:1)/B3,(90:1)/B4,(-54:1)/B5} }{ {W1/B1,W2/B1,W3/B1,W4/B2,W5/B2,W6/B2,W1/B3,W4/B3,W7/B3,W2/B4,W5/B4,W7/B4,W3/B5,W6/B5,W7/B5} }{1}}}
        \vcenter{\hbox{\onshellgraph{ {(225:2)/1,(180:2)/2,(135:2)/3,(90:2)/4,(45:2)/5,(0:2)/6,(-45:2)/7,(-90:2)/8} }{ {(225:2)/W1,(180:2)/W2,(135:2)/W3,(90:2)/W4,(45:2)/W5,(0:2)/W6,(-45:2)/W7,(-90:2)/W8} }{ {(210:1)/B1,(120:1)/B2,(30:1)/B3,(-60:1)/B4,(90:0.3)/B5,(-90:0.3)/B6} }{ {W1/B1,W2/B1,W3/B1,W3/B2,W4/B2,W5/B2,W5/B3,W6/B3,W7/B3,W7/B4,W8/B4,W1/B4,W2/B5,W4/B5,W6/B5,W2/B6,W6/B6,W8/B6} }{1}}}
        \]
        \caption{Some type $(2,n)$ graphs for $n=6, 7, 8$.}\label{fig:k2-irreducible}
\end{figure}

\begin{rem}The formula \eqref{eq:pushforward} was first written in \cite{Arkani-Hamed:2014bca} in the context of MHV leading singularities of non-planar $N=4$ super Yang-Mills scattering amplitudes. The physics derivation relies on certain constraints between the entries of the $\Gr_{2,n}$ matrix and kinematical variables $\lambda$, $\widetilde{\lambda}$.

When $G$ is a top-cell $(2,n)$ plabic graph, the numerator of \eqref{eq:pushforward} factors as a product of Pl\"ucker coordinates and there is cancellation between it and the denominator. The on-shell form specializes to the Parke--Taylor form
\[\omega_{\mathrm{PT}} = \frac{1}{p_{12}p_{23} \ldots p_{n-1,n}p_{1n}}dX.\]

The Parke--Taylor form was first written in the physics literature in \cite{Parke:1986gb} using kinematical variables in the context of scattering of gluons in quantum chromodynamics. It was later realized in \cite{Arkani-Hamed:2009ljj} that the same form can be interpreted using minors of the Grassmannian $\Gr_{2,n}$. Furthermore, in \cite{Arkani-Hamed:2012zlh} the authors associated the Parke--Taylor form with the canonical form of the positive Grassmannian $\Gr^{\geq 0}_{2, n}$ and plabic graphs (on-shell diagrams in the physics literature).
\end{rem}

 We will need the following two lemmas about $A$ to prove \zcref{thm:pushforward-hypertree}. 

\begin{lemma}\label{lem:A perp}
    Consider a point $X \in \Gr_{2,n}$. 
    Then $ \mathrm{rowsp} A(X)$ lies inside $X^\perp,$ with equality whenever $A(X)$ has full rank.
\end{lemma}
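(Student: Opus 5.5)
The plan is to check directly that every row of $A(X)$ is orthogonal to both rows of a $2\times n$ representative of $X$. Then compare dimensions.

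Fix a representative $X$ with columns $x_1,\dots,x_n \in \bC^2$, so that $p_{ij} = \det[x_i\, x_j]$. Consider the row of $A(X)$ indexed by a triple $(abc)\in\mathcal S$. Pairing it against the rows of $X$ gives the vector
\[
p_{bc}\,x_a - p_{ac}\,x_b + p_{ab}\,x_c \in \bC^2 .
\]
This vanishes by the three-term Cramer/Plücker identity for three vectors in $\bC^2$. One way to see it: the $3\times 3$ matrix whose first row is a row of $X$ restricted to columns $a,b,c$, and whose last two rows are the two rows of $X$ on those columns, has a repeated row, so its determinant is zero. Expanding that determinant along the first row gives exactly the displayed combination, coordinate by coordinate. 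Hence each row of $A(X)$ lies in $X^\perp$, and $\rowsp A(X)\subseteq X^\perp$.

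This is equivalently the statement in \zcref{rem:kastelyn A} that the entries of $A(X)$ are VRC coefficients with boundary $X$. The step is valid for every $X$, including degenerate ones, since the identity is polynomial. It also does not depend on the chosen representative: changing the representative by $g\in\GL_2$ rescales every $p_{ij}$ by $\det g$, which rescales $A$ and leaves its row space unchanged.

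For the equality statement, $A(X)$ has $n-2$ rows, so "full rank" means rank $n-2$. Also $\dim X^\perp = n-2$ because $X$ is $2$-dimensional. A subspace of dimension $n-2$ inside an $(n-2)$-dimensional space is the whole space, so $\rowsp A(X) = X^\perp$.

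There is no real obstacle here. The only point needing care is the sign convention $A_{(abc,b)}=-p_{ac}$ with $a<b<c$, which must match the cofactor expansion; that holds because the cofactor signs are $+,-,+$.
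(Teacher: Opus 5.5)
Your proposal is correct and follows the paper's proof. Both reduce the containment to $A(X)X^T=0$, i.e.\ to the three-term identity $p_{bc}x_a - p_{ac}x_b + p_{ab}x_c = 0$ for the columns of $X$; the paper cites Cramer's rule for this identity, while you get it by expanding a $3\times 3$ determinant with a repeated row, and your dimension count for the equality case ($\mathrm{rank}\, A(X) = n-2 = \dim X^\perp$) fills in a step the paper leaves implicit.
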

\begin{proof} 
    Let $X$ denote a $2 \times n$ matrix whose rowspan is the corresponding point in $\Gr_{2,n}.$ We want to show $AX^T = 0.$ Writing $x, y$ for rows of $X,$ this reduces to showing the equality
\[\begin{matrix}
    \begin{vmatrix}
        x_b & x_c \\
        y_b & y_c
    \end{vmatrix} \cdot  \begin{pmatrix}
        x_a \\
        y_a
    \end{pmatrix} & + & \begin{vmatrix}
        x_c & x_a \\
        y_c & y_a
    \end{vmatrix} \cdot \begin{pmatrix}
        x_b \\
        y_b
    \end{pmatrix}& + & \begin{vmatrix}
        x_a & x_b \\
        y_a & y_b
    \end{vmatrix} \cdot \begin{pmatrix}
        x_c \\
        y_c
    \end{pmatrix}= 0.
\end{matrix}\]
This follows by Cramer's rule. 
\end{proof}
\begin{lemma}\label{lem:greatest common factor}
    Consider a point $X \in \Gr_{2,n}.$ Suppose that $A(X)$ has full rank. Let $A'$ be any submatrix of $A(X)$ obtained by deleting a row and the three columns with nonzero entries in that row. Then $\det A'$ is the greatest common factor of the maximal minors of $A(X),$ and the maximal minor given by deleting columns $i$ and $j$ equals $\pm p_{ij} \det A'.$ 
\end{lemma}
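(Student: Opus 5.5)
We will use the standard fact relating the maximal minors of a full-rank matrix to those of its orthogonal complement. By \zcref{lem:A perp}, if $A(X)$ has full rank $n-2$, then $\rowsp A(X) = X^\perp$. Two full-rank matrices with the same row space have proportional maximal minors: writing $\Delta_{[n]\setminus\{i,j\}}(A(X))$ for the minor on the columns complementary to $\{i,j\}$, there is a scalar $\lambda(X)\neq 0$ such that
\[\Delta_{[n]\setminus\{i,j\}}(A(X)) = \lambda(X)\, p_{[n]\setminus\{i,j\}}(X^\perp).\]
Combining this with \zcref{lem:duality-plucker-coordinates} gives $\Delta_{[n]\setminus\{i,j\}}(A(X)) = \pm \lambda(X)\, p_{ij}(X)$. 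So the whole task reduces to identifying $\lambda(X)$ with $\pm\det A'$.

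To make this identification, we choose a row $(abc)\in\mathcal S$ and compute the minor $\Delta_{[n]\setminus\{a,b\}}(A(X))$. The columns used are $c$ together with all columns other than $a,b,c$. The row $(abc)$ restricted to these columns has a single nonzero entry, namely $p_{ab}$ in column $c$. Laplace expansion along this row therefore gives
\[\Delta_{[n]\setminus\{a,b\}} = \pm p_{ab}\det A',\]
where $A'$ is obtained by deleting row $(abc)$ and columns $a,b,c$. Comparing with the previous paragraph yields $\lambda(X) = \pm\det A'$, provided $p_{ab}(X)\neq 0$. For generic $X$ this holds; since the identity is polynomial, it extends to every $X$ with $A(X)$ of full rank, or we can simply choose $(i,j)$ with $p_{ij}\neq0$, which exists because $X$ has rank 2. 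In fact the same expansion works for any pair from $\{a,b,c\}$. The conclusion is that every maximal minor equals $\pm p_{ij}\det A'$, with a sign independent of $X$ up to the fixed ordering conventions. As a consistency check, this shows that $\det A'$ is the same up to sign for every choice of deleted row.

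For the greatest-common-factor statement, interpreted polynomially in the Plücker variables (or in the matrix entries of $X$), all maximal minors are $\det A'$ times the $p_{ij}$. The $p_{ij}$ have no common factor: for instance $p_{12}$ and $p_{34}$ are distinct irreducibles in the polynomial ring in the entries of $X$. Hence $\det A'$ is the gcd. Note that $\det A'$ is nonzero because $A(X)$ has full rank.

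The main obstacle is bookkeeping the signs carefully enough to see that the scalar $\pm$ is uniform over all $(i,j)$, rather than dependent on $X$. However, the statement only requires the equality up to sign for each minor, and the proportionality argument already controls the ratio uniformly.
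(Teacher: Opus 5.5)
Your proposal is correct and follows essentially the same route as the paper: the row-space lemma together with Grassmann duality makes the maximal minors of $A(X)$ proportional to the $p_{ij}(X)$, and Laplace expansion along the row $(abc)$ of the minor on columns $[n]\setminus\{a,b\}$ identifies the proportionality factor as $\pm\det A'$. One small slip concerns your fallback for the case $p_{ab}(X)=0$: picking an arbitrary pair $(i,j)$ with $p_{ij}(X)\neq 0$ does not allow the Laplace expansion unless $\{i,j\}$ lies in a triple, so rely instead on the polynomial-identity argument or on another pair from $\{a,b,c\}$, which works because full rank forces the row $(abc)$, and hence one of $p_{ab},p_{ac},p_{bc}$, to be nonzero.
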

\begin{proof}
    Let $X$ be a rank two $2 \times n$ matrix. By \zcref{lem:A perp} and Cramer's rule, we have that
    \[[\det X_{1,2}: \dots : \det X_{n-1,n}] = [\det A(X)_{(1,2)^c}: \dots : \det A(X)_{(n-1,n)^c}],\]
    where equality is in projective space $\bP^{\binom{n}{2}-1}.$ Consider a row indexed by $(abc).$ By construction of $A(X),$ we know that $A(X)_{abc, c} = (\det X_{a,b}) \cdot (\det A'),$ where $A'$ is the $(n-3) \times (n-3)$ submatrix formed by deleting row $(abc)$ and columns $a,b,c.$ Thus $\det A'$ is the overall factor of proportionality, and the greatest common factor of the minors.
\end{proof}

By \zcref{lem:greatest common factor} and \zcref{rem:kastelyn A}, \zcref{thm:pushforward-hypertree} tells us in particular that the on-shell form vanishes whenever the weighted adjacency matrix in the Pl\"ucker coordinates drops rank. 

Recall the coordinate projection map $\pi^G$ from \zcref{lem:gluing-inverse}. We will also need a lemma about the Jacobian of $\pi^G$. 

We first fix some notation. For any subset $S \subset [n],$ let $\bC^S \subset \bC^n$ be the subspace spanned by the standard basis vectors indexed by $S.$ Let $\Gr_{k, S}$ denote the Grassmannian of $k$-dimensional vector spaces in $\bC^S.$ 

To write the Jacobian, we use local coordinates on the source and the target, so that the map $\pi^G$ becomes a birational map between affine spaces of the same dimension. On $\Gr_{2,n}$ we use
the affine chart $\{p_{ij}\neq 0\}$, and for
$S=(abc)\in\mathcal{S}$ we use on $\Gr_{2,S}$ the affine chart $\{p_{ab}\neq 0\}.$ 
\begin{lemma}\label{lem:jacobian}
        Let $G$ be a top-cell type $(2,n)$ graph, and let $A'$ be any submatrix of $A$
obtained by deleting a row and the three columns with nonzero entries in that row. In the coordinates above, the Jacobian 
of the projection map 
        \begin{equation*}
            \pi^G: \Gr_{2,n} \dashrightarrow \prod_{S \in \mathcal{S}} \Gr_{2, S}
        \end{equation*}
        has determinant
$\pm(\det A')^2/\prod_{(abc)\in\mathcal{S}}p_{ab}^3.$
    \end{lemma}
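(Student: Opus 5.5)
We compute the Jacobian by factoring the differential of $\pi^G$ through the linear constraints described by $A$. First we fix coordinates. On $\Gr_{2,n}$, in the chart $\{p_{ij}\neq 0\}$, we use $X=[\mathrm{Id}\,|\,x]$ in columns $i,j$. For each $S=(abc)$, the chart $\{p_{ab}\neq 0\}$ of $\Gr_{2,S}\cong\Gr_{2,3}$ has coordinates the column $X_{ab}^{-1}X_c\in\bC^2$. By Cramer's rule this column is $\bigl(p_{cb}/p_{ab},\,p_{ac}/p_{ab}\bigr)^T$, up to ordering. So $\pi^G$ in coordinates is
\[X\mapsto \bigl(p_{cb}(X)/p_{ab}(X),\;p_{ac}(X)/p_{ab}(X)\bigr)_{(abc)\in\mathcal S}.\]
This is a map $\bC^{2(n-2)}\to\bC^{2(n-2)}$. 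Both the Jacobian determinant and the right-hand side are invariant under the right $\GL_2$-action up to consistent factors: each $p$ scales by $\det g$, and the ratios are invariant. Hence it suffices to compute at generic $X$, with freedom to change charts.

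The key structural step is to express $d\pi^G$ via the vectors $X_{ab}^{-1}X_c$. Write $\lambda_S=(\lambda_{S,1},\lambda_{S,2})$ for the coordinates of the image on factor $S$, so that $X_c=\lambda_{S,1}X_a+\lambda_{S,2}X_b$ (with signs absorbed). Differentiating gives
\[dX_c-\lambda_{S,1}dX_a-\lambda_{S,2}dX_b=X_a\,d\lambda_{S,1}+X_b\,d\lambda_{S,2}.\]
Pairing with the two covectors dual to $X_a,X_b$ shows that $(d\lambda_{S,1},d\lambda_{S,2})$ equals $p_{ab}^{-1}$ times a linear function of $dX$. That linear function is the $2\times 2$ block obtained by applying the row $(p_{bc},-p_{ac},p_{ab})$ of $A$ to the two rows of $dX$. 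Concretely, this comes from differentiating the identity $A(X)X^T=0$ of \zcref{lem:A perp}. Since $A(X)\,dX^T$ is the only part that survives, $d\pi^G$ factors as
\[
dX\;\longmapsto\;\mathrm{diag}(p_{ab}^{-2})_S\cdot\bigl(A(X)\otimes \mathrm{Id}_2\bigr)\,dX^T,
\]
modulo the $\GL_2$ normalization. Here each $S$ contributes $p_{ab}^{-1}$ from the ratio, and an extra $p_{ab}^{-1}$ per coordinate from dual-basis normalization, up to how one counts. We will track this carefully so that the total prefactor is $\prod_S p_{ab}^{-3}$.

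The main computation is the determinant of $A(X)\otimes\mathrm{Id}_2$ restricted to $dX$ supported on the $n-2$ free columns, i.e.\ with columns $i,j$ deleted. It is the square of the $(n-2)\times(n-2)$ minor of $A(X)$ on columns $[n]\setminus\{i,j\}$. By \zcref{lem:greatest common factor}, this minor equals $\pm p_{ij}\det A'$. Its square, $p_{ij}^2(\det A')^2$, then combines with the normalization. In the chart, the factors of $p_{ij}$ are $1$; in general they cancel against the homogeneity of the dual basis. The remaining factors are $p_{ab}^{-1}$ per row from the ratio $1/p_{ab}$ and $p_{ab}^{-2}$ from inverting $X_{ab}$ on the two components. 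Together these give the claimed $\pm(\det A')^2/\prod_S p_{ab}^3$.

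I expect the main obstacle to be the bookkeeping of these normalization factors. The goal is to confirm that exactly $p_{ab}^{3}$ appears per lobe rather than $p_{ab}^{2}$ or $p_{ab}^{4}$. The plan there is to check the exponent by a homogeneity count. The Jacobian must be invariant under rescaling of the columns $X_s\mapsto t_sX_s$ for $s\ne i,j$, since both charts transform compatibly; that invariance, combined with the degrees of $\det A'$, should pin down the power. If ambiguity remains, we will verify on the smallest case $n=4$ (two tripods), where everything is explicit.
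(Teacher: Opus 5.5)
Your strategy is sound, and it is essentially a coordinate version of the paper's argument. The map $dX\mapsto\bigl(\sum_s A_{S,s}\,dX_s\bigr)_{S}$, i.e.\ $A_{(ij)^c}\otimes\mathrm{Id}_2$, is what the paper writes intrinsically as $\id_{L^\vee}\otimes\pi_G$ on $\Hom(L,\bC^n/L)$. You also correctly extract $(\det A_{(ij)^c})^2=(p_{ij}\det A')^2$ from it using \zcref{lem:greatest common factor}.

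The gap is the part of the lemma beyond this factor, namely the power of $p_{ab}$. You never actually compute the per-lobe $2\times 2$ block. The displayed factorization with $\mathrm{diag}(p_{ab}^{-2})_S$ ``modulo the $\GL_2$ normalization'' is not the differential: it drops the matrix $X_{ab}^{-1}$ that your own ``pairing with the dual covectors'' produces. Your final attribution, $p_{ab}^{-1}$ from the ratio and $p_{ab}^{-2}$ from inverting $X_{ab}$, is wrong in both pieces. The exponent $3$ comes out only because the two errors cancel, and you say explicitly that you will arrange the count so that the total is $p_{ab}^{-3}$.

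The proposed fallback does not rescue this. $J$ is \emph{not} invariant under $X_s\mapsto t_sX_s$ ($s\neq i,j$), because the target coordinates $p_{ac}/p_{ab}$ and $p_{bc}/p_{ab}$ rescale as well. In fact $J(tX)=J(X)\prod_{S}t_c^2/(t_at_b)\cdot\prod_{s\neq i,j}t_s^{-2}$ (with $t_i=t_j=1$), so that check cannot be used as stated. Similarly, inside a fixed chart there is no residual $\GL_2$ freedom and the Jacobian genuinely depends on the chart. Your opening reduction is therefore not meaningful, though you do not need it.

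The fix is one line from your own differentiated identity. With $\lambda_{S,1}=p_{cb}/p_{ab}$ and $\lambda_{S,2}=p_{ac}/p_{ab}$, it reads
\[
X_{ab}\begin{pmatrix} d\lambda_{S,1}\\ d\lambda_{S,2}\end{pmatrix}=\frac{1}{p_{ab}}\bigl(p_{bc}\,dX_a-p_{ac}\,dX_b+p_{ab}\,dX_c\bigr).
\]
So $d\pi^G$ is $A_{(ij)^c}\otimes\mathrm{Id}_2$ followed by the block-diagonal map $\bigoplus_{S}p_{ab}^{-1}X_{ab}^{-1}$. Each block has determinant $p_{ab}^{-2}\cdot(\det X_{ab})^{-1}=p_{ab}^{-3}$: the scalar $1/p_{ab}$ acts on a $2$-dimensional space, while $X_{ab}^{-1}$ contributes only $p_{ab}^{-1}$. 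Hence $J=\pm p_{ij}^2(\det A')^2/\prod_{S}p_{ab}^3$, with $p_{ij}=1$ in the chart. As a sanity check, for a single tripod in the chart $\{p_{13}\neq0\}$ one finds directly that $J=p_{12}^{-3}$.

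The same split is how the chart correction at the end of the paper's proof has to be counted to land on $p_{ab}^3$: $p_{ab}^{-2}$ from normalizing $\lambda_S$ by $p_{ab}$, squared by the tensor structure, and $p_{ab}^{-1}$ from changing the basis of the two-dimensional space $\pi^S(L)$ to the rows of $X_{ab}^{-1}X_S$.
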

    \begin{proof}
    We first set up some notation. Let $L$ denote a point in $\Gr_{2,n}.$ For $S \in \mathcal{S},$ let $\pi^S: L \to \bC^S$ be the corresponding projection. In terms of Pl\"ucker coordinates, for $S = (abc)$, the map $\pi^S$ induces a map on Grassmannians, which we denote the same way:
    \begin{equation}
        \begin{split}
            \pi^S: \Gr_{2,n} & \dashrightarrow \Gr_{2, S} \\
            (p_{I})_{I \in \binom{n}{2}} & \mapsto [p_{bc} : -p_{ac}: p_{ab}].
        \end{split}
    \end{equation}
    Indeed, a general point $L$ lies outside the base locus of this projection, and the linear spaces $L$ and $\pi^S(L)$ are isomorphic. In particular, $\pi^S(L)$ has dimension two inside of $\bC^S.$ Then $\pi^{G}$ is the direct sum of the maps $\pi^S$ for $S$ in ${\mathcal S}.$

    Similarly, we define
    $\pi_S:\bC^n / L \to \bC^S / \pi^S(L)$ to be the induced projection on the quotient. Let $\pi_G$ be the product of these maps. That is, we define
    \begin{align*}
       \pi_G: \bC^n / L \to W, \qquad \qquad W := \bigoplus_{S \in \mathcal{S}} \bC^S / \pi^S(L)  .
    \end{align*}
    
    We compute the Jacobian by using the natural identification of the tangent space $T_L\Gr_{k,n}$ with $ \Hom(L, \bC^n / L)$, for every point $L \in \Gr_{2,n}$. This identification may be found in standard textbooks, e.g. \cite[Chapter 3]{3264}.

    The pushforward along $\pi^G$ gives a map of tangent bundles
    \begin{equation}\label{eq:tangent map}
        (\pi^G)_*: T\Gr_{2,n} \dashrightarrow \bigoplus_{S \in \mathcal{S}} T\Gr_{2, S}.
    \end{equation}
    
 At a general point $L \in \Gr_{2,n}$, the projection $\pi^S$ has an inverse, and \eqref{eq:tangent map} is given by
    \begin{equation}\label{eq:tangent map explicit}
    \begin{split}
        (\pi^G)_*: \Hom(L, \bC^n/L) & \to \bigoplus_{S \in \mathcal{S}} \Hom(\pi^S(L), \bC^S / \pi^S(L))  \cong \Hom(L, W) \\
        f &  \mapsto \bigoplus_{S \in \mathcal{S}} \pi_S \circ f \circ (\pi^S)^{-1}.
    \end{split}
    \end{equation}
    
Recall that for vector spaces $A, B$ we have the natural identification
     \[\Hom(A, B) = A^\vee \otimes B.\]
     Via the identification $\Hom(L, \bC^n / L)^\vee = \Hom(\bC^n/L, L),$ we view $(\pi^G)_*$ as an element of 
    \[\Hom(\bC^n/L, L) \otimes \Hom(L, W) = (\bC^n / L)^\vee \otimes L \otimes L^\vee \otimes W = \Hom(L^\vee, L^\vee) \otimes \Hom(\bC^n / L, W).\]
    Under this identification we have
    $(\pi^G)_*= \id_{L^\vee} \otimes \pi_G.$ Using the determinantal identity 
    \begin{equation}\label{eq:det identity}
       \det (\id_{L^\vee} \otimes \pi_G) = (\det \pi_G)^{\otimes 2} 
    \end{equation}
    for maps on exterior powers yields the map
    \begin{equation*}
        (\det \pi_G)^2: \Omega^\vee_L{\Gr_{2,n}} \to \displaystyle\bigotimes_{s \in \mathcal{S} }\Omega^\vee_{\pi^S(L)}{(\Gr_{2,S})}
    \end{equation*}
    of one-dimensional vector spaces, for each general point $L.$ Pullback of differential top-forms is thus given at $L$ by $(\det \pi_G^\vee)^2.$ This gives the exponent two in the numerator of~\eqref{eq:pushforward}.
    
    To conclude, it remains to compute $\det \pi_G^\vee$  in our chosen affine chart. For $S = (abc),$ we define the linear functional \[\lambda_S := p_{ab}e_c^\vee - p_{ac}e_b^\vee + p_{bc}e_a^\vee \in (\bC^S)^\vee.\] Then we obtain a concrete description of the dual map $\pi_G^\vee$ as
    \begin{equation*}
        \begin{split}
            \pi_G^\vee: \bigoplus_{s \in \mathcal{S}} (\bC^S/\pi^S(L))^\vee & \to (\bC^n / L)^\vee \\
            \bigoplus_{s \in \mathcal{S}} c_S \lambda_S & \mapsto \sum_{s \in \mathcal{S}} c_S \lambda_S.
        \end{split}
    \end{equation*}

    Now, consider the basis of $(\bC^n / L)^\vee = L^\perp$ given by
    $$\ell_s:=e_s^\vee-x_{1s}\,e_i^\vee-x_{2s}\,e_j^\vee\qquad s\notin\{i,j\},$$
    where $x_{rs}$ are the coordinates on our chart $p_{ij} \neq 0$ as in \eqref{eq:x coordinates}. 
    In terms of this basis we have
    $$\lambda_S=\sum_{s\notin\{i,j\}}\lambda_S(e_s)\,\ell_s=\sum_{s\notin\{i,j\}}A_{S,s}\,\ell_s .$$
    For the chart $\{p_{ab}\neq0\}$ on $\mathrm{Gr}_{2,S}$, the analogous basis of $(\mathbb C^S/\pi^S(L))^\vee$ is the single functional $\lambda_S/p_{ab}$. In these bases, the matrix of $\pi_G^\vee$ is $A$ with columns $i,j$ deleted and each row $(abc)$ divided by $p_{ab}$. Thus $$\det\pi_G^\vee=\det A_{(ij)^c} / \prod_{(abc) \in \mathcal{S}} p_{ab} =\pm p_{ij}\det A' / \prod_{(abc) \in \mathcal{S}} p_{ab},$$ where the final equality follows from Lemma \ref{lem:greatest common factor}. The additional terms $p_{ab}^2$ is because the identity $(\pi^G)_*=\mathrm{id}_{L^\vee}\otimes\pi_G$ uses the rows of $X$ as a basis of both $L$ and $\pi_S(L),$ whereas the chart on $\Gr_{2,S}$ uses coordinates $p_{ac}/p_{ab}, p_{bc}/p_{ab}.$ This contributes an extra factor of $1/p_{ab}^2$ for each $S,$ thus giving the desired formula for the determinant of the Jacobian with respect to the two charts.
\end{proof}

\begin{proof}[Proof of \zcref{thm:pushforward-hypertree}] 
Since the graph $G$ is top cell, by \zcref{prop:top cell k=2} it has lobes $G_1, \ldots, G_{n-2}$, each consisting of a black vertex $b_i$ attached to three white boundary vertices $\partial G_i$. By \zcref{lem:gluing-inverse}, the boundary measurement map factors as 
\[\bM^G = \Phi^G \circ \prod_{i = 1}^{n-2} \bM^{G_i}.\]
Let $\bC^{\partial G_i}$ denote the subspace of $\bC^n$ spanned by basis vectors indexed by $\partial G_i.$ The domain of $\bM^{G_i}$ is $(\bC^*)^{\partial G_i - b_i} = (\bC^*)^{\partial G_i} / \bC^*,$ where $\bC^*$ is included via $t \mapsto (t,t,t).$ Thus $\bM^G$ factors as
\begin{equation*}
    \bM^G: (\bC^*)^{\partial G_1 - b_1} \times \ldots \times (\bC^*)^{\partial G_{n-2} - b_{n-2}} \dashrightarrow \Gr_{2,n}.
\end{equation*}
Any matching in $G_i$ involves exactly one edge, so the boundary measurement map $\bM^{G_i}$ has target $\Gr_{2,3}.$
Since $\pi^G$ is the inverse of $\Phi^G$, the inverse of $\bM^G$ is the composition \[
\begin{tikzcd}[ampersand replacement=\&, baseline=-0.5ex, column sep=large]
\Gr_{2,n}
\arrow[r, dashed, "\pi^G"]
\&
\displaystyle \prod_{i=1}^{n-2}\Gr_{2,\partial G_i}
\arrow[r]
\&
\displaystyle \prod_{i=1}^{n-2}(\bC^*)^{\partial G_i - b_i} .
\end{tikzcd}
\]
where the second map inverts the boundary measurement in each factor. However, up to an automorphism of the torus, we may view the second map as restriction to the subtorus of $\Gr_{2, \partial G_i}$ where Pl\"ucker coordinates are nonzero\footnote{This is indeed a torus since $\Gr_{2, \partial G_i} \simeq \Gr_{2,3}$.}. By Lemma \ref{lem:gluing-inverse}, we have that $\Phi^G$ is birational with inverse $\pi^G$, so $(\Phi^G)_*=(\pi^G)^*$ on rational top forms. Thus 
$$
\omega_G= (\pi^G)^*\Big(\bigwedge_{(abc)\in\mathcal S}d\log{{\frac{p_{ac}}{p_{ab}}}\wedge d\log {\frac{p_{bc}}{p_{ab}}}}\Big)
=\frac{\pm (\det A')^2}{\displaystyle\prod_{(abc) \in \mathcal{S}}p_{ab}^3\,\frac{p_{ac}}{p_{ab}}\frac{p_{bc}}{p_{ab}}}\,dX
=\frac{\pm(\det A')^2}{\displaystyle\prod_{(abc)\in\mathcal S}p_{ab}p_{ac}p_{bc}}\,dX ,
$$
where the second equality follows from Lemma \ref{lem:jacobian}.

    \end{proof}

\section{Relation between $k=2$ top-cell forms and hypertree divisors}\label{subsec:hypertree}


Hypertree divisors are divisors on $\overline{M}_{0,n}$ introduced by Castravet and Tevelev \cite{CastravetTevelev}. When they are irreducible, they correspond to extremal rays in the effective cone of $\overline{M}_{0,n}$ \cite[Theorem 1.5]{CastravetTevelev}. Hypertree divisors are indexed by combinatorial objects called \emph{hypertrees}; the irreducible hypertree divisors are indexed by the subclass of irreducible hypertrees, which also has a straightforward combinatorial characterization.

 In this section we give a procedure for going from an irreducible hypertree to a set of graphs with boundary, which we call \emph{irreducible graphs} (\zcref{def:irreducible}). Our main result in this section is \zcref{prop:graph-to-hypertree}, which gives an alternate formula for on-shell forms of irreducible graphs using the defining equation of the corresponding hypertree divisor. We also show in \zcref{prop:base-locus-hypertree} that for top-cell graphs, the boundary measurement map is an isomorphism precisely on the complement of the corresponding hypertree divisor. Finally, we remark on the reducible case at the end of the section, describing how the on-shell form factors.

\subsection{Background on hypertrees} We begin by recalling the relationship between $\overline{M}_{0,n}$ and the Grassmannian $\Gr_{2,n}.$ The moduli space $\overline{M}_{0,n}$ is a compactification of the open moduli space $M_{0,n}$ of configurations of $n$ distinct points in $\bP^1.$ This open moduli space can be expressed as a torus quotient of the Grassmannian $\Gr_{2,n}$ as follows. 

\begin{definition}
    We define the open subset
    \[\Gr_{2,n}^\circ:= \bigr\{L \ : \ p_{ij}(L) \neq 0 \  \ \forall \ \{i,j\} \in \binom{[n]}{2}\bigr\}\] 
    of $\Gr_{2,n}$ of points whose Pl\"ucker coordinates  are nonzero. We define $M_{0,n}$ to be the quotient of $\Gr_{2,n}^\circ$ by the action of $(\bC^*)^n$ scaling the columns of any matrix representative, or equivalently, by $ (t_1, \ldots, t_n) \cdot p_{ij} = t_it_jp_{ij}.$  
\end{definition}

In particular, any differential form on $\Gr_{2,n}$ which is invariant under the torus action descends to a differential form on $M_{0,n}$ (and therefore to a rational differential form on any compactification). By \zcref{lem:onshell torus invariant}, on-shell forms have this property. One can see this concretely for the forms in \zcref{eg:twosix-hypertree}; the rational function in front of $dX$ has weight $-2$ in each torus coordinate $t_i$, which is then canceled out by $dX,$ which has weight $2.$  

We now introduce irreducible hypertrees, to which we associate divisors in $M_{0,n}$ and $\Gr_{2,n}$.

\begin{definition}
A \emph{hypertree}
\[
\Gamma=\{\Gamma_1,\ldots,\Gamma_d\}
\]
on $[n]$ is a collection of subsets of $[n]$ such that the following conditions are satisfied:
\begin{itemize}
    \item Any subset \(\Gamma_j\) has at least three elements;

    \item Any \(i\in [n]\) is contained in at least two subsets \(\Gamma_j\);
    \item Normalization: \[
    n-2
    =
    \sum_{j=1}^d \bigl(|\Gamma_j|-2\bigr);
    \]

    \item Convexity: \[\left|\bigcup_{j\in S}\Gamma_j\right|-2
    \geq
    \sum_{j\in S}\bigl(|\Gamma_j|-2\bigr)
    \qquad
    \text{for any non-empty } S\subseteq \{1,\ldots,d\}.\]
\end{itemize}

A hypertree \(\Gamma\) is \emph{irreducible} if the final inequality is strict whenever $1<|S|<d.$
\end{definition}
See \zcref{eg:hypertree-graph} for some examples of irreducible hypertrees.

Irreducible hypertrees yield divisors as follows.
Let $\pi: \bP^2 \dashrightarrow \bP^1$ be any surjective linear projection. Fix an irreducible hypertree $\Gamma = (\Gamma_1, \ldots, \Gamma_d)$. Consider the subset in $M_{0,n}$ of configurations of distinct points in $\bP^1$ given by
\[D^\circ_\Gamma = \{(\pi(q_1), \ldots, \pi(q_n)) \ : \ q_i, q_j, q_k \in \bP^2 \text{ are collinear iff } i, j, k \in \Gamma_\ell \text{ for some } 1 \leq \ell \leq d\}.\]
The \emph{hypertree divisor} $D_\Gamma$ corresponding to $\Gamma$ is the Zariski closure of $D^\circ_\Gamma$ in $\overline{M}_{0,n}$.
The result \cite[Theorem 1.5]{CastravetTevelev} states that the divisor $D_\Gamma$ is nonempty and irreducible. 

\begin{rem}\label{rem:m0n-and-gr2n-coordinates}
    For a hypertree $\Gamma,$ let $f_\Gamma$ denote the defining equation of the corresponding hypertree divisor. A priori, $f_\Gamma$ is a polynomial in coordinates on $M_{0,n}.$ However, we may instead think of $f_\Gamma$ as a torus-fixed polynomial in the Pl\"ucker coordinates using the explicit substitution $p_{ij} = x_i - x_j,$ where $x_1, \ldots, x_n$ are coordinates on $M_{0,n}$ as in \cite{CastravetTevelev, Opie}. Viewing $f_\Gamma$ as a polynomial in Pl\"ucker coordinates, the vanishing locus $\{f_{\Gamma}=0\} \subset \Gr_{2,n}^\circ$ is the pre-image of the hypertree divisor $D_\Gamma$ under the torus quotient. Abusing notation we also often call this pre-image the hypertree divisor and use the same notation.
\end{rem}

Opie \cite[Theorem 3.1]{Opie} gives a determinantal formula for the defining polynomial of a hypertree divisor, generalizing \cite[Proposition 8.1]{CastravetTevelev} for the case where each $\Gamma_i$ has size three. These formulas are quite similar to the numerator of \eqref{eq:pushforward}, a similarity we make precise in this section.

\subsection{Irreducible graphs} We may associate to each irreducible hypertree a set of graphs with boundary.

\begin{definition}\label{def:irreducible}
    Let $\Gamma = \{\Gamma_1,\dots,\Gamma_d\}$ be an irreducible hypertree on the set $[n]$. The \emph{irreducible graphs} associated to $\Gamma$ are a set $\mathcal G(\Gamma)$ of type $(2,n)$ graphs, each of which is constructed as follows:
    \begin{itemize}
        \item For each set $\Gamma_j$, choose an ordering of the elements. Choose any type $(2,|\Gamma_j|)$ top-cell plabic graph $P_j$ whose boundary vertices are the elements of $\Gamma_j$, in clockwise order\footnote{Every such plabic graph $P_j$ can be obtained by taking a triangulation of a $|\Gamma_j|$-gon, making the vertices of the polygon white, and placing a ``tripod'' with a trivalent black vertex inside of each triangle.}.
        \item The graph $G$ is defined as the union $\bigcup_{j = 1}^d P_j$.
    \end{itemize}
\end{definition}

Note that this definition produces a set of abstract graphs, without any particular choice of disk immersion. This is good enough for our purposes as the property of being top-cell and the form $\omega_G$ do not depend on the immersion. 

\begin{lemma}
    Irreducible graphs are top-cell graphs.
\end{lemma}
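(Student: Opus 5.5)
The plan is to apply \zcref{prop:top cell k=2}. It characterizes type $(2,n)$ top-cell graphs without bivalent internal vertices as exactly those with $n-2$ internal vertices, all black and trivalent, that are convex in the sense of \zcref{def:convexity}. The work therefore splits into a counting step and a convexity step.

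\textbf{Counting.} Let $G \in \mathcal G(\Gamma)$ be built from plabic graphs $P_1, \dots, P_d$. Each $P_j$ is a top-cell $(2,|\Gamma_j|)$ plabic graph coming from a triangulation of a $|\Gamma_j|$-gon. Hence its internal vertices are $|\Gamma_j|-2$ trivalent black tripod vertices, and it has no internal white vertices. The union $G$ then has $\sum_j (|\Gamma_j|-2) = n-2$ internal vertices by normalization, and all of them are black and trivalent. There are no bivalent internal vertices, and $k_G = n + 0 - (n-2) = 2$.

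\textbf{Lobe structure.} The lobes of $G$ are exactly the tripods, since $G \setminus \partial G$ has no edges between distinct black vertices. Each tripod has boundary a triple $T \subset \Gamma_j$ for a unique $j$, namely a triangle of the chosen triangulation of $\Gamma_j$.

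\textbf{Convexity.} Fix a nonempty set $S$ of tripods, and let $S_j$ be the tripods of $S$ lying in $P_j$. Let $J = \{j : S_j \neq \emptyset\}$. I need
\[\Bigl|\bigcup_{T \in S} T\Bigr| - 2 \ge |S| = \sum_{j \in J} |S_j|.\]
For a single plabic graph, triangles in a triangulation satisfy $|\bigcup_{T\in S_j} T| - 2 \ge |S_j|$. This is the plabic case of the statement, equivalently convexity of a top-cell plabic graph via \zcref{lem:convexity-iff-matchings}; I expect it follows by induction on a leaf triangle of the dual tree of the sub-triangulation, with equality exactly when $S_j$ forms a connected piece.

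The idea is to replace each $S_j$ by the whole $\Gamma_j$. Set $U = \bigcup_{T \in S} T$ and $U' = U \cup \bigcup_{j\in J}\Gamma_j$. Hypertree convexity gives
\[|U'| - 2 \ge \sum_{j\in J} (|\Gamma_j|-2).\]
Passing from $U$ to $U'$ adds, for each $j$, at most $|\Gamma_j \setminus \bigcup_{T\in S_j} T|$ new vertices. Meanwhile the right side drops by $\sum_j (|\Gamma_j|-2-|S_j|)$. By the plabic bound, $|\Gamma_j| - |\bigcup_{T\in S_j}T| \le |\Gamma_j|-2-|S_j|$, so the deficits match.

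\textbf{Main obstacle.} This last inequality has the wrong direction. Making the replacement rigorous requires the stronger plabic fact that each vertex added to $U'$ costs at least one tripod, i.e. $|\Gamma_j \setminus \bigcup S_j| \ge |\Gamma_j| - 2 - |S_j|$. That holds only when $\bigcup S_j$ is "connected" in the triangulation. Disconnected $S_j$ give slack in the plabic bound, which must be used more carefully.

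My fallback is the matching criterion of \zcref{lem:convexity-iff-matchings}. For each pair $i,l$ I would build a perfect matching of $G$ avoiding $i$ and $l$. Hypertree convexity and Hall's theorem give an assignment of the two "missing" slots to the hyperedges. Within each $P_j$, a plabic top-cell graph has a matching avoiding any prescribed $2$-subset of $\Gamma_j$. If a cleaner route is needed, I would use Castravet--Tevelev's reformulation: a collection satisfying the hypertree axioms, after triangulating each hyperedge, gives a hypertree with all edges of size three. That is exactly the convexity of \zcref{def:convexity}.
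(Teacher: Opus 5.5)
Your \textbf{Convexity} paragraph is already a complete proof, and it follows the same route as the paper: apply \zcref{prop:top cell k=2}, get the vertex count from normalization, and get convexity by combining convexity of each $P_j$ with convexity of $\Gamma$.

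The ``main obstacle'' is a false alarm. To pass from hypertree convexity for $J$ to convexity for $S$, the left-hand side must drop by \emph{at most} as much as the right-hand side. That is, you need
\[
|U'|-|U| \le \sum_{j\in J}\bigl(|\Gamma_j|-2-|S_j|\bigr),
\]
and this is exactly what your two estimates give. Explicitly, set $U_j=\bigcup_{T\in S_j}T$ and note $U'=\bigcup_{j\in J}\Gamma_j$. Then
\[
|U|-2 \;\ge\; |U'|-2-\sum_{j\in J}\bigl(|\Gamma_j|-|U_j|\bigr) \;\ge\; \sum_{j\in J}\bigl(|U_j|-2\bigr) \;\ge\; \sum_{j\in J}|S_j| = |S|.
\]
The three steps are justified as follows:
\begin{enumerate}
\item the first uses $U'\setminus U\subseteq\bigcup_{j\in J}(\Gamma_j\setminus U_j)$;
\item the second is hypertree convexity applied to $J$;
\item the third is the plabic bound $|U_j|-2\ge|S_j|$.
\end{enumerate}

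The reverse inequality $|\Gamma_j\setminus U_j|\ge|\Gamma_j|-2-|S_j|$, which does require $S_j$ to be connected, is never used. So your fallbacks via Hall's theorem or the Castravet--Tevelev reformulation are unnecessary.

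The plabic bound also needs no separate dual-tree induction. Each $P_j$ is a top-cell type $(2,|\Gamma_j|)$ graph whose internal vertices are all trivalent and black, so it is convex by \zcref{prop:top cell k=2}. This is what the paper means by using ``these properties for the constituent graphs $P_j$.''
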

\begin{proof}
    Recall that a graph is top-cell if and only if it has $n-2$ internal vertices, all of which are black and trivalent, and the graph is convex (\zcref{prop:top cell k=2}). Making use of these properties for the constituent graphs $P_j$, alongside the normalization and convexity of the hypertree $\Gamma$, the properties follow for an irreducible graph $G \in \mathcal G(\Gamma)$.
\end{proof}

\begin{eg}\label{eg:hypertree-graph}
    The graphs $G^I$, $G^{II}$, and $G^{III}$ from \zcref{eg:twosix-hypertree} and \zcref{fig:k2-irreducible} are irreducible graphs, associated to the irreducible hypertrees
    \begin{align*}
            \Gamma^I&=\{123,345,156,246\}, \\
            \Gamma^{II} &=\{123,456,147,257,367\}, \\
            \Gamma^{III} &=\{123,345,567,178,2468\},
    \end{align*}
    respectively. 
    Furthermore, $G^I$ and $G^{II}$ are the unique graphs associated to $\Gamma^{I}$ and $\Gamma^{II}$ respectively, as there is a unique choice of plabic graph for each size-3 set $\Gamma_j$. This is not true for $G^{III}$, as there are other choices of graphs $P_j$ with boundary $\partial P_j = 2468$.
\end{eg}

A key feature of irreducible graphs is the absence of non-trivial top-cell subgraphs.

\begin{lemma}\label{lem:irreducible graph no subgraph}
    Let $\Gamma$ be an irreducible hypertree on $[n]$. Suppose $G = \bigcup_{j=1}^d P_j$ is an irreducible graph in the set $\mathcal G(\Gamma)$, where $P_j$ are the plabic graphs in \zcref{def:irreducible}. Let $T = \bigcup_i G_i$ be a union of lobes that is itself a top-cell graph. Then, either $T = G$ or $T \subseteq P_j$ for some $j \in [d]$.
\end{lemma}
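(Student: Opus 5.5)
Throughout, $T$ is a union of lobes of $G$, i.e.\ a set of tripods. By \zcref{prop:top cell k=2}, saying $T$ is top-cell means it has exactly $|\partial T|-2$ tripods and is convex. Here $\partial T$ is the union of the boundaries of its tripods, and $T$ counts as a type $(2,|\partial T|)$ graph. The plan is to turn this count into an equality in the hypertree's convexity inequality and then use irreducibility.

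For each $j$, let $T_j := T \cap P_j$ be the set of tripods of $T$ lying in $P_j$. Let $S := \{j : T_j \neq \emptyset\}$. We compare $T$ with $\bigcup_{j\in S} P_j$.

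\textbf{Step 1: a local bound inside each $P_j$.} The graph $P_j$ is a top-cell $(2,|\Gamma_j|)$ plabic graph, so by \zcref{prop:top cell k=2} it is convex. Applying convexity to the nonempty set of lobes $T_j$ gives $|T_j| \le |\partial T_j| - 2$. Since $\partial T_j \subseteq \Gamma_j$, this yields $|T_j| \le |\Gamma_j| - 2$, with equality only if $T_j = P_j$. Indeed, $P_j$ has exactly $|\Gamma_j|-2$ tripods, so equality forces all of them to lie in $T$.

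\textbf{Step 2: a global count.} Since $T$ is top-cell,
\[
|\partial T| - 2 = |T| = \sum_{j\in S}|T_j| \le \sum_{j\in S}\bigl(|\Gamma_j|-2\bigr) \le \Bigl|\bigcup_{j\in S}\Gamma_j\Bigr| - 2,
\]
where the last inequality is hypertree convexity. We also have $\partial T \subseteq \bigcup_{j\in S}\Gamma_j$.

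The main obstacle is that $\partial T$ may be strictly smaller than $\bigcup_{j\in S}\Gamma_j$, so the chain above need not collapse to equalities. To handle this, sharpen Step 1 to
\[
|T_j| \le |\partial T_j| - 2,
\]
and use the observation that each element of $\partial T$ lies in some $\partial T_j$ with $j \in S$.

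Replace the hypertree by the collection $\Gamma' := \{\partial T_j\}_{j\in S} \cup \{\Gamma_j\}_{j\notin S}$. This is not literally a hypertree, but the needed convexity can be argued directly. Alternatively, compare $\partial T_j$ with $\Gamma_j$: since $P_j$ is a triangulation of a $|\Gamma_j|$-gon, a convex sub-union $T_j$ with $|T_j| = |\partial T_j| - 2$ is a triangulation of the sub-polygon on $\partial T_j$.

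The cleanest route I would try first is to show that $\partial T = \bigcup_{j\in S}\Gamma_j$ is forced. Suppose some $i \in \Gamma_j\setminus\partial T$ with $j \in S$. Consider $T' := T \cup P_j$. Its tripod count increases by $|\Gamma_j| - 2 - |T_j|$, and its boundary increases by $|\Gamma_j \setminus \partial T|$. Applying convexity of $G$ to the union of $T'$ with the remaining $P$'s, one should derive a contradiction unless $T_j = P_j$. I expect this bookkeeping to be the delicate step.

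\textbf{Step 3: conclusion.} Once every inequality in the chain of Step 2 is an equality, we get
\[
\Bigl|\bigcup_{j\in S}\Gamma_j\Bigr| - 2 = \sum_{j\in S}\bigl(|\Gamma_j|-2\bigr).
\]
Irreducibility of $\Gamma$ then forces $|S| = 1$ or $S = [d]$.

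If $|S| = 1$, then $T \subseteq P_j$ for the single $j \in S$.

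If $S = [d]$, equality in Step 1 for every $j$ gives $T_j = P_j$ for all $j$. Hence $T = G$.
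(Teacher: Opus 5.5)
Your proposal has a genuine gap, and the route you sketch for closing it cannot work.

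\textbf{Why your route fails.} Your Step 2 chain only yields $|\partial T|\le|\bigcup_{j\in S}\Gamma_j|$. You already know this from $\partial T\subseteq\bigcup_{j\in S}\Gamma_j$, so the chain never makes the hypertree inequality for $S$ tight. Everything then rests on the step you defer as ``delicate bookkeeping,'' and that step cannot be done as you formulate it. You aim to force $\partial T=\bigcup_{j\in S}\Gamma_j$, which amounts to equality throughout Step 2, i.e.\ $T_j=P_j$ for every $j\in S$. This is false in general. Take $G^{III}$, with $\Gamma_5=2468$ and $P_5$ made of the tripods $246$ and $268$. The single tripod $T=\{246\}$ is top-cell, $S=\{5\}$, and $8\in\Gamma_5\setminus\partial T$, yet nothing is contradicted. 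For the same reason, your Step 3 claim that $S=[d]$ gives equality in Step 1 for every $j$ is unsupported. Note also that you never use the axiom that every index lies in at least two $\Gamma_j$; the case $S=[d]$ genuinely needs it.

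\textbf{The missing idea.} Do not try to show that $T$ fills out the $P_j$. Instead, show that $\bigcup_{j\in S}P_j$ is itself top-cell, using the fact that two top-cell unions of lobes sharing a lobe have a top-cell union. Write $\ell_X$ for the number of lobes of $X$. For $j\in S$,
\[
|\partial(T\cup P_j)| \le |\partial T|+|\partial P_j|-|\partial(T\cap P_j)| \le (\ell_T+2)+(\ell_{P_j}+2)-(\ell_{T\cap P_j}+2) = \ell_{T\cup P_j}+2 \le |\partial(T\cup P_j)|.
\]
The first inequality uses $\partial(T\cap P_j)\subseteq\partial T\cap\partial P_j$. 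The second uses that $T$ and $P_j$ are top-cell, together with convexity of $G$ applied to the nonempty set of lobes $T\cap P_j$. The last uses convexity of $G$ applied to $T\cup P_j$.

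So equality holds throughout. Then $T\cup P_j$ has $|\partial(T\cup P_j)|-2$ tripods and inherits convexity from $G$, hence is top-cell. Iterating over $j\in S$ (the growing union still contains $T$, so it meets each remaining $P_j$) shows that $\bigcup_{j\in S}P_j$ is top-cell. This is exactly the statement $|\bigcup_{j\in S}\Gamma_j|-2=\sum_{j\in S}(|\Gamma_j|-2)$, so irreducibility gives $|S|=1$, whence $T\subseteq P_j$, or $|S|=d$.

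\textbf{The case $|S|=d$.} Suppose $P_r\not\subseteq T$ for some $r$. The same argument makes $T\cup\bigcup_{j\neq r}P_j$ top-cell with fewer than $n-2$ lobes, so its boundary has fewer than $n$ elements. Any missing index then lies in no $\Gamma_j$ with $j\neq r$. This contradicts the axiom that every index lies in at least two $\Gamma_j$, so $T=G$.
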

\begin{proof}
    Let $\{P_j\}_{j \in S}$ be the set of plabic graphs for which $T \cap P_j \neq \emptyset$. Let $\ell_T$ be the number of lobes in $T$. For some fixed $j \in S$, $\ell_{j}$ be the number of lobes in $P_j$, and $\ell_{\cap}$ be the number of lobes in $T \cap P_j$. Then, the number of lobes in $T \cup P_j$ is $\ell_T + \ell_j - \ell_\cap$. Furthermore, making use of \zcref{prop:top cell k=2} and the convexity of $G$, one finds
    \[
        \begin{aligned}
            |\partial(T \cup P_j)| &= |\partial T| + |\partial P_j| - |\partial T \cap \partial P_j| \\
            &\leq |\partial T| + |\partial P_j| - |\partial(T \cap P_j)| \\
            &\leq (\ell_T + 2) + (\ell_j + 2) - (\ell_\cap + 2) \\
            &\leq (\ell_T + \ell_j - \ell_\cap) + 2 \\
            &\leq |\partial(T \cup P_j)|.
        \end{aligned}
    \]
    Consequently, the inequalities are in fact equalities and $(\ell_T + \ell_j - \ell_\cap) = |\partial(T \cup P_j)| - 2$. Since $T \cup P_j$ inherits convexity from $G$, and has the correct number of lobes for \zcref{prop:top cell k=2} to apply, $T \cup P_j$ is itself a top-cell graph. By repeating this procedure for each relabeled plabic graph that $T$ intersected, one concludes that $T \cup \bigr(\bigcup_{j \in S} P_j\bigr) = \bigcup_{j \in S} P_j$ is a top-cell graph.

    Each $P_j$ has $|\partial P_j|-2 = |\Gamma_j|-2$ lobes. Their union $\bigcup_{j \in S} P_j$ has $|\partial \bigcup_{j \in S} P_j|-2 = |\bigcup_{j \in S} \Gamma_j|-2$ lobes, so
    \[
        \left|\bigcup_{j \in S} \Gamma_j\right|-2 = \sum_{j \in S} (|\Gamma_j|-2).
    \]
    Using the constraint that $\Gamma$ is irreducible, one concludes that either $|S| = 1$ and $T \subset P_j$ for some $j \in [d]$, or $|S| = d$ and $T$ intersects every relabeled plabic graph in $G$.

    In the latter case, unless $T = G$, one may identify some $r \in [d]$ such that $P_r \not\in T$. Then, consider the top-cell graph $T' := T \cup \bigr(\bigcup_{j \neq r} P_j\bigr)$, which intersects $P_r$. Supposing $T \neq G$, $T'$ contains fewer lobes and consequently has a smaller boundary than $G$. In that case, there exists some index $i \in [n] \setminus \partial T \subset [n] \setminus \bigr(\bigcup_{j \neq r} \Gamma_j\bigr)$, which would be an index $i \in \Gamma_r$ contained in no other $\Gamma_j$. Since every $i \in [n]$ must be contained in at least two subsets of the hypertree, we obtain a contradiction and conclude that $T = G$.
\end{proof}

It is possible to characterize irreducible graphs directly by studying coarsenings of the lobe decomposition. One can use the result to read off the corresponding irreducible hypertree.

\begin{lemma}\label{lem:planar-decomposition}
    Suppose $G$ is a type $(2,n)$ top-cell graph. It is irreducible if and only if there is a unique way to coarsen the lobe decomposition $G= \bigcup_{i \in [n-2]} G_i$ into pieces $\{P_j\}$ such that each piece $P_j=\cup_{i} G_i$ of the coarsening has the following properties
    \begin{itemize}
        \item each $P_j$ is a relabeling of a top-cell plabic graph,
        \item there does not exist a top-cell union of lobes $T = \cup_i G_i \subsetneq G$ that strictly contains any $P_j$,
        \item each index $i \in [n]$ appears in at least two elements of $\Gamma(G) := \{\partial P_1 , \dots , \partial P_d\}$.
    \end{itemize}
\end{lemma}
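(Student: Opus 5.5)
We prove the two directions separately. In both we use \zcref{lem:irreducible graph no subgraph}, together with the characterization of top-cell $(2,n)$ graphs in \zcref{prop:top cell k=2}. By that proposition, a union of lobes $T$ is top-cell exactly when it has $|\partial T|-2$ lobes and is convex, and convexity is inherited from $G$. So being top-cell is a purely combinatorial counting condition on $T$.

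\emph{($\Rightarrow$)} Suppose $G=\bigcup_j P_j\in\mathcal G(\Gamma)$ with $\Gamma$ irreducible.

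Existence: the decomposition $\{P_j\}$ from \zcref{def:irreducible} has the three properties.
\begin{itemize}
\item Each $P_j$ is a relabeled top-cell plabic graph by construction.
\item For maximality, let $T\supsetneq P_j$ be a top-cell union of lobes with $T\neq G$. By \zcref{lem:irreducible graph no subgraph}, $T\subseteq P_{j'}$ for some $j'$. Then $P_j\subsetneq P_{j'}$. This is impossible if $j'\neq j$, since distinct $P$'s share no lobes. It is impossible if $j'=j$ by strictness.
\item Each $i\in[n]$ lies in at least two elements of $\Gamma(G)$ because $\Gamma(G)=\Gamma$ and $\Gamma$ is a hypertree.
\end{itemize}

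Uniqueness: let $\{Q_m\}$ be another coarsening with the three properties. Each $Q_m$ is a top-cell union of lobes. It is not $G$, since otherwise the third property fails: a single piece would have each index appearing once. So by \zcref{lem:irreducible graph no subgraph}, $Q_m\subseteq P_j$ for some $j$. Now $P_j$ is a top-cell union of lobes containing $Q_m$, and $P_j\neq G$ because $d\ge 2$. Maximality of $Q_m$ then forces $Q_m=P_j$. Hence $\{Q_m\}$ is a sub-collection of $\{P_j\}$. Since both collections partition the lobes, they are equal.

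\emph{($\Leftarrow$)} Suppose the coarsening $\{P_j\}$ exists and is unique, and set $\Gamma=\Gamma(G)$. We check that $\Gamma$ is an irreducible hypertree and that $G\in\mathcal G(\Gamma)$.
\begin{itemize}
\item Each $P_j$ is a relabeled top-cell plabic graph of type $(2,|\partial P_j|)$, hence contains a lobe. Therefore $|\Gamma_j|\ge 3$.
\item The condition that each index appears at least twice is the third property.
\item Normalization: $\sum_j(|\Gamma_j|-2)$ counts the lobes, which number $n-2$.
\item Convexity follows from convexity of $G$ applied to the lobes in $\bigcup_{j\in S}P_j$, since the boundary of that union is $\bigcup_{j\in S}\Gamma_j$.
\end{itemize}
With these in hand, $G=\bigcup P_j$ matches \zcref{def:irreducible}, up to the standard fact that $(2,m)$ top-cell plabic graphs are exactly the triangulation graphs; we would cite that fact or take it as the meaning of ``relabeling of a top-cell plabic graph''.

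The main obstacle is strict convexity. Suppose equality holds for some $S$ with $1<|S|<d$. Then $T=\bigcup_{j\in S}P_j$ has exactly $|\partial T|-2$ lobes and is convex, so $T$ is a proper top-cell union of lobes strictly containing each $P_j$ with $j\in S$. This violates the second property directly. So strictness is essentially forced, and uniqueness is in fact not needed for this direction.

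The delicate point is rather the claim in ($\Rightarrow$) that a piece $Q_m$ cannot equal $G$ and must sit inside a single $P_j$. That is where \zcref{lem:irreducible graph no subgraph} does the work. I would double-check the edge cases: $d\ge2$ holds because every index lies in two sets, and the case where $G$ itself is plabic must be excluded by the ``at least two'' condition.
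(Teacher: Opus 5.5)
Your proof is correct and follows the same route as the paper. For $(\Rightarrow)$ you use the decomposition from \zcref{def:irreducible} together with \zcref{lem:irreducible graph no subgraph}, both for maximality and for uniqueness. For $(\Leftarrow)$ you check the hypertree axioms from the top-cell and convexity properties of $G$, and you get strict convexity because an equality case would give, via \zcref{prop:top cell k=2}, a proper top-cell union of pieces that violates the second condition. You spell out details the paper leaves implicit: the explicit uniqueness argument, the edge case $d\ge 2$, and the correct observation that uniqueness is never used in the $(\Leftarrow)$ direction.
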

\begin{proof}
    If $G$ is irreducible, then such a decomposition exists and is unique from \zcref{def:irreducible} and \zcref{lem:irreducible graph no subgraph}. One has $G \in \mathcal G(\Gamma(G))$.

    If $G$ has a unique decomposition with the desired properties, then we define $\Gamma(G) := \{\partial P_1, \dots, \partial P_d\}$ and check it is an irreducible hypertree. The conditions to be a hypertree are easy to check using $G$ being a top-cell graph and the third condition of the decomposition. Suppose $\Gamma(G)$ is not irreducible. Then, there exists some $S$ with $1 < |S| < d$ for which
    \[
        \left| \bigcup_{j \in S} \partial P_j\right| - 2 = \sum_{j \in S} (|\partial P_j|-2).
    \]
    From \zcref{prop:top cell k=2}, one concludes that $\bigcup_{j \in S} P_j$ is a top-cell graph containing some components of the decomposition, contradicting the second condition.
\end{proof}

\subsection{Relating on-shell forms for irreducible graphs and hypertree divisors} The planar decomposition of an irreducible graph $G$ and the irreducible hypertree $\Gamma(G)$ allows us to write a simpler formula for the on-shell form, which makes it clear the numerator vanishes exactly along the hypertree divisor. We use \zcref{rem:m0n-and-gr2n-coordinates} to write the defining polynomial of the hypertree divisor using Pl\"ucker coordinates.

\begin{theorem}\label{prop:graph-to-hypertree}
    Let $G$ be an irreducible graph, let $\{P_i\}_{i \in [d]}$ be the unique planar decomposition of \zcref{lem:planar-decomposition}, and let $\Gamma(G)$ be the corresponding irreducible hypertree. Then
    \begin{equation}\label{eq:pushforward irreducible}
        \omega_G= \frac{(f_{\Gamma(G)})^2}{\displaystyle\prod_{i=1}^d p_{a_1 a_{2}} p_{a_2 a_3} \dots p_{a_{|\partial P_i|}a_1}} dX,
    \end{equation}
    where $f_{\Gamma(G)}$ is the defining polynomial of the hypertree divisor $D_{\Gamma(G)}$, and $\partial P_i = (a_1 , \dots , a_{|\partial P_i|})$ are ordered via any planar immersion of $P_i$.
\end{theorem}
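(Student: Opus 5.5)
Starting from \zcref{thm:pushforward-hypertree}, the task is to rewrite both the numerator $(\det A')^2$ and the denominator $\prod_{(abc)\in\mathcal S}p_{ab}p_{ac}p_{bc}$ in terms of the planar decomposition $\{P_i\}$. The denominator and numerator are handled separately. The guiding identity to prove is
\[
\det A' \;=\; \pm f_{\Gamma(G)}\cdot \prod_{i=1}^d \prod_{\substack{\{a,b\}\text{ internal diagonal}\\ \text{of the triangulation of } P_i}} p_{ab},
\]
where the internal diagonals are the chords of the triangulation of the $|\partial P_i|$-gon defining $P_i$ (cf.\ the footnote in \zcref{def:irreducible}). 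Once this holds, each triangle $(abc)$ of $P_i$ contributes $p_{ab}p_{ac}p_{bc}$ to the denominator. Every polygon side of $P_i$ appears once, and every internal diagonal appears exactly twice, since each diagonal borders two triangles. Squaring the numerator then cancels the diagonals completely, leaving $\prod_i p_{a_1a_2}\cdots p_{a_{|\partial P_i|}a_1}$, as claimed in \eqref{eq:pushforward irreducible}.

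To prove the numerator identity, I would first reduce to a single polygon. The rows of $A$ indexed by the triangles of one $P_i$ span, by \zcref{lem:A perp} applied to the plabic graph $P_i$ alone, the space $(\pi^{\partial P_i}X)^\perp$ inside $\bC^{\partial P_i}$. This space has dimension $|\partial P_i|-2$. So I would perform row operations within each block. Fan-triangulating from a vertex is a canonical choice, and any two triangulations are related by flips, which are square moves; by \zcref{lem:moves-preserve-form} the form does not change. After these operations the block of $P_i$ is replaced by the standard matrix of relations $p_{a_2a_3}e_{a_1}-p_{a_1a_3}e_{a_2}+p_{a_1a_2}e_{a_3}$, generalized to all of $\Gamma_i$. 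Tracking the determinant of the row operations should produce exactly the product of diagonal Plücker coordinates, each to the first power. This would be a direct computation by induction on the number of triangles: gluing a triangle along a diagonal $p_{ab}$ and eliminating introduces one factor $p_{ab}$.

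After this reduction, $A$ is row-equivalent, up to these factors, to the hypertree matrix of Opie \cite[Theorem 3.1]{Opie}. In that matrix each $\Gamma_i$ contributes $|\Gamma_i|-2$ rows expressing the linear relations among the points $\{x_a\}_{a\in\Gamma_i}$. Opie's determinantal formula says that the analogous minor equals $f_\Gamma$, up to sign and up to his normalization of the deleted rows and columns. One then matches conventions: set $p_{ij}=x_i-x_j$ as in \zcref{rem:m0n-and-gr2n-coordinates}, and relate our deletion of one row together with its three columns to Opie's deletion. The freedom in this choice is absorbed by \zcref{lem:greatest common factor}. Irreducibility is used to know that $f_\Gamma$ is the irreducible defining polynomial; it also guarantees, through \zcref{lem:planar-decomposition}, that no extra Plücker factors divide $\det A'$ beyond the diagonal ones.

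The main obstacle is this identification with Opie's formula: getting the exact normalization right, so that there are no stray Plücker factors and the polynomial is genuinely $f_\Gamma$ rather than $f_\Gamma$ times monomials. If Opie's convention differs, I would fall back on a degree and irreducibility argument. The quotient $\det A'/\prod p_{\text{diag}}$ vanishes on $D^\circ_\Gamma$, because there the points lift to $\bP^2$ with the prescribed collinearities, producing an extra kernel vector and forcing $A$ to drop rank. I would then compare degrees in each torus weight with $f_\Gamma$ and use the irreducibility of $D_\Gamma$ to conclude that the quotient equals $\pm f_\Gamma$.
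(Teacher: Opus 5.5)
Your skeleton matches the paper's proof. The paper also reduces the theorem to $\det A' = \pm f_{\Gamma}\prod_{ab\in\mathcal C}p_{ab}$, where $\mathcal C$ is the set of chords of all the triangulations. It gets the factor $f_\Gamma$ by appealing to Opie's argument, much as you do, and cancels against the denominator by the same count (each side once, each chord twice).

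The gap is the step that produces the chord factors, which you leave at ``should produce'' and flag as the main obstacle. As stated, the row-operation bookkeeping does not yield exactly the product of diagonals. Take the quadrilateral $1234$ with chord $13$, and rewrite the rows $(123),(134)$ in the star-type basis $(123),(124)$. Since $(124)=\frac{p_{14}}{p_{13}}(123)+\frac{p_{12}}{p_{13}}(134)$, the minors of the triangulation rows are $p_{13}/p_{12}$ times those of the star rows. So you get the chord divided by a base pair, and these extra factors must then be matched against whatever normalization the comparison matrix carries. The flip reduction is also unnecessary. Moreover, a flip is not a single square move: the shared vertices are boundary vertices, so (EC) moves are needed first.

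The paper instead uses a one-line divisibility argument. Let $ab$ be a chord lying in triangles $(abc)$ and $(abc')$. Compute $\det A'$ by deleting row $(abc)$ and columns $a,b,c$. Then row $(abc')$ has a single surviving entry, $p_{ab}$ in column $c'$, so $p_{ab}\mid\det A'$. Two hyperedges of an irreducible hypertree share at most one index, by strict convexity with $|S|=2$. Hence the chords are distinct and pairwise coprime. There are $\sum_i(|\partial P_i|-3)=n-2-d$ of them, which equals $\deg\det A'-\deg f_\Gamma=(n-3)-(d-1)$, and this pins down the monomial cofactor.

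Your fallback is a good complement but not a substitute. At a point of $D^\circ_\Gamma$, the second affine coordinates of a lift $q_i\in\bP^2$ give a kernel vector of $A(X)$ outside the row span of $X$. So every maximal minor $\pm p_{ij}\det A'$ vanishes, and $f_\Gamma\mid\det A'$ directly. As written, however, it presupposes that the chord product divides $\det A'$. Supplying that by the single-row expansion above, together with $\deg f_\Gamma=d-1$ and $\det A'\not\equiv 0$ (because $G$ is top-cell), gives a complete proof of the numerator identity. It does not require re-running Opie's argument.
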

\begin{proof}
   To ease notation, set $\Gamma=\Gamma(G)$.

    Note that the choice of planar immersion and induced ordering for each $P_i$ only affects the denominator by a change of sign; indeed, cyclic shifts of the ordering preserve the denominator, and reflections only change the overall sign.
    
    By the same argument as the proof of \cite[Theorem 3.1]{Opie}, the determinant $\det A'$ from \zcref{thm:pushforward-hypertree} factors as the product of a degree $d-1$ irreducible polynomial $f_\Gamma$ and a degree $(n-2-d)$ monomial in the Pl\"ucker coordinates. It remains to identify this monomial explicitly and show that cancellation yields \eqref{eq:pushforward irreducible}.

    Let $ab$ be a pair of indices appearing together in two triplets $abc$ and $abc'$ of $\mathcal S$. Then, computing $\det A'$ by removing the row corresponding to $abc$ and columns corresponding to $a,b,c$, the only entry in the row corresponding to $abc'$ is $p_{ab}$, so $\det A'$ is divisible by it.

    One may identify each union $P_i = \bigcup_{j \in R_i} G_j$ with the triangulation of a polygon with vertices $\partial P_i$ by triangles with vertices $\partial G_j$. The chords of this triangulation are exactly pairs of indices appearing together in two triplets. With $d$ polygons triangulated by $n-2$ triangles, there are exactly $n-2-d$ such chords. Furthermore, all such chords are distinct; otherwise one would have $|\partial P_i \cup \partial P_{i'}| \leq |\partial P_i| + |\partial P_{i'}| - 2$ implying that $P_i \cup P_{i'}$ is a non-plabic union of lobes violating strict convexity. Thus, using the degree of the monomial, one concludes that
    \[
        \det A' = f_\Gamma \times \prod_{ab \in \mathcal C} p_{ab},
    \]
    where $\mathcal C$ denotes the set of chords in all triangulations. Using the description in terms of triangulations, one can check that the denominator of $\omega_G$ factors as
    \[
        \prod_{(abc) \in \mathcal{S}}p_{ab}p_{ac}p_{bc} = \prod_{i = 1}^d p_{a_1a_2}p_{a_2a_3}\dots p_{a_{|\partial P_i|}a_1} \times \bigg(\prod_{ab \in \mathcal C} p_{ab}\bigg)^2.
    \]
    Plugging the two expressions into \zcref{eq:pushforward},
    \[
        \omega_G = \frac{(\det A')^2}{\displaystyle\prod_{(abc) \in \mathcal{S}}p_{ab}p_{ac}p_{bc}}dX = \frac{(f_\Gamma)^2}{\prod_{i = 1}^d p_{a_1a_2}p_{a_2a_3}\dots p_{a_{|\partial P_i|}a_1}}dX,
    \]
    as desired.
\end{proof}

The following proposition connects the hypertree divisor (or rather, its preimage in $\Gr_{2,n}^\circ$) and the boundary measurement map. It implies that $\omega_G$ vanishes precisely on the complement of $\im \bM^G$.

\begin{proposition}\label{prop:base-locus-hypertree}
    Let $G$ be an irreducible graph and let $\Gamma(G)$ be the corresponding irreducible hypertree. Then the hypertree divisor $D_{\Gamma(G)} \subset \Gr_{2,n}^\circ$ is the complement of $\mathrm{im}\, \bM^G$.
\end{proposition}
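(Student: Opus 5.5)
Throughout, work inside $\Gr_{2,n}^\circ$, where all Pl\"ucker coordinates are nonzero, and use the VRC description of the fibers. By \zcref{cor:fiber-vs-VRC-set} and \zcref{rem:kastelyn A}, a point $X \in \Gr_{2,n}^\circ$ lies in $\im \bM^G$ if and only if the essentially unique VRC with boundary $X$ has $W$-full rank weighted adjacency matrix. Since $W=\emptyset$, this just means $K(r_e)=A(X)$ has full rank $n-2$.

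Uniqueness of that VRC deserves a word. Each black vertex is a tripod on boundary vertices $a<b<c$. Three pairwise independent vectors in $\bC^2$ satisfy a unique relation up to scale, and pairwise independence holds because $p_{ab},p_{ac},p_{bc}\neq 0$. Hence the coefficients are forced, up to gauge, to be the entries of $A(X)$. So the claim reduces to
\[
X \notin \im \bM^G \iff \operatorname{rank} A(X) < n-2 \iff \text{all maximal minors of } A(X) \text{ vanish}.
\]

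Next, use \zcref{lem:greatest common factor}. The maximal minor of $A(X)$ omitting columns $i,j$ equals $\pm p_{ij}\det A'$. I expect this identity to hold as a polynomial identity in a $2\times n$ matrix representative, even though the lemma is stated only under a full-rank hypothesis; one checks this by rerunning its Cramer's-rule argument on the generic matrix. On $\Gr_{2,n}^\circ$ all $p_{ij}\neq 0$, so $A(X)$ drops rank exactly when $\det A'(X)=0$.

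In the proof of \zcref{prop:graph-to-hypertree} we showed
\[
\det A' = f_{\Gamma(G)}\prod_{ab\in\mathcal C}p_{ab}.
\]
The monomial factor is nonvanishing on $\Gr_{2,n}^\circ$. Therefore, on $\Gr_{2,n}^\circ$,
\[
\{\det A' = 0\} = \{f_{\Gamma(G)}=0\} = D_{\Gamma(G)},
\]
the last equality by \zcref{rem:m0n-and-gr2n-coordinates}. Combining with the first step, the complement of $\im\bM^G$ in $\Gr_{2,n}^\circ$ is exactly $D_{\Gamma(G)}$.

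The main obstacle is making the first step airtight in both directions at every point, not just generically. The direction ``$A(X)$ full rank implies $X\in\im\bM^G$'' is immediate from \zcref{lem:bdry-meas-vs-VRC}. The converse needs care: if $X=\bM^G(x_e)$, then $x_e$ is in the domain, so $K(\sigma x_e)$ is $W$-full rank. Its kernel is then a VRC with boundary $X$, so by the uniqueness above it is gauge-equivalent to $A(X)$. Gauge transformations rescale rows, so $A(X)$ is also full rank.

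A second subtlety is that $\im\bM^G$ should be read inside $\Gr_{2,n}^\circ$. Points of the image outside $\Gr_{2,n}^\circ$ are irrelevant to the statement, since $D_{\Gamma(G)}$ is defined as a subset of $\Gr_{2,n}^\circ$.
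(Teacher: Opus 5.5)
Your proposal is correct and follows the paper's proof. On $\Gr_{2,n}^\circ$ the unique VRC with boundary $X$ has adjacency matrix $A(X)$ up to row scaling, so \zcref{cor:fiber-vs-VRC-set} reduces membership in $\mathrm{im}\,\bM^G$ to $A(X)$ having full rank; \zcref{lem:greatest common factor} reduces this to $\det A'(X)\neq 0$; and the factorization $\det A' = f_{\Gamma(G)}\prod_{ab\in\mathcal C}p_{ab}$ from the proof of \zcref{prop:graph-to-hypertree} finishes the argument. Your two extra points make explicit details the paper leaves implicit: first, that the minor identity holds as a polynomial identity, since both sides agree on a dense set; second, the gauge argument for the direction $X\in\mathrm{im}\,\bM^G \Rightarrow A(X)$ has full rank.
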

\begin{proof}
The proof of \zcref{prop:top cell k=2} shows that for each $X \in \Gr_{2,n}^{\circ}$, there is a unique VRC on $G$ with boundary $X$. The associated weighted adjacency matrix is exactly the matrix $A(X)$, by \zcref{rem:kastelyn A}. By \zcref{cor:fiber-vs-VRC-set}, $X$ is in $\im \bM^G$ if and only if $A(X)$ is full rank. So $X$ is in the complement if and only if $A(X)$ is not full rank. Since some maximal minors of $A(X)$ differ from $\det A'$ by only a Pl\"ucker coordinate of $X$, $A(X)$ is not full rank if and only if $\det A'$ vanishes. By the proof of \zcref{prop:graph-to-hypertree}, the determinant $\det A' = M \cdot f_{\Gamma(G)}$ where $M$ is a monomial in Pl\"ucker coordinates.
\end{proof}

Finally, we comment on the case where $G$ is a top-cell graph that is not irreducible. In this case the matrix $A$ develops a block structure, and both the numerator and denominator of the rational polynomial in \eqref{eq:pushforward} factor into smaller pieces. This factorization is controlled by the presence of a non-trivial union of lobes that is itself a top-cell graph, which was forbidden in the irreducible case by \zcref{lem:irreducible graph no subgraph}.

\begin{lemma}[{\cite[Lemma 2]{Lisitsyn:2025prd}}]
    Let $G$ be a top-cell type $(2,n)$ graph, and $G_R = \bigcup_{i \in R} G_i$ a union of lobes that is itself a top-cell type $(2,n)$ graph. Then, with the set $M = (\bigcup_{i \in R} \partial G_i) \cap (\bigcup_{i \in [n-2] \setminus R} \partial G_i)$, we define a new graph $H = (\bigcup_{i \in [n-2] \setminus R} G_i) \cup P_M$, where $P_M$ is a plabic graph with ordered boundary $(m_1,\dots, m_{|M|})$. Defining $m_{|M|+1} := m_1$, one has
    \[
        \omega_G = \frac{(\det A'_R)^2}{\prod_{(abc) \in \mathcal S_R}p_{ab} p_{ac} p_{bc}}  \times \frac{(\det A'_H)^2}{\prod_{(abc) \in \mathcal S_H}p_{ab} p_{ac} p_{bc}} \times \prod_{i = 1}^{|M|} p_{m_i m_{i+1}} \, dX.
    \]
\end{lemma}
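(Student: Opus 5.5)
The plan is to work directly with the determinantal formula of \zcref{thm:pushforward-hypertree} and to exploit a block structure of the matrix $A$. First, note what the hypotheses force. $G_R$ is top-cell, so by \zcref{prop:top cell k=2} it has $|\partial G_R|-2$ lobes, where $\partial G_R=\bigcup_{i\in R}\partial G_i$. (The statement says ``type $(2,n)$''; I read this as type $(2,|\partial G_R|)$.) The complement $G_{R^c}=\bigcup_{i\notin R}G_i$ has $n-2-|R|$ lobes. Choose $P_M$ to be a triangulation-type plabic graph with $|M|-2$ tripods, as in the footnote to \zcref{def:irreducible}. Then $H$ has boundary contained in $M\cup\partial G_{R^c}$. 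The first task is to check, using convexity of $G$ together with the count $|\partial G_R|-2=|R|$, that $H$ is top-cell, i.e.\ that it has exactly $|\partial H|-2$ lobes and is convex. I would do this by a Hall-type counting argument of the same kind as in the proof of \zcref{lem:irreducible graph no subgraph}: union with $G_R$ saturates the convexity inequality, and this lets one replace $G_R$ by any top-cell graph on the boundary $M$ (plus the points of $\partial G_R\setminus M$, which do not appear in $H$).

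Next, compute $\det A'$ for $G$ by deleting a row $(abc)\in\mathcal S_R$ together with its three columns. Order the columns so that those in $\partial G_R$ come first. The rows of $\mathcal S_R$ are supported only on $\partial G_R$, so the remaining matrix is block lower-triangular. Its top-left block is $A'_R$, which is square of size $|R|-1$ after the deletion. Its bottom-right block is the $(n-2-|R|)\times(n-|\partial G_R|)$ submatrix of the rows $\mathcal S_{R^c}$ on the columns $[n]\setminus\partial G_R$. This block is square because $|\partial G_R|-2=|R|$. Hence $\det A'=\pm\det A'_R\cdot\det B$, where $B$ is that block.

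The key step, and the one I expect to be the main obstacle, is to identify $\det B$ with $\det A'_H$ up to an explicit product of Plücker coordinates. Consider $A_H$, and delete one row of $P_M$ together with its three columns, all lying in $M$. The rows of $P_M$ are supported on $M$. After this deletion, the $|M|-3$ remaining rows of $P_M$ restricted to the $|M|-3$ remaining columns of $M$ form the matrix $A'_{P_M}$, so the matrix is again block triangular. This gives $\det A'_H=\pm\det A'_{P_M}\cdot\det B$, since the rows of $\mathcal S_{R^c}$ on the columns outside $M$ are exactly $B$. The points of $M$ are the only overlap, and columns in $\partial G_R\setminus M$ never meet $R^c$ rows. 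For the plabic graph $P_M$, \zcref{prop:graph-to-hypertree} (or a direct computation, since the triangulation is a single polygon) gives $\det A'_{P_M}=\pm\prod_{\text{chords}}p$. Thus
\[
\det A'_R\cdot \det A'_H=\pm \det A'\cdot\prod_{\text{chords of }P_M}p_{ab}.
\]

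Finally, substitute this into \eqref{eq:pushforward}. The product $\prod_{(abc)\in\mathcal S_H}p_{ab}p_{ac}p_{bc}$ contains the $P_M$ triangles. Their contribution is $\prod_{i}p_{m_im_{i+1}}\cdot\bigl(\prod_{\text{chords}}p\bigr)^2$, exactly as in the triangle-counting step in the proof of \zcref{prop:graph-to-hypertree}. The squared chord factors then cancel against the squared chord factor coming from the determinant identity, leaving the boundary-cycle factor $\prod_i p_{m_im_{i+1}}$ in the numerator. This yields the displayed formula, up to the sign ambiguity that is permitted throughout.
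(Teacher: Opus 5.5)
Your two block-triangular factorizations are correct: deleting a row of $\mathcal S_R$ gives $\det A'=\pm\det A'_R\det B$, and deleting a row of $P_M$ gives $\det A'_H=\pm\det A'_{P_M}\det B$ with the same $B$. The chord bookkeeping that follows is also correct. This is the block structure the paper alludes to just before the lemma; the paper gives no proof of its own and cites \cite{Lisitsyn:2025prd}.

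The genuine hole is the case $|M|=2$, which the lemma is meant to cover. See the remark right after the lemma, and the lone-index example $G^A$, where $M=\{1,2\}$. In this case $P_M$ has no lobes and $H=G_{R^c}$. There is no row of $P_M$ to delete, so your second factorization does not exist. The patch is short. Here $\partial H=([n]\setminus\partial G_R)\cup\{m_1,m_2\}$, so $B$ is exactly the submatrix of $A_H$ omitting columns $m_1,m_2$. Then \zcref{lem:greatest common factor} gives $\det B=\pm p_{m_1m_2}\det A'_H$. Hence $\det A'=\pm p_{m_1m_2}\det A'_R\det A'_H$. Since $\mathcal S=\mathcal S_R\sqcup\mathcal S_H$, this produces the factor $p_{m_1m_2}^2=\pm\prod_{i=1}^{2}p_{m_im_{i+1}}$.

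There are two smaller points.

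First, citing \zcref{prop:graph-to-hypertree} for $\det A'_{P_M}$ does not work as stated, because $\{M\}$ is not a hypertree: every index must lie in at least two parts. Chord divisibility plus a degree count only gives $\det A'_{P_M}=c\prod p_{\mathrm{chord}}$ for some constant $c$, and your identity needs $c=\pm1$. Prove this directly by induction on $|M|$, starting from $|M|=3$, where the determinant is empty and equals $1$.
\begin{itemize}
\item For $|M|\ge4$, pick an ear $(x,y,z)$ of the triangulation other than the deleted triangle.
\item Column $y$ of $A'_{P_M}$ then has the single entry $\pm p_{xz}$.
\item Expanding along that column leaves the same kind of matrix for the triangulation of the polygon with $y$ removed, with the same deleted triangle.
\end{itemize}
This gives $c=\pm1$.

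Second, your first paragraph, which checks that $H$ is convex and top-cell, is true but unnecessary. The identity is purely algebraic. It uses only that $H$ has $|\partial H|-2$ tripod lobes, so that $A'_H$ is square.
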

In the case that $|M| = 2$, one has $H = (\bigcup_{i \in [n-2] \setminus R} G_i)$, as the plabic graph with two vertices is empty and the formula uses $\prod_{i=1}^{|M|} p_{m_i m_{i+1}} = (p_{m_1m_2})^2$. The cases $|M| < 2$ would violate the convexity of $G$.

\begin{rem}
    In the case that $G_R$ is itself a planar diagram with no lone indices, one can choose $P_M = G_R$ and the last term cancels the first term. Since these are the only top-cell unions of lobes for an irreducible graph, the lemma does not provide a meaningful factorization as expected.

    In the case that $G$ contains a lone index $r$, $G_R$ can be chosen to be the triplet $rst$ containing the lone index. Then, the form can be computed as
    \[
        \omega_G = \frac{p_{st}}{p_{rs} p_{rt}} \times \frac{(\det A'_{H})^2}{\prod_{(abc) \in \mathcal S_H} p_{ab} p_{ac} p_{bc}}\,dX,
    \]
    in terms of the graph with the lone index removed. In physics, this is known as an inverse soft factor \cite{Arkani-Hamed:2009ljj,Arkani-Hamed:2009kmp,Bourjaily:2010kw,Bullimore:2010pa,Boucher-Veronneau:2011rwd,Nandan:2012rk}. 

    Starting with a diagram that is not irreducible, one can repeatedly apply the factorization until the result is a product of terms coming from either irreducible diagrams or diagrams that contain a single trivalent black vertex.
\end{rem}

\begin{eg}
    We give three examples of reducible diagrams, pictured in \zcref{fig:reducible}, identifying each with the set of triplets $\{\partial G_i\}_{i \in [n-2]}$. In each case, the irreducible type (2,6) top-cell graph is a proper subgraph.
    \[
        \begin{aligned}
            \{\partial G_i^A\}_{i \in [5]} &= \{123,345,156,246,127\}, \\
            \{\partial G_i^B\}_{i \in [7]} &= \{123,345,156,246,178,389,579\}, \\
            \{\partial G_i^B\}_{i \in [10]} &= \{123,345,156,246,1\ta\tb,2\tb\ttc,3\ttc\td,4\td\te,5\te\tf,6\tf\ta\}.
        \end{aligned}
    \]
    The first case demonstrates how a lone index can be removed: excluding the lobe containing the lone index leaves a top-cell graph. The second case demonstrates a factorization with 3 shared indices: the second factor has an extra lobe corresponding to this triplet. The third case demonstrates a factorization with many shared indices; in particular, it is enough that the second factor still has the same boundary as the original graph.

    The forms factor as
    \[
        \begin{aligned}
            \omega_{G^A} &= \frac{(p_{16}p_{24}p_{35} - p_{15}p_{26}p_{34})^2}{p_{12}p_{13}p_{23}p_{34}p_{35}p_{45}p_{15}p_{16}p_{56}p_{24}p_{26}p_{46}} \times \frac{1}{p_{12}p_{17}p_{27}} \times (p_{12})^2\, dX \\
            \omega_{G^B} &= \frac{(p_{16}p_{24}p_{35} - p_{15}p_{26}p_{34})^2}{p_{12}p_{13}p_{23}p_{34}p_{35}p_{45}p_{15}p_{16}p_{56}p_{24}p_{26}p_{46}} \\ & \quad \quad \times \frac{(p_{17}p_{35}p_{89} + p_{15}p_{78}p_{39})^2}{p_{17}p_{18}p_{78}p_{38}p_{39}p_{89}p_{57}p_{59}p_{79}p_{13}p_{15}p_{35}} \times (p_{13}p_{15}p_{35}) \, dX \\
            \omega_{G^C} &= \frac{(p_{16}p_{24}p_{35} - p_{15}p_{26}p_{34})^2}{p_{12}p_{13}p_{23}p_{34}p_{35}p_{45}p_{15}p_{16}p_{56}p_{24}p_{26}p_{46}}  \\ & \quad \quad \times \frac{\left(p_{1\tb}p_{2\ttc}p_{3\td}p_{4\te}p_{5\tf}p_{6\ta} - p_{1\ta}p_{2\tb}p_{3\ttc}p_{4\td}p_{5\te}p_{6\tf}\right)^2
            }{
            p_{12}p_{23}p_{34}p_{45}p_{56}p_{16}
            p_{1\ta}p_{1\tb}p_{\ta\tb}
            p_{2\tb}p_{2\ttc}p_{\tb\ttc}
            p_{3\ttc}p_{3\td}p_{\ttc\td}
            p_{4\td}p_{4\te}p_{\td\te}
            p_{5\te}p_{5\tf}p_{\te\tf}
            p_{6\ta}p_{6\tf}p_{\tf\ta}
            } \\ & \quad \quad \quad \times (p_{12}p_{23}p_{34}p_{45}p_{56}p_{16}) \, dX
        \end{aligned}
    \]
\end{eg}

\begin{figure}
    \[
        \begin{aligned}
            \vcenter{\hbox{\onshellgraph{{(-120:2)/1,(-180:2)/2,(120:2)/3,(60:2)/4,(0:2)/5,(-60:2)/6,(-150:3.46)/7}}{{(-120:2)/W1,(-180:2)/W2,(120:2)/W3,(60:2)/W4,(0:2)/W5,(-60:2)/W6,(-150:3.46)/W7}}{{(150:1.125)/B1,(30:1.125)/B2,(-90:1.125)/B3,(0,0)/B4,(-150:2.31)/B5}}{{W1/B1, W2/B1, W3/B1, W3/B2, W4/B2, W5/B2, W1/B3, W5/B3, W6/B3, B4/W2, B4/W4, B4/W6, W1/B5, W2/B5, W7/B5}}{0.7}}} &\longrightarrow \vcenter{\hbox{\onshellgraph{{(-120:2)/1,(-180:2)/2,(120:2)/3,(60:2)/4,(0:2)/5,(-60:2)/6}}{{(-120:2)/W1,(-180:2)/W2,(120:2)/W3,(60:2)/W4,(0:2)/W5,(-60:2)/W6}}{{(150:1.125)/B1,(30:1.125)/B2,(-90:1.125)/B3,(0,0)/B4}}{{W1/B1, W2/B1, W3/B1, W3/B2, W4/B2, W5/B2, W1/B3, W5/B3, W6/B3, B4/W2, B4/W4, B4/W6}}{0.7}}} \times \vcenter{\hbox{\onshellgraph{{(-30:1)/1,(-150:1)/7,(90:1)/2}}{{(-30:1)/W1,(-150:1)/W7,(90:1)/W2}}{{(0,0)/B}}{ {W1/B,W2/B,W7/B} }{0.7}}} \\
            \vcenter{\hbox{\onshellgraph{{(-120:2)/1,(-180:2)/2,(120:2)/3,(60:2)/4,(0:2)/5,(-60:2)/6,(-180:3)/8,(60:3)/9,(-60:3)/7}}{{(-120:2)/W1,(-180:2)/W2,(120:2)/W3,(60:2)/W4,(0:2)/W5,(-60:2)/W6,(-180:3)/W8,(60:3)/W9,(-60:3)/W7}}{{(150:1.125)/B1,(30:1.125)/B2,(-90:1.125)/B3,(0,0)/B4,(-120:3)/B5,(120:3)/B6,(0:3)/B7}}{{W1/B1, W2/B1, W3/B1, W3/B2, W4/B2, W5/B2, W1/B3, W5/B3, W6/B3, B4/W2, B4/W4, B4/W6, W1/B5, W7/B5, W8/B5, W3/B6, W8/B6, W9/B6, W5/B7, W9/B7, W7/B7}}{0.7}}} &\longrightarrow \vcenter{\hbox{\onshellgraph{{(-120:2)/1,(-180:2)/2,(120:2)/3,(60:2)/4,(0:2)/5,(-60:2)/6}}{{(-120:2)/W1,(-180:2)/W2,(120:2)/W3,(60:2)/W4,(0:2)/W5,(-60:2)/W6}}{{(150:1.125)/B1,(30:1.125)/B2,(-90:1.125)/B3,(0,0)/B4}}{{W1/B1, W2/B1, W3/B1, W3/B2, W4/B2, W5/B2, W1/B3, W5/B3, W6/B3, B4/W2, B4/W4, B4/W6}}{0.7}}} \times \vcenter{\hbox{\onshellgraph{{(-60:2)/7,(-120:2)/1,(180:2)/8,(120:2)/3,(60:2)/9,(0:2)/5}}{{(-60:2)/W1,(-120:2)/W2,(180:2)/W3,(120:2)/W4,(60:2)/W5,(0:2)/W6}}{{(210:1.125)/B1,(90:1.125)/B2,(-30:1.125)/B3,(0,0)/B4}}{{W1/B1, W2/B1, W3/B1, W3/B2, W4/B2, W5/B2, W1/B3, W5/B3, W6/B3, B4/W2, B4/W4, B4/W6}}{0.7}}} \\
            \vcenter{\hbox{\onshellgraph{{(-120:2)/1,(-180:2)/2,(120:2)/3,(60:2)/4,(0:2)/5,(-60:2)/6,(-90:3)/a,(-150:3)/b,(150:3)/c,(90:3)/d,(30:3)/e,(-30:3)/f}}{{(-120:2)/W1,(-180:2)/W2,(120:2)/W3,(60:2)/W4,(0:2)/W5,(-60:2)/W6,(-90:3)/Wa,(-150:3)/Wb,(150:3)/Wc,(90:3)/Wd,(30:3)/We,(-30:3)/Wf}}{{(150:1.125)/B1,(30:1.125)/B2,(-90:1.125)/B3,(0,0)/B4,(-120:3)/B5,(-180:3)/B6,(120:3)/B7,(60:3)/B8,(0:3)/B9,(-60:3)/B10}}{{W1/B1, W2/B1, W3/B1, W3/B2, W4/B2, W5/B2, W1/B3, W5/B3, W6/B3, B4/W2, B4/W4, B4/W6, W1/B5, Wa/B5, Wb/B5, W2/B6, Wb/B6, Wc/B6, W3/B7, Wc/B7, Wd/B7, W4/B8, Wd/B8, We/B8, W5/B9, We/B9, Wf/B9, W6/B10, Wf/B10, Wa/B10}}{0.7}}} &\longrightarrow \vcenter{\hbox{\onshellgraph{{(-120:2)/1,(-180:2)/2,(120:2)/3,(60:2)/4,(0:2)/5,(-60:2)/6}}{{(-120:2)/W1,(-180:2)/W2,(120:2)/W3,(60:2)/W4,(0:2)/W5,(-60:2)/W6}}{{(150:1.125)/B1,(30:1.125)/B2,(-90:1.125)/B3,(0,0)/B4}}{{W1/B1, W2/B1, W3/B1, W3/B2, W4/B2, W5/B2, W1/B3, W5/B3, W6/B3, B4/W2, B4/W4, B4/W6}}{0.7}}} \times \vcenter{\hbox{\onshellgraph{{(-120:2)/1,(-180:2)/2,(120:2)/3,(60:2)/4,(0:2)/5,(-60:2)/6,(-90:3)/a,(-150:3)/b,(150:3)/c,(90:3)/d,(30:3)/e,(-30:3)/f}}{{(-120:2)/W1,(-180:2)/W2,(120:2)/W3,(60:2)/W4,(0:2)/W5,(-60:2)/W6,(-90:3)/Wa,(-150:3)/Wb,(150:3)/Wc,(90:3)/Wd,(30:3)/We,(-30:3)/Wf}}{{(150:1.125)/B1,(-30:1.125)/B2,(90:1.125)/B3,(-90:1.125)/B4,(-120:3)/B5,(-180:3)/B6,(120:3)/B7,(60:3)/B8,(0:3)/B9,(-60:3)/B10}}{{W1/B1, W2/B1, W3/B1, W4/B2, W5/B2, W6/B2, W1/B3, W3/B3, W4/B3, B4/W1, B4/W4, B4/W6, W1/B5, Wa/B5, Wb/B5, W2/B6, Wb/B6, Wc/B6, W3/B7, Wc/B7, Wd/B7, W4/B8, Wd/B8, We/B8, W5/B9, We/B9, Wf/B9, W6/B10, Wf/B10, Wa/B10}}{0.7}}}
        \end{aligned}
    \]
    \caption{Three reducible diagrams, alongside the graphs remaining after factoring. Note that to more clearly illustrate the factoring, the drawings here are not disk immersions.}\label{fig:reducible}
\end{figure}

\subsection{Outlook}
In this section and in \zcref{sec:k=2} we focused on the case of $k=2$ top-cell graphs. This choice was to make the connection to hypertrees clearer and to avoid overburdening the notation. However, \zcref{thm:pushforward-hypertree} applies more generally to top-cell type $(k, n)$ graphs whose internal vertices are all black and $(k+1)$-valent, which necessarily have exactly $n-k$ internal vertices. 

As in the $k=2$ case, we let $\mathcal{S} := \{\partial G_1, \ldots, \partial G_{n-k}\}$ denote the set of boundaries of lobes. Let $A$ be the $(n-k) \times n$ matrix with rows indexed by $\mathcal{S},$ columns indexed by $[n]$ and entries
\[A_{S, i} = \begin{cases}
    (-1)^{{\rm ind}(i) -1}p_{S \setminus i} & i \in S \\
    0 & \text{else}
\end{cases},\]
where ${\rm ind}(i)$ is the index of $i$ in $S.$ The following is a generalization of \zcref{thm:pushforward-hypertree}, and the proof is essentially identical. The only change is that in \zcref{lem:jacobian}, the exponent two in \eqref{eq:det identity} is replaced by the parameter $k$. 
\begin{theorem}\label{thm:pushforward-higher-k}
    Let $G$ be a top-cell type $(k,n)$ graph whose internal vertices are all black and $(k+1)$-valent. Suppose that $\deg \bM^G = 1.$ The on-shell form of $G$ is given by
    \begin{equation}
        \omega_G = \frac{(\det A')^k}{\displaystyle\prod_{S \in \mathcal{S}} \prod_{i \in S}p_{S \setminus i}}dX
    \end{equation}
    where $A'$ is any submatrix of $A$ obtained by deleting a row and the $k+1$ columns with nonzero
entries in that row.
\end{theorem}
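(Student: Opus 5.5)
Mirroring the proof of \zcref{thm:pushforward-hypertree}, I will first pin down the lobe structure. Since every internal vertex is black of degree $k+1$, we have $|W|=0$ and $k=n-|B|$, so $|B|=n-k$. If a black vertex were adjacent to a boundary vertex by two edges, \zcref{lem:reduced-conditions} would be violated via a bigon-type argument, and a degree-$(k+1)$ vertex with fewer than $k+1$ boundary neighbours would have to repeat an edge. So each lobe $G_i$ is a single black vertex joined to $k+1$ distinct boundary vertices, i.e. a ``star'' of type $(k,k+1)$, which is plabic and top-cell. Thus \zcref{lem:top-lobes-k}, \zcref{lem:gluing-inverse} and \zcref{cor:degree-factors} apply (after choosing an immersion satisfying \zcref{lem:gluing exists}). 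Consequently $\bM^G=\Phi^G\circ\prod_i\bM^{G_i}$, with $\Phi^G$ birational and inverse to $\pi^G:\Gr_{k,n}\dashrightarrow\prod_{S\in\mathcal S}\Gr_{k,S}$. Each $\Gr_{k,S}\cong\Gr_{k,k+1}\cong\bP^k$, and $\bM^{G_i}$ identifies the torus $(\bC^*)^{S}/\bC^*$ with its dense torus, up to sign. So
\[
\omega_G=(\pi^G)^*\Bigl(\bigwedge_{S\in\mathcal S}\bigwedge_{i\in S\setminus s_0}d\log\frac{p_{S\setminus i}}{p_{S\setminus s_0}}\Bigr),
\]
for a chosen $s_0\in S$. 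In the chart $p_{S\setminus s_0}\neq 0$, each factor is $\prod_{i\neq s_0}p_{S\setminus i}^{-1}$ times the standard form with an extra $p_{S\setminus s_0}^{k}$.

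Next I generalize the two lemmas about $A$. For \zcref{lem:A perp}, $A X^T=0$ is the Laplace/Cramer identity $\sum_{i\in S}(-1)^{\mathrm{ind}(i)-1}p_{S\setminus i}\,x_i=0$ for $k+1$ vectors in $\bC^k$. Hence $\rowsp A(X)=X^\perp$ when $A$ has full rank. Duality (\zcref{lem:duality-plucker-coordinates}) then gives that the maximal minors of $A$ (deleting a $k$-set $I$ of columns) are proportional to $\pm p_I$. The proportionality constant is computed exactly as in \zcref{lem:greatest common factor}: delete the $k$ columns $S\setminus s_0$ for one row $S$. That minor is a single nonzero entry $\pm p_{S\setminus s_0}$ times $\det A'$, where $A'$ deletes row $S$ and all columns of $S$. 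So $\det A_{I^c}=\pm p_I\det A'$. This also shows that $\det A'$ is independent of the choice of row up to sign.

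The main step is the Jacobian, generalizing \zcref{lem:jacobian}. I will write $T_L\Gr_{k,n}=\Hom(L,\bC^n/L)$ and $(\pi^G)_*=\mathrm{id}_{L^\vee}\otimes\pi_G$, where $\pi_G:\bC^n/L\to\bigoplus_S\bC^S/\pi^S(L)$. Since $\dim L=k$, we get $\det(\mathrm{id}_{L^\vee}\otimes\pi_G)=(\det\pi_G)^k$; this is where the exponent $2$ becomes $k$. As before, $\pi_G^\vee$ sends the generator $\lambda_S=\sum_{i\in S}A_{S,i}e_i^\vee$ of $(\bC^S/\pi^S L)^\vee$ to itself in $L^\perp$. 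In the chart $p_I\neq0$ with basis $\ell_s$ ($s\notin I$), its matrix is $A_{I^c}$ with row $S$ rescaled by $p_{S\setminus s_0}^{-1}$. Therefore $\det\pi_G^\vee=\pm p_I\det A'/\prod_S p_{S\setminus s_0}$.

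The delicate part, which I expect to be the real obstacle, is the normalization bookkeeping. I must check that the factor $p_I^k$ is absorbed by the chart change between the frame of $L$ given by the rows of $X$ and the identity frame on $I$, and that the target frames contribute an extra $p_{S\setminus s_0}^{-(k)}$ so the total power is $p_{S\setminus s_0}^{-k}\cdot p_{S\setminus s_0}^{-k}$ against the $p_{S\setminus s_0}^{k}$ in the form's rewriting. A clean way to verify this is homogeneity: the result must be torus-invariant by \zcref{lem:onshell torus invariant}, and this forces the exponents. With that check done, combining the pieces gives the Jacobian $\pm(\det A')^k/\prod_S p_{S\setminus s_0}^{k+1}$. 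Multiplying by $\prod_S p_{S\setminus s_0}\prod_{i\in S}p_{S\setminus i}^{-1}$ from the dlog forms then yields the stated formula. The hypothesis $\deg\bM^G=1$ is used only to make $\omega_G$ defined and consistent with \zcref{cor:degree-factors}.
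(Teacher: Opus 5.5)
Your outline is the paper's proof. The paper proves this theorem by rerunning the proof of \zcref{thm:pushforward-hypertree} with the exponent $2$ in \eqref{eq:det identity} replaced by $k$, and your steps match it one for one: lobes are stars, $\Phi^G$ is inverse to $\pi^G$, you pull back the dlog forms, $(\pi^G)_*=\id_{L^\vee}\otimes\pi_G$ has determinant $(\det\pi_G)^k$, and $\det\pi_G^\vee=\pm p_I\det A'/\prod_S p_{S\setminus s_0}$ is read off from $A$. Your final Jacobian $\pm(\det A')^k/\prod_S p_{S\setminus s_0}^{k+1}$ is also correct.

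The normalization step you flagged, however, is stated wrongly, and taken literally it gives the wrong exponent.

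\emph{Source side.} In the chart $X_I=\mathrm{Id}$ the rows of $X$ are already the chart frame and $p_I=1$, so nothing needs absorbing.

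\emph{Target side.} On each $\Gr_{k,S}$, the frame $\pi^S(X_1),\dots,\pi^S(X_k)$ differs from the chart frame (identity on $S\setminus s_0$) by the $k\times k$ submatrix $g=X_{S\setminus s_0}$, with $\det g=\pm p_{S\setminus s_0}$. Since $T\Gr_{k,S}=\Hom(\pi^S L,\bC^S/\pi^S L)$ has one-dimensional target, the frame coordinates $c$ and the chart coordinates $\delta x^S$ satisfy $c=g\,\delta x^S$. So the correction is a single factor $p_{S\setminus s_0}^{-1}$ per lobe, for every $k$, not $p_{S\setminus s_0}^{-k}$.

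The total power is therefore $p_{S\setminus s_0}^{-k}\cdot p_{S\setminus s_0}^{-1}$, which agrees with your stated Jacobian. Your ``$p^{-k}\cdot p^{-k}$'' would give $p^{-2k}$ and a wrong formula for every $k\ge 2$. As a check, for $n=k+1$ and a single star, $\pi^G$ is a change of affine chart on $\bP^k$, whose Jacobian is $\pm(p_{S\setminus s_0}/p_I)^{-(k+1)}$. The sentence in the proof of \zcref{lem:jacobian} attributing an extra $1/p_{ab}^2$ to this chart change has the same off-by-one problem. Its conclusion $p_{ab}^3$ is right because the true correction there is $1/p_{ab}$.

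Your last sentence has a related slip. The factor coming from the dlog forms is $p_{S\setminus s_0}^{k}\prod_{i\neq s_0}p_{S\setminus i}^{-1}=p_{S\setminus s_0}^{k+1}\prod_{i\in S}p_{S\setminus i}^{-1}$, as you correctly wrote earlier, not $p_{S\setminus s_0}\prod_{i\in S}p_{S\setminus i}^{-1}$. Only with exponent $k+1$ does $p_{S\setminus s_0}$ cancel against the Jacobian and leave $(\det A')^k/\prod_S\prod_{i\in S}p_{S\setminus i}$.

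Your homogeneity shortcut can pin down the exponent once you know the Jacobian has the shape $(\det A')^k\prod_S p_{S\setminus s_0}^{-m}$. The weight of the coefficient of $dX$ in $t_s$ shifts by $k+1-m$ for each lobe in which $s\neq s_0$, and torus invariance then forces $m=k+1$. The direct $k\times k$ determinant above is simpler and does not need that ansatz.
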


It is also straightforward to generalize \zcref{prop:base-locus-hypertree} to this setting: the vanishing locus of the numerator $\det A'$ of $\omega_G$ is the complement of $\im \bM^G$, by the same proof.

These results suggest many further questions about this class of graphs. For instance, it would be interesting to understand the analogue of the convexity condition, and whether the divisors defined by the determinantal expressions $\det A'$ are special from the perspective of algebraic geometry. The relevant algebro-geometric object to consider here may be \emph{configuration space}, that is $\Gr_{k,n}^{\circ}/T$, or some compactification of it. 

More generally, we do not know much about top-cell type $(k,n)$ graphs. A basic question is to understand the degree of the boundary measurement map (cf. \zcref{eg:threesix}). A more ambitious question is to find a rational formula for the resulting on-shell forms. Guided by \zcref{prop:base-locus-hypertree}, one might hope that the numerator of such a formula vanishes precisely on the complement of $\im \bM^G$. 

\printbibliography

@article{Arkani-Hamed:2009ljj,
    author = "Arkani-Hamed, Nima and Cachazo, Freddy and Cheung, Clifford and Kaplan, Jared",
    label = {AH+10},
    title = "{A Duality For The S Matrix}",
    eprint = "0907.5418",
    archivePrefix = "arXiv",
    primaryClass = "hep-th",
    doi = "10.1007/JHEP03(2010)020",
    journal = "JHEP",
    volume = "03",
    pages = "020",
    year = "2010"
}

@article{Arkani-Hamed:2009kmp,
    author = "Arkani-Hamed, Nima and Bourjaily, Jacob and Cachazo, Freddy and Trnka, Jaroslav",
    label = {AH+11},
    title = "{Unification of Residues and Grassmannian Dualities}",
    eprint = "0912.4912",
    archivePrefix = "arXiv",
    primaryClass = "hep-th",
    doi = "10.1007/JHEP01(2011)049",
    journal = "JHEP",
    volume = "01",
    pages = "049",
    year = "2011"
}

@article{Bourjaily:2010kw,
    author = "Bourjaily, Jacob L. and Trnka, Jaroslav and Volovich, Anastasia and Wen, Congkao",
    title = "{The Grassmannian and the Twistor String: Connecting All Trees in N=4 SYM}",
    eprint = "1006.1899",
    archivePrefix = "arXiv",
    primaryClass = "hep-th",
    doi = "10.1007/JHEP01(2011)038",
    journal = "JHEP",
    volume = "01",
    pages = "038",
    year = "2011"
}

@article{Bullimore:2010pa,
    author = "Bullimore, Mathew",
    title = "{Inverse Soft Factors and Grassmannian Residues}",
    eprint = "1008.3110",
    archivePrefix = "arXiv",
    primaryClass = "hep-th",
    doi = "10.1007/JHEP01(2011)055",
    journal = "JHEP",
    volume = "01",
    pages = "055",
    year = "2011"
}

@article{Boucher-Veronneau:2011rwd,
    author = "Boucher-Veronneau, Camille and Larkoski, Andrew J.",
    label = {BVL},
    title = "{Constructing Amplitudes from Their Soft Limits}",
    eprint = "1108.5385",
    archivePrefix = "arXiv",
    primaryClass = "hep-th",
    reportNumber = "SLAC-PUB-14549",
    doi = "10.1007/JHEP09(2011)130",
    journal = "JHEP",
    volume = "09",
    pages = "130",
    year = "2011"
}

@article{Nandan:2012rk,
    author = "Nandan, Dhritiman and Wen, Congkao",
    title = "{Generating All Tree Amplitudes in N=4 SYM by Inverse Soft Limit}",
    eprint = "1204.4841",
    archivePrefix = "arXiv",
    primaryClass = "hep-th",
    reportNumber = "QMUL-PH-12-09",
    doi = "10.1007/JHEP08(2012)040",
    journal = "JHEP",
    volume = "08",
    pages = "040",
    year = "2012"
}

@book{3264,
  title={3264 and All That: A Second Course in Algebraic Geometry},
  author={Eisenbud, David and Harris, Joe},
  year={2016},
  publisher={Cambridge University Press}
}

@article{Cachazo:2018wvl,
    author = "Cachazo, Freddy and Early, Nick and Guevara, Alfredo and Mizera, Sebastian",
    title = "{$\Delta$-algebra and scattering amplitudes}",
    eprint = "1812.01168",
    archivePrefix = "arXiv",
    primaryClass = "hep-th",
    doi = "10.1007/JHEP02(2019)005",
    journal = "JHEP",
    volume = "02",
    pages = "005",
    year = "2019"
}

@BOOK{biggs,
  title     = "Algebraic Graph Theory",
  author    = "Biggs, Norman",
  publisher = "Cambridge University Press",
  series    = "Cambridge tracts in mathematics",
  edition   =  2,
  month     =  aug,
  year      =  2012,
  address   = "Cambridge, England",
  language  = "en"
}

@ARTICLE{BrownDupont,
  title     = "Positive geometries and canonical forms via mixed Hodge theory",
  author    = "Brown, Francis and Dupont, Cl{\'e}ment",
  journal   = "Commun. Math. Phys.",
  publisher = "Springer Science and Business Media LLC",
  volume    =  406,
  number    =  11,
  month     =  nov,
  year      =  2025,
  language  = "en"
}

@book{CoxLittleSchenck,
  title={Toric Varieties},
  author={Cox, David A and Little, John B and Schenck, Henry K},
  volume={124},
  year={2011},
  publisher={American Mathematical Society},
  series={Graduate Studies in Mathematics},
  isbn={978-0821848197}
}

@ARTICLE{psw2009,
  title     = "Matching polytopes, toric geometry, and the totally non-negative
               Grassmannian",
  author    = "Postnikov, Alexander and Speyer, David and Williams, Lauren",
  journal   = "J. Algebr. Comb.",
  publisher = "Springer Science and Business Media LLC",
  volume    =  30,
  number    =  2,
  pages     = "173--191",
  month     =  sep,
  year      =  2009,
  language  = "en"
}

@ARTICLE{ahbl2017,
  title     = "Positive geometries and canonical forms",
  author    = "Arkani-Hamed, Nima and Bai, Yuntao and Lam, Thomas",
  label = {AHBL},
  journal   = "J. High Energy Phys.",
  publisher = "Springer Science and Business Media LLC",
  volume    =  2017,
  number    =  11,
  month     =  nov,
  year      =  2017,
  copyright = "https://creativecommons.org/licenses/by/4.0/",
  language  = "en"
}

@ARTICLE{Opie,
  title     = "Extremal divisors on moduli spaces of rational curves with
               marked points",
  author    = "Opie, Morgan",
  journal   = "Michigan Math. J.",
  publisher = "Michigan Mathematical Journal",
  volume    =  65,
  number    =  2,
  month     =  jun,
  year      =  2016
}

@article{CastravetTevelev,
url = {https://doi.org/10.1515/CRELLE.2011.189},
title = {Hypertrees, projections, and moduli of stable rational curves},
author = {Ana-Maria Castravet and Jenia Tevelev},
pages = {121--180},
volume = {2013},
number = {675},
journal = {Journal für die reine und angewandte Mathematik (Crelles Journal)},
doi = {doi:10.1515/CRELLE.2011.189},
year = {2013},
lastchecked = {2026-04-18}
}

@article{postnikov,
      title={Total positivity, Grassmannians, and networks}, 
      author={Alexander Postnikov},
      year={2006},
      eprint={math/0609764},
      archivePrefix={arXiv}
}

@article{Bourjaily:2016mnp,
    author = "Bourjaily, Jacob L. and Franco, Sebastian and Galloni, Daniele and Wen, Congkao",
    title = "{Stratifying On-Shell Cluster Varieties: the Geometry of Non-Planar On-Shell Diagrams}",
    eprint = "1607.01781",
    archivePrefix = "arXiv",
    primaryClass = "hep-th",
    reportNumber = "CCNY-HEP-16-06",
    doi = "10.1007/JHEP10(2016)003",
    journal = "JHEP",
    volume = "10",
    pages = "003",
    year = "2016"
}

@article{Arkani-Hamed:2014bca,
    author = "Arkani-Hamed, Nima and Bourjaily, Jacob L. and Cachazo, Freddy and Postnikov, Alexander and Trnka, Jaroslav",
    label = {AH+15},
    title = "{On-Shell Structures of MHV Amplitudes Beyond the Planar Limit}",
    eprint = "1412.8475",
    archivePrefix = "arXiv",
    primaryClass = "hep-th",
    reportNumber = "CALT-TH-2014-169",
    doi = "10.1007/JHEP06(2015)179",
    journal = "JHEP",
    volume = "06",
    pages = "179",
    year = "2015"
}

@article{Franco:2015rma,
    author = "Franco, Sebastian and Galloni, Daniele and Penante, Brenda and Wen, Congkao",
    title = "{Non-Planar On-Shell Diagrams}",
    eprint = "1502.02034",
    archivePrefix = "arXiv",
    primaryClass = "hep-th",
    doi = "10.1007/JHEP06(2015)199",
    journal = "JHEP",
    volume = "06",
    pages = "199",
    year = "2015"
}

@article{Lisitsyn:2025prd,
    author = "Lisitsyn, Artyom and Oktem, Umut and Sherman-Bennett, Melissa and Trnka, Jaroslav",
    label = {L+25},
    title = "{Grassmannian Geometries for Non-Planar On-Shell Diagrams}",
    eprint = "2512.25005",
    archivePrefix = "arXiv",
    primaryClass = "hep-th",
    month = "12",
    year = "2025"
}

@article{Parke:1986gb,
    author = "Parke, Stephen J. and Taylor, T. R.",
    title = "{An Amplitude for $n$ Gluon Scattering}",
    reportNumber = "FERMILAB-PUB-86-042-T",
    doi = "10.1103/PhysRevLett.56.2459",
    journal = "Phys. Rev. Lett.",
    volume = "56",
    pages = "2459",
    year = "1986"
}

@book{Arkani-Hamed:2012zlh,
    author = "Arkani-Hamed, Nima and Bourjaily, Jacob L. and Cachazo, Freddy and Goncharov, Alexander B. and Postnikov, Alexander and Trnka, Jaroslav",
    label = {AH+16},
    title = "{Grassmannian Geometry of Scattering Amplitudes}",
    publisher = "Cambridge University Press",
    month = "4",
    year = "2016"
}

@article{Affolter_2023,
   title={Vector-relation configurations and plabic graphs},
   volume={30},
   ISSN={1420-9020},
   url={http://dx.doi.org/10.1007/s00029-023-00898-z},
   DOI={10.1007/s00029-023-00898-z},
   number={1},
   journal={Selecta Mathematica},
   publisher={Springer Science and Business Media LLC},
   author={Affolter, Niklas and Glick, Max and Pylyavskyy, Pavlo and Ramassamy, Sanjay},
   year={2023},
   month=dec }

@article {Speyer16,
    AUTHOR = {Speyer, David E.},
     TITLE = {Variations on a theme of {K}asteleyn, with application to the
              totally nonnegative {G}rassmannian},
   JOURNAL = {Electron. J. Combin.},
  FJOURNAL = {Electronic Journal of Combinatorics},
    VOLUME = {23},
      YEAR = {2016},
    NUMBER = {2},
     PAGES = {Paper 2.24, 7},
      ISSN = {1077-8926},
   MRCLASS = {05C30 (05A15)},
  MRNUMBER = {3512646},
MRREVIEWER = {Markus\ E.\ Fulmek},
       DOI = {10.37236/5871},
       URL = {https://doi.org/10.37236/5871},
}

@article {MSTwist,
    AUTHOR = {Muller, Greg and Speyer, David E.},
     TITLE = {The twist for positroid varieties},
   JOURNAL = {Proc. Lond. Math. Soc. (3)},
  FJOURNAL = {Proceedings of the London Mathematical Society. Third Series},
    VOLUME = {115},
      YEAR = {2017},
    NUMBER = {5},
     PAGES = {1014--1071},
      ISSN = {0024-6115,1460-244X},
   MRCLASS = {14M15 (05C22 13F60)},
  MRNUMBER = {3733558},
MRREVIEWER = {Truong\ Le\ Hoang},
       DOI = {10.1112/plms.12056},
       URL = {https://doi.org/10.1112/plms.12056},
}

@article {OH01,
    AUTHOR = {Ohsugi, Hidefumi and Hibi, Takayuki},
     TITLE = {Convex polytopes all of whose reverse lexicographic initial
              ideals are squarefree},
   JOURNAL = {Proc. Amer. Math. Soc.},
  FJOURNAL = {Proceedings of the American Mathematical Society},
    VOLUME = {129},
      YEAR = {2001},
    NUMBER = {9},
     PAGES = {2541--2546},
      ISSN = {0002-9939,1088-6826},
   MRCLASS = {13P10 (52B20)},
  MRNUMBER = {1838375},
MRREVIEWER = {Joseph\ Gubeladze},
       DOI = {10.1090/S0002-9939-01-05853-1},
       URL = {https://doi.org/10.1090/S0002-9939-01-05853-1},
}

@article{Lam-grassmann-polytope,
      title={Totally nonnegative Grassmannian and Grassmann polytopes},
      label= {La15},
      author={Thomas Lam},
      year={2015},
      eprint={1506.00603},
      archivePrefix={arXiv},
      primaryClass={math.CO},
      url={https://arxiv.org/abs/1506.00603}, 
}

@article{Lam-pos-geom,
      title={An invitation to positive geometries}, 
      label= {La22},
      author={Thomas Lam},
      year={2022},
      eprint={2208.05407},
      archivePrefix={arXiv},
      primaryClass={math.CO},
      url={https://arxiv.org/abs/2208.05407}, 
}

@article{Mason,
author = {Mason, J. H.},
title = {On a Class of Matroids Arising From Paths in Graphs},
journal = {Proceedings of the London Mathematical Society},
volume = {s3-25},
number = {1},
pages = {55-74},
doi = {https://doi.org/10.1112/plms/s3-25.1.55},
url = {https://londmathsoc.onlinelibrary.wiley.com/doi/abs/10.1112/plms/s3-25.1.55},
eprint = {https://londmathsoc.onlinelibrary.wiley.com/doi/pdf/10.1112/plms/s3-25.1.55},
year = {1972}
}

@article {TW,
    AUTHOR = {Talaska, Kelli and Williams, Lauren},
     TITLE = {Network parametrizations for the {G}rassmannian},
   JOURNAL = {Algebra Number Theory},
  FJOURNAL = {Algebra \& Number Theory},
    VOLUME = {7},
      YEAR = {2013},
    NUMBER = {9},
     PAGES = {2275--2311},
      ISSN = {1937-0652,1944-7833},
   MRCLASS = {14M15 (05E10)},
  MRNUMBER = {3152014},
MRREVIEWER = {Deborah\ Olayide A. Ajayi},
       DOI = {10.2140/ant.2013.7.2275},
       URL = {https://doi.org/10.2140/ant.2013.7.2275},
}

@article{GLSBS,
      title={Braid variety cluster structures, I: 3D plabic graphs}, 
      author={Pavel Galashin and Thomas Lam and Melissa Sherman-Bennett and David Speyer},
      year={2022},
      eprint={2210.04778},
      archivePrefix={arXiv},
      primaryClass={math.CO},
      url={https://arxiv.org/abs/2210.04778}, 
}
\end{document}